\documentclass[11pt]{article}

\usepackage[margin=1in]{geometry}
\usepackage[T1]{fontenc}
\usepackage{array}
\usepackage{amsmath,amssymb,amsthm,mathtools}
\usepackage{tikz}
\usetikzlibrary{arrows.meta,calc,decorations.pathmorphing}
\usepackage[colorlinks=true,linkcolor=blue,citecolor=blue,urlcolor=blue]{hyperref}

\newtheorem{theorem}{Theorem}[section]
\newtheorem{definition}[theorem]{Definition}
\newtheorem{lemma}[theorem]{Lemma}
\newtheorem{proposition}[theorem]{Proposition}
\newtheorem{corollary}[theorem]{Corollary}
\newtheorem{remark}[theorem]{Remark}
\newtheorem{example}[theorem]{Example}

\newcommand{\C}{\mathbb C}
\newcommand{\CH}{\mathbb C\mathrm H}

\newcommand{\PH}{\mathbb P}
\newcommand{\Span}{\operatorname{span}}
\newcommand{\argg}{\operatorname{arg}}
\newcommand{\dd}{\,\mathrm d}

\title{Universal Hermitian Projective Calculus for \(\CH^2\)\\
\large Pair, Phase, Contact, and Kernel Invariants in Complex Hyperbolic Two-Space}
\author{Karl J. Kreder III}
\date{May 2026}

\begin{document}
\maketitle

\begin{abstract}
This paper develops an algebraic invariant calculus for complex hyperbolic
two-space \(\CH^2\).  The model is Hermitian projective rather than real
projective: \(\CH^2\) is the domain of negative complex lines in a Hermitian
vector space \(V\cong\C^{2,1}\), and its boundary is the projectivized null
cone.  Consequently the analogue of Wildberger's quadrance/spread language
cannot be a single real pair of invariants.  It splits into Hermitian pair
moduli, determinant quadrance, polar spread, triple-product phase,
cross-ratio laws, CR/contact boundary formulas, normalized-ball kernel
identities, and finite Hermitian incidence.  The development is intrinsic to
Hermitian projective geometry and uses only the standard complex-hyperbolic
strata: negative, null, and positive projective lines.
\end{abstract}

\section*{Preface}

Complex hyperbolic two-space is a classical object with several equally useful
faces.  It is a rank-one Hermitian symmetric space, a projective domain of
negative lines, a ball with its Bergman metric, and a geometry whose boundary
carries a CR/contact structure rather than an ordinary conformal sphere.  The
purpose of this manuscript is to place these familiar features into a single
algebraic calculus.  The starting point is only a three-dimensional vector
space with a Hermitian form of signature \((2,1)\).  From that datum one obtains
the negative, null, and positive projective strata, and from their pairings one
obtains the invariants developed below.

The guiding principle is that the Hermitian setting does not reduce to the
real hyperbolic pattern with complex coefficients inserted.  In the real
projective plane, a rational trigonometry can be organized around quadrance and
spread.  In \(\CH^2\), the same role is distributed among several coupled
quantities.  Pair moduli record the symmetric metric content of two lines.
Determinant quadrances and polar spreads record join, pole, and orthogonality
relations.  Triple products carry a phase, and that phase is the algebraic
source of the Cartan angular invariant.  On the null boundary, the same
Hermitian pairings degenerate into Heisenberg and contact expressions.  Thus
the calculus is not a longer notation for the classical metric theory; it is a
projective way of separating the metric, phase, polar, and boundary components
that coexist in complex hyperbolic geometry.

A second principle is denominator control.  Many standard expressions in
complex hyperbolic geometry are naturally written as ratios.  Ratios are
perfectly adequate on the open strata where their denominators are nonzero, but
they obscure how formulas behave under incidence, degeneration, and finite
field specialization.  For this reason the main identities are stated both as
projective quotients and, when useful, as cleared polynomial relations.  This is
what allows the same algebraic pattern to describe ordinary complex
hyperbolic points, positive poles of complex geodesics, boundary chains,
spinal spheres, kernel overlaps on the ball, and finite Hermitian unital
incidence.

The manuscript may be read in layers.  The opening sections fix the Hermitian
projective model and the basic pair invariants.  The middle sections develop
triangle trigonometry, phase, cross-ratios, complex geodesic poles, projective
unitary covariance, bisectors, spinal spheres, and the CR/contact atlas.  The
later sections connect the projective calculus with normalized ball kernels
and finite Hermitian geometries over fields with involution.  A reader mainly
interested in classical \(\CH^2\) may regard the algebraic invariants as
projective coordinates for familiar metric and CR objects.  A reader interested
in finite or symbolic models may instead treat the quotient formulas as
readouts of the underlying polynomial identities.

The result is intended as a textbook-style calculus rather than a catalog of
isolated formulas.  Each invariant is introduced with its projective scaling
law, its geometric meaning, and the identities that make it usable in
computations.  Classical distance, angle, chain, and kernel formulas reappear
as analytic interpretations of the algebraic data, not as the definitions from
which the algebra depends.

\newpage
\tableofcontents
\newpage

\section{Introduction}

Wildberger's universal hyperbolic geometry develops the real projective
hyperbolic plane from a symmetric bilinear form \cite{wildberger-uhg}.  Its
basic measurements, quadrance and spread, are rational projective invariants;
null points and null lines are not discarded, but become part of the algebra.
We use that as a methodological guide, not as a template to copy.

For \(\CH^2\), the governing form is Hermitian, not symmetric bilinear.
The replacement is therefore:
\[
\begin{array}{c}
  \text{real bilinear quadrance/spread} \\
  \leadsto \\
  \text{Hermitian pair, triple, cross-ratio, contact, and kernel invariants}.
\end{array}
\]

\begin{table}[ht]
\centering
\renewcommand{\arraystretch}{1.2}
\begin{tabular}{>{\raggedright\arraybackslash}p{0.34\textwidth}
                >{\raggedright\arraybackslash}p{0.50\textwidth}}
\hline
Real projective hyperbolic calculus & Hermitian \((2,1)\) calculus for \(\CH^2\)\\
\hline
Symmetric bilinear form & Hermitian form \(h(z,w)\), linear in the first slot and
conjugate-linear in the second\\
Projective points and null points & Negative, null, and positive complex lines:
\(\PH(V_-)\), \(\PH(V_0)\), \(\PH(V_+)\)\\
Quadrance & Pair modulus \(\eta_H\) and determinant quadrance
\(q_H=\Delta_H/(h(z,z)h(w,w))\)\\
Spread between lines & Polar spread, obtained by applying the same pair
determinant calculus to positive poles\\
Triangle cross laws & Three-point Hermitian Gram identity, cyclic spread law,
and Pythagoras specialization\\
No real analogue in a single quadrance/spread pair & Triple-product phase
\(T_H=-h_{12}h_{23}h_{31}\), the algebraic source of the Cartan angular
invariant\\
Boundary conic & Null boundary with CR/contact and Heisenberg structure\\
Finite rational models & Finite Hermitian incidence over fields with involution\\
\hline
\end{tabular}
\caption{A translation dictionary from the real bilinear universal
hyperbolic language to the Hermitian projective language used here.}
\label{tab:uhg-dictionary}
\end{table}

The standard background is already well established.  Parker's notes, for
example, treat Hermitian forms of signature \((2,1)\), the projectivized
negative-line model of \(\CH^2\), the ball and Siegel models, the Heisenberg
boundary compactification, CR boundary geometry, subspaces, and distance and
cross-ratio formulae \cite{parker-nchg}.  This paper takes that infrastructure
as background.  The additional aim here is the algebraic packaging: projective
pair/quadrance identities, triple-phase and cross-ratio quotient laws,
vertex-spread cross laws and Pythagoras identities, CR/contact readouts,
normalized-kernel phase/contrast factorization, and finite Hermitian incidence
identities, all organized as an intrinsic calculus.

The relation with the standard literature is therefore:
\begin{itemize}
\item The Hermitian \((2,1)\) projective model and null boundary are standard;
here they support projective pair invariants, determinant quadrance, and
null-join witnesses.
\item Positive polar vectors, complex lines, and boundary chains are standard;
here they are organized by polar spread, defined as the same pair calculus
applied to dual positive-pole data, and by a vertex-spread Hermitian cross
law for point triangles.
\item Koranyi--Reimann cross-ratios and metric formulas are standard; here the
focus is projective descent together with swap, inverse, double-swap, and
conjugation laws.
\item The Heisenberg compactification and CR boundary geometry are standard;
here they are packaged as boundary pair kernels, triple readouts,
contact-density laws, similarity covariance, chart transitions, and inversion
laws.
\item Ball and Siegel coordinate models over \(\C\) are standard; here they
feed normalized-ball kernel ratios and phase/contrast factorization.
\item Classical \(\CH^2\) is complex-analytic; here the calculus also points
toward finite Hermitian incidence over fields with involution, including the
order-two \(\mathbb F_4/\mathbb F_2\) model and the general rank-three
finite Hermitian unital counts.
\end{itemize}

\section{Hermitian projective model}

This section fixes the standard complex-hyperbolic conventions, following the
usual Hermitian projective model.  The algebraic contribution begins when
these standard strata are used as carriers for pair, phase, contact, kernel,
and finite-incidence invariants.

\begin{definition}[Hermitian projective model]
Let \(V\) be a complex vector space of dimension \(3\), equipped with a
Hermitian form \(h\) of signature \((2,1)\).  We use the convention
\[
  h(\lambda z,\mu w)=\lambda\overline{\mu}\,h(z,w).
\]
Define
\[
  V_-=\{z:h(z,z)<0\},\qquad
  V_0=\{z\ne0:h(z,z)=0\},\qquad
  V_+=\{z:h(z,z)>0\}.
\]
Then
\[
  \CH^2=\PH(V_-),\qquad
  \partial\CH^2=\PH(V_0).
\]
\end{definition}

\begin{definition}[Projective strata]
For a nonzero vector \(z\in V\), write \([z]\) for its complex projective
line.  We call \([z]\) negative, null, or positive according as
\[
  h(z,z)<0,\qquad h(z,z)=0,\qquad h(z,z)>0.
\]
Negative lines are points of \(\CH^2\), null lines are boundary points, and
positive polar vectors represent complex geodesics and boundary chains.
\end{definition}

\begin{definition}[Local positive frame]
Let \(x=[z]\in\CH^2\) be an interior point, so \(h(z,z)<0\).  The negative
line and its Hermitian orthogonal complement are
\[
  L_x=\C z,\qquad E_x=L_x^\perp.
\]
Since \(h\) has signature \((2,1)\), the restriction of \(h\) to \(E_x\)
is positive definite and
\[
  V=L_x\oplus E_x,\qquad E_x\cong\C^2.
\]
The compact local frame group at \(x\) is
\[
  U(E_x)\cong U(2),
  \qquad
  SU(E_x)\cong SU(2).
\]
\end{definition}

\begin{proposition}[Local frame representation dictionary]
The first natural complex representations attached to the local frame are
\[
\begin{array}{ccl}
E_x &\longleftrightarrow& \text{the fundamental \(2\)-dimensional
representation of \(SU(E_x)\)},\\[1mm]
\mathfrak{su}(E_x)_\C\cong\mathfrak{sl}(E_x)
&\longleftrightarrow& \text{the adjoint \(3\)-dimensional representation},\\[1mm]
\C &\longleftrightarrow& \text{the trivial \(1\)-dimensional representation}.
\end{array}
\]
Equivalently, in highest-weight notation for \(SU(2)\), these are the
irreducibles of highest weights \(1\), \(2\), and \(0\), respectively.
\end{proposition}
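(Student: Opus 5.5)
The argument reduces to standard $SU(2)$ representation theory once the three spaces are identified concretely. By the Local positive frame definition, $h|_{E_x}$ is positive definite, so I first choose an $h$-orthonormal basis $e_1,e_2$ of $E_x$. This gives $E_x\cong\C^2$ with its standard Hermitian inner product, and turns $U(E_x)$ and $SU(E_x)$ into $U(2)$ and $SU(2)$. All three claims will then be checked in these coordinates. Since the representations are defined intrinsically (tautological action, conjugation action, trivial action), the conclusions do not depend on the chosen basis.

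\emph{Fundamental representation.} $SU(E_x)$ acts on $E_x$ tautologically. In the orthonormal basis this is the defining action of $SU(2)$ on $\C^2$, which is irreducible: $SU(2)$ acts transitively on the unit sphere $S^3\subset\C^2$, so it has no invariant line. Restricting to the maximal torus $\mathrm{diag}(e^{i\theta},e^{-i\theta})$ gives weights $\pm1$, so the highest weight is $1$.

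\emph{Adjoint representation.} First I identify $\mathfrak{su}(E_x)_\C$ with $\mathfrak{sl}(E_x)$. The Lie algebra $\mathfrak{su}(E_x)$ consists of traceless $h$-skew-adjoint endomorphisms. Every traceless endomorphism $A$ splits uniquely as
\[
A=\tfrac12(A-A^*)+i\cdot\tfrac1{2i}(A+A^*),
\]
where $A^*$ is the $h$-adjoint. Both pieces are traceless and skew-adjoint, so $\mathfrak{sl}(E_x)=\mathfrak{su}(E_x)\oplus i\,\mathfrak{su}(E_x)$. The adjoint action $g\cdot X=gXg^{-1}$ is complex-linear on $\mathfrak{sl}(E_x)$ and has complex dimension $3$. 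Using the basis $H,E,F$, the torus acts with weights $0$ and $\pm2$; all weights have multiplicity one, and the highest weight is $2$. For irreducibility, any nonzero invariant subspace is stable under $\mathrm{ad}\,H$, so it contains a weight vector. Applying $\mathrm{ad}\,E$ and $\mathrm{ad}\,F$, which act on $\mathfrak{sl}_2$ through the complexified Lie-algebra action, then generates all of $\mathfrak{sl}_2$. Alternatively, I can cite the equivalence $\mathfrak{sl}(E_x)\cong\mathrm{Sym}^2E_x$ together with the classification of $SU(2)$ irreducibles.

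\emph{Trivial representation.} $\C$ with the trivial action is one-dimensional with weight $0$. In the geometry it appears naturally as the scalars $\C\cdot\mathrm{id}_{E_x}$, the complement of $\mathfrak{sl}(E_x)$ in $\mathrm{End}(E_x)$, and also as $\det E_x=\Lambda^2E_x$, on which $SU(E_x)$ acts by $\det g=1$.

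\emph{Main obstacle.} The only substantive point is the complexification $\mathfrak{su}(E_x)_\C\cong\mathfrak{sl}(E_x)$ and checking that this identification is $SU(E_x)$-equivariant. Equivariance holds because conjugation by $g\in SU(E_x)$ commutes with taking $h$-adjoints when $g$ is $h$-unitary.

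I would close by remarking that the "equivalently" clause follows from the classification: $SU(2)$ irreducibles are determined by their highest weight $n\ge0$, with dimension $n+1$. The three computed weights $1,2,0$ therefore match dimensions $2,3,1$.
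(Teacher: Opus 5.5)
Your proof is correct and follows the same route as the paper. Both use the positive-definite restriction of $h$ to identify $SU(E_x)\cong SU(2)$, then recognize the tautological action, the conjugation action on $\mathfrak{su}(E_x)_\C=\mathfrak{sl}(E_x)$, and the trivial action as the standard irreducibles of highest weights $1$, $2$, $0$. The difference is only in detail: the paper cites these identifications as standard, whereas you also verify the complexification, its $SU(E_x)$-equivariance, the torus weights, and irreducibility.
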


\begin{proof}
The positive frame \(E_x\) is a two-dimensional Hermitian vector space, so
the simple part of its unitary group is \(SU(E_x)\cong SU(2)\).  Its defining
action on \(E_x\) is the fundamental two-dimensional irreducible
representation.  The complexified adjoint action is the conjugation action
on
\[
  \mathfrak{su}(E_x)_\C=\mathfrak{sl}(E_x),
\]
which is the three-dimensional irreducible representation of \(SU(2)\).
Finally, scalar frame-invariant quantities transform through the trivial
one-dimensional representation.  These three representations are canonical
once the interior point \(x\) and hence the positive frame \(E_x\) have been
chosen.
\end{proof}

\begin{proposition}[Cartan tangent split at an interior point]
Let \(x=[z]\in\CH^2\) and let \(V=L_x\oplus E_x\) be the local splitting
above.  The Lie algebra of \(SU(2,1)\) admits the Cartan decomposition
\[
  \mathfrak{su}(2,1)=\mathfrak k_x\oplus\mathfrak p_x,
\]
where
\[
  \mathfrak k_x
  \cong
  \mathfrak{s}\bigl(\mathfrak u(L_x)\oplus\mathfrak u(E_x)\bigr)
  \cong
  \mathfrak u(2),
\]
and
\[
  \mathfrak p_x\cong \operatorname{Hom}_\C(L_x,E_x)
\]
as a real vector space.  Consequently
\[
  T_x\CH^2\cong \operatorname{Hom}_\C(L_x,E_x),
  \qquad
  \dim_\C T_x\CH^2=2,\quad \dim_{\mathbb R} T_x\CH^2=4.
\]
\end{proposition}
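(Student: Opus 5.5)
The plan is to realize the Cartan decomposition through the Cartan involution attached to the splitting \(V=L_x\oplus E_x\). Let \(P\) be the \(h\)-orthogonal projection onto \(E_x\) and \(\theta_x=2P-I\). This map is \(-1\) on \(L_x\) and \(+1\) on \(E_x\). It lies in \(U(2,1)\), since the splitting is \(h\)-orthogonal, and it satisfies \(\theta_x^2=I\). Conjugation \(X\mapsto\theta_xX\theta_x\) is therefore an involutive automorphism of \(\mathfrak{su}(2,1)\). I will take \(\mathfrak k_x\) and \(\mathfrak p_x\) to be its \(+1\) and \(-1\) eigenspaces. This gives \(\mathfrak{su}(2,1)=\mathfrak k_x\oplus\mathfrak p_x\) immediately, together with the bracket relations \([\mathfrak k,\mathfrak k]\subset\mathfrak k\), \([\mathfrak k,\mathfrak p]\subset\mathfrak p\) and \([\mathfrak p,\mathfrak p]\subset\mathfrak k\).

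Next I identify the two eigenspaces in block form relative to \(L_x\oplus E_x\). The operators commuting with \(\theta_x\) are the block-diagonal ones, and skew-Hermitian with respect to \(h\) means each block is skew-Hermitian for the restricted form, negative definite on \(L_x\) and positive definite on \(E_x\). Adding the trace-zero condition gives \(\mathfrak k_x=\mathfrak s(\mathfrak u(L_x)\oplus\mathfrak u(E_x))\). The map \((a,B)\mapsto B\) is injective on this trace-zero pair, since \(a=-\operatorname{tr}B\), and it is onto \(\mathfrak u(E_x)\cong\mathfrak u(2)\).

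The operators anticommuting with \(\theta_x\) are the off-diagonal ones, with components \(\beta\colon L_x\to E_x\) and \(\gamma\colon E_x\to L_x\). Writing out \(h(Xu,v)+h(u,Xv)=0\) for \(u\in L_x\), \(v\in E_x\) gives \(h(\beta u,v)=-h(u,\gamma v)\). So \(\gamma=-\beta^{*}\), where the adjoint is taken with respect to the restricted forms. Off-diagonal operators are automatically traceless. Hence \(X\mapsto\beta\) is an \(\mathbb R\)-linear bijection \(\mathfrak p_x\to\operatorname{Hom}_\C(L_x,E_x)\). As a check, \(\dim_{\mathbb R}\mathfrak{su}(2,1)=8=4+4\).

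For the tangent space, the orbit map \(g\mapsto g\cdot x\) identifies \(\CH^2\) with \(SU(2,1)/K_x\), where \(K_x\) is the stabilizer of the line \(L_x\). One needs transitivity of \(SU(2,1)\) on negative lines, which is standard (Witt's theorem, then rescaling the determinant using the \(U(1)\) factor on \(L_x\)). The Lie algebra of the stabilizer consists of the elements preserving \(L_x\), and hence also \(E_x=L_x^\perp\); that is exactly \(\mathfrak k_x\). The differential of the orbit map therefore induces \(T_x\CH^2\cong\mathfrak{su}(2,1)/\mathfrak k_x\cong\mathfrak p_x\cong\operatorname{Hom}_\C(L_x,E_x)\).

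The last step is to check that this agrees with the complex structure, giving \(\dim_\C=2\). Concretely, \(T_{[z]}\mathbb P(V)\cong\operatorname{Hom}(L_x,V/L_x)\cong\operatorname{Hom}(L_x,E_x)\), and the element \(X\in\mathfrak p_x\) sends \(z\) to \(\beta z\) modulo \(L_x\). The delicate point is exactly this compatibility: showing that the real identification of \(\mathfrak p_x\) matches the complex-linear description of the tangent space to projective space, rather than merely being a real isomorphism.
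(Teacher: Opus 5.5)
Your proposal is correct and follows essentially the paper's route: a block decomposition of \(\mathfrak{su}(2,1)\) relative to \(L_x\oplus E_x\). In the paper's adapted orthonormal basis your involution \(\theta_x\) has matrix \(J=\operatorname{diag}(-1,I_2)\), so its \(\pm1\) eigenspaces are exactly the paper's block-diagonal part and off-diagonal \(b\)-part.

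You are more complete than the paper on the tangent-space step, which the paper only asserts; you justify it with transitivity and the orbit map. The compatibility you flag as delicate is already settled by your own computation: under the \(\C\)-linear identification \(T_{[z]}\PH(V)\cong\operatorname{Hom}_\C(L_x,V/L_x)\), an element \(X\in\mathfrak p_x\) acts by \(z\mapsto\beta z \bmod L_x\). Since \(\CH^2\) is open in \(\PH(V)\), this gives the complex isomorphism and \(\dim_\C T_x\CH^2=2\).
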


\begin{proof}
Choose an \(h\)-orthonormal basis adapted to \(V=L_x\oplus E_x\), so that
the Hermitian matrix of \(h\) is
\[
  J=\begin{pmatrix}-1&0\\0&I_2\end{pmatrix}.
\]
The Lie algebra \(\mathfrak{su}(2,1)\) consists of trace-zero complex
matrices \(X\) satisfying
\[
  X^\ast J+JX=0.
\]
Writing \(X\) in block form relative to \(L_x\oplus E_x\), this condition is
equivalent to
\[
  X=
  \begin{pmatrix}
    i\alpha & b^\ast\\
    b & A
  \end{pmatrix},
  \qquad
  \alpha\in\mathbb R,\quad A\in\mathfrak u(2),\quad
  i\alpha+\operatorname{tr}A=0,
\]
with \(b\in\C^2\).  The block-diagonal part \(b=0\) is the stabilizer of
the line \(L_x\); it is
\[
  \mathfrak{s}\bigl(\mathfrak u(L_x)\oplus\mathfrak u(E_x)\bigr)
  \cong\mathfrak u(2).
\]
The off-diagonal part is determined by \(b\in E_x\), equivalently by a
complex linear map \(L_x\to E_x\) after choosing a nonzero vector in
\(L_x\).  Thus the complement is naturally identified, as a real vector
space, with \(\operatorname{Hom}_\C(L_x,E_x)\).  This complement is the
tangent representation of the symmetric space at \(x\), giving the stated
complex and real dimensions.
\end{proof}

\begin{figure}[ht]
\centering
\begin{tikzpicture}[
    x=1cm,y=1cm,
    line cap=round,
    line join=round,
    >=Latex,
    label/.style={font=\small, fill=white, inner sep=1.5pt},
    note/.style={font=\small}
  ]
  \def\R{2.15}

  \fill[blue!4] (0,0) circle (\R);
  \draw[thick] (0,0) circle (\R);
  \node[label] at (-0.35,0.40) {\(\CH^2=\PH(V_-)\)};
  \node[label] at (0,-2.55) {\(\partial\CH^2=\PH(V_0)\)};

  \coordinate (Z) at (-0.85,-0.55);
  \fill[blue!70!black] (Z) circle (2pt);
  \node[label, below left=1pt] at (Z) {\([z]\)};

  \coordinate (Xi) at ({\R*cos(35)},{\R*sin(35)});
  \fill[black] (Xi) circle (2pt);
  \node[label, above right=1pt] at (Xi) {\([\xi]\)};

  \coordinate (A) at ({\R*cos(137)},{\R*sin(137)});
  \coordinate (B) at ({\R*cos(-42)},{\R*sin(-42)});
  \draw[very thick,blue!65]
    (A) .. controls (-0.70,0.15) and (0.78,-0.18) .. (B);
  \node[label, text=blue!65] at (0.73,-0.98) {\(L_p\)};

  \draw[very thick,red!70!black]
    (A) arc[start angle=137,end angle=318,radius=\R];
  \node[label, text=red!70!black] at (-2.08,-0.20) {\(\mathcal C_p\)};

  \coordinate (P) at (4.15,0.55);
  \draw[fill=red!70!black] (P) circle (2pt);
  \node[label, right=2pt] at (P) {\([p]\in\PH(V_+)\)};
  \draw[dashed,red!70!black,->] (P) -- (1.16,-0.55);
  \draw[dashed,red!70!black,->] (P) -- (Xi);

  \draw[densely dotted,thick,gray!65]
    (Z) .. controls (-1.25,0.45) and (0.22,1.78) .. (Xi);
  \node[note] at (0,2.70) {mixed data: interior point, boundary point, positive pole};
\end{tikzpicture}
\caption{Schematic mixed configuration.  A negative line \([z]\) is an
interior point, a null line \([\xi]\) is a boundary point, and a positive pole
\([p]\) determines both the complex geodesic \(L_p\) and its boundary chain
\(\mathcal C_p\).  The equations \(h(z,p)=0\) and \(h(\xi,p)=0\) are routed
through different strata.}
\label{fig:mixed-triangle}
\end{figure}

\section{Uniform algebraic layers and stratum domains}

The calculus uses the complex model as its main geometric picture, but the
algebraic expressions are deliberately organized so that finite Hermitian
models use the same formulas.  This requires separating four layers.

\begin{description}
\item[Starred-field projective algebra.]
Let \(K\) be a field with involution \(\lambda\mapsto\lambda^\ast\), and let
\(h\) be a Hermitian form over a rank-three \(K\)-space.  The scalar norm is
\[
  N(\lambda)=\lambda\lambda^\ast.
\]
Pair numerators, pair denominators, two-point Gram determinants, triple
products, and cross-ratio numerators and denominators are defined at this
level.  They are polynomial or rational Hermitian expressions with explicit
projective scale weights.

\item[Polarity-stratified algebra.]
Over a general field with involution there may be no order and therefore no
literal negative/positive inequality.  The replacement is a scale-stable null
predicate together with a chosen anisotropic or positive-pole predicate.  This
is the correct algebraic setting for boundary-chain incidence \(h(x,p)=0\)
and for finite Hermitian unital incidence.

\item[Analytic complex \(\CH^2\).]
When \(K=\C\) and \(h\) has signature \((2,1)\), the chosen predicates become
the familiar strata
\[
  V_-,\qquad V_0,\qquad V_+.
\]
This layer adds distance readouts, Cartan angular readouts, the Heisenberg
boundary chart, CR/contact forms, and analytic kernel formulas.

\item[Finite Hermitian specialization.]
For a quadratic finite-field extension \(K'/K\), the involution is the
Frobenius over \(K\), and the norm is \(x x^\ast\).  The same projective
Hermitian formulas give finite null points, positive-pole chains, norm-fiber
row charts, and the rank-two Hermitian unital incidence package.
\end{description}

\begin{proposition}[Layer descent]
Every invariant in the starred-field projective layer specializes to the
complex analytic and finite Hermitian layers after adding the appropriate
extra structure.  The complex layer adds ordered sign strata and CR/contact
analysis; the finite layer replaces ordered signs by scale-stable
null/anisotropic polarity strata and finite norm-fiber incidence.
\end{proposition}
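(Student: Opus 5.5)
The plan is to read the proposition as a statement about the \emph{form} of the invariants. Every quantity in the starred-field projective layer is defined as a polynomial or rational expression in the Hermitian pairings \(h(z_i,z_j)\), the involution \(\lambda\mapsto\lambda^\ast\), and the norm \(N(\lambda)=\lambda\lambda^\ast\), with explicit projective scale weights. Specialization then just means substituting a particular field with involution, \((K,\ast)=(\C,\overline{\phantom{x}})\) or \((K',\mathrm{Frob})\), into the same expressions. So the proof reduces to two things. First, the defining operations are available uniformly in every field with involution. Second, whatever extra structure each specialized layer adds is compatible with the projective scale laws already established at the algebraic level.

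The first step is to verify the uniform availability of the defining operations. The pair numerators, the Gram determinants \(h(z,z)h(w,w)-N(h(z,w))\), the triple products \(-h_{12}h_{23}h_{31}\), and the cross-ratio numerators and denominators are all built from field operations, \(\ast\), and \(h\). A field homomorphism that intertwines the involutions therefore commutes with forming them. Each such expression \(F(z_1,\dots,z_k)\) satisfies a scaling law
\[
F(\lambda_1z_1,\dots,\lambda_kz_k)=\prod_i\lambda_i^{a_i}(\lambda_i^\ast)^{b_i}\,F(z_1,\dots,z_k).
\]
This law is a formal consequence of sesquilinearity, \(h(\lambda z,\mu w)=\lambda\mu^\ast h(z,w)\), and so it holds over any \((K,\ast)\). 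Consequently the projective quotients are well defined on whatever locus their denominators are nonzero.

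The second step handles the complex layer. For \(K=\C\) with complex conjugation, \(h(z,z)\) is real, so the null predicate \(h(z,z)=0\) refines into the ordered strata \(V_-,V_0,V_+\). Sign predicates are scale-stable because \(h(\lambda z,\lambda z)=N(\lambda)h(z,z)\) with \(N(\lambda)>0\). The analytic readouts (distance, Cartan angle, CR/contact forms) are then added as functions of the already-defined algebraic invariants, composed for instance with \(\operatorname{arccosh}\) or \(\argg\), and so they do not alter the underlying layer.

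The third step handles the finite layer. Here \(N\) is the norm \(K'\to K\), which is surjective onto the base field in the finite case. The same identity \(h(\lambda z,\lambda z)=N(\lambda)h(z,z)\) makes both the null predicate and the anisotropic predicate \(h(p,p)\ne0\) (or the predicate that \(h(p,p)\) lies in a chosen norm class) scale-stable. This is exactly what is needed for the incidence relation \(h(x,p)=0\) and for counting points on norm fibers.

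I expect the main obstacle to be making precise the phrase ``after adding the appropriate extra structure'' so that the proposition becomes a checkable statement rather than a definitional remark. In particular, one must confirm that no invariant secretly uses the order on \(\mathbb R\) at the algebraic level. For example, the positivity of \(E_x\) and the real-valuedness of the pair modulus must be invoked only in the complex layer. I would handle this by auditing each definition and recording which predicate (null, anisotropic, or sign) its domain requires. This reduces the proposition to the two scale-stability identities above.
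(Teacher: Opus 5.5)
Your proposal is correct and follows essentially the same route as the paper: the invariants are built only from \(h\), the involution, and scalar operations, their scale weights are formal consequences of sesquilinearity, and adding a complex signature or a finite Frobenius norm only supplies extra interpretations (sign strata, null/anisotropic strata) without changing the projective descent laws. Your version is more explicit than the paper's, for instance in checking scale-stability of the sign and null/anisotropic predicates via \(h(\lambda z,\lambda z)=N(\lambda)h(z,z)\). One addition is needed: the triple product descends as a class modulo norm scalars rather than as a quotient, and your general scaling law gives this as well.
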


\begin{proof}
The underlying expressions are built from \(h\), the involution, scalar
multiplication, and projective rescaling.  Their scale weights are algebraic:
non-null pair quantities divide by the matching diagonal weight, while triple
phase data quotient by norm scalars.  Adding a complex signature or a finite
Frobenius norm supplies extra interpretations of the same expressions, but it
does not change their projective descent laws.
\end{proof}

\begin{definition}[Field-uniform data]
Field-uniform CH2 data consist of the following four pieces.
\[
\begin{array}{ll}
\text{A. starred projective data} &
(K,\ast,V,h),\quad h(\lambda z,\mu w)=\lambda\mu^\ast h(z,w),\\
\text{B. projective strata} &
\text{scale-stable null, non-null, and pole predicates},\\
\text{C. analytic readouts} &
\text{ordered signs, CR/contact forms, kernels, and limits over }\C,\\
\text{D. finite readouts} &
\text{Frobenius norm fibers and finite Hermitian incidence.}
\end{array}
\]
Only A and B are part of the algebraic core.  C and D are specializations:
the analytic layer adds inequalities and differential/contact readouts, while
the finite layer replaces inequalities by polarity strata and norm-fiber
counts.
\end{definition}

\begin{definition}[Shared-determinant rank-pair group]
Let
\[
  G_{3,2}
  =
  U(3)\times_{\det,U(1),\det}U(2)
  =
  \{(A,B)\in U(3)\times U(2):\det A=\det B\}.
\]
This is the compact unitary group obtained by coupling a rank-three unitary
carrier and a rank-two unitary frame by a shared determinant.  It is an
auxiliary determinant-locking group attached to the rank pair \((3,2)\), not
the holomorphic isometry group of \(\CH^2\).
\end{definition}

\begin{theorem}[Shared-determinant quotient]
There is a natural isomorphism of compact Lie groups
\[
  G_{3,2}
  \cong
  \frac{SU(3)\times SU(2)\times U(1)}{\mu_6},
\]
where \(\mu_6=\{\lambda\in U(1):\lambda^6=1\}\) is embedded by
\[
  \lambda\longmapsto
  (\lambda^{-2}I_3,\lambda^{-3}I_2,\lambda).
\]
\end{theorem}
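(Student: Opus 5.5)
The plan is to write down an explicit homomorphism from the product onto \(G_{3,2}\) and identify its kernel with the embedded \(\mu_6\). Define
\[
  \Phi:SU(3)\times SU(2)\times U(1)\longrightarrow U(3)\times U(2),
  \qquad
  \Phi(S,T,\lambda)=(\lambda^{2}S,\ \lambda^{3}T).
\]
It is a continuous homomorphism because the scalars \(\lambda^2 I_3\) and \(\lambda^3 I_2\) are central. Its image lies in \(G_{3,2}\), since \(\det(\lambda^2 S)=\lambda^6\det S=\lambda^6\) and \(\det(\lambda^3 T)=\lambda^6\det T=\lambda^6\). The exponents \(2\) and \(3\) are chosen exactly so that both determinants equal \(\lambda^6\), which is where the shared determinant condition is met.

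Next I would show that \(\Phi\) is surjective onto \(G_{3,2}\). Let \((A,B)\in G_{3,2}\) with common determinant \(\delta\in U(1)\). Choose any sixth root \(\lambda\) of \(\delta\), which exists because \(U(1)\) is divisible. Put \(S=\lambda^{-2}A\) and \(T=\lambda^{-3}B\). Then \(\det S=\lambda^{-6}\delta=1\) and \(\det T=\lambda^{-6}\delta=1\), so \(S\in SU(3)\), \(T\in SU(2)\), and \(\Phi(S,T,\lambda)=(A,B)\).

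Then I would compute the kernel. The condition \(\Phi(S,T,\lambda)=(I_3,I_2)\) forces \(S=\lambda^{-2}I_3\) and \(T=\lambda^{-3}I_2\).
\begin{itemize}
\item The requirement \(\det S=1\) gives \(\lambda^{-6}=1\).
\item The requirement \(\det T=1\) gives the same condition \(\lambda^{-6}=1\).
\item Conversely, every \(\lambda\in\mu_6\) makes \((\lambda^{-2}I_3,\lambda^{-3}I_2,\lambda)\) a kernel element.
\end{itemize}
So \(\ker\Phi\) is precisely the image of \(\mu_6\) under the stated embedding. That embedding is injective because of its last coordinate, and its image is central and finite.

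Finally comes the Lie-theoretic bookkeeping. \(G_{3,2}\) is a closed subgroup of \(U(3)\times U(2)\), being the preimage of the identity under \((A,B)\mapsto\det A\,\overline{\det B}\). It is therefore a compact Lie group. The map \(\Phi\) induces a continuous bijective homomorphism from the compact quotient \((SU(3)\times SU(2)\times U(1))/\mu_6\) onto the Hausdorff group \(G_{3,2}\). A continuous bijection from a compact space to a Hausdorff space is a homeomorphism, and a continuous homomorphism of Lie groups is smooth, so this is an isomorphism of Lie groups.

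I do not expect a real obstacle. The only step requiring care is checking that the kernel matches the stated sign convention \(\lambda\mapsto(\lambda^{-2}I_3,\lambda^{-3}I_2,\lambda)\) rather than a twisted version, and this follows from the kernel computation above. Naturality holds because \(\Phi\) involves no choices; the sixth root is used only to prove surjectivity, not to define the map.
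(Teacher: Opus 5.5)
Your proposal is correct and is essentially the paper's proof: the same map $\Phi(S,T,\lambda)=(\lambda^{2}S,\lambda^{3}T)$, surjectivity by choosing a sixth root of the common determinant, and the same kernel computation giving the embedded $\mu_6$. Your closing step is a welcome addition that the paper omits: $G_{3,2}$ is a closed subgroup, and a continuous bijection from a compact group onto a Hausdorff group is a homeomorphism, so the abstract isomorphism is an isomorphism of compact Lie groups.
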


\begin{proof}
Define
\[
  \Phi:SU(3)\times SU(2)\times U(1)\longrightarrow U(3)\times U(2)
\]
by
\[
  \Phi(U,V,\lambda)=(\lambda^2U,\lambda^3V).
\]
For the first factor,
\[
  \det(\lambda^2U)=(\lambda^2)^3\det U=\lambda^6,
\]
and for the second factor,
\[
  \det(\lambda^3V)=(\lambda^3)^2\det V=\lambda^6.
\]
Thus \(\Phi\) lands in \(G_{3,2}\).

The map is onto.  Given \((A,B)\in G_{3,2}\), write
\[
  \delta=\det A=\det B\in U(1).
\]
Choose \(\lambda\in U(1)\) with \(\lambda^6=\delta\).  Then
\[
  U=\lambda^{-2}A,\qquad V=\lambda^{-3}B
\]
satisfy
\[
  \det U=\lambda^{-6}\det A=1,\qquad
  \det V=\lambda^{-6}\det B=1.
\]
Hence \(U\in SU(3)\), \(V\in SU(2)\), and
\[
  \Phi(U,V,\lambda)=(A,B).
\]

It remains to compute the kernel.  If
\[
  \Phi(U,V,\lambda)=(I_3,I_2),
\]
then
\[
  U=\lambda^{-2}I_3,\qquad V=\lambda^{-3}I_2.
\]
The determinant conditions \(U\in SU(3)\) and \(V\in SU(2)\) give
\[
  1=\det U=(\lambda^{-2})^3=\lambda^{-6},
  \qquad
  1=\det V=(\lambda^{-3})^2=\lambda^{-6}.
\]
Thus \(\lambda^6=1\).  Conversely, if \(\lambda^6=1\), then
\[
  (\lambda^{-2}I_3,\lambda^{-3}I_2,\lambda)
\]
lies in \(SU(3)\times SU(2)\times U(1)\) and maps to
\((I_3,I_2)\).  Therefore
\[
  \ker\Phi
  =
  \{(\lambda^{-2}I_3,\lambda^{-3}I_2,\lambda):\lambda^6=1\}
  \cong \mu_6.
\]
The first isomorphism theorem gives the displayed quotient.
\end{proof}

\begin{definition}[Central residue coordinates]
Let \(A_1\in\mathbb Z/6\mathbb Z\) denote the exponent of the shared
\(\mu_6\)-coordinate in the quotient presentation of \(G_{3,2}\).  Let
\(Y_2\in\mathbb Z/2\mathbb Z\) denote the exponent of the central
\(\mu_2\)-coordinate of the rank-two frame projection.  The common
sixth-root lattice embeds the frame coordinate by
\[
  Y_2\longmapsto 3Y_2\in\mathbb Z/6\mathbb Z.
\]
The associated central residue is
\[
  Q_{3,2}
  =
  \frac{A_1}{6}+\frac{Y_2}{2}
  =
  \frac{A_1+3Y_2}{6}
  \quad\in\quad \frac{1}{6}\mathbb Z/\mathbb Z.
\]
\end{definition}

\begin{proposition}[Rank-pair residue lattice]
Let \(t\in\mathbb Z/3\mathbb Z\) be the rank-three central exponent and
let \(w\in\mathbb Z/2\mathbb Z\) be the rank-two central exponent.  In the
common \(\mu_6\)-lattice define
\[
  Y_6\equiv -2t+3w\pmod 6.
\]
If \(k=\pm1\) denotes the two frame orientations, represented in the
same lattice by \(3k\), then the four first central residues are
\[
  Q_{3,2}(t,w,k)
  =
  \frac{Y_6+3k}{6}.
\]
For the rank-three singlet class \(t=0\) and the first rank-two class
\(w=1\), this gives
\[
  0,\qquad -1.
\]
For the rank-three fundamental class \(t=1\) and \(w=1\), this gives
\[
  \frac23,\qquad -\frac13.
\]
\end{proposition}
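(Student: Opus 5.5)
The plan is to treat the proposition as a bookkeeping statement inside the common sixth-root lattice from the Shared-determinant quotient theorem. I would argue in three steps: well-definedness of the lattice coordinate, well-definedness of the residue, and then evaluation of the four listed cases.

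\textbf{Step 1: the lattice coordinate.} I first check that \(Y_6\equiv -2t+3w \pmod 6\) is well defined for \(t\in\mathbb Z/3\mathbb Z\) and \(w\in\mathbb Z/2\mathbb Z\). Changing \(t\) by \(3\) changes \(-2t\) by \(-6\), and changing \(w\) by \(2\) changes \(3w\) by \(6\). This matches the embedding \(\lambda\mapsto(\lambda^{-2}I_3,\lambda^{-3}I_2,\lambda)\): the rank-three centre sees \(\lambda^{-2}\), the rank-two centre sees \(\lambda^{3}\), up to sign convention, and \(\mathbb Z/6\cong\mathbb Z/3\times\mathbb Z/2\) by the Chinese remainder theorem.

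\textbf{Step 2: the residue.} The orientation \(k=\pm1\) enters through the embedding \(Y_2\mapsto 3Y_2\) from the Central residue coordinates definition, so \(Q_{3,2}=(Y_6+3k)/6\in\frac16\mathbb Z/\mathbb Z\). This is the formula \((A_1+3Y_2)/6\) with \(A_1=Y_6\). Since \(3k\) is only defined modulo \(6\), and \(3\cdot1\equiv3\cdot(-1)\), the two orientations agree as classes in \(\frac16\mathbb Z/\mathbb Z\).

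\textbf{Step 3: representatives and evaluation.} The four displayed values are explicit rational representatives rather than bare classes. To obtain them I will fix the balanced representative \(Y_6\in\{-3,-2,\dots,2\}\) and read \((Y_6+3k)/6\) as a rational number before reducing modulo \(\mathbb Z\).
\begin{itemize}
\item For \(t=0\), \(w=1\): \(Y_6\equiv3\), so the balanced representative is \(Y_6=-3\). Then \(k=1\) gives \(0\) and \(k=-1\) gives \(-6/6=-1\).
\item For \(t=1\), \(w=1\): \(Y_6\equiv-2+3=1\). Then \(k=1\) gives \(4/6=\tfrac23\) and \(k=-1\) gives \(-2/6=-\tfrac13\).
\end{itemize}
I will also note that in each pair the two values differ by \(1\), so each pair collapses to a single class in \(\frac16\mathbb Z/\mathbb Z\), namely \(0\) and \(\tfrac23\) respectively.

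\textbf{Main obstacle.} The arithmetic is routine; the real difficulty is the choice of representative. With the naive representative \(Y_6=3\) in the singlet case, one gets the values \(1\) and \(0\) instead of the stated \(0\) and \(-1\). So the proof must state explicitly that the convention is the balanced window \([-3,3)\), and that the statement holds on the nose for rational representatives and modulo \(\mathbb Z\) for the classes.
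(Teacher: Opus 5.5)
Your proposal is correct and follows the paper's proof. The paper also reads off \(Y_6\equiv -2t+3w \pmod 6\) from the \(\mu_3\)- and \(\mu_2\)-central exponents (scaled by \(2\) and \(3\)) in the common \(\mu_6\)-lattice, substitutes into \((Y_6+3k)/6\), and takes the representative \(Y_6=-3\) in the singlet case, exactly as your balanced window \([-3,3)\) does; your extra remarks (well-definedness of \(Y_6\), and each displayed pair differing by \(1\), hence giving a single class in \(\frac16\mathbb Z/\mathbb Z\)) make explicit a representative convention the paper uses without comment.
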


\begin{proof}
The center of \(SU(3)\) is \(\mu_3\), so its primitive exponent is
represented in the common \(\mu_6\)-lattice by multiplication by \(2\).
The center of \(SU(2)\) is \(\mu_2\), so its primitive exponent is
represented by multiplication by \(3\).  The sign convention in the
quotient embedding gives the combined central coordinate
\[
  Y_6\equiv -2t+3w\pmod 6.
\]
The two frame orientations differ by the two representatives \(k=\pm1\),
which contribute \(3k\) on the same sixth-root lattice.  Therefore
\[
  Q_{3,2}=\frac{Y_6+3k}{6}.
\]
For \(t=0,w=1\), one has \(Y_6\equiv3\pmod6\).  Using the representative
\(Y_6=-3\) gives
\[
  k=+1:\quad \frac{-3+3}{6}=0,\qquad
  k=-1:\quad \frac{-3-3}{6}=-1.
\]
For \(t=1,w=1\), one has \(Y_6\equiv -2+3=1\pmod6\), hence
\[
  k=+1:\quad \frac{1+3}{6}=\frac23,\qquad
  k=-1:\quad \frac{1-3}{6}=-\frac13.
\]
These are residues of the central quotient lattice; no metric or
analytic structure is used.
\end{proof}

\begin{proposition}[Projection quotients of the rank pair]
The shared-determinant group \(G_{3,2}\) has two natural projection
quotients:
\[
  U(3)\cong\frac{SU(3)\times U(1)}{\mu_3},
  \qquad
  U(2)\cong\frac{SU(2)\times U(1)}{\mu_2}.
\]
These two \(U(1)\)-coordinates are not independent inside \(G_{3,2}\);
they are projections of the single determinant coordinate
\[
  \det A=\det B.
\]
\end{proposition}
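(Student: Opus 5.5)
The plan has three parts: the two standard quotient presentations, the fact that the projections of \(G_{3,2}\) onto \(U(3)\) and \(U(2)\) are surjective, and a comparison of the \(U(1)\)-coordinates through the presentation of the Shared-determinant quotient theorem.

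\textbf{Step 1: the two quotient presentations.} For \(U(3)\), I will define
\[
\psi_3:SU(3)\times U(1)\to U(3),\qquad (U,\mu)\mapsto \mu U.
\]
\emph{Surjectivity.} Given \(A\in U(3)\), choose \(\mu\) with \(\mu^3=\det A\). Then \(\mu^{-1}A\in SU(3)\).

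\emph{Kernel.} If \(\mu U=I_3\), then \(U=\mu^{-1}I_3\). The condition \(\det U=1\) forces \(\mu^3=1\), so the kernel is \(\{(\mu^{-1}I_3,\mu):\mu^3=1\}\cong\mu_3\).

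For \(U(2)\), the same argument with \((V,\nu)\mapsto\nu V\) gives kernel \(\mu_2\). Both maps are continuous surjective homomorphisms of compact Lie groups, so the first isomorphism theorem gives the two displayed isomorphisms. This is the same computation as in the proof of the Shared-determinant quotient theorem, with one factor dropped.

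\textbf{Step 2: both projections are onto.} I will check that the projections \(G_{3,2}\to U(3)\) and \(G_{3,2}\to U(2)\) are surjective, so that they really are quotients. Given \(A\in U(3)\), pick \(B=\operatorname{diag}(\det A,1)\in U(2)\); then \((A,B)\in G_{3,2}\). Symmetrically, given \(B\in U(2)\), take \(A=\operatorname{diag}(\det B,1,1)\).

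\textbf{Step 3: the \(U(1)\)-coordinates are not independent.} I will use the parametrization \(\Phi(U,V,\lambda)=(\lambda^2U,\lambda^3V)\) from the Shared-determinant quotient theorem.

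Under the projection to \(U(3)\), the \(U(1)\)-coordinate is \(\mu=\lambda^2\), well defined modulo \(\mu_3\), and \(\det A=\mu^3=\lambda^6\). Under the projection to \(U(2)\), the coordinate is \(\nu=\lambda^3\), well defined modulo \(\mu_2\), and \(\det B=\nu^2=\lambda^6\).

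Both therefore factor through the single coordinate \(\lambda\), and their determinant readouts coincide with \(\det A=\det B=\lambda^6\). Concretely, the map \(G_{3,2}\to U(1)\times U(1)\), \((A,B)\mapsto(\det A,\det B)\), has image the diagonal. The pair of \(U(1)\) quotient coordinates \((\mu^3,\nu^2)\) is thus constrained to one circle rather than a torus.

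\textbf{Main obstacle.} The only real difficulty is making the informal phrase ``not independent'' precise. I will state it as the claim that the image of \(G_{3,2}\) in \(U(1)/\mu_3\times U(1)/\mu_2\cong U(1)\times U(1)\), via \(\det\times\det\), is the diagonal circle. The proof is the one-line observation that \(\det A=\det B\), which holds by the definition of \(G_{3,2}\).
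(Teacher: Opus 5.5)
Your proof is correct and follows the paper's route: the standard presentation $U(n)\cong(SU(n)\times U(1))/\mu_n$ via $(S,\lambda)\mapsto\lambda S$ with kernel $\mu_n$, followed by the observation that $\det A=\det B$ ties the two scalar coordinates together. Your additions are also correct and go beyond what the paper writes: the explicit root-extraction argument for surjectivity, the check that the projections $G_{3,2}\to U(3)$ and $G_{3,2}\to U(2)$ are onto, and the precise reading of ``not independent'' as the image of $\det\times\det$ being the diagonal circle, seen through $\Phi(U,V,\lambda)=(\lambda^2U,\lambda^3V)$.
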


\begin{proof}
For rank \(n\), the standard map
\[
  SU(n)\times U(1)\longrightarrow U(n),\qquad
  (S,\lambda)\longmapsto \lambda S
\]
is surjective.  If \(\lambda S=I_n\), then
\[
  S=\lambda^{-1}I_n,\qquad 1=\det S=\lambda^{-n}.
\]
Thus the kernel is \(\mu_n\), embedded by
\[
  \lambda\longmapsto(\lambda^{-1}I_n,\lambda),
  \qquad \lambda^n=1,
\]
and the first isomorphism theorem gives
\[
  U(n)\cong (SU(n)\times U(1))/\mu_n.
\]
Taking \(n=3\) and \(n=2\) gives the two displayed projection quotients.

Inside \(G_{3,2}\), however, the projected unitary factors are constrained by
\[
  \det A=\det B.
\]
Therefore the scalar coordinate seen in the \(U(3)\)-projection and the
scalar coordinate seen in the \(U(2)\)-projection are two readings of the
same determinant coordinate.  The independent central periods are \(\mu_3\)
and \(\mu_2\), while their common comparison lattice is \(\mu_6\).
\end{proof}

\begin{lemma}[Scalar image for \(SU(3)\)-types]
Let \(W\) be a finite-dimensional complex representation of \(SU(3)\).  The
space of scalar images of \(W\) is
\[
  \operatorname{Hom}_{SU(3)}(W,\C),
\]
where \(\C\) carries the trivial representation.  This space is nonzero
exactly on the trivial summand of \(W\).  In particular, if \(W\) is an
irreducible nontrivial \(SU(3)\)-representation, then
\[
  \operatorname{Hom}_{SU(3)}(W,\C)=0.
\]
At the infinitesimal level, the determinant character contributes no first
order scalar on \(\mathfrak{su}(3)\):
\[
  \dd(\det)_I(X)=\operatorname{tr}X=0
  \qquad (X\in\mathfrak{su}(3)).
\]
\end{lemma}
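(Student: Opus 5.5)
The plan is to use complete reducibility of finite-dimensional representations of the compact group \(SU(3)\) together with Schur's lemma. First I fix a \(SU(3)\)-invariant inner product on \(W\) by averaging any inner product over Haar measure. Then I decompose \(W\) orthogonally into irreducible summands,
\[
  W\cong W^{SU(3)}\oplus \bigoplus_{j} W_j,
\]
where \(W^{SU(3)}\) is the isotypic component of the trivial representation and each \(W_j\) is irreducible and nontrivial. Since \(\operatorname{Hom}_{SU(3)}(-,\C)\) is additive in direct sums, I will have
\[
  \operatorname{Hom}_{SU(3)}(W,\C)\cong \operatorname{Hom}_{SU(3)}(W^{SU(3)},\C)\oplus\bigoplus_j \operatorname{Hom}_{SU(3)}(W_j,\C).
\]

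The central step is the irreducible nontrivial case. Let \(f:W_j\to\C\) be equivariant. Its kernel is an invariant subspace, so by irreducibility it is either \(0\) or all of \(W_j\). If the kernel were \(0\), then \(f\) would be an isomorphism onto \(\C\), so \(W_j\) would be one-dimensional and trivial, which is a contradiction. Hence \(f=0\). This is Schur's lemma, and it gives the displayed vanishing statement directly.

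On the trivial summand, any linear functional is automatically equivariant. So the Hom space equals \((W^{SU(3)})^\ast\), which is nonzero exactly when the trivial summand is nonzero. Every equivariant functional factors through the projection onto \(W^{SU(3)}\). This is the precise meaning of ``nonzero exactly on the trivial summand.''

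For the infinitesimal claim, I will use the standard derivative of the determinant at the identity, \(\frac{d}{ds}\det(I+sX)\big|_{s=0}=\operatorname{tr}X\). One can see this by expanding \(\det(I+sX)=1+s\operatorname{tr}X+O(s^2)\), or from Jacobi's formula. Then \(\mathfrak{su}(3)=\{X:X^\ast=-X,\ \operatorname{tr}X=0\}\) gives the vanishing. Conceptually, \(\det\) is identically \(1\) on \(SU(3)\), so its differential along any curve in \(SU(3)\) is zero.

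There is no real obstacle here. The only care needed is to phrase ``nonzero exactly on the trivial summand'' precisely as the factorization through the isotypic projection. I also need to invoke unitarity, and not merely finite-dimensionality, to justify complete reducibility.
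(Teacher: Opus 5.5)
Your proposal is correct and follows essentially the same route as the paper. You use complete reducibility for the compact group $SU(3)$, and Schur's lemma to show that every equivariant functional vanishes on the nontrivial irreducible summands, so $\operatorname{Hom}_{SU(3)}(W,\C)$ is the dual of the trivial isotypic part. You then use the identity $\dd(\det)_I(X)=\operatorname{tr}X$ together with tracelessness of $\mathfrak{su}(3)$. The Haar-averaging step and the kernel form of Schur's lemma only make explicit what the paper cites as standard.
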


\begin{proof}
Since \(SU(3)\) is compact, every finite-dimensional complex representation is
completely reducible.  Decompose
\[
  W\cong \C^{\oplus m}\oplus W',
\]
where \(W'\) has no trivial irreducible summand.  By Schur's lemma, an
equivariant map from a nontrivial irreducible summand to the trivial
representation is zero, while each trivial summand contributes one scalar
projection.  Therefore
\[
  \operatorname{Hom}_{SU(3)}(W,\C)\cong \C^m.
\]
This proves the stated criterion and the irreducible nontrivial case.

For the determinant statement, let \(X\in\mathfrak{su}(3)\).  The standard
differential identity for determinant gives
\[
  \left.\frac{\dd}{\dd t}\right|_{t=0}\det(I+tX)=\operatorname{tr}X.
\]
The Lie algebra \(\mathfrak{su}(3)\) consists of traceless skew-Hermitian
matrices, hence \(\operatorname{tr}X=0\).
\end{proof}

\begin{remark}[Separation from the complex-hyperbolic isometry group]
The group \(G_{3,2}\) should not be identified with \(SU(2,1)\) or
\(PU(2,1)\).  The latter groups are the unitary and projective-unitary
symmetry groups of the Hermitian \((2,1)\)-form itself.  By contrast,
\(G_{3,2}\) is the compact shared-determinant group associated to a
rank-three carrier and a rank-two unitary frame.  Its appearance here is
purely algebraic: the same determinant and phase-locking mechanisms used in
the projective calculus also produce the central quotient by \(\mu_6\).
\end{remark}

\begin{proposition}[Primitive scale weights]
Let \(N(\lambda)=\lambda\lambda^\ast\).  In the starred projective layer,
\[
  P(z,w)=h(z,w)h(w,z),\qquad
  D(z,w)=h(z,z)h(w,w),
\]
and
\[
  \Delta_H(z,w)=D(z,w)-P(z,w)
\]
all transform under \(z\mapsto\lambda z\), \(w\mapsto\mu w\) by the same
factor \(N(\lambda)N(\mu)\).  The triple product
\[
  T_H(z_1,z_2,z_3)
  =
  -h(z_1,z_2)h(z_2,z_3)h(z_3,z_1)
\]
transforms by \(N(\lambda_1)N(\lambda_2)N(\lambda_3)\).  The cross-ratio
numerator and denominator transform by the same endpoint scalar factor.
Thus the scale laws give three projective mechanisms:
\[
\begin{array}{ll}
\text{quotient} & P/D,\quad q_H=\Delta_H/D,\\
\text{norm-scalar phase class} & [T_H]\text{ modulo norm scalars},\\
\text{paired cancellation} & X(a,b;c,d).
\end{array}
\]
\end{proposition}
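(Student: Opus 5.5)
Each claim is a direct consequence of the sesquilinearity convention \(h(\lambda z,\mu w)=\lambda\mu^\ast h(z,w)\) together with the Hermitian symmetry \(h(w,z)=h(z,w)^\ast\). The plan is to record one elementary scaling rule and then read off every assertion from it. The rule is that under \(z_i\mapsto\lambda_i z_i\) each off-diagonal entry transforms as
\[
h(z_i,z_j)\mapsto \lambda_i\lambda_j^\ast h(z_i,z_j),
\]
and each diagonal entry transforms as \(h(z_i,z_i)\mapsto N(\lambda_i)h(z_i,z_i)\).

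For the pair quantities, \(P(z,w)=h(z,w)h(w,z)\) picks up the factor \(\lambda\mu^\ast\cdot\mu\lambda^\ast\). Commutativity of \(K\) rearranges this to \(N(\lambda)N(\mu)\). The diagonal product \(D\) visibly picks up \(N(\lambda)N(\mu)\) as well. Since \(\Delta_H=D-P\) is a difference of two terms with the same weight, it inherits that weight by linearity.

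For the triple product, the three factors contribute \(\lambda_1\lambda_2^\ast\), \(\lambda_2\lambda_3^\ast\) and \(\lambda_3\lambda_1^\ast\). Regrouping these gives \(N(\lambda_1)N(\lambda_2)N(\lambda_3)\), and the sign \(-1\) plays no role. The cyclic ordering \(12,23,31\) matters here: each index appears once unstarred and once starred, and this is exactly what makes the product a norm rather than a phase.

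For the cross-ratio, I will use the Kor\'anyi--Reimann form
\[
X(a,b;c,d)=\frac{h(c,a)h(d,b)}{h(d,a)h(c,b)}.
\]
Each endpoint appears exactly once in the numerator and once in the denominator, in the same slot position. Hence numerator and denominator both acquire the common factor \(\lambda_a^\ast\lambda_b^\ast\lambda_c\lambda_d\), which cancels in the quotient. The main obstacle is that the cross-ratio has not yet been defined in the paper, so the argument is stated with respect to this standard convention. Any convention in which each of \(a,b,c,d\) occurs once in the numerator and once in the denominator, in the same slot, gives the same conclusion.

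Finally, the three mechanisms are read off as follows. \(P/D\) and \(q_H=\Delta_H/D\) are ratios of equal weight, so they are scale-invariant wherever \(D\ne0\), which descends them to the non-null strata. \(T_H\) changes only by elements of the norm group \(N(K^\times)\), so its class modulo norm scalars is well defined. Over \(\C\) the norms are the positive reals, so this class is the phase \(\arg T_H\). \(X\) is invariant by the paired cancellation just described.
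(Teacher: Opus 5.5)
Your proposal is correct and takes essentially the same route as the paper: every claim is read off from the sesquilinear scaling rule \(h(\lambda z,\mu w)=\lambda\mu^\ast h(z,w)\). In \(T_H\) each index appears once plain and once starred, and in the cross-ratio each endpoint contributes the same factor to numerator and denominator; your explicit convention \(X(a,b;c,d)=h(c,a)h(d,b)/(h(d,a)h(c,b))\) is exactly the definition the paper adopts later.
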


\begin{proof}
The identities follow directly from Hermitian scaling:
\[
  h(\lambda z,\mu w)=\lambda\mu^\ast h(z,w).
\]
For instance,
\[
  P(\lambda z,\mu w)
  =
  \lambda\mu^\ast h(z,w)\,\mu\lambda^\ast h(w,z)
  =
  N(\lambda)N(\mu)P(z,w),
\]
and the same factor multiplies \(D\), hence also \(\Delta_H=D-P\).  The
triple product contains each scalar once and each conjugate scalar once.  In
the cross-ratio expression, the four endpoint factors multiplying the
numerator are exactly the four endpoint factors multiplying the denominator.
\end{proof}

\begin{definition}[Reusable stratum-domain hypotheses]
Write
\[
\begin{array}{ll}
  \mathsf H_- & =\text{internal/non-null point stratum},\\
  \mathsf H_0 & =\text{null boundary stratum},\\
  \mathsf H_+ & =\text{external pole stratum}.
\end{array}
\]
Over \(\C\), these are \(\PH(V_-)\), \(\PH(V_0)\), and \(\PH(V_+)\).  Over a
field with involution, \(\mathsf H_0\) is the scale-stable null stratum and
\(\mathsf H_+\) is the chosen anisotropic pole stratum.  The reusable domain
predicates are:
\[
\begin{array}{ll}
\operatorname{NonNullPair}(A,B) & A,B\ne\mathsf H_0,\\
\operatorname{BoundaryPair}(A,B) & A=B=\mathsf H_0,\\
\operatorname{InteriorBoundary}(A,B) & \{A,B\}=\{\mathsf H_-,\mathsf H_0\},\\
\operatorname{InteriorPole}(A,B) & \{A,B\}=\{\mathsf H_-,\mathsf H_+\},\\
\operatorname{BoundaryPole}(A,B) & \{A,B\}=\{\mathsf H_0,\mathsf H_+\},\\
\operatorname{PolePair}(A,B) & A=B=\mathsf H_+.
\end{array}
\]
The symbol \(\mathsf H_-\) is analytic unless a model supplies an algebraic
non-null point stratum; the pole and boundary predicates are the field-uniform
ones used by the finite Hermitian theorem.
\end{definition}

The resulting interaction table is:
\begin{center}
\renewcommand{\arraystretch}{1.25}
\begin{tabular}{p{0.17\textwidth}p{0.35\textwidth}p{0.38\textwidth}}
Domain & Legal invariant & Geometric readout \\
\hline
\(\PH(V_-)\times\PH(V_-)\) &
\(\eta_H\), \(q_H\), and \(\Delta_H\) &
interior pair invariant; distance is an analytic readout of \(\eta_H\) \\
\(\PH(V_-)\times\PH(V_0)\) &
boundary pairing \(h(z,\xi)\) and normalized boundary kernels &
interior-to-boundary kernel; \(q_H\) is undefined because \(h(\xi,\xi)=0\) \\
\(\PH(V_-)\times\PH(V_+)\) &
non-null pair invariant and incidence \(h(z,p)=0\) &
position relative to the geodesic polar to \(p\) \\
\(\PH(V_0)\times\PH(V_0)\) &
boundary pairing, boundary cross-ratio, triple product, contact kernel &
CR/contact boundary calculus; distinct null points are nonorthogonal in
\(\CH^2\) \\
\(\PH(V_0)\times\PH(V_+)\) &
chain incidence \(h(\xi,p)=0\), chain kernels, chain cross-ratio readouts &
boundary point on or off the chain polar to \(p\) \\
\(\PH(V_+)\times\PH(V_+)\) &
polar spread and positive-pole pair invariant &
relative position of complex geodesics or boundary chains
\end{tabular}
\end{center}

The denominator rule is therefore simple: \(\eta_H\) and \(q_H\) are defined
only when both diagonal entries are nonzero.  As soon as a null point appears,
the calculus switches to boundary primitives: Hermitian pairings, cross
ratios, triple products, CR/contact density, and chain incidence.

\begin{theorem}[Domain decision and primitive routing]
For any pair of projective strata \(A,B\in\{\mathsf H_-,\mathsf H_0,\mathsf
H_+\}\), the non-null pair invariant \(\eta_H\), and hence \(q_H\), is legal
if and only if neither entry is \(\mathsf H_0\).  If at least one entry is
\(\mathsf H_0\), every legal readout must be one of the boundary primitives:
Hermitian boundary pairing, boundary cross-ratio, boundary triple product,
CR/contact density, interior-boundary kernel, or chain incidence.

More explicitly:
\[
\begin{array}{c|c}
\text{domain} & \text{primitive route}\\ \hline
\mathsf H_-\mathsf H_- & \eta_H,q_H,\Delta_H\\
\mathsf H_-\mathsf H_0 & h(z,\xi)\text{ and boundary/Poisson kernels}\\
\mathsf H_-\mathsf H_+ & \eta_H,q_H,\Delta_H\text{ plus }h(z,p)=0\\
\mathsf H_0\mathsf H_0 & h(\xi,\eta),X,T_H,\text{CR/contact data}\\
\mathsf H_0\mathsf H_+ & h(\xi,p)=0\text{ and chain readouts}\\
\mathsf H_+\mathsf H_+ & S_H,\eta_H,q_H,\Delta_H\text{ on poles.}
\end{array}
\]
\end{theorem}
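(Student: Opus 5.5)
The argument is a case analysis driven by the primitive scale weights proposition. The first step is to fix what ``legal'' means. A readout on a pair of strata is legal when it is well defined on projective classes: it is either invariant under \(z\mapsto\lambda z\), \(w\mapsto\mu w\), or it is a scale-stable predicate or class, such as vanishing, a phase class modulo norm scalars, or a paired-cancellation quotient. By the primitive scale weights proposition, \(P\), \(D\) and \(\Delta_H\) all scale by \(N(\lambda)N(\mu)\). Hence \(\eta_H=P/D\) and \(q_H=\Delta_H/D\) descend to projective classes exactly when \(D(z,w)=h(z,z)h(w,w)\neq0\).

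The ``if and only if'' then follows from the stratum definitions. On \(\mathsf H_\pm\) the diagonal entry is nonzero: over \(\C\) by the sign conditions, and over a field with involution because the chosen stratum is anisotropic. So \(D\neq0\) precisely when neither entry lies in \(\mathsf H_0\). Conversely, if one entry is null then \(D=0\) identically on that stratum. The quotient \(P/D\) is then not a defined element of \(K\). It also has no scale-stable extension: for an interior-boundary pair \(P=|h(z,\xi)|^2\) is generically nonzero, so the quotient is infinite. For distinct null points, \(h(\xi,\eta)\neq0\) since distinct null lines are nonorthogonal in signature \((2,1)\), so again the quotient is infinite.

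The second half of the statement needs care, because ``every legal readout must be a boundary primitive'' is a classification claim. I would prove it relative to the paper's primitive algebra. Every readout in the calculus is a rational expression in the entries \(h(z_i,z_j)\), the involution, and scalar operations. Once the invariants built on \(D\) are excluded, the remaining scale-covariant building blocks on a pair containing a null entry are the following:
\begin{itemize}
\item the off-diagonal pairing \(h(z,\xi)\), whose vanishing is a scale-stable predicate; this gives chain incidence when the other entry is a pole, and normalized kernels when the other entry is interior, with the normalization using only the non-null entry's diagonal;
\item the triple product \(T_H\) as a norm-scalar class;
\item the cross-ratio \(X\) via paired cancellation;
\item the CR/contact forms, which over \(\C\) are the analytic readouts of these same pairings in the Heisenberg chart.
\end{itemize}
I would then check each row of the table by reading off which of these are defined. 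The \(\mathsf H_-\mathsf H_+\) row also adds the incidence \(h(z,p)=0\) to \(\eta_H\), \(q_H\), \(\Delta_H\). The \(\mathsf H_+\mathsf H_+\) row identifies the polar spread \(S_H\) as the same pair calculus applied to poles, so it is legal by the first part.

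The main obstacle is making ``every legal readout'' precise; as stated it is not a mathematical theorem until the class of admissible readouts is fixed. I would first restrict to rational functions of the Gram entries that are bihomogeneous of weight zero in each argument. Under that restriction, a weight-zero combination in a null argument \(\xi\) cannot use \(h(\xi,\xi)\), which vanishes. It must therefore pair every \(h(\cdot,\xi)\) factor with an \(h(\xi,\cdot)\) factor and cancel the resulting \(N(\mu)\) against another such pair. Balancing in this way produces exactly kernel normalizations, triple products, and cross-ratios. If this monomial-balancing argument becomes delicate, I would fall back to stating the routing as the paper's definitional convention and proving only the sharp ``if and only if'' for \(\eta_H\) and \(q_H\).
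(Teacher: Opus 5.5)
Your proof of the ``if and only if'' is the paper's proof. The paper observes that the denominator \(h(z,z)h(w,w)\) of \(\eta_H\) is nonzero exactly when both entries are non-null. It then notes that the remaining table entries (raw pairings, triple products, cross-ratios with nonzero denominator, incidence equations, contact forms in the boundary chart, pole-pair invariants) are expressions whose hypotheses avoid the null diagonal. It does not prove the ``every legal readout'' clause as a classification; it treats the table as a routing convention. That is exactly your fallback, so that version of your proposal is correct and agrees with the paper.

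One small correction: since \(P\) and \(D\) scale by the same factor, \([P:D]\in\mathbb P^1\) \emph{is} scale-stable. It is the constant \(\infty\) on interior--boundary pairs and on distinct null pairs. (On interior--boundary pairs \(h(z,\xi)\) never vanishes, not merely generically, because \(z^\perp\) is positive definite.) It is the indeterminate \([0:0]\) on incident boundary--pole or coincident null pairs. Your ``no scale-stable extension'' remark should simply say that the quotient is not an element of \(K\), which is all the theorem needs.

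The monomial-balancing classification you offer as the main route would not work as sketched. For a Laurent monomial \(\prod h(x_i,x_j)^{e_{ij}}\), invariance under \(x_i\mapsto\lambda_ix_i\) over \(\C\) forces every row sum and every column sum of \((e_{ij})\) to vanish separately, because \(\lambda^m\overline{\lambda}^{\,n}\equiv1\) forces \(m=n=0\). So balancing is not ``pair each \(h(\cdot,\xi)\) with an \(h(\xi,\cdot)\)''. The cross-ratio cancels \(h(c,a)\) against \(h(d,a)\), and \(T_H\) itself has weight \(N(\lambda_1)N(\lambda_2)N(\lambda_3)\ne1\). Only weight-zero combinations such as \(T_H/\overline{T_H}\) (or \(T_H/(a_1a_2a_3)\) on non-null triples) enter your class.

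More seriously, on a two-entry domain containing a null entry, rescaling acts transitively on the Gram data up to whether the off-diagonal pairing vanishes. So the only scale-invariant pair readouts are constants and incidence predicates. Everything the table lists falls outside the weight-zero rational class you restrict to:
\begin{itemize}
\item \(h(z,\xi)\) and \(h(\xi,p)\) have nonzero weight;
\item Poisson-type kernels need a basepoint or a fixed representative;
\item \(X\) and \(T_H\) need three or four entries;
\item contact forms are not rational in the Gram entries.
\end{itemize}
That restriction also contradicts your own opening definition of ``legal'', which admitted predicates and norm-scalar classes.

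A correct weight count therefore does not ``produce exactly kernel normalizations, triple products, and cross-ratios''. It proves only that pair readouts with a null entry reduce to incidence. Matching the table would require enlarging the admissible class to weighted covariants and multi-point data, and then the argument collapses to the entry-by-entry check the paper performs. Present the fallback rather than the balancing argument.
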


\begin{proof}
The denominator of \(\eta_H\) is \(h(z,z)h(w,w)\).  It is nonzero exactly
when both entries are non-null.  A null entry makes this denominator vanish,
so the quotient defining \(\eta_H\) and \(q_H\) is not part of the legal
projective calculus.
The remaining entries in the table are polynomial or quotient expressions
whose stated hypotheses avoid the vanished null diagonal: raw Hermitian
pairings, triple products, cross-ratios with nonzero denominator, incidence
equations, contact forms in the analytic boundary chart, and pole-pair
invariants on non-null pole representatives.
\end{proof}

\begin{proposition}[Orthogonality readout by domain]
The equation \(h(u,v)=0\) has a domain-dependent interpretation:
\[
\begin{array}{c|c}
\text{domain} & \text{readout}\\ \hline
\mathsf H_-\mathsf H_- \text{ or non-null/non-null} & q_H=1\\
\mathsf H_-\mathsf H_+ & \text{the internal point lies on the polar geodesic}\\
\mathsf H_0\mathsf H_+ & \text{the boundary point lies on the polar chain}\\
\mathsf H_0\mathsf H_0 & \text{distinct null points are nonorthogonal in CH2}\\
\mathsf H_+\mathsf H_+ & \text{a pole-level relation between chains.}
\end{array}
\]
Thus orthogonality is not a single geometric sentence; it is an algebraic
equation routed through the appropriate stratum domain.
\end{proposition}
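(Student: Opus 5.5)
The plan is to treat each row of the table as a separate case, deciding it by the signs of the diagonal entries \(h(u,u)\), \(h(v,v)\) and by the definitions of pole, polar geodesic and polar chain. Throughout I use only the scale laws of the Primitive scale weights proposition. They make \(h(u,v)=0\) a projectively well-defined condition, since it is multiplied by \(\lambda\mu^\ast\) under rescaling.

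\textbf{Non-null/non-null row.} Here \(D(u,v)=h(u,u)h(v,v)\ne0\), so \(q_H=\Delta_H/D=1-P/D\) is legal by the Domain decision theorem. Since \(P(u,v)=h(u,v)h(v,u)=N(h(u,v))\) and \(N\) is anisotropic (\(N(\lambda)=0\) iff \(\lambda=0\) for a field with involution), we get \(h(u,v)=0\iff P=0\iff q_H=1\).

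\textbf{Mixed rows with a pole.} For \(\mathsf H_-\mathsf H_+\) and \(\mathsf H_0\mathsf H_+\) the argument is definitional. The polar complex geodesic of a positive \(p\) is \(\PH(p^\perp\cap V_-)\), and the polar chain is \(\PH(p^\perp\cap V_0)\). So \(h(z,p)=0\) with \(z\) negative says exactly that \([z]\in L_p\), and \(h(\xi,p)=0\) with \(\xi\) null says exactly that \([\xi]\in\mathcal C_p\). The \(\mathsf H_+\mathsf H_+\) row is also a reformulation: \(h(p,q)=0\) is an equation on the pole data, to be read through the polar spread \(S_H\). By the non-null computation above, it is equivalent to the polar-spread value \(1\).

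\textbf{Boundary/boundary row (main obstacle).} This is the only row needing a genuine argument: I must show that distinct null \(\xi,\eta\) satisfy \(h(\xi,\eta)\ne0\). Suppose \(h(\xi,\eta)=0\). Then \(W=\Span(\xi,\eta)\) is two-dimensional because the lines are distinct, and \(h|_W\equiv0\) by sesquilinearity. So \(W\) would be a two-dimensional totally isotropic subspace. But signature \((2,1)\) forces the maximal isotropic dimension to be \(\min(2,1)=1\). Concretely, \(W\) meets the two-dimensional positive-definite subspace \(E_x\) from the Local positive frame nontrivially by dimension count (\(2+2>3\)), and a nonzero vector of \(E_x\) has \(h>0\), a contradiction.

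Over a general field with involution this step needs care: the rank-three nondegenerate Hermitian form should have Witt index \(1\), which holds in the finite unital setting. I will therefore state this row for the analytic \(\C\) layer, as the word ``CH2'' indicates, and only remark on the finite case.
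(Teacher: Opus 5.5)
Your proposal is correct and follows the same case-by-case routing as the paper. The non-null row goes through \(\eta_H=0\), the two pole rows follow from the definitions of \(L_p\) and \(\mathcal C_p\), the pole-pair row is the non-null calculus applied to poles, and the boundary row rules out a totally isotropic two-plane. Your only additions are valid: you prove the converse \(q_H=1\Rightarrow h(u,v)=0\) via anisotropy of \(N\), and you replace the paper's bare citation of Witt index one with an explicit dimension count against the positive-definite \(E_x\).
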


\begin{proof}
On the non-null pair domain, \(h(u,v)=0\) gives
\[
  \eta_H([u],[v])=0,\qquad q_H([u],[v])=1.
\]
For an internal point and a positive pole, \(h(u,p)=0\) is exactly the
definition of membership in the complex geodesic \(\PH(p^\perp\cap V_-)\).
For a null point and positive pole, the same equation is membership in the
boundary chain \(\PH(p^\perp\cap V_0)\).  Two distinct null lines in
\(\CH^2\) cannot be orthogonal: otherwise they would span a totally isotropic
two-plane, contradicting Witt index one for a \((2,1)\)-signature Hermitian
space.  The pole-pair case is the dual non-null pair calculus applied to
positive representatives.
\end{proof}

\section{Pair invariant and Hermitian quadrance}

\begin{definition}[Hermitian pair invariant]
For non-null projective points \([z]\) and \([w]\), define
\[
  \eta_H([z],[w])
  =
  \frac{|h(z,w)|^2}{h(z,z)h(w,w)}.
\]
The associated Hermitian quadrance is
\[
  q_H([z],[w])=1-\eta_H([z],[w]).
\]
\end{definition}

The quantity \(\eta_H\) is the standard Hermitian kernel appearing in the
Bergman distance formula.  Here it is repurposed as an algebraic projective
coordinate; the added calculus is the determinant complement \(q_H\), its
projective descent laws, and the null-join witnesses below.

\begin{proposition}[Scale invariance]
For nonzero \(\lambda,\mu\in\C\),
\[
  \eta_H([\lambda z],[\mu w])=\eta_H([z],[w]),
  \qquad
  q_H([\lambda z],[\mu w])=q_H([z],[w]).
\]
\end{proposition}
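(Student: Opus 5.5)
The plan is to reduce the statement to the primitive scale-weight computation in the Proposition on primitive scale weights, specialized to \(K=\C\) with complex conjugation as the involution, so that \(N(\lambda)=|\lambda|^2\). The key observation is that \(\eta_H\) is the quotient \(P/D\), where
\[
  P(z,w)=h(z,w)h(w,z)=|h(z,w)|^2,\qquad D(z,w)=h(z,z)h(w,w).
\]
The identity \(P=|h(z,w)|^2\) comes from Hermitian symmetry \(h(w,z)=\overline{h(z,w)}\).

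First, I will record the direct computation. Sesquilinearity, \(h(\lambda z,\mu w)=\lambda\overline{\mu}\,h(z,w)\), gives
\[
  |h(\lambda z,\mu w)|^2=|\lambda|^2|\mu|^2|h(z,w)|^2 .
\]
Likewise \(h(\lambda z,\lambda z)=|\lambda|^2h(z,z)\) and \(h(\mu w,\mu w)=|\mu|^2h(w,w)\). Hence numerator and denominator both acquire the factor \(|\lambda|^2|\mu|^2\), exactly as in the scale-weight proposition.

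Second, I will check that the quotient is legal after rescaling. Because \(\lambda,\mu\neq0\), the factor \(|\lambda|^2|\mu|^2\) is a nonzero positive real. The denominator \(h(\lambda z,\lambda z)h(\mu w,\mu w)\) is therefore nonzero whenever \(h(z,z)h(w,w)\) is, which holds by the non-null hypothesis in the definition of \(\eta_H\). So the common factor cancels and \(\eta_H([\lambda z],[\mu w])=\eta_H([z],[w])\).

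Finally, \(q_H=1-\eta_H\) is invariant as an immediate consequence. Equivalently, \(q_H=\Delta_H/D\) with \(\Delta_H=D-P\) carrying the same weight. This also shows that \(\eta_H\) and \(q_H\) are well defined on projective points, independent of the chosen representatives. There is no real obstacle here. The only point needing care is to note explicitly that the cancelled factor is nonzero and that the stratum (nullity) of each line is scale-stable, so the domain hypothesis survives rescaling.
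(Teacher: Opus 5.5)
Your proposal is correct and matches the paper's argument: sesquilinear scaling multiplies the numerator \(|h(z,w)|^2\) by \(|\lambda|^2|\mu|^2\) and the diagonal entries by \(|\lambda|^2\) and \(|\mu|^2\), so the quotient, and hence \(q_H=1-\eta_H\), is unchanged. Your extra remarks that the cancelled factor is nonzero and that non-nullity survives rescaling are small refinements the paper leaves implicit.
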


\begin{proof}
By Hermitian scaling,
\[
  h(\lambda z,\mu w)=\lambda\overline{\mu}\,h(z,w).
\]
The numerator is multiplied by \(|\lambda|^2|\mu|^2\), while the two diagonal
terms are multiplied by \(|\lambda|^2\) and \(|\mu|^2\).  The quotient is
therefore unchanged, and so is \(1-\eta_H\).
\end{proof}

\begin{proposition}[Gram determinant form]
Let
\[
  \Delta_H(z,w)
  =
  h(z,z)h(w,w)-|h(z,w)|^2.
\]
Then
\[
  q_H([z],[w])
  =
  \frac{\Delta_H(z,w)}{h(z,z)h(w,w)}.
\]
\end{proposition}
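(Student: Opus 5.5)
The plan is to unwind the definitions of \(q_H\) and \(\eta_H\) and place everything over the common denominator \(h(z,z)h(w,w)\). This denominator is nonzero because \([z]\) and \([w]\) are non-null projective points, as required for \(\eta_H\) to be defined at all (compare the domain decision theorem above).

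By definition,
\[
  q_H([z],[w])=1-\eta_H([z],[w])=1-\frac{|h(z,w)|^2}{h(z,z)h(w,w)}.
\]
Writing \(1=h(z,z)h(w,w)/\bigl(h(z,z)h(w,w)\bigr)\) and combining the two fractions gives
\[
  q_H([z],[w])=\frac{h(z,z)h(w,w)-|h(z,w)|^2}{h(z,z)h(w,w)}=\frac{\Delta_H(z,w)}{h(z,z)h(w,w)},
\]
which is the claimed formula.

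For consistency with the starred-field notation of the primitive scale weights proposition, I will also record that \(|h(z,w)|^2=h(z,w)\overline{h(z,w)}=h(z,w)h(w,z)\). This follows from Hermitian symmetry \(h(w,z)=\overline{h(z,w)}\). Hence \(\Delta_H=D-P\) in that notation, and the identity reads \(q_H=\Delta_H/D\).

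Finally, I will note that the right-hand side is itself projectively well defined. By the primitive scale weights proposition, \(\Delta_H\) and \(D\) both scale by \(|\lambda|^2|\mu|^2\), so their quotient is invariant. This agrees with the scale invariance proposition and means the identity descends to projective points.

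There is no real obstacle here. The only point needing care is to state explicitly the non-null hypothesis that makes the denominator nonzero, together with the Hermitian symmetry used to rewrite \(|h(z,w)|^2\).
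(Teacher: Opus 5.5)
Your proposal is correct and follows the paper's proof: both use the non-null hypothesis to put \(1-\eta_H\) over the common denominator \(h(z,z)h(w,w)\), and both rewrite \(|h(z,w)|^2=h(z,w)h(w,z)\) via Hermitian symmetry. Your closing remark on the projective descent of \(\Delta_H/D\) is correct, though the identity itself does not need it.
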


\begin{proof}
On the non-null pair domain \(h(z,z)h(w,w)\ne0\), so the defining quotient
for \(q_H\) can be put over a common denominator.  Since
\(|h(z,w)|^2=h(z,w)h(w,z)\), one obtains
\[
\begin{aligned}
q_H([z],[w])
&=1-\frac{|h(z,w)|^2}{h(z,z)h(w,w)}\\
&=\frac{h(z,z)h(w,w)-|h(z,w)|^2}{h(z,z)h(w,w)}
 =\frac{\Delta_H(z,w)}{h(z,z)h(w,w)}.
\end{aligned}
\]
\end{proof}

\begin{corollary}[Perpendicular value and null-join condition]
If \(h(z,w)=0\), then \(q_H([z],[w])=1\).  If \(z,w\) are non-null, then
\[
  q_H([z],[w])=0
  \quad\Longleftrightarrow\quad
  \Delta_H(z,w)=0.
\]
For distinct projective points this is the algebraic condition that the
Hermitian form restricted to the projective join has a null radical direction.
\end{corollary}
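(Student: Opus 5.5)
The first two claims follow from the Gram determinant form proposition. The third, about the null radical on the join, needs a short Gram-matrix argument, and that is where the substance lies.

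\emph{Perpendicular value.} If \(h(z,w)=0\), then \(|h(z,w)|^2=0\). Hence \(\eta_H([z],[w])=0\) and \(q_H=1-\eta_H=1\). This uses only the definition, on the non-null domain where the quotient is legal.

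\emph{Vanishing of \(q_H\).} For non-null \(z,w\) the denominator \(h(z,z)h(w,w)\) is nonzero. The Gram determinant form \(q_H=\Delta_H/(h(z,z)h(w,w))\) then shows that \(q_H=0\) exactly when \(\Delta_H(z,w)=0\).

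\emph{Null radical on the join.} Take \([z]\ne[w]\), so \(z,w\) are linearly independent and the join is the two-plane \(W=\Span(z,w)\). In this basis the Gram matrix of \(h|_W\) is
\[
  G=\begin{pmatrix} h(z,z) & h(z,w)\\ h(w,z) & h(w,w)\end{pmatrix},
\]
and \(\det G=h(z,z)h(w,w)-h(z,w)h(w,z)=\Delta_H(z,w)\). So \(\Delta_H=0\) if and only if \(G\) is singular. That in turn holds if and only if some nonzero \(u=az+bw\) satisfies \(h(u,z)=h(u,w)=0\), i.e.\ \(u\) lies in the radical of \(h|_W\).

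It remains to see that such a \(u\) is null: \(h(u,u)=a\,h(z,u)+b\,h(w,u)=0\), using the conjugate symmetry \(h(z,u)=h(u,z)^\ast=0\) and \(h(w,u)=h(u,w)^\ast=0\). Moreover \(G\neq 0\) because its diagonal entries are nonzero, so \(G\) has rank one. The radical is therefore exactly one complex line \([u]\), a null radical direction of the join.

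I expect this last step to be the only real obstacle, and it is mild. One must check that the radical vector is genuinely null and spans a single line rather than the whole plane, and the rank-one observation settles this. Over \(\C\) one can add that \(W\) is then degenerate, meeting \(\partial\CH^2\) tangentially. I would not need that refinement for the stated equivalence.
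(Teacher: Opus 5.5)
Your proof is correct and follows the paper's argument essentially step for step. Both get $q_H=1$ from $\eta_H=0$ and the vanishing criterion from the Gram determinant form, and both obtain the null radical from singularity of the restricted Gram matrix, reading off $h(u,u)=0$ from the radical condition; your extra observation that nonzero diagonal entries force $G$ to have rank one, so the radical is a single line, is a small refinement the paper does not state.
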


\begin{proof}
If \(h(z,w)=0\), then \(\eta_H([z],[w])=0\), so \(q_H([z],[w])=1\).  For
non-null \(z,w\), the denominator \(h(z,z)h(w,w)\) is nonzero, so the Gram
determinant formula gives
\[
  q_H([z],[w])=0
  \quad\Longleftrightarrow\quad
  \Delta_H(z,w)=0.
\]
The last statement is the usual determinant criterion: the restriction of
\(h\) to the span of \(z,w\) has Gram matrix
\[
  \begin{pmatrix}
  h(z,z) & h(z,w)\\
  h(w,z) & h(w,w)
  \end{pmatrix},
\]
whose determinant is \(\Delta_H(z,w)\).  If \([z]\ne[w]\), then \(z,w\)
are linearly independent and this span is two-dimensional.  Vanishing of the
determinant is exactly singularity of the restricted Gram matrix, so there is
a nonzero vector
\[
  x=\alpha z+\beta w
\]
in the radical of the restricted form.  In particular \(h(x,y)=0\) for every
\(y\in\operatorname{span}\{z,w\}\), and taking \(y=x\) gives
\(h(x,x)=0\).  Thus the projective join contains the null direction \([x]\).
Conversely, if the restricted form has a nonzero radical vector, then its
Gram matrix is singular, so the determinant \(\Delta_H(z,w)\) vanishes.
\end{proof}

\begin{proposition}[Null-join witness]
Let \(z,w\) be non-null representatives with \(a=h(z,z)\ne0\), and set
\[
  v=a\,w-h(w,z)\,z.
\]
Then
\[
  h(v,v)=a\,\Delta_H(z,w).
\]
Consequently, if \(\Delta_H(z,w)=0\), then \(v\) is null.  If moreover
\([z]\ne[w]\), then \(v\ne0\), so
\([v]\in\partial\CH^2\) lies on the projective join of \([z]\) and \([w]\).
Equivalently, on the non-null pair domain, a zero witness would force the two
standard Siegel representatives to have been projectively equivalent already.
\end{proposition}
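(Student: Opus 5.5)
The plan is to expand \(h(v,v)\) by sesquilinearity and then handle the two consequences separately. Write \(a=h(z,z)\), \(b=h(w,w)\) and \(c=h(w,z)\), so that \(h(z,w)=\overline{c}\). Note that \(a\) is real, since \(h\) is Hermitian. Expanding
\[
  h(aw-cz,\,aw-cz)=a\overline a\,b-a\overline c\,h(w,z)-c\overline a\,h(z,w)+c\overline c\,a
\]
and using \(\overline a=a\), the four terms become \(a^2b\), \(-a|c|^2\), \(-a|c|^2\) and \(a|c|^2\). Their sum is \(a(ab-|c|^2)=a\,\Delta_H(z,w)\), which is the claimed identity.

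The consequences follow directly. If \(\Delta_H(z,w)=0\), then \(h(v,v)=0\).

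For nonvanishing of \(v\), suppose \([z]\ne[w]\). Then \(z,w\) are linearly independent, and the coefficient of \(w\) in \(v=aw-cz\) is \(a\ne0\), so \(v\ne0\). Hence \(v\in V_0\), and \([v]\in\PH(V_0)=\partial\CH^2\). Since \(v\in\operatorname{span}\{z,w\}\), the point \([v]\) lies on the projective join. This is consistent with the earlier Corollary (null-join condition), and in fact makes its radical vector explicit.

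For the final ``equivalently'' sentence, I read it as the contrapositive on the non-null pair domain. A zero witness \(v=0\) means \(aw=cz\), so \(w\) is a multiple of \(z\) (because \(a\ne0\)), that is \([z]=[w]\). The reference to ``standard Siegel representatives'' I would treat as the normalization in which representatives are scaled so that \(a\) is fixed. With that normalization, \(v=0\) forces the normalized representatives to agree up to the projective scalar.

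The only delicate point is the interpretation of that last sentence, since the Siegel normalization has not yet been defined in the paper. I would state it as \(v=0\Rightarrow [z]=[w]\) and note that this is independent of the choice of normalization.
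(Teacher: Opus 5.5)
Your proposal is correct and matches the paper's proof. It uses the same sesquilinear expansion of \(h(aw-h(w,z)z,\,aw-h(w,z)z)\) with \(\overline a=a\), and the same observation that \(v=0\) forces \(w=(h(w,z)/a)\,z\), i.e.\ \([w]=[z]\), which is also how the paper reads the final ``equivalently'' sentence. The one cosmetic difference is that you get \(v\ne0\) directly from linear independence of \(z,w\), while the paper argues by contrapositive and notes explicitly that the scalar \(h(w,z)/a\) is nonzero because \(w\ne0\); you use that fact only implicitly when concluding \([z]=[w]\) from \(v=0\).
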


\begin{proof}
Using Hermitian linearity in the first argument and conjugate-linearity in the
second, and writing \(a=h(z,z)\), one has
\[
\begin{aligned}
h(v,v)
&=h(a w-h(w,z)z,\,a w-h(w,z)z)\\
&=a\overline a\,h(w,w)
  -a\,h(z,w)h(w,z)
  -h(w,z)\overline a\,h(z,w)
  +h(w,z)h(z,w)h(z,z).
\end{aligned}
\]
The diagonal value \(a\) is real because \(h(z,z)=\overline{h(z,z)}\), hence
\(\overline a=a\).  Substituting \(h(z,z)=a\) and collecting the three
\(h(z,w)h(w,z)\) terms gives
\[
\begin{aligned}
h(v,v)
&=a^2h(w,w)-a\,h(z,w)h(w,z)\\
&=a\bigl(a\,h(w,w)-h(z,w)h(w,z)\bigr).
\end{aligned}
\]
Finally \(h(z,w)h(w,z)=|h(z,w)|^2\), so
\[
  h(v,v)
  =
  a\bigl(h(z,z)h(w,w)-|h(z,w)|^2\bigr)
  =
  a\,\Delta_H(z,w).
\]
If \(\Delta_H(z,w)=0\), this identity gives \(h(v,v)=0\), so any nonzero
\(v\) is a null vector.  Since \(v=a w-h(w,z)z\), it lies in
\(\operatorname{span}\{z,w\}\), hence its projective class lies on the
projective join of \([z]\) and \([w]\).  It remains only to check
nonvanishing for distinct projective points.  If \(v=0\), then
\(a w=h(w,z)z\), hence
\[
  w=\frac{h(w,z)}{a}\,z.
\]
Since \(w\) is non-null, the scalar \(h(w,z)/a\) cannot be zero.  Thus
\([w]=[z]\).  The contrapositive gives the stated nonvanishing for distinct
projective points.
\end{proof}

\section{Polar spread and complex geodesics}

Positive polar vectors and their complex geodesics are standard
complex-hyperbolic geometry.  The point here is to reuse the same determinant
pair calculus on the dual positive-pole data, producing a spread-like
algebraic readout rather than a new class of geodesics.

\begin{definition}[Polar hyperplane and complex geodesic]
For a positive vector \(p\in V_+\), define
\[
  p^\perp=\{z\in V:h(z,p)=0\}.
\]
The associated complex geodesic and its boundary chain are
\[
  L_p=\PH(p^\perp\cap V_-),\qquad
  \mathcal C_p=\PH(p^\perp\cap V_0).
\]
\end{definition}

\begin{proposition}[Projective polarity of chains]
If \(\lambda\in\C^\times\), then
\[
  (\lambda p)^\perp=p^\perp,\qquad
  L_{\lambda p}=L_p,\qquad
  \mathcal C_{\lambda p}=\mathcal C_p.
\]
Equivalently, the incidence relation
\[
  [x]\in\mathcal C_p
  \quad\Longleftrightarrow\quad
  h(x,x)=0,\ h(x,p)=0
\]
depends only on the projective classes \([x]\) and \([p]\).
\end{proposition}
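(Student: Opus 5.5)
The plan is to reduce everything to the Hermitian scaling law \(h(\lambda z,\mu w)=\lambda\overline{\mu}\,h(z,w)\) from the Hermitian projective model definition, applied to the single equation \(h(z,\lambda p)=0\). For any \(z\in V\),
\[
  h(z,\lambda p)=\overline{\lambda}\,h(z,p).
\]
Since \(\lambda\ne0\), we also have \(\overline{\lambda}\ne0\). Hence \(h(z,\lambda p)=0\) if and only if \(h(z,p)=0\), which gives \((\lambda p)^\perp=p^\perp\) as sets.

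Next I would check that the positivity hypothesis is preserved, so that \(L_{\lambda p}\) and \(\mathcal C_{\lambda p}\) are defined. This follows from \(h(\lambda p,\lambda p)=|\lambda|^2h(p,p)>0\), so \(\lambda p\in V_+\). Since the intersections \(p^\perp\cap V_-\) and \(p^\perp\cap V_0\) depend only on the set \(p^\perp\), and \(V_-,V_0\) do not depend on \(p\), the equalities \(L_{\lambda p}=L_p\) and \(\mathcal C_{\lambda p}=\mathcal C_p\) follow after projectivizing.

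For the incidence reformulation, I would note that \(p^\perp\cap V_0\) is a cone: it is stable under nonzero scalars, because \(h(\mu x,\mu x)=|\mu|^2h(x,x)\) and \(h(\mu x,p)=\mu\,h(x,p)\). So \([x]\in\mathcal C_p\) holds exactly when some representative, and hence every representative, satisfies \(h(x,x)=0\) and \(h(x,p)=0\). The conjunction is therefore invariant under \(x\mapsto\mu x\) by the two displayed scalings, and invariant under \(p\mapsto\lambda p\) by the first step. So it depends only on \([x]\) and \([p]\).

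No step is a real obstacle. The only point needing care is the distinction between the set \(\mathcal C_p\subset\PH(V_0)\) and the vector-level equations. This is handled by the cone property, which makes membership of \([x]\) independent of the chosen representative \(x\).
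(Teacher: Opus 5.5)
Your proposal is correct and matches the paper's proof: the identity \(h(z,\lambda p)=\overline{\lambda}\,h(z,p)\) with \(\lambda\ne0\) shows the hyperplanes \((\lambda p)^\perp\) and \(p^\perp\) coincide, and the rescalings \(h(\mu x,p)=\mu\,h(x,p)\) and \(h(\mu x,\mu x)=|\mu|^2h(x,x)\) show the incidence does not depend on the representative of \([x]\). Your extra check that \(\lambda p\) stays in \(V_+\), so that \(L_{\lambda p}\) and \(\mathcal C_{\lambda p}\) are defined, is a small point the paper leaves implicit.
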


\begin{proof}
For every \(z\in V\),
\[
  h(z,\lambda p)=\overline{\lambda}\,h(z,p).
\]
Since \(\lambda\ne0\), the vanishing of \(h(z,\lambda p)\) is equivalent to
the vanishing of \(h(z,p)\).  Thus the orthogonal hyperplane, its negative
part, and its null part are unchanged.  If \(x\) is also rescaled by
\(\mu\in\C^\times\), then
\[
  h(\mu x,p)=\mu h(x,p),
  \qquad
  h(\mu x,\mu x)=|\mu|^2h(x,x),
\]
so nullity and incidence are unchanged on the projective class of \(x\).
\end{proof}

\begin{definition}[Hermitian spread]
For positive polar vectors \(p,q\), define
\[
  S_H(p,q)
  =
  1-\frac{|h(p,q)|^2}{h(p,p)h(q,q)}.
\]
\end{definition}

This is the polar-vector version of the pair determinant invariant.  It is a
natural Hermitian analogue of spread between hyperplanes, but it is not enough
to describe \(\CH^2\) by itself: triple phase and boundary contact data remain
essential.

At quotient level, a pair of positive polar representatives is simply a
positive-pole pair.  Since positive representatives have nonzero Hermitian
diagonal, every such pair lies in the non-null pair domain.  The projective
polar spread is therefore exactly the already-defined projective quadrance of
the associated non-null polar pair:
\[
  S_H([p],[q])
  =
  q_H([p],[q]).
\]
This is the precise Hermitian analogue of a Wildberger-style spread identity:
it is not a new metric primitive, but the same determinant quotient applied to
dual polar data.

\begin{proposition}[Geodesic position discriminator]
Let \(p,q\in V_+\) be independent positive poles and set
\[
  \Delta_H(p,q)=h(p,p)h(q,q)-|h(p,q)|^2.
\]
Then:
\[
\begin{array}{c|c}
\Delta_H(p,q)>0 & L_p\cap L_q\text{ consists of one interior point}\\
\Delta_H(p,q)=0 & \mathcal C_p\cap\mathcal C_q\text{ consists of one boundary point}\\
\Delta_H(p,q)<0 & \overline{L_p}\cap\overline{L_q}=\varnothing
\end{array}
\]
where \(\overline{L_p}=L_p\cup\mathcal C_p\).  Equivalently, since
\[
  \Delta_H(p,q)=h(p,p)h(q,q)S_H(p,q)
\]
and the diagonal factors are positive, the sign of the polar spread is the
sign of the intersection type.  The special case \(h(p,q)=0\), equivalently
\(S_H(p,q)=1\), is an orthogonal interior intersection.
\end{proposition}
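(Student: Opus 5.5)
The plan is to reduce everything to the signature of \(h\) restricted to the one-dimensional complex line \(W=p^\perp\cap q^\perp\). Since \(p,q\) are independent, \(U=\Span\{p,q\}\) is two-dimensional. Its Gram matrix is
\[
  G=\begin{pmatrix}h(p,p)&h(p,q)\\ h(q,p)&h(q,q)\end{pmatrix},
  \qquad \det G=\Delta_H(p,q).
\]
Because \(h(p,p)>0\), the form on \(U\) has signature \((2,0)\), \((1,1)\), or \((1,0)\) with a one-dimensional radical, according as \(\Delta_H(p,q)\) is \(>0\), \(<0\), or \(=0\). The set \(W=U^\perp\) has complex dimension at least \(1\), and exactly \(1\) because \(h\) is nondegenerate. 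Moreover \(L_p\cap L_q=\PH(W\cap V_-)\) and \(\mathcal C_p\cap\mathcal C_q=\PH(W\cap V_0)\). The problem is therefore to decide whether the single line \(W\) is negative, null, or positive.

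\textbf{Nondegenerate case.} If \(\Delta_H\ne0\), the restriction to \(U\) is nondegenerate, so \(V=U\oplus W\) orthogonally. Signatures then add: the total is \((2,1)\), so \(W\) is negative when \(U\) has signature \((2,0)\), and positive when \(U\) has signature \((1,1)\).

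In the first case \(\PH(W)\) is one interior point lying in both \(L_p\) and \(L_q\). No null points lie in \(W\), so the intersection of closures is exactly that point.

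In the second case \(W\) is positive. Then \(\overline{L_p}\cap\overline{L_q}=\PH(W\cap(V_-\cup V_0))=\varnothing\).

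\textbf{Degenerate case.} This is the main obstacle, since the orthogonal splitting fails. If \(\Delta_H=0\), the Gram matrix is singular, so there is a nonzero radical vector \(x\in U\) with \(h(x,U)=0\). The null-join witness proposition, applied with \(z=p\) and \(w=q\), produces it explicitly as \(x=h(p,p)q-h(q,p)p\), which is nonzero because \([p]\ne[q]\) and null because \(h(x,x)=h(p,p)\Delta_H=0\).

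Since \(h(x,p)=h(x,q)=0\), we get \(x\in W\). As \(W\) is one-dimensional, \(W=\C x\), which is null. Hence \(\mathcal C_p\cap\mathcal C_q=\{[x]\}\) and \(L_p\cap L_q=\varnothing\).

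\textbf{Remaining assertions.} The identity \(\Delta_H=h(p,p)h(q,q)S_H\) is immediate from the definition of \(S_H\), exactly as in the Gram determinant form for \(q_H\). The diagonal factors are positive, so the signs of \(\Delta_H\) and \(S_H\) agree.

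If \(h(p,q)=0\), then \(S_H=1>0\), so the intersection is an interior point. Orthogonality of that intersection in the sense of complex geodesics follows because \(p\perp q\) makes \(q\) a tangent-normal direction lying inside \(p^\perp\). The trichotomy is the content to establish; this orthogonality remark is only a readout.
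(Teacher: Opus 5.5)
Your proof is correct and follows essentially the same route as the paper. Both reduce to deciding the sign of \(h\) on the line \(\Span\{p,q\}^{\perp}\), using the Gram determinant \(\Delta_H(p,q)\) of the pole plane together with the ambient signature \((2,1)\). Your degenerate-case argument via the null-join witness (where directly \(h(x,p)=0\) and \(h(x,q)=\Delta_H(p,q)\)) supplies a step that the paper's proof of this proposition only asserts; the paper gives the same argument later, in the chain tangency discriminator.
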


\begin{proof}
The intersection of the two projective closures is represented by the
one-dimensional orthogonal complement
\[
  \Span\{p,q\}^{\perp}.
\]
The restriction of \(h\) to \(\Span\{p,q\}\) has Hermitian Gram matrix
\[
  \begin{pmatrix}
    h(p,p) & h(p,q)\\
    h(q,p) & h(q,q)
  \end{pmatrix}
\]
with determinant \(\Delta_H(p,q)\).  Since \(h(p,p)>0\), this two-plane is
positive definite when \(\Delta_H(p,q)>0\), degenerate when
\(\Delta_H(p,q)=0\), and has signature \((1,1)\) when
\(\Delta_H(p,q)<0\).  The ambient signature is \((2,1)\), so the orthogonal
complement is respectively negative, null, or positive.  These cases give an
interior point, a boundary point, or no point in the closed ball.  The
identity with \(S_H\) is the determinant form of polar spread.  If
\(h(p,q)=0\), then the pole two-plane is positive orthogonal, and the two
complex geodesics meet orthogonally at the negative complement point.
\end{proof}

\begin{proposition}[Unique chain through two boundary points]
Any two distinct boundary points \([\xi]\ne[\eta]\) lie on a unique chain.
Its pole is the positive projective line
\[
  [p]=\PH\bigl(\Span\{\xi,\eta\}^{\perp}\bigr).
\]
\end{proposition}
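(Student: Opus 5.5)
The plan is to reduce the statement to linear algebra on the two-plane \(W=\Span\{\xi,\eta\}\), in the style of the geodesic position discriminator.

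\textbf{Step 1: \(W\) is two-dimensional of signature \((1,1)\).} Since \([\xi]\ne[\eta]\), the vectors \(\xi,\eta\) are linearly independent, so \(\dim W=2\). By the orthogonality readout (Witt index one), \(h(\xi,\eta)\ne0\). Hence the Gram matrix of \(h|_W\) in the basis \(\xi,\eta\) is
\[
  \begin{pmatrix}0&h(\xi,\eta)\\ h(\eta,\xi)&0\end{pmatrix},
\]
with determinant \(-|h(\xi,\eta)|^2<0\). A \(2\times2\) Hermitian matrix with negative determinant is nondegenerate of signature \((1,1)\).

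\textbf{Step 2: the pole is positive.} Because \(h|_W\) is nondegenerate, \(V=W\oplus W^\perp\) and \(\dim W^\perp=1\). Signatures add: \((2,1)=(1,1)+\operatorname{sig}(W^\perp)\), so \(W^\perp\) is a positive line. Write \(W^\perp=\C p\) with \(h(p,p)>0\). This is the only step where a little care is needed, namely ensuring the decomposition is orthogonal and direct. That follows from nondegeneracy of \(h|_W\), exactly as in the earlier discriminator proof.

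\textbf{Step 3: existence.} By construction \(h(\xi,p)=h(\eta,p)=0\), and both \(\xi\) and \(\eta\) are null. By the incidence description in the projective polarity proposition, this gives \([\xi],[\eta]\in\mathcal C_p\).

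\textbf{Step 4: uniqueness.} Suppose \([\xi],[\eta]\in\mathcal C_q\) for some positive pole \(q\). Then \(h(\xi,q)=h(\eta,q)=0\), so \(q\in W^\perp=\C p\). Hence \([q]=[p]\), and by projective polarity \(\mathcal C_q=\mathcal C_p\).

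One further point needs checking: a chain should determine its pole, so that "unique chain" is unambiguous. The chain \(\mathcal C_p\) contains at least the two independent null vectors \(\xi,\eta\), which span \(p^\perp\). So \(p^\perp\) is recovered from the chain, and with it \([p]\). This closes the argument.
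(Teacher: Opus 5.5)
Your proposal is correct and follows essentially the same route as the paper. Witt index one gives \(h(\xi,\eta)\ne0\), the Gram determinant \(-|h(\xi,\eta)|^2<0\) forces \(\Span\{\xi,\eta\}^{\perp}\) to be a positive line, and uniqueness holds because any competing pole lies in that one-dimensional complement; your added check that the chain recovers its pole is a harmless extra that the paper omits.
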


\begin{proof}
Distinct null lines cannot be Hermitian-orthogonal in signature \((2,1)\).
Otherwise \(\Span\{\xi,\eta\}\) would be a two-dimensional totally isotropic
subspace, contradicting Witt index one.  Hence
\[
  h(\xi,\eta)\ne0.
\]
The restriction of \(h\) to \(\Span\{\xi,\eta\}\) has Gram matrix
\[
  \begin{pmatrix}
    0 & h(\xi,\eta)\\
    h(\eta,\xi) & 0
  \end{pmatrix},
\]
of determinant \(-|h(\xi,\eta)|^2\).  Thus the span has signature \((1,1)\),
so its orthogonal complement is a positive line.  Let \(p\) represent that
positive line.  Then \(h(\xi,p)=h(\eta,p)=0\), so both boundary points lie on
\(\mathcal C_p\).  If another positive pole \(p'\) defines a chain through
both points, then \(p'\in\Span\{\xi,\eta\}^{\perp}\), so \([p']=[p]\).
\end{proof}

\section{Triple product}

\begin{definition}[Hermitian triple product]
For projective representatives \(z_1,z_2,z_3\), define
\[
  T_H(z_1,z_2,z_3)
  =
  -h(z_1,z_2)h(z_2,z_3)h(z_3,z_1).
\]
Its phase class is the algebraic origin of the Cartan angular invariant.
\end{definition}

\begin{proposition}[Triple-product scaling]
If \(z_i\mapsto \lambda_i z_i\), then
\[
  T_H(\lambda_1z_1,\lambda_2z_2,\lambda_3z_3)
  =
  |\lambda_1\lambda_2\lambda_3|^2\,T_H(z_1,z_2,z_3).
\]
Thus the class of \(T_H\) modulo positive real norm factors is projectively
well-defined.
\end{proposition}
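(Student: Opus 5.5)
The proof will be a direct computation from the sesquilinear scaling rule \(h(\lambda z,\mu w)=\lambda\overline{\mu}\,h(z,w)\) fixed in the Hermitian projective model. I will substitute \(\lambda_iz_i\) into each of the three factors of \(T_H\):
\[
  h(\lambda_1z_1,\lambda_2z_2)=\lambda_1\overline{\lambda_2}\,h(z_1,z_2),\qquad
  h(\lambda_2z_2,\lambda_3z_3)=\lambda_2\overline{\lambda_3}\,h(z_2,z_3),\qquad
  h(\lambda_3z_3,\lambda_1z_1)=\lambda_3\overline{\lambda_1}\,h(z_3,z_1).
\]
Multiplying these and keeping the leading minus sign gives the factor
\[
  \lambda_1\overline{\lambda_1}\,\lambda_2\overline{\lambda_2}\,\lambda_3\overline{\lambda_3}
  =|\lambda_1|^2|\lambda_2|^2|\lambda_3|^2=|\lambda_1\lambda_2\lambda_3|^2 .
\]

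The key structural point is cyclicity. Because the indices run through the cycle \(1\to2\to3\to1\), each index appears exactly once in a linear slot and exactly once in a conjugate-linear slot. This is the same observation recorded in the proof of the primitive scale weights proposition, where \(T_H\) picks up the factor \(N(\lambda_1)N(\lambda_2)N(\lambda_3)\). Here I am simply specializing it to \(K=\C\), where \(N(\lambda)=|\lambda|^2\).

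For the projective well-definedness claim, I will note that for nonzero \(\lambda_i\) the factor \(|\lambda_1\lambda_2\lambda_3|^2\) is a strictly positive real number. So changing representatives multiplies \(T_H\) by an element of \(\mathbb R_{>0}\). Its class in \(\C/\mathbb R_{>0}\) is therefore independent of the choice of representatives. Concretely, this class is the pair ``zero, or the argument of \(T_H\).''

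There is no real obstacle. The only care needed is to track which slot carries the conjugate, so that the factors really pair up as \(\lambda_i\overline{\lambda_i}\) and not as \(\lambda_i^2\). I will also remark that nothing here uses the sign strata, so the statement holds for null as well as non-null representatives. This is what lets \(T_H\) serve on the boundary domain \(\mathsf H_0\mathsf H_0\).
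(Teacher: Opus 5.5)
Your proposal is correct and follows the paper's proof: substitute $h(\lambda_i z_i,\lambda_j z_j)=\lambda_i\overline{\lambda_j}\,h(z_i,z_j)$ into the three cyclic factors, and regroup the scalars as $\lambda_1\overline{\lambda_1}\,\lambda_2\overline{\lambda_2}\,\lambda_3\overline{\lambda_3}=|\lambda_1\lambda_2\lambda_3|^2$. Then observe that this factor is a positive real for nonzero $\lambda_i$, so the class of $T_H$ modulo positive real factors descends to projective classes.
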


\begin{proof}
By Hermitian scaling,
\[
  h(\lambda_i z_i,\lambda_j z_j)
  =
  \lambda_i\overline{\lambda_j}\,h(z_i,z_j).
\]
Therefore
\[
\begin{aligned}
T_H(\lambda_1z_1,\lambda_2z_2,\lambda_3z_3)
&=-(\lambda_1\overline{\lambda_2}h(z_1,z_2))
  (\lambda_2\overline{\lambda_3}h(z_2,z_3))
  (\lambda_3\overline{\lambda_1}h(z_3,z_1))\\
&=-(\lambda_1\overline{\lambda_1})
   (\lambda_2\overline{\lambda_2})
   (\lambda_3\overline{\lambda_3})
   h(z_1,z_2)h(z_2,z_3)h(z_3,z_1)\\
&=|\lambda_1\lambda_2\lambda_3|^2\,T_H(z_1,z_2,z_3).
\end{aligned}
\]
The scalar factor is a positive real norm factor, so the phase class and the
class modulo positive real norm factors are projectively well-defined.
\end{proof}

\begin{proposition}[Cyclic and reverse behavior]
The triple product is cyclically invariant:
\[
  T_H(z_1,z_2,z_3)=T_H(z_2,z_3,z_1).
\]
Reversing cyclic orientation conjugates it:
\[
  T_H(z_1,z_3,z_2)=\overline{T_H(z_1,z_2,z_3)}.
\]
\end{proposition}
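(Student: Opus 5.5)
The plan is to work directly from the definition \(T_H(z_1,z_2,z_3)=-h(z_1,z_2)h(z_2,z_3)h(z_3,z_1)\). The only tools needed are commutativity of multiplication in \(\C\) and Hermitian symmetry \(h(w,z)=\overline{h(z,w)}\); no scaling or stratum hypotheses enter, so both identities hold for arbitrary representatives.

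For cyclic invariance, expand the right-hand side:
\[
  T_H(z_2,z_3,z_1)=-h(z_2,z_3)h(z_3,z_1)h(z_1,z_2).
\]
This is the same three factors as in \(T_H(z_1,z_2,z_3)\), permuted cyclically. Commutativity of \(\C\) gives equality at once. Since the paper also wants the starred-field layer, I note that this step uses only commutativity of \(K\), so it holds there verbatim.

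For reversal, expand
\[
  T_H(z_1,z_3,z_2)=-h(z_1,z_3)h(z_3,z_2)h(z_2,z_1).
\]
Apply Hermitian symmetry factor by factor: \(h(z_1,z_3)=\overline{h(z_3,z_1)}\), \(h(z_3,z_2)=\overline{h(z_2,z_3)}\), and \(h(z_2,z_1)=\overline{h(z_1,z_2)}\). The product then becomes
\[
  -\overline{h(z_3,z_1)}\,\overline{h(z_2,z_3)}\,\overline{h(z_1,z_2)}.
\]
Complex conjugation is multiplicative and fixes the real sign \(-1\), so this equals \(\overline{T_H(z_1,z_2,z_3)}\) after reordering by commutativity. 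Over a starred field, the same argument goes through with \(\ast\) in place of the bar, provided \(h(w,z)=h(z,w)^\ast\) and the involution is multiplicative and fixes \(-1\).

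There is no real obstacle. The only point needing care is the orientation bookkeeping: one should check that every transposition of a transposition lands on exactly one factor of the original product. As a closing remark, one can combine the two identities to show that any odd permutation of the arguments yields the conjugate. Together with the earlier scaling proposition, this means the phase class of \(T_H\) is reversed in sign under orientation reversal.
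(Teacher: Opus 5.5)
Your proof is correct and is the paper's argument: cyclic invariance from commutativity of the three scalar factors, and the reversal law from applying \(h(z_j,z_i)=\overline{h(z_i,z_j)}\) to each factor, pulling the conjugation out of the product, and reordering. Your remarks on the starred-field layer and on odd permutations are valid extras that the statement does not need.
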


\begin{proof}
Cyclic invariance follows by commutativity of complex multiplication:
\[
  h(z_1,z_2)h(z_2,z_3)h(z_3,z_1)
  =
  h(z_2,z_3)h(z_3,z_1)h(z_1,z_2).
\]
For reversal, Hermitian symmetry gives \(h(z_j,z_i)=\overline{h(z_i,z_j)}\).
Thus
\[
\begin{aligned}
T_H(z_1,z_3,z_2)
&=-h(z_1,z_3)h(z_3,z_2)h(z_2,z_1)\\
&=-\overline{h(z_3,z_1)}\,
    \overline{h(z_2,z_3)}\,
    \overline{h(z_1,z_2)}\\
&=-\overline{h(z_3,z_1)h(z_2,z_3)h(z_1,z_2)}\\
&=-\overline{h(z_1,z_2)h(z_2,z_3)h(z_3,z_1)}\\
&=\overline{-h(z_1,z_2)h(z_2,z_3)h(z_3,z_1)}
 =\overline{T_H(z_1,z_2,z_3)}.
\end{aligned}
\]
\end{proof}

\section{Analytic recovery of classical invariants}

The algebraic quantities above contain the usual analytic invariants of
complex hyperbolic geometry.  The point of the calculus is not to discard
distance and angle, but to postpone the transcendental readout until after
the projective Hermitian identities have been established.

\begin{proposition}[Bergman distance from the pair invariant]
For negative vectors \(z,w\in V_-\), the complex-hyperbolic distance satisfies
\[
  \cosh^2\!\left(\frac{d([z],[w])}{2}\right)
  =
  \frac{|h(z,w)|^2}{h(z,z)h(w,w)}
  =
  \eta_H([z],[w]).
\]
Equivalently,
\[
  d([z],[w])=2\,\operatorname{arcosh}\sqrt{\eta_H([z],[w])}
  =
  2\,\operatorname{arcosh}\sqrt{1-q_H([z],[w])}.
\]
\end{proposition}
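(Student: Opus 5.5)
The plan is to reduce to a normal form using the symmetries already established, and then compute the distance along a single geodesic. The Bergman metric is fixed by convention: it is the $PU(2,1)$-invariant metric normalized to holomorphic sectional curvature $-1$, i.e.\ the convention for which the stated formula is the standard one in Parker's notes \cite{parker-nchg}.

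\textbf{Step 1: invariance.} Both sides are unchanged under rescaling $z\mapsto\lambda z$, $w\mapsto\mu w$, by the scale-invariance proposition for $\eta_H$. Both sides are also unchanged under the action of $U(2,1)$, since $h$ is preserved and the metric is invariant.

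\textbf{Step 2: normal form.} Use transitivity of $U(2,1)$ on $\CH^2$ to move $[z]$ to the point $x_0=[e_0]$, where the adapted basis of the Cartan tangent split proposition gives $J=\operatorname{diag}(-1,1,1)$. The stabilizer $U(E_{x_0})\cong U(2)$ acts transitively on unit vectors of $E_{x_0}$. Writing a negative representative as $w=e_0+b$ with $b\in E_{x_0}$ and $|b|<1$, we may therefore rotate $b$ to $r e_1$ with $0\le r<1$. So it suffices to treat $z=e_0$ and $w=e_0+r e_1$. In that case $h(z,z)=-1$, $h(w,w)=r^2-1$ and $h(z,w)=-1$, which gives
\[
\eta_H=\frac{1}{1-r^2}.
\]

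\textbf{Step 3: the distance along the diameter.} The points $[e_0+se_1]$ with $s\in(-1,1)$ lie in the complex geodesic polar to $e_2$, which is a totally geodesic copy of $\CH^1$. The real segment $s\in[0,r]$ is the geodesic from $x_0$ to $[w]$. The Bergman metric in the ball chart $u=b$ (from the standard background) is
\[
ds^2=\frac{4\left((1-|u|^2)|du|^2+|\langle u,du\rangle|^2\right)}{(1-|u|^2)^2}.
\]
Along the radial segment this reduces to $ds=\dfrac{2\,dr}{1-r^2}$, so $d=2\operatorname{artanh} r$. Hence
\[
\cosh^2(d/2)=\frac{1}{1-\tanh^2(d/2)}=\frac{1}{1-r^2}=\eta_H .
\]
The equivalent forms then follow from $\eta_H\ge1$ and the definition $q_H=1-\eta_H$.

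\textbf{Main obstacle.} The main difficulty is Step 3, which rests on the choice of metric normalization and on the fact that the radial segment is length-minimizing. I would handle the normalization by fixing it explicitly as curvature $-1$, equivalently the displayed ball metric, and noting that other normalizations rescale $d$. For minimality I would use that the geodesic symmetry at $x_0$, given by $J$ acting as $u\mapsto -u$, is an isometry, together with the standard fact that rays from the origin of the ball are geodesics. Alternatively, I would reduce to the Poincaré disc inside the complex geodesic and cite the known disc distance there.
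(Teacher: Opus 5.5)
Your argument is correct, but it takes a genuinely different and more complete route than the paper. The paper does not derive the formula. It cites \(\cosh^2(d/2)=\eta_H\) as the standard Bergman distance formula, checks only that \(h(z,z)h(w,w)>0\) on \(V_-\) (so the quotient is positive and scale-invariant), and then solves for \(d\).

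You actually prove it. \(U(2,1)\)-transitivity together with the stabilizer \(U(E_{x_0})\cong U(2)\) reduces every pair to \(z=e_0\), \(w=e_0+re_1\). There both sides become explicit functions of the single parameter \(r\): \(\eta_H=1/(1-r^2)\) and \(d=2\operatorname{artanh}r\). This is two-point homogeneity made concrete.

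The paper's citation buys brevity. Your derivation buys an explicit metric normalization: your displayed ball metric, with holomorphic sectional curvature \(-1\). The paper leaves that normalization implicit in this proposition. It agrees with the radial normalization \(ds=2\,\mathrm{d}\rho/(1-\rho^2)\) the paper adopts later in the radial-divergence proposition, so your convention is the right one.

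One step needs tightening. The facts that ``the geodesic symmetry is an isometry'' and that ``rays are geodesics'' do not by themselves give minimality. A geodesic need not minimize unless you also invoke Cartan--Hadamard uniqueness. Your Poincar\'e-disc alternative likewise needs ambient distance to equal intrinsic distance on the complex geodesic. The cleanest fix is a direct estimate. For any curve, write \(\rho=|u|\). Cauchy--Schwarz gives \(|\mathrm{d}u|\ge|\mathrm{d}\rho|\) and \(|\langle u,\mathrm{d}u\rangle|\ge|\mathrm{Re}\langle \mathrm{d}u,u\rangle|=\rho\,|\mathrm{d}\rho|\). Hence your metric satisfies \(ds\ge 2\,|\mathrm{d}\rho|/(1-\rho^2)\). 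Every path from the origin to Euclidean radius \(r\) therefore has length at least \(2\operatorname{artanh}r\), and the radial segment attains this bound.
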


\begin{proof}
This is the standard Bergman distance formula in the projective ball model.
Both diagonal terms are negative on \(V_-\), so their product is positive and
the displayed quotient is scale-invariant.  The second formula is obtained by
solving the first for \(d\) and using \(q_H=1-\eta_H\).
\end{proof}

\begin{proposition}[Cartan angular invariant from triple phase]
For a triple \(z_1,z_2,z_3\) for which \(T_H(z_1,z_2,z_3)\ne0\), define
\[
  \mathbb A(z_1,z_2,z_3)
  =
  \argg T_H(z_1,z_2,z_3)
  =
  \argg\!\left(
    -h(z_1,z_2)h(z_2,z_3)h(z_3,z_1)
  \right).
\]
For boundary triples this is the usual Cartan angular invariant, with the
standard range convention \(\mathbb A\in[-\pi/2,\pi/2]\).  If
\[
  \Omega_{123}=\frac{T_H(z_1,z_2,z_3)}{|T_H(z_1,z_2,z_3)|},
\]
then
\[
  \Omega_{123}=e^{i\mathbb A(z_1,z_2,z_3)},
  \qquad
  \Omega_{123}+\overline{\Omega}_{123}=2\cos\mathbb A(z_1,z_2,z_3).
\]
\end{proposition}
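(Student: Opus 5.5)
The statement has three parts, and I will treat them separately. First, the formula for \(\mathbb A\) is projectively well defined. Second, for boundary triples it agrees with the classical Cartan invariant and lies in \([-\pi/2,\pi/2]\). Third, the identities for \(\Omega_{123}\) hold.

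\textbf{Well-definedness.} By the Proposition on triple-product scaling, rescaling representatives multiplies \(T_H\) by the positive real number \(|\lambda_1\lambda_2\lambda_3|^2\). Hence \(\argg T_H\) and \(\Omega_{123}=T_H/|T_H|\) depend only on the projective classes whenever \(T_H\ne0\).

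\textbf{Identification with the classical invariant.} For boundary triples, nonvanishing is automatic. Three distinct null points are pairwise nonorthogonal, by the Witt-index argument already used in the Proposition on the unique chain through two boundary points. So each factor \(h(z_i,z_j)\) is nonzero. The identification with the classical Cartan invariant is then Goldman's definition \(\argg(-\langle z_1,z_2\rangle\langle z_2,z_3\rangle\langle z_3,z_1\rangle)\). This applies under the paper's convention that \(h\) is linear in the first slot, which I will simply cite, as the paper does for the Bergman formula. The cyclic and reversal behaviour from the earlier Proposition matches the classical symmetry properties, which serves as a consistency check.

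\textbf{Range.} This is the only part with real content, and I will prove it algebraically with a Gram determinant. For null \(z_1,z_2,z_3\), form the \(3\times3\) Gram matrix \(G=(h(z_i,z_j))\). It has zero diagonal, so expanding gives
\[
  \det G = h_{12}h_{23}h_{31}+h_{13}h_{32}h_{21}
  = 2\,\mathrm{Re}\bigl(h_{12}h_{23}h_{31}\bigr) = -2\,\mathrm{Re}\,T_H .
\]
Now let \(J\) be the matrix of \(h\) in some basis, so \(\det J<0\) by signature \((2,1)\). Let \(M\) be the matrix of coordinates of the \(z_i\). Then \(G\) is \(M^{\mathsf T}J\overline M\) (up to transpose, depending on how the coordinates are arranged), so
\[
  \det G=|\det M|^2\det J\le 0 ,
\]
with equality exactly when the \(z_i\) are dependent. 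Therefore \(\mathrm{Re}\,T_H\ge0\). Since \(T_H\ne0\), the principal argument lies in \([-\pi/2,\pi/2]\). The endpoints \(\pm\pi/2\) occur exactly when the three points are linearly dependent, that is, when they lie on a common chain. I will remark on this as the standard chain characterization, though the statement only requires the range.

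\textbf{The \(\Omega\) identities.} These are immediate from the polar form \(T_H=|T_H|e^{i\mathbb A}\). Dividing by \(|T_H|\) gives \(\Omega_{123}=e^{i\mathbb A}\), and adding the conjugate gives \(2\cos\mathbb A\).

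The main obstacle I expect is bookkeeping rather than substance. I need the sign in \(\det G=-2\,\mathrm{Re}\,T_H\), which comes from the paper's leading minus in \(T_H\). I also need the orientation of the conjugation in \(G=M^{\mathsf T}J\overline M\). Neither affects the sign of \(\det G\), but both must be stated carefully so that the inequality points toward \(\mathrm{Re}\,T_H\ge0\) rather than \(\le0\).
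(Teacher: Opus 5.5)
Your well-definedness step, the identification by citing Cartan's (Goldman's) triple-product formula, and the polar-form derivation of the \(\Omega_{123}\) identities are the same as the paper's three-line proof. The difference is the range. The paper calls \(\mathbb A\in[-\pi/2,\pi/2]\) a ``standard range convention'' and folds it into the citation. You instead prove it by combining two facts. The first is the zero-diagonal Gram expansion \(\det G=-2\,\mathrm{Re}\,T_H\), which is the paper's own ideal-triangle identity \(D_\partial=-\mathcal T_\partial-\overline{\mathcal T_\partial}\). The second is the factorization \(\det G=|\det M|^2\det J\) with \(\det J<0\) from signature \((2,1)\). That argument is correct, and it buys something real. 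It shows the range is a theorem about the principal argument of \(T_H\), not a normalization choice. It also yields the endpoint characterization \(|\mathbb A|=\pi/2\iff\) the representatives are linearly dependent \(\iff\) the points lie on a common chain. This is the content of the paper's later ideal-triangle chain test and of its boundary-triples remark. The paper proves that test using only nondegeneracy, never the sign of the determinant, which is the extra input you supply here. The paper's version is shorter but leaves the range entirely to the literature. One small precision: nonvanishing of \(T_H\) is automatic only for three \emph{distinct} boundary points. A repeated point forces \(T_H=0\), which the hypothesis \(T_H\ne0\) already excludes.
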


\begin{proof}
The triple product changes under projective rescaling by the positive real
factor \(|\lambda_1\lambda_2\lambda_3|^2\).  Its argument is therefore
projectively well-defined.  The displayed expression is exactly Cartan's
triple-product formula, and the final identities are the polar form of a
nonzero complex number.
\end{proof}

\section{Pair and triple cycles under projective-unitary symmetry}

The pair and triple invariants can also be organized as Hermitian cycles.
This emphasizes the separation between metric modulus and complex phase under
the projective-unitary symmetries of the \((2,1)\)-form.

\begin{definition}[Hermitian pair and triple cycles]
For non-null representatives \(z_i\), write
\[
  a_i=h(z_i,z_i),\qquad h_{ij}=h(z_i,z_j).
\]
The pair cycle is
\[
  C_{2,ij}
  =
  \frac{h_{ij}h_{ji}}{a_i a_j}
  =
  \frac{|h_{ij}|^2}{a_i a_j}
  =
  \eta_H([z_i],[z_j]).
\]
For a non-null triple, the normalized triple cycle is
\[
  C_{3,123}
  =
  \frac{h_{12}h_{23}h_{31}}{a_1a_2a_3}.
\]
Thus \(C_{3,123}\) is the same normalized product denoted
\(\Omega_{123}\) in the triangle identities below, while the signed Cartan
convention is
\[
  \frac{T_H(z_1,z_2,z_3)}{a_1a_2a_3}=-C_{3,123}.
\]
\end{definition}

\begin{proposition}[Projective-unitary invariance of cycles]
Let \(g\in U(2,1)\); in particular, let \(g\in SU(2,1)\).  Then
\[
  C_{2,ij}(gz_i,gz_j)=C_{2,ij}(z_i,z_j),
  \qquad
  C_{3,123}(gz_1,gz_2,gz_3)=C_{3,123}(z_1,z_2,z_3).
\]
The same formulas descend to \(PU(2,1)\) on projective classes.
\end{proposition}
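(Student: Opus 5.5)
The plan is a direct computation from the defining property of \(U(2,1)\): an element \(g\) preserves the Hermitian form, so
\[
  h(gz,gw)=h(z,w)\qquad\text{for all } z,w\in V.
\]
First, every building block of the cycles is unchanged under \(g\). The diagonal entries satisfy \(a_i(gz_i)=h(gz_i,gz_i)=a_i\). In particular, \(g\) maps non-null representatives to non-null representatives of the same sign, so the cycles remain defined on the image. The off-diagonal entries satisfy \(h_{ij}(gz_i,gz_j)=h_{ij}\). Substituting these into
\[
  C_{2,ij}=\frac{h_{ij}h_{ji}}{a_ia_j}
  \qquad\text{and}\qquad
  C_{3,123}=\frac{h_{12}h_{23}h_{31}}{a_1a_2a_3}
\]
gives invariance term by term. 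Since \(SU(2,1)\subset U(2,1)\), the \(SU(2,1)\) case is included.

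Next comes the descent to \(PU(2,1)\), which I expect to be the only step needing any care. There are two independent ambiguities to rule out.

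The first is the choice of representatives. Rescaling \(z_i\mapsto\lambda_iz_i\) multiplies the numerator of \(C_{2,ij}\) by \(|\lambda_i|^2|\lambda_j|^2\), and the denominator by the same factor. This is the Scale invariance proposition, or equivalently the Primitive scale weights proposition. For \(C_{3,123}\), the numerator \(h_{12}h_{23}h_{31}\) picks up
\[
  \lambda_1\overline{\lambda_2}\,\lambda_2\overline{\lambda_3}\,\lambda_3\overline{\lambda_1}
  =|\lambda_1\lambda_2\lambda_3|^2,
\]
exactly as in the Triple-product scaling proposition. The denominator \(a_1a_2a_3\) picks up the same factor. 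Hence both cycles are functions of the projective classes \([z_i]\) alone.

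The second is the choice of lift of an element of \(PU(2,1)\). Two lifts of the same projective element differ by a scalar \(c\) in the center, and \(c\) must satisfy \(|c|=1\) in order to preserve \(h\). Replacing \(g\) by \(cg\) is then just a rescaling of every representative by the same unit scalar \(c\). This is covered by the previous step, or directly: \(h(cgz,cgw)=|c|^2h(gz,gw)=h(gz,gw)\).

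Combining the two steps, the map \([z_i]\mapsto[gz_i]\) is well defined and preserves both \(C_{2,ij}\) and \(C_{3,123}\) on projective classes.
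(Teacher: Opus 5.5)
Your proposal is correct and follows essentially the same route as the paper: substitute \(h(gu,gv)=h(u,v)\) into every factor \(h_{ij}\) and \(a_i\), then use the balanced scale weights \(|\lambda_i|^2|\lambda_j|^2\) and \(|\lambda_1\lambda_2\lambda_3|^2\) for projective descent. Your explicit check that two lifts of a \(PU(2,1)\) element differ by a unit scalar is a small refinement the paper leaves implicit; it reduces to the same rescaling argument.
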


\begin{proof}
The defining property of \(U(2,1)\) is
\[
  h(gu,gv)=h(u,v).
\]
Hence every numerator factor \(h_{ij}\) and every diagonal factor \(a_i\) in
the definitions of \(C_2\) and \(C_3\) is unchanged.  This proves invariance
under \(U(2,1)\), hence under the subgroup \(SU(2,1)\).  The determinant-one
condition is not used for these cycle invariants; preservation of \(h\) is
the essential input.  Projective descent follows from the scale cancellations
already built into the definitions:
under \(z_i\mapsto\lambda_i z_i\), the numerator and denominator of
\(C_{2,ij}\) are both multiplied by
\(|\lambda_i|^2|\lambda_j|^2\), and the numerator and denominator of
\(C_{3,123}\) are both multiplied by
\(|\lambda_1\lambda_2\lambda_3|^2\).
\end{proof}

\begin{proposition}[Stratified meaning of the pair cycle]
The invariant \(C_{2,ij}\) is the unoriented pair modulus, but its geometric
readout depends on the projective stratum.
\begin{enumerate}
\item If \(z_i,z_j\in V_-\), then
\[
  C_{2,ij}\ge1,\qquad
  \cosh^2\!\left(\frac{d([z_i],[z_j])}{2}\right)=C_{2,ij}.
\]
\item If \(p,q\in V_+\) are positive poles, then
\[
  S_H([p],[q])=1-C_{2,pq}
\]
is the polar spread of the associated complex geodesics.
\item If \(\xi,\eta\in V_0\), then the denominator of \(C_2\) vanishes.
Moreover, if \([\xi]\ne[\eta]\), then \(h(\xi,\eta)\ne0\).
\end{enumerate}
\end{proposition}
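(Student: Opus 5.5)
The plan is to treat the three items separately, since each reduces to an earlier result or to a short signature argument.

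For item (1), the key step is the inequality \(C_{2,ij}\ge1\) for \(z_i,z_j\in V_-\). I would prove it through the Gram determinant. Because \(a_i<0\), the orthogonal complement \(E=z_i^\perp\) is positive definite, as in the local positive frame. Write \(z_j=cz_i+e\) with \(e\in E\) and \(c=h(z_j,z_i)/a_i\). Then \(\Delta_H(z_i,z_j)=-a_i\,h(e,e)\cdot(\text{sign})\); more precisely, \(a_ia_j-|h_{ij}|^2=a_i h(e,e)\le0\). This follows from \(a_j=|c|^2a_i+h(e,e)\) and \(|h_{ij}|^2=|c|^2a_i^2\). Since \(a_ia_j>0\), dividing gives \(q_H\le0\), that is, \(C_{2,ij}=\eta_H\ge1\). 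Equality holds exactly when \(e=0\), i.e.\ when \([z_i]=[z_j]\). The distance identity is then the Bergman distance proposition, using \(C_{2,ij}=\eta_H\). This step is the only one that uses signature rather than pure algebra, so it is the main obstacle. An alternative is to note that \(\operatorname{Span}\{z_i,z_j\}\) contains a negative vector, so it cannot be positive definite, and hence \(\Delta_H\le0\). This is the same signature count used in the geodesic position discriminator.

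For item (2), I would cite the definition of \(S_H\) together with the remark that \(S_H([p],[q])=q_H([p],[q])=1-\eta_H\) on positive poles. The equation \(\eta_H=C_{2,pq}\) is then just the definition of the pair cycle. The interpretation as the spread of the complex geodesics \(L_p,L_q\) comes from the geodesic position discriminator.

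For item (3), the diagonal entries vanish, so \(a_ia_j=0\). The nonvanishing of \(h(\xi,\eta)\) for distinct null points is the Witt-index argument from the unique-chain proposition. If \(h(\xi,\eta)=0\), then \(\operatorname{Span}\{\xi,\eta\}\) would be a totally isotropic \(2\)-plane, which is impossible in signature \((2,1)\).
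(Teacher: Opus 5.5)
Your proof is correct. Items (2) and (3) follow the paper essentially verbatim: \(S_H=q_H=1-\eta_H\) on positive poles, and the Witt-index contradiction for orthogonal distinct null lines.

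The genuine difference is in item (1). The paper obtains \(C_{2,ij}\ge1\) as a corollary of the imported Bergman distance formula, simply because \(\cosh^2(d/2)\ge1\). You instead prove it algebraically: you split \(z_j=cz_i+e\) with \(e\in z_i^\perp\), and the exact identity \(a_ia_j-|h_{ij}|^2=a_i\,h(e,e)\le0\) is a reverse Cauchy--Schwarz inequality. The paper's route is one line, but it makes the inequality depend on the transcendental readout. That is slightly backwards, since writing \(d=2\operatorname{arcosh}\sqrt{\eta_H}\) already presupposes \(\eta_H\ge1\). Your route uses only positive-definiteness of the local frame \(E_x\), so it stays inside the algebraic calculus the paper advertises. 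It also yields the equality case \(C_{2,ij}=1\iff[z_i]=[z_j]\) for free, because \(h(e,e)=0\) forces \(e=0\).

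Two small clean-ups. First, delete the garbled intermediate formula with ``\((\text{sign})\)'' and keep only the exact identity. Second, in your alternative argument, ``cannot be positive definite'' does not by itself give \(\Delta_H\le0\), since a negative definite plane would have \(\Delta_H>0\). You need the further fact that a plane in signature \((2,1)\) has negative index at most one. Hence a plane containing a negative vector has exactly one negative eigenvalue, and its other eigenvalue is \(\ge0\).
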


\begin{proof}
The first statement is the Bergman distance formula, and the inequality
\(C_{2,ij}\ge1\) follows because \(\cosh^2(d/2)\ge1\).  The second statement
is the definition of polar spread as the same determinant quotient applied to
positive-pole data:
\[
  S_H([p],[q])
  =
  1-\frac{|h(p,q)|^2}{h(p,p)h(q,q)}.
\]
For the null case, \(h(\xi,\xi)h(\eta,\eta)=0\), so the pair-cycle quotient
is not defined.  If distinct null lines were orthogonal, then
\(\Span\{\xi,\eta\}\) would be a two-dimensional totally isotropic subspace,
contradicting Witt index one for a Hermitian form of signature \((2,1)\).
\end{proof}

\begin{theorem}[Modulus-phase decoupling for the triple cycle]
On the non-null triple domain,
\[
  C_{3,123}\overline{C_{3,123}}
  =
  C_{2,12}C_{2,23}C_{2,31}.
\]
Consequently the modulus of \(C_{3,123}\) is determined by the three pair
cycles.  When \(C_{3,123}\ne0\), its independent contribution is the phase
\[
  \argg C_{3,123}.
\]
With the signed convention \(T_H=-h_{12}h_{23}h_{31}\), the normalized Cartan
phase is
\[
  \frac{T_H}{|T_H|}
  =
  -\operatorname{sgn}(a_1a_2a_3)\,
  \frac{C_{3,123}}{|C_{3,123}|}
\]
whenever the triple product is nonzero.  Thus the Cartan readout differs from
\(\argg C_{3,123}\) only by the fixed real sign determined by the non-null
strata of the three vertices.
\end{theorem}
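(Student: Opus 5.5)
The identity follows from Hermitian symmetry alone, so the plan is a direct computation from the definitions of the pair and triple cycles. Write \(a_i=h(z_i,z_i)\), which is real and nonzero on the non-null triple domain, and \(h_{ij}=h(z_i,z_j)\). Hermitian symmetry gives \(h_{ji}=\overline{h_{ij}}\). Conjugating the definition of \(C_{3,123}\) and using that the \(a_i\) are real, we get
\[
  \overline{C_{3,123}}=\frac{h_{21}h_{32}h_{13}}{a_1a_2a_3}.
\]
Multiplying by \(C_{3,123}\) and regrouping the six numerator factors into the pairs \(h_{12}h_{21}\), \(h_{23}h_{32}\), \(h_{31}h_{13}\) gives
\[
  C_{3,123}\overline{C_{3,123}}
  =\frac{(h_{12}h_{21})(h_{23}h_{32})(h_{31}h_{13})}{(a_1a_2)(a_2a_3)(a_3a_1)}.
\]
The denominator uses the identity \((a_1a_2a_3)^2=(a_1a_2)(a_2a_3)(a_3a_1)\). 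The right-hand side is \(C_{2,12}C_{2,23}C_{2,31}\) by the definition of the pair cycles.

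The modulus statement follows at once, since \(|C_{3,123}|^2\) is this product. When \(C_{3,123}\ne0\), polar form \(C_{3,123}=|C_{3,123}|\,e^{i\argg C_{3,123}}\) shows that the only datum not already fixed by the pair cycles is the phase.

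For the Cartan phase, I will use the relation \(T_H=-a_1a_2a_3\,C_{3,123}\) recorded in the definition of the cycles. Since \(a_1a_2a_3\) is a nonzero real number, \(|T_H|=|a_1a_2a_3|\,|C_{3,123}|\), and \(T_H\ne0\) exactly when \(C_{3,123}\ne0\). Dividing then gives
\[
  \frac{T_H}{|T_H|}=-\frac{a_1a_2a_3}{|a_1a_2a_3|}\cdot\frac{C_{3,123}}{|C_{3,123}|}
  =-\operatorname{sgn}(a_1a_2a_3)\frac{C_{3,123}}{|C_{3,123}|}.
\]
The sign \(\operatorname{sgn}(a_1a_2a_3)\) is fixed by the strata: each vertex contributes \(-1\) if it is negative and \(+1\) if it is positive. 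By the scaling proposition it is unchanged under projective rescaling, since each \(a_i\) is multiplied by \(|\lambda_i|^2>0\). This gives the final sentence of the theorem.

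There is no real obstacle here. The only points needing care are the use of reality of the diagonal entries \(a_i\) when conjugating, and confirming the sign convention in \(T_H=-a_1a_2a_3C_{3,123}\), which comes straight from the definitions.
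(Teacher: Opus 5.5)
Your proposal is correct and takes the same route as the paper's proof: you conjugate \(C_{3,123}\) using reality of the \(a_i\), regroup the six pairings into the three pair cycles, and read off the normalized phase from \(T_H=-(a_1a_2a_3)C_{3,123}\) with \(a_1a_2a_3\) real and nonzero. Your added remark that \(\operatorname{sgn}(a_1a_2a_3)\) is unchanged by projective rescaling is a harmless clarification that the paper leaves implicit.
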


\begin{proof}
Using \(a_i=\overline{a_i}\), one computes
\[
\begin{aligned}
C_{3,123}\overline{C_{3,123}}
&=
\frac{h_{12}h_{23}h_{31}}{a_1a_2a_3}
\frac{h_{21}h_{32}h_{13}}{a_1a_2a_3} \\
&=
\frac{h_{12}h_{21}}{a_1a_2}
\frac{h_{23}h_{32}}{a_2a_3}
\frac{h_{31}h_{13}}{a_3a_1}  \\
&=
C_{2,12}C_{2,23}C_{2,31}.
\end{aligned}
\]
Thus the absolute value of \(C_{3,123}\) is fixed by the \(C_2\)-data.  The
remaining datum in \(C_{3,123}\) is its argument.  Since
\[
  T_H=-h_{12}h_{23}h_{31}=-(a_1a_2a_3)C_{3,123}
\]
and \(a_1a_2a_3\) is a nonzero real number on the non-null triple domain, the
displayed normalized phase relation follows.
\end{proof}

\begin{definition}[Edge phases]
On a non-null triple with nonzero pairings \(h_{12}h_{23}h_{31}\ne0\), define
the oriented edge phases
\[
  \omega_{ij}
  =
  \frac{h_{ij}}{|h_{ij}|}
  \in U(1).
\]
These are defined for oriented edges: by Hermitian symmetry,
\[
  \omega_{ji}=\overline{\omega_{ij}}.
\]
\end{definition}

\begin{theorem}[Edge phases and cyclic holonomy]
The individual edge phases \(\omega_{ij}\) are not projective invariants, but
their cyclic product is.  More precisely, if
\[
  z_i\longmapsto \lambda_i z_i,
  \qquad
  u_i=\frac{\lambda_i}{|\lambda_i|}\in U(1),
\]
then
\[
  \omega_{ij}\longmapsto u_i\overline{u_j}\,\omega_{ij}.
\]
Consequently
\[
  \omega_{12}\omega_{23}\omega_{31}
  \longmapsto
  \omega_{12}\omega_{23}\omega_{31}.
\]
Moreover
\[
  \frac{C_{3,123}}{|C_{3,123}|}
  =
  \operatorname{sgn}(a_1a_2a_3)\,
  \omega_{12}\omega_{23}\omega_{31},
\]
and, with \(T_H=-h_{12}h_{23}h_{31}\),
\[
  \frac{T_H}{|T_H|}
  =
  -\omega_{12}\omega_{23}\omega_{31}.
\]
\end{theorem}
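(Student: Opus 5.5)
The plan is a direct computation from Hermitian scaling, in three steps.

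\textbf{Step 1 (edge-phase transformation).} Under \(z_i\mapsto\lambda_i z_i\) we have \(h_{ij}\mapsto\lambda_i\overline{\lambda_j}\,h_{ij}\), as in the proof of the triple-product scaling proposition. Hence
\[
  |h_{ij}|\mapsto|\lambda_i||\lambda_j|\,|h_{ij}|.
\]
Dividing and using \(\overline{\lambda_j}/|\lambda_j|=\overline{u_j}\) gives \(\omega_{ij}\mapsto u_i\overline{u_j}\,\omega_{ij}\). The nonvanishing hypothesis \(h_{ij}\ne0\) is preserved because \(\lambda_i\lambda_j\ne0\). This step also shows that individual edge phases are not invariant: choosing \(u_1\ne u_2\) changes \(\omega_{12}\).

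\textbf{Step 2 (cyclic product).} Multiplying the three transformed phases gives the factor
\[
  u_1\overline{u_2}\,u_2\overline{u_3}\,u_3\overline{u_1}=|u_1|^2|u_2|^2|u_3|^2=1,
\]
so \(\omega_{12}\omega_{23}\omega_{31}\) is unchanged. This is the phase counterpart of the norm factor \(|\lambda_1\lambda_2\lambda_3|^2\) in the triple-product scaling proposition.

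\textbf{Step 3 (the two displayed identities).} Write
\[
  h_{12}h_{23}h_{31}=|h_{12}||h_{23}||h_{31}|\,\omega_{12}\omega_{23}\omega_{31}.
\]
Then \(C_{3,123}=h_{12}h_{23}h_{31}/(a_1a_2a_3)\). Since \(a_1a_2a_3\) is a nonzero real number and \(|h_{12}h_{23}h_{31}|>0\), dividing by the modulus gives
\[
  \frac{C_{3,123}}{|C_{3,123}|}=\frac{a_1a_2a_3}{|a_1a_2a_3|}\,\omega_{12}\omega_{23}\omega_{31}=\operatorname{sgn}(a_1a_2a_3)\,\omega_{12}\omega_{23}\omega_{31}.
\]
For the Cartan phase, \(T_H=-h_{12}h_{23}h_{31}\) is nonzero under the standing hypothesis, so
\[
  \frac{T_H}{|T_H|}=-\omega_{12}\omega_{23}\omega_{31}.
\]
This can be cross-checked against the modulus-phase decoupling theorem: multiplying the \(C_3\) identity by \(-\operatorname{sgn}(a_1a_2a_3)\) and using \(\operatorname{sgn}^2=1\) reproduces \(T_H/|T_H|=-\operatorname{sgn}(a_1a_2a_3)\,C_{3,123}/|C_{3,123}|\).

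There is no substantive obstacle. The only points needing care are the conjugation convention (linear in the first slot, so the factor is \(u_i\overline{u_j}\) and not \(\overline{u_i}u_j\)), and the fact that \(a_1a_2a_3\) may be negative. That sign is why the \(C_3\) identity carries \(\operatorname{sgn}(a_1a_2a_3)\) while the \(T_H\) identity does not, since \(T_H\) never divides by the diagonal entries.
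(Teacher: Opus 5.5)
Your proposal is correct and follows essentially the same route as the paper's proof. Both use Hermitian scaling $h_{ij}\mapsto\lambda_i\overline{\lambda_j}h_{ij}$ to get the edge-phase law, cancel the unit factors around the oriented cycle, and then write $h_{12}h_{23}h_{31}=|h_{12}||h_{23}||h_{31}|\,\omega_{12}\omega_{23}\omega_{31}$ and divide by the nonzero real $a_1a_2a_3$. Your extra cross-check against the modulus--phase decoupling theorem is consistent but not needed.
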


\begin{proof}
Under \(z_i\mapsto\lambda_i z_i\), Hermitian scaling gives
\[
  h_{ij}\longmapsto \lambda_i\overline{\lambda_j}\,h_{ij}.
\]
Taking phases gives
\[
  \frac{\lambda_i\overline{\lambda_j}h_{ij}}
       {|\lambda_i\overline{\lambda_j}h_{ij}|}
  =
  \frac{\lambda_i}{|\lambda_i|}
  \frac{\overline{\lambda_j}}{|\lambda_j|}
  \frac{h_{ij}}{|h_{ij}|}
  =
  u_i\overline{u_j}\,\omega_{ij}.
\]
Multiplying around the oriented cycle,
\[
\begin{aligned}
\omega_{12}\omega_{23}\omega_{31}
&\longmapsto
(u_1\overline{u_2})(u_2\overline{u_3})(u_3\overline{u_1})
\omega_{12}\omega_{23}\omega_{31}\\
&=\omega_{12}\omega_{23}\omega_{31}.
\end{aligned}
\]
Thus the cyclic edge phase is projectively invariant even though the
individual edge phases depend on the chosen projective representatives.

For the relation with \(C_3\), write
\[
  h_{12}h_{23}h_{31}
  =
  |h_{12}|\,|h_{23}|\,|h_{31}|\,
  \omega_{12}\omega_{23}\omega_{31}.
\]
Since \(a_1a_2a_3\) is real and nonzero,
\[
  C_{3,123}
  =
  \frac{|h_{12}|\,|h_{23}|\,|h_{31}|}{|a_1a_2a_3|}
  \operatorname{sgn}(a_1a_2a_3)\,
  \omega_{12}\omega_{23}\omega_{31}.
\]
Dividing by \(|C_{3,123}|\) gives the displayed formula.  Finally
\[
  T_H=-h_{12}h_{23}h_{31},
\]
so its normalized phase is the negative of the cyclic edge phase.
\end{proof}

\begin{remark}[Projective frame interpretation]
The phases \(\omega_{ij}\) form a \(U(1)\)-valued edge cochain on the
oriented triangle: changing projective representatives applies the vertex
rescaling factors \(u_i\overline{u_j}\).  The product
\[
  \omega_{12}\omega_{23}\omega_{31}
\]
is the corresponding projective cyclic phase.  Thus the decoupling can be
read as:
\[
\begin{array}{c|c}
C_{2,ij} & \text{edge moduli}\\
\omega_{ij} & \text{representative-dependent edge phases}\\
\omega_{12}\omega_{23}\omega_{31} & \text{projectively invariant cyclic phase}\\
C_{3,123} & \text{modulus fixed by \(C_2\), phase fixed by the cyclic product.}
\end{array}
\]
\end{remark}

\begin{theorem}[Gram compatibility of pair modulus and triple phase]
Let
\[
  D_{123}=\det(h(z_i,z_j))_{1\le i,j\le3}.
\]
Then
\[
  \frac{D_{123}}{a_1a_2a_3}
  =
  1-C_{2,12}-C_{2,23}-C_{2,31}
  +C_{3,123}+\overline{C_{3,123}}.
\]
Equivalently, if
\[
  C_{3,123}=\rho e^{i\alpha},
  \qquad
  \rho^2=C_{2,12}C_{2,23}C_{2,31},
\]
then
\[
  \frac{D_{123}}{a_1a_2a_3}
  =
  1-C_{2,12}-C_{2,23}-C_{2,31}
  +2\rho\cos\alpha.
\]
\end{theorem}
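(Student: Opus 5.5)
The plan is to expand the \(3\times3\) Hermitian Gram determinant directly and then divide by \(a_1a_2a_3\), which is a nonzero real number on the non-null triple domain. Write the Gram matrix as
\[
  G=\begin{pmatrix} a_1 & h_{12} & h_{13}\\ h_{21} & a_2 & h_{23}\\ h_{31} & h_{32} & a_3\end{pmatrix},
  \qquad h_{ji}=\overline{h_{ij}},\quad a_i=\overline{a_i}.
\]
We apply the Leibniz (Sarrus) expansion over the six permutations of \(\{1,2,3\}\). The identity gives \(a_1a_2a_3\). The three transpositions give \(-a_1h_{23}h_{32}\), \(-a_2h_{13}h_{31}\) and \(-a_3h_{12}h_{21}\). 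The two \(3\)-cycles give \(h_{12}h_{23}h_{31}\) and \(h_{13}h_{32}h_{21}\).

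The key bookkeeping step is to identify each term after dividing by \(a_1a_2a_3\).
\begin{itemize}
\item Each transposition term carries the missing pair of diagonals. For example, \(a_3h_{12}h_{21}/(a_1a_2a_3)=|h_{12}|^2/(a_1a_2)=C_{2,12}\), and the other two give \(C_{2,23}\) and \(C_{2,31}\).
\item The first cycle term is exactly \(C_{3,123}\) by definition.
\item The second cycle term is \(h_{13}h_{32}h_{21}=\overline{h_{31}h_{23}h_{12}}\). Since the denominator is real, it becomes \(\overline{C_{3,123}}\).
\end{itemize}
This yields the first displayed identity.

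For the polar form, we invoke the modulus-phase decoupling theorem, \(C_{3,123}\overline{C_{3,123}}=C_{2,12}C_{2,23}C_{2,31}\). Writing \(C_{3,123}=\rho e^{i\alpha}\) with \(\rho\ge0\), it gives \(\rho^2\) equal to the product of the pair cycles. Then \(C_{3,123}+\overline{C_{3,123}}=2\operatorname{Re}C_{3,123}=2\rho\cos\alpha\). If \(C_{3,123}=0\), the formula holds with \(\rho=0\) and any \(\alpha\).

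There is no real obstacle. The only place needing care is matching the cycle orientation: we must confirm that the second \(3\)-cycle term is the conjugate of \(C_{3,123}\) and not of a reordered product. We must also use the reality of the diagonal entries so that conjugation passes through the denominator.
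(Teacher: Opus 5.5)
Your proposal is correct and follows the paper's proof. The paper likewise divides the three-point Hermitian Gram expansion by the nonzero real number \(a_1a_2a_3\) and identifies the pair and triple cycles, then gets the polar form from the modulus identity \(C_{3,123}\overline{C_{3,123}}=C_{2,12}C_{2,23}C_{2,31}\). The only difference is that you carry out the Leibniz expansion inline, whereas the paper cites the three-point Gram identity, which it proves by first-row cofactor expansion in a later section; this makes your version self-contained at this point in the text.
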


\begin{proof}
This is the three-point Hermitian Gram identity divided by \(a_1a_2a_3\).
Indeed,
\[
  \frac{|h_{ij}|^2}{a_i a_j}=C_{2,ij},
  \qquad
  \frac{h_{12}h_{23}h_{31}}{a_1a_2a_3}=C_{3,123}.
\]
Substitution gives the first displayed identity.  The second follows by
writing \(C_{3,123}=\rho e^{i\alpha}\), so that
\[
  C_{3,123}+\overline{C_{3,123}}=2\rho\cos\alpha,
\]
and using the modulus identity from the preceding theorem.
\end{proof}

\begin{remark}[Boundary triples]
For boundary triples the diagonal denominators vanish, so the normalized
cycles \(C_2\) and \(C_3\) are not the correct objects.  The boundary phase is
carried by the raw triple product
\[
  T_\partial(\xi_1,\xi_2,\xi_3)
  =
  -h(\xi_1,\xi_2)h(\xi_2,\xi_3)h(\xi_3,\xi_1).
\]
Its argument is Cartan's boundary angular invariant.  In the standard
boundary theory, \(\mathbb A=0\) characterizes triples lying on a totally real
circle, while \(|\mathbb A|=\pi/2\) characterizes triples lying on a chain.
\end{remark}

\section{Three-point Gram identity}

Let \(h_{ij}=h(z_i,z_j)\).  The Hermitian Gram determinant is
\[
  \det(h_{ij})_{1\le i,j\le3}.
\]

\begin{theorem}[Three-point Hermitian Gram identity]
For \(a=h(z_1,z_1)\), \(b=h(z_2,z_2)\), \(c=h(z_3,z_3)\), and
\[
  \tau=h(z_1,z_2)h(z_2,z_3)h(z_3,z_1),
\]
one has
\[
\begin{aligned}
\det(h_{ij})
&=abc
-c|h(z_1,z_2)|^2
-a|h(z_2,z_3)|^2
-b|h(z_3,z_1)|^2  \\
&\qquad +\tau+\overline{\tau}.
\end{aligned}
\]
\end{theorem}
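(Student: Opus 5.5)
The plan is to expand the $3\times3$ determinant directly by the Leibniz (permutation) formula and then use Hermitian symmetry $h_{ji}=\overline{h_{ij}}$ to group the six terms. Write the Gram matrix as
\[
  G=\begin{pmatrix} a & h_{12} & h_{13}\\ h_{21} & b & h_{23}\\ h_{31} & h_{32} & c\end{pmatrix},
\]
where the diagonal entries $a,b,c$ are real because $h(z,z)=\overline{h(z,z)}$. The sum over $S_3$ has one term per permutation, and I will sort the permutations by cycle type.

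The identity permutation contributes $abc$. The three transpositions each contribute minus a diagonal entry times a product of a pair of mutually conjugate off-diagonal entries:
\begin{itemize}
\item $(12)$ gives $-c\,h_{12}h_{21}$;
\item $(23)$ gives $-a\,h_{23}h_{32}$;
\item $(13)$ gives $-b\,h_{13}h_{31}$.
\end{itemize}
By Hermitian symmetry, $h_{ij}h_{ji}=|h_{ij}|^2$, which yields the three subtracted terms in the statement. Note that $|h_{13}|^2=|h(z_3,z_1)|^2$.

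The two $3$-cycles contribute with sign $+1$. The cycle $1\to2\to3\to1$ gives $h_{12}h_{23}h_{31}=\tau$, and the reverse cycle gives $h_{13}h_{32}h_{21}$. By Hermitian symmetry and the same computation as in the cyclic and reverse behavior proposition for $T_H$, this second product is $\overline{h_{31}}\,\overline{h_{23}}\,\overline{h_{12}}=\overline{\tau}$. Summing the six contributions gives exactly the displayed formula.

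The only point needing care is the bookkeeping of which diagonal entry pairs with which transposition, and correctly identifying the reversed $3$-cycle as $\overline{\tau}$ rather than $\tau$. Both are routine. Nothing depends on the signature or on non-nullity, so the identity holds for arbitrary representatives, and indeed over any field with involution once $|h_{ij}|^2$ is read as $N(h_{ij})$.

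As a consistency check, dividing by $abc$ on the non-null domain should recover the Gram compatibility theorem stated earlier, with $C_{2,ij}$ and $C_{3,123}$ in place of the raw terms.
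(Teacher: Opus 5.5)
Your proof is correct and is essentially the paper's argument. The paper expands along the first row instead of summing over \(S_3\) by cycle type, but it obtains the same six terms. It identifies them the same way, using \(h_{ji}=\overline{h_{ij}}\) to produce the three \(|h_{ij}|^2\) terms and to recognize the reversed product \(h_{13}h_{21}h_{32}\) as \(\overline{\tau}\).
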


\begin{proof}
Expand the determinant of the Hermitian Gram matrix along the first row:
\[
\begin{aligned}
\det(h_{ij})
&=h_{11}(h_{22}h_{33}-h_{23}h_{32})
 -h_{12}(h_{21}h_{33}-h_{23}h_{31})\\
&\qquad
 +h_{13}(h_{21}h_{32}-h_{22}h_{31}).
\end{aligned}
\]
Substitute \(h_{11}=a\), \(h_{22}=b\), \(h_{33}=c\), and use Hermitian
symmetry \(h_{ji}=\overline{h_{ij}}\).  This gives
\[
\begin{aligned}
\det(h_{ij})
&=abc-a\,h_{23}h_{32}
  -h_{12}h_{21}c+h_{12}h_{23}h_{31}\\
&\qquad
  +h_{13}h_{21}h_{32}-h_{13}b h_{31}.
\end{aligned}
\]
The three pair-modulus terms are
\[
  h_{23}h_{32}=|h_{23}|^2,\qquad
  h_{12}h_{21}=|h_{12}|^2,\qquad
  h_{13}h_{31}=|h_{31}|^2,
\]
where the last equality uses \(h_{13}=\overline{h_{31}}\).  The two remaining
terms are the two orientations of the triple product without the conventional
minus sign:
\[
  h_{12}h_{23}h_{31}=\tau,
  \qquad
  h_{13}h_{21}h_{32}
  =\overline{h_{31}}\overline{h_{12}}\overline{h_{23}}
  =\overline{\tau}.
\]
Collecting these six terms yields
\[
\begin{aligned}
\det(h_{ij})
&=abc
-c|h_{12}|^2
-a|h_{23}|^2
-b|h_{31}|^2
+\tau+\overline{\tau},
\end{aligned}
\]
which is the displayed identity.
\end{proof}

This is the first place where \(\CH^2\) visibly departs from a two-invariant
quadrance/spread story.  Pair moduli do not determine the three-point
configuration; the real part of the triple product is also present, and the
phase class records orientation-sensitive information.

\section{Triangle trigonometry}

The preceding Gram identity is already the raw algebraic triangle law.  This
section rewrites it in the language of quadrance, phase, and polar spread, so
that the analogy with universal trigonometry is explicit.

\begin{definition}[Normalized triangle invariants]
For non-null representatives \(z_1,z_2,z_3\), write
\[
  a_i=h(z_i,z_i),\qquad h_{ij}=h(z_i,z_j),
\]
\[
  \Delta_{ij}=a_i a_j-|h_{ij}|^2,
  \qquad
  q_{ij}=\frac{\Delta_{ij}}{a_i a_j}.
\]
Set
\[
  D_{123}=\det(h_{ij})_{1\le i,j\le3},
  \qquad
  \Omega_{123}=\frac{h_{12}h_{23}h_{31}}{a_1a_2a_3},
\]
and let
\[
  \Theta_{123}=\Omega_{123}+\overline{\Omega}_{123}.
\]
\end{definition}

\begin{theorem}[Hermitian triangle constraint]
On the non-null triple domain,
\[
  \frac{D_{123}}{a_1a_2a_3}
  =
  q_{12}+q_{23}+q_{31}-2+\Theta_{123}.
\]
Moreover,
\[
  \Omega_{123}\overline{\Omega}_{123}
  =
  (1-q_{12})(1-q_{23})(1-q_{31}).
\]
\end{theorem}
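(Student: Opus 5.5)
The plan is to derive both identities directly from the Three-point Hermitian Gram identity and the Gram-compatibility theorem already proved, so that the only work is rewriting pair moduli in terms of quadrances. On the non-null triple domain the product \(a_1a_2a_3\) is a nonzero real number. We may therefore divide the Gram identity by it, which gives
\[
  \frac{D_{123}}{a_1a_2a_3}
  =1-\frac{|h_{12}|^2}{a_1a_2}-\frac{|h_{23}|^2}{a_2a_3}-\frac{|h_{31}|^2}{a_3a_1}
  +\Omega_{123}+\overline{\Omega}_{123}.
\]
This is exactly the Gram compatibility theorem, since \(C_{2,ij}=|h_{ij}|^2/(a_ia_j)\) and \(C_{3,123}=\Omega_{123}\). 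In the division, note that \(a_3\) multiplies \(|h_{12}|^2\), \(a_1\) multiplies \(|h_{23}|^2\), and \(a_2\) multiplies \(|h_{31}|^2\), so each term reduces to the correct pair cycle. For the conjugate term we use that \(a_1a_2a_3\) is real, so \(\overline{\tau}/(a_1a_2a_3)=\overline{\Omega}_{123}\).

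Next we substitute \(C_{2,ij}=\eta_H([z_i],[z_j])=1-q_{ij}\), which follows from the definition of \(q_H\) together with the Gram determinant form \(q_{ij}=\Delta_{ij}/(a_ia_j)\). The constant term then becomes
\[
  1-(1-q_{12})-(1-q_{23})-(1-q_{31})=q_{12}+q_{23}+q_{31}-2.
\]
Adding \(\Theta_{123}=\Omega_{123}+\overline{\Omega}_{123}\) gives the first displayed identity.

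The second identity is the modulus–phase decoupling theorem, \(C_{3,123}\overline{C_{3,123}}=C_{2,12}C_{2,23}C_{2,31}\), rewritten with \(C_{3,123}=\Omega_{123}\) and \(C_{2,ij}=1-q_{ij}\). If we prefer not to cite it, we can verify it directly by pairing \(h_{ij}\) with \(h_{ji}=\overline{h_{ij}}\) and using \(\overline{a_i}=a_i\).

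There is no real obstacle here. The only point needing care is bookkeeping: the division must be legitimate, which is guaranteed because the vertices are non-null, and conjugation must commute with the real denominator. We also need to match \(|h_{31}|^2=h_{31}h_{13}\) with \(q_{31}\), which holds because \(\Delta_H\) is symmetric in its two arguments.
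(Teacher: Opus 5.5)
Your proposal is correct and follows the paper's proof. You divide the three-point Hermitian Gram identity by the nonzero real product \(a_1a_2a_3\), rewrite each \(|h_{ij}|^2/(a_ia_j)\) as \(1-q_{ij}\), and use reality of the \(a_i\) to identify the conjugate term with \(\overline{\Omega}_{123}\); you then get the modulus identity by pairing each \(h_{ij}\) with \(h_{ji}=\overline{h_{ij}}\). Routing these steps through the Gram-compatibility and modulus--phase decoupling theorems is only a change of notation (\(C_{2,ij}=1-q_{ij}\), \(C_{3,123}=\Omega_{123}\)), since those results rest on the same computations.
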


\begin{proof}
Divide the three-point Gram identity by \(a_1a_2a_3\).  Since
\[
  \frac{|h_{ij}|^2}{a_ia_j}=1-q_{ij},
\]
the pair-modulus terms give
\[
\begin{aligned}
1-(1-q_{12})-(1-q_{23})-(1-q_{31})
&=q_{12}+q_{23}+q_{31}-2.
\end{aligned}
\]
The remaining two terms divide to
\[
  \frac{h_{12}h_{23}h_{31}}{a_1a_2a_3}
  +
  \frac{\overline{h_{12}h_{23}h_{31}}}{a_1a_2a_3}
  =
  \Omega_{123}+\overline{\Omega}_{123},
\]
because each \(a_i\) is real.  This proves the first identity.

For the modulus identity,
\[
\begin{aligned}
\Omega_{123}\overline{\Omega}_{123}
&=
\frac{|h_{12}|^2|h_{23}|^2|h_{31}|^2}
     {(a_1a_2a_3)^2}  \\
&=
\frac{|h_{12}|^2}{a_1a_2}\,
\frac{|h_{23}|^2}{a_2a_3}\,
\frac{|h_{31}|^2}{a_3a_1},
\end{aligned}
\]
where the last displayed line denotes the product of the three displayed
factors.  Each factor is \(1-q_{ij}\), giving the stated formula.
\end{proof}

\begin{definition}[Vertex polar spread]
Assume \(z_1,z_2,z_3\) are linearly independent and that the side spans
\(\langle z_1,z_2\rangle\) and \(\langle z_2,z_3\rangle\) are
nondegenerate.  Let \(p_3\) be a nonzero polar vector to
\(\langle z_1,z_2\rangle\), and let \(p_1\) be a nonzero polar vector to
\(\langle z_2,z_3\rangle\):
\[
  h(p_3,z_1)=h(p_3,z_2)=0,\qquad
  h(p_1,z_2)=h(p_1,z_3)=0.
\]
The vertex spread at \(z_2\) is
\[
  s_2=q_H([p_3],[p_1]).
\]
\end{definition}

\begin{proposition}[Vertex spread cofactor formula]
For a nondegenerate triangle whose side polars \(p_3,p_1\) are non-null,
\[
  1-s_2
  =
  \frac{|h_{12}h_{23}-a_2h_{13}|^2}{\Delta_{12}\Delta_{23}}.
\]
\end{proposition}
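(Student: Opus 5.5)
The plan is to construct the side polars $p_3$ and $p_1$ explicitly as dual-basis vectors of the frame $z_1,z_2,z_3$. Then their Hermitian Gram data become cofactors of the $3\times 3$ Gram matrix $G=(h_{ij})$. Since $q_H$ is projectively invariant (the scale invariance proposition), the choice of nonzero polar representatives does not matter. So it suffices to compute
\[
  1-s_2=\eta_H([p_3],[p_1])=\frac{|h(p_3,p_1)|^2}{h(p_3,p_3)h(p_1,p_1)}
\]
for one convenient choice.

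\textbf{Step 1: the dual basis.} Because $z_1,z_2,z_3$ are linearly independent and $h$ is nondegenerate on the three-dimensional space $V$, the Gram matrix $G$ is invertible, so $D_{123}\neq 0$. Hence there is a unique dual basis $e^1,e^2,e^3$ with $h(e^i,z_j)=\delta_{ij}$. Writing $e^i=\sum_k c_{ik}z_k$, the duality conditions read $CG=I$, so $C=G^{-1}$. A short computation, using that $G^{-1}$ is Hermitian, gives
\[
  h(e^i,e^j)=(CGC^\ast)_{ij}=(G^{-1})_{ij}.
\]
By construction $e^3$ is orthogonal to $z_1,z_2$ and $e^1$ is orthogonal to $z_2,z_3$, so we may take $p_3=e^3$ and $p_1=e^1$.

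\textbf{Step 2: cofactors.} By Cramer's rule, $(G^{-1})_{ij}=\operatorname{adj}(G)_{ij}/D_{123}$. The two diagonal entries we need are
\[
  h(p_3,p_3)=\frac{a_1a_2-|h_{12}|^2}{D_{123}}=\frac{\Delta_{12}}{D_{123}},
  \qquad
  h(p_1,p_1)=\frac{\Delta_{23}}{D_{123}}.
\]
The off-diagonal entry $h(p_3,p_1)=(G^{-1})_{31}$ is $\pm$ the $2\times2$ minor of $G$ obtained by deleting row $1$ and column $3$, divided by $D_{123}$. That minor is
\[
  h_{21}h_{32}-h_{22}h_{31}.
\]
By Hermitian symmetry, $h_{21}h_{32}-a_2h_{31}=\overline{h_{12}h_{23}-a_2h_{13}}$, so
\[
  |h(p_3,p_1)|=\frac{|h_{12}h_{23}-a_2h_{13}|}{|D_{123}|}.
\]

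\textbf{Step 3: assemble.} Substituting into $\eta_H$, the factor $D_{123}^2$ appears in both numerator and denominator and cancels. The result is exactly the claimed formula
\[
  1-s_2=\frac{|h_{12}h_{23}-a_2h_{13}|^2}{\Delta_{12}\Delta_{23}}.
\]
The hypothesis that the polars are non-null is precisely $\Delta_{12}\Delta_{23}\neq0$, which makes $q_H([p_3],[p_1])$ legal in the sense of the domain-decision theorem.

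The main obstacle is bookkeeping in Step 1: keeping the conjugation and transpose conventions of $h(\lambda z,\mu w)=\lambda\bar\mu\,h(z,w)$ straight, so that $h(e^i,e^j)$ really is $(G^{-1})_{ij}$ and not its transpose or conjugate. Only the modulus of the off-diagonal term enters the final formula, and the diagonal entries are real. So a transpose/conjugate slip there would not change the answer, which gives a useful consistency check.
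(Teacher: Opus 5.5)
Your proof is correct and is essentially the paper's argument. The paper also obtains the Gram data of the side polars, up to one common nonzero scalar, as the cofactor matrix of $G$ via Cramer's rule, and then cancels all scalars in $q_H([p_3],[p_1])$. Your dual-basis construction with $h(e^i,e^j)=(G^{-1})_{ij}$ simply makes explicit the proportionalities that the paper records with the symbol $\sim$.
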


\begin{proof}
Let \(G=(h_{ij})\) be the three-by-three Hermitian Gram matrix.  The polar
vector to the side opposite \(z_i\) is computed, up to projective scale, by
the \(i\)-th cofactor column of \(G\).  Equivalently, Cramer's rule gives the
Gram matrix of the three side polars, up to one common nonzero scalar, as the
cofactor matrix of \(G\).  Hence
\[
  h(p_3,p_3)\sim \Delta_{12},\qquad
  h(p_1,p_1)\sim \Delta_{23},
\]
and
\[
  h(p_3,p_1)\sim h_{12}h_{23}-a_2h_{13}.
\]
The common scalar and the independent projective scalings of \(p_3,p_1\)
cancel in the projective quadrance \(q_H([p_3],[p_1])\).  Therefore
\[
\begin{aligned}
s_2
&=1-\frac{|h(p_3,p_1)|^2}{h(p_3,p_3)h(p_1,p_1)}\\
&=1-\frac{|h_{12}h_{23}-a_2h_{13}|^2}
          {\Delta_{12}\Delta_{23}},
\end{aligned}
\]
which is the claim.
\end{proof}

\begin{theorem}[Hermitian cross law]
On the domain where the vertex spread \(s_2\) is defined,
\[
  q_{12}q_{23}s_2
  =
  q_{12}+q_{23}+q_{31}-2+\Theta_{123}.
\]
Equivalently, in denominator-cleared form,
\[
  \Delta_{12}\Delta_{23}s_2
  =
  a_2D_{123}.
\]
\end{theorem}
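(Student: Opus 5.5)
The plan is to prove the denominator-cleared identity \(\Delta_{12}\Delta_{23}s_2=a_2D_{123}\) first, and then obtain the normalized form by dividing through and invoking the Hermitian triangle constraint.

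\textbf{Step 1: reduce to a polynomial identity.} On the domain where \(s_2\) is defined, the side polars \(p_3,p_1\) are non-null, so \(\Delta_{12}\Delta_{23}\neq0\). The Vertex spread cofactor formula then gives
\[
  \Delta_{12}\Delta_{23}\,s_2
  =
  \Delta_{12}\Delta_{23}-|h_{12}h_{23}-a_2h_{13}|^2 .
\]
So the cleared form is equivalent to the purely algebraic identity
\[
  \Delta_{12}\Delta_{23}-|h_{12}h_{23}-a_2h_{13}|^2=a_2D_{123}.
\]
Conceptually, this is the Desnanot--Jacobi (Sylvester) identity for the Hermitian Gram matrix, taken with respect to the minors through the middle index \(2\). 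I would still verify it directly, since the computation is short.

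\textbf{Step 2: expand both terms.} First,
\[
  \Delta_{12}\Delta_{23}
  =a_1a_2^2a_3-a_1a_2|h_{23}|^2-a_2a_3|h_{12}|^2+|h_{12}|^2|h_{23}|^2 .
\]
Second, using \(a_2\in\mathbb R\) and \(h_{13}=\overline{h_{31}}\),
\[
  |h_{12}h_{23}-a_2h_{13}|^2
  =|h_{12}|^2|h_{23}|^2-a_2\bigl(\tau+\overline{\tau}\bigr)+a_2^2|h_{13}|^2,
\]
where \(\tau=h_{12}h_{23}h_{31}\). The cross term needs care: \(h_{12}h_{23}\overline{h_{13}}=h_{12}h_{23}h_{31}=\tau\). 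Getting this conjugation right is the only point where a sign or orientation error could occur, so it is the step I expect to need the most attention.

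\textbf{Step 3: subtract and recognize \(D_{123}\).} In the difference the quartic terms \(|h_{12}|^2|h_{23}|^2\) cancel, and every remaining term carries a factor \(a_2\):
\[
  a_2\bigl(a_1a_2a_3-a_1|h_{23}|^2-a_3|h_{12}|^2-a_2|h_{31}|^2+\tau+\overline\tau\bigr).
\]
By the Three-point Hermitian Gram identity, the bracket is exactly \(D_{123}\). This proves \(\Delta_{12}\Delta_{23}s_2=a_2D_{123}\).

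\textbf{Step 4: recover the normalized form.} Divide the cleared identity by \(a_1a_2^2a_3\), which is nonzero on the non-null domain. Since \(q_{ij}=\Delta_{ij}/(a_ia_j)\), the left side becomes \(q_{12}q_{23}s_2\) and the right side becomes \(D_{123}/(a_1a_2a_3)\). The Hermitian triangle constraint identifies the latter with
\[
  q_{12}+q_{23}+q_{31}-2+\Theta_{123},
\]
which gives the first displayed form of the Hermitian cross law.
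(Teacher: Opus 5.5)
Your proposal is correct and follows the paper's proof essentially step for step. Both clear the denominator via the vertex spread cofactor formula, expand \(|h_{12}h_{23}-a_2h_{13}|^2\) with cross term \(-a_2(\tau+\overline{\tau})\), factor out \(a_2\) to recognize \(D_{123}\) through the three-point Gram identity, and then divide by \(a_1a_2^2a_3\) and apply the Hermitian triangle constraint. Your observation that the cleared identity is the Desnanot--Jacobi identity for the Gram matrix at the middle index is correct and is a conceptual gloss the paper does not make, but the verification itself is the same.
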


\begin{proof}
By the cofactor formula,
\[
  s_2
  =
  \frac{\Delta_{12}\Delta_{23}
        -|h_{12}h_{23}-a_2h_{13}|^2}
       {\Delta_{12}\Delta_{23}}.
\]
Expand the squared modulus:
\[
\begin{aligned}
|h_{12}h_{23}-a_2h_{13}|^2
&=|h_{12}|^2|h_{23}|^2
  +a_2^2|h_{31}|^2
  -a_2(\tau+\overline{\tau}),
\end{aligned}
\]
where \(\tau=h_{12}h_{23}h_{31}\) and \(h_{13}=\overline{h_{31}}\).  Thus
\[
\begin{aligned}
\Delta_{12}\Delta_{23}
&-|h_{12}h_{23}-a_2h_{13}|^2\\
&=(a_1a_2-|h_{12}|^2)(a_2a_3-|h_{23}|^2)\\
&\qquad
  -|h_{12}|^2|h_{23}|^2
  -a_2^2|h_{31}|^2
  +a_2(\tau+\overline{\tau})\\
&=a_2\bigl(
    a_1a_2a_3
    -a_3|h_{12}|^2
    -a_1|h_{23}|^2
    -a_2|h_{31}|^2
    +\tau+\overline{\tau}
  \bigr)\\
&=a_2D_{123}.
\end{aligned}
\]
This proves the denominator-cleared identity.  Dividing by
\(a_1a_2^2a_3\) gives
\[
  q_{12}q_{23}s_2
  =
  \frac{D_{123}}{a_1a_2a_3}.
\]
The normalized form follows from the Hermitian triangle constraint.
\end{proof}

\begin{corollary}[Hermitian Pythagoras]
If the two side polars at \(z_2\) are Hermitian-orthogonal, equivalently
\(s_2=1\), then
\[
  q_{12}q_{23}
  =
  q_{12}+q_{23}+q_{31}-2+\Theta_{123}.
\]
Equivalently,
\[
  \Delta_{12}\Delta_{23}=a_2D_{123}.
\]
\end{corollary}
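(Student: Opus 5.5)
This corollary is a direct specialization of the Hermitian cross law, so the plan is simply to substitute \(s_2=1\) into both forms of that theorem.

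First I would clarify the hypothesis. By the definition of vertex spread, \(s_2=q_H([p_3],[p_1])\). The Corollary on the perpendicular value shows that \(h(p_3,p_1)=0\) forces \(q_H([p_3],[p_1])=1\). Conversely, on the domain where \(s_2\) is defined, \(p_3,p_1\) are non-null. There the Gram determinant form gives \(s_2=1\) if and only if \(|h(p_3,p_1)|^2=0\). So Hermitian orthogonality of the side polars at \(z_2\) is equivalent to \(s_2=1\), which justifies the word ``equivalently'' in the statement.

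Next I would set \(s_2=1\) in the denominator-cleared identity \(\Delta_{12}\Delta_{23}s_2=a_2D_{123}\), which immediately gives \(\Delta_{12}\Delta_{23}=a_2D_{123}\). For the normalized form, I would substitute \(s_2=1\) in \(q_{12}q_{23}s_2=q_{12}+q_{23}+q_{31}-2+\Theta_{123}\). The two displayed forms are interchangeable: dividing the cleared one by \(a_1a_2^2a_3\ne0\) and invoking the Hermitian triangle constraint gives the normalized one.

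As a cross-check tying the result to the polar data, I would note via the vertex spread cofactor formula that \(s_2=1\) is equivalent to \(h_{12}h_{23}=a_2h_{13}\). One could then verify the cleared identity directly by expanding \(\Delta_{12}\Delta_{23}-a_2D_{123}\), as in the proof of the cross law. Since the cross law is already available, this is not required.

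There is no real obstacle here. The only point needing care is that the domain hypotheses of the cross law (nondegenerate sides, non-null polars) are inherited by the corollary, since \(s_2\) must be defined before one can set it equal to \(1\).
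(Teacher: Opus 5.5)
Your proposal is correct and takes the same approach as the paper: the paper also reads \(s_2=q_H([p_3],[p_1])=1\) as exactly \(h(p_3,p_1)=0\) on the non-null polar domain, then substitutes \(s_2=1\) into both forms of the Hermitian cross law. Your extra cross-check via \(h_{12}h_{23}=a_2h_{13}\) is consistent with the cofactor formula but, as you note, not needed.
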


\begin{proof}
The condition \(s_2=1\) is exactly \(h(p_3,p_1)=0\) on the non-null polar
domain, since \(s_2=q_H([p_3],[p_1])\).  Substituting \(s_2=1\) into the
Hermitian cross law gives both displayed identities.
\end{proof}

\begin{definition}[Cyclic side polars and spreads]
For a linearly independent non-null triangle \(z_1,z_2,z_3\) whose side spans
are nondegenerate, choose side polars \(p_i\) by
\[
  h(p_i,z_j)=h(p_i,z_k)=0,
  \qquad
  \{i,j,k\}=\{1,2,3\}.
\]
Thus \(p_i\) is polar to the side opposite \(z_i\).  When the relevant side
polars are non-null, define the three vertex spreads by
\[
  s_1=q_H([p_2],[p_3]),\qquad
  s_2=q_H([p_3],[p_1]),\qquad
  s_3=q_H([p_1],[p_2]).
\]
Define the three spread numerators
\[
\begin{aligned}
\Sigma_1&=\Delta_{31}\Delta_{12}
          -|h_{31}h_{12}-a_1h_{32}|^2,\\
\Sigma_2&=\Delta_{12}\Delta_{23}
          -|h_{12}h_{23}-a_2h_{13}|^2,\\
\Sigma_3&=\Delta_{23}\Delta_{31}
          -|h_{23}h_{31}-a_3h_{21}|^2.
\end{aligned}
\]
\end{definition}

\begin{theorem}[Cyclic Hermitian spread law]
On the above cyclic nondegenerate domain,
\[
  \Sigma_i=a_iD_{123},
  \qquad i=1,2,3.
\]
Equivalently, whenever the displayed spreads are defined,
\[
\begin{aligned}
\Delta_{31}\Delta_{12}s_1&=a_1D_{123},\\
\Delta_{12}\Delta_{23}s_2&=a_2D_{123},\\
\Delta_{23}\Delta_{31}s_3&=a_3D_{123}.
\end{aligned}
\]
In normalized form,
\[
  q_{31}q_{12}s_1
  =
  q_{12}q_{23}s_2
  =
  q_{23}q_{31}s_3
  =
  \frac{D_{123}}{a_1a_2a_3}
  =
  q_{12}+q_{23}+q_{31}-2+\Theta_{123}.
\]
\end{theorem}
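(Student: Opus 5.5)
The plan is to reduce all three identities to the case \(i=2\), which is already established in the Hermitian cross law, by cyclically relabelling the vertices.

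\textbf{Relabelling step.} Apply the cyclic permutation \(\sigma:1\mapsto 2\mapsto 3\mapsto 1\) to the indices, i.e.\ consider the relabelled triangle \(z'_j=z_{\sigma(j)}\). Under \(\sigma\), each building block transforms predictably:
\begin{itemize}
\item the numerator \(\Sigma_2\) becomes \(\Sigma_3\);
\item the side polars \(p_3,p_1\) become \(p_1,p_2\);
\item the diagonal entry \(a_2\) becomes \(a_3\).
\end{itemize}
Checking the first claim: \(\Sigma_2=\Delta_{12}\Delta_{23}-|h_{12}h_{23}-a_2h_{13}|^2\) maps to \(\Delta_{23}\Delta_{31}-|h_{23}h_{31}-a_3h_{21}|^2=\Sigma_3\). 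Applying \(\sigma\) once more yields \(\Sigma_1\), since the same substitution carries \(\Sigma_3\) to \(\Delta_{31}\Delta_{12}-|h_{31}h_{12}-a_1h_{32}|^2\).

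\textbf{Invariance of the Gram determinant.} The key point is that \(D_{123}\) is unchanged by the relabelling. Permuting the vectors conjugates the Gram matrix by a permutation matrix \(P\), giving \(P^{T}GP\), and \(\det(P^{T}GP)=\det G\). One can also see this directly from the expansion in the three-point Hermitian Gram identity: it is symmetric in the diagonal entries, and \(\tau\) is cyclically invariant (Cyclic and reverse behavior).

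\textbf{Applying the cross law.} The proof of the Hermitian cross law was a purely polynomial computation, \(\Delta_{12}\Delta_{23}-|h_{12}h_{23}-a_2h_{13}|^2=a_2D_{123}\). It used only Hermitian symmetry and held for arbitrary representatives. Applied to the relabelled triple, it gives \(\Sigma_3=a_3D_{123}\) and \(\Sigma_1=a_1D_{123}\).

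\textbf{Spread forms.} For the spread versions, invoke the vertex spread cofactor formula at the relabelled middle vertex. This gives \(1-s_i=|\cdot|^2/(\Delta\Delta)\), hence \(\Sigma_i=\Delta\Delta\,s_i\). The hypothesis that the relevant side polars are non-null is exactly what is needed. One must also confirm that the polar vector opposite \(z_i\) is correctly relabelled: the defining equations \(h(p_i,z_j)=h(p_i,z_k)=0\) are permutation-equivariant, so this holds.

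\textbf{Normalized form.} Divide the \(i\)-th identity by \(a_1a_2a_3\cdot a_i\), using \(q_{ij}=\Delta_{ij}/(a_ia_j)\). For instance, \(\Delta_{31}\Delta_{12}/(a_1^2a_2a_3)=q_{31}q_{12}\). The common value of all three expressions is then \(D_{123}/(a_1a_2a_3)\), which equals \(q_{12}+q_{23}+q_{31}-2+\Theta_{123}\) by the Hermitian triangle constraint. Here \(\Theta_{123}\) is cyclically invariant because \(\Omega_{123}\) is.

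The only real obstacle is index bookkeeping: the cofactor terms use mixed orientations (\(h_{13}\) versus \(h_{31}\), and \(h_{32}\), \(h_{21}\)). I will verify that the relabelling sends \(h_{13}\mapsto h_{21}\mapsto h_{32}\), which matches the stated \(\Sigma_3\) and \(\Sigma_1\) exactly, so that no conjugation discrepancy arises.
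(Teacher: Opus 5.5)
Your proposal is correct and follows essentially the same route as the paper. You take $\Sigma_2=a_2D_{123}$ from the expanded numerator computation in the Hermitian cross law, obtain $i=1,3$ by cyclic relabelling (your bookkeeping $h_{13}\mapsto h_{21}\mapsto h_{32}$ and $(p_3,p_1)\mapsto(p_1,p_2)$ is right), use cyclic invariance of $D_{123}$ and $\Theta_{123}$, and get the spread and normalized forms from the cofactor formula and the triangle constraint. Your explicit division by $a_1a_2a_3a_i$ is the precise normalization that yields $q_{ji}q_{ik}s_i$; the paper loosely says it divides by $a_1a_2a_3$.
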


\begin{proof}
The identity \(\Sigma_2=a_2D_{123}\) is exactly the expanded numerator
calculation in the Hermitian cross law.  The other two identities are obtained
from the same calculation by cyclic relabelling
\[
  (z_1,z_2,z_3)\mapsto(z_2,z_3,z_1)
  \quad\text{and}\quad
  (z_1,z_2,z_3)\mapsto(z_3,z_1,z_2).
\]
The Gram determinant \(D_{123}\), the sum of quadrances, and
\(\Theta_{123}\) are cyclically invariant by the determinant identity and the
cyclic invariance of the triple product.  Dividing
\(\Sigma_i=a_iD_{123}\) by the appropriate product
\(\Delta_{ji}\Delta_{ik}\) gives the three spread formulas, and dividing
those formulas by \(a_1a_2a_3\) gives the normalized equality.  The final
expression is the Hermitian triangle constraint.
\end{proof}

\begin{corollary}[Cyclic Hermitian Pythagoras]
If the vertex spread at \(z_i\) is \(s_i=1\), then
\[
  q_{ji}q_{ik}
  =
  q_{12}+q_{23}+q_{31}-2+\Theta_{123},
  \qquad
  \{i,j,k\}=\{1,2,3\},
\]
with \(j-i-k\) in the cyclic order used above.  Equivalently,
\[
  \Delta_{ji}\Delta_{ik}=a_iD_{123}.
\]
\end{corollary}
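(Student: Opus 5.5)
This is a direct specialization of the Cyclic Hermitian spread law. Fix $i$ and let $(j,i,k)$ be the cyclic triple used there, so that $(i,j,k)$ is one of $(1,3,2)$, $(2,1,3)$, $(3,2,1)$. Because $s_i$ is assumed to equal $1$, the relevant side polars are non-null, and we are on the domain where that theorem holds. Its $i$-th identity then reads
\[
  \Delta_{ji}\Delta_{ik}\,s_i=a_iD_{123}.
\]
Substituting $s_i=1$ gives the denominator-cleared claim $\Delta_{ji}\Delta_{ik}=a_iD_{123}$ immediately.

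For the normalized form, we use the normalized chain of equalities in the same theorem:
\[
  q_{ji}q_{ik}s_i=\frac{D_{123}}{a_1a_2a_3}=q_{12}+q_{23}+q_{31}-2+\Theta_{123}.
\]
Setting $s_i=1$ yields the displayed Pythagoras identity.

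Alternatively, one can divide $\Delta_{ji}\Delta_{ik}=a_iD_{123}$ by $a_j a_i^2 a_k$, which is nonzero on the non-null domain. Using $q_{ji}=\Delta_{ji}/(a_ja_i)$ and $q_{ik}=\Delta_{ik}/(a_ia_k)$, the left side becomes $q_{ji}q_{ik}$. The right side becomes $D_{123}/(a_1a_2a_3)$, which equals $q_{12}+q_{23}+q_{31}-2+\Theta_{123}$ by the Hermitian triangle constraint.

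As in the proof of the two-dimensional Hermitian Pythagoras corollary, we should also record that $s_i=1$ is equivalent to Hermitian orthogonality of the two side polars meeting at $z_i$. This holds because $s_i=q_H$ of those polars, and $q_H=1$ exactly when $h$ of the two polars vanishes, given that their diagonal values are nonzero.

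The only point needing care is the index bookkeeping. We must check that for each $i$ the pair $\Delta_{ji}\Delta_{ik}$ matches the product appearing in the $i$-th line of the cyclic law: $\Delta_{31}\Delta_{12}$ for $i=1$, $\Delta_{12}\Delta_{23}$ for $i=2$, and $\Delta_{23}\Delta_{31}$ for $i=3$. Since $\Delta_{ij}=\Delta_{ji}$ and $q_{ij}=q_{ji}$ by Hermitian symmetry, the orientation conventions for $j$ and $k$ do not matter, and the statement holds for each $i$ without further computation.
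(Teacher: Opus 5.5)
Your proposal is correct and follows the paper's proof: set \(s_i=1\) in the cyclic Hermitian spread law, which gives both the normalized and the denominator-cleared identities, and note that \(s_i=1\) means the two side polars at \(z_i\) are Hermitian-orthogonal. Your index bookkeeping and the alternative normalization through the Hermitian triangle constraint are accurate but not needed.
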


\begin{proof}
This is the cyclic spread law with \(s_i=1\).  The condition \(s_i=1\) is
equivalent to Hermitian orthogonality of the two side polars meeting at
\(z_i\), because \(s_i\) is the projective quadrance of that polar pair.
\end{proof}

\begin{definition}[Ideal boundary triangle]
An ideal triangle is a triple of pairwise transverse null representatives
\(\xi_1,\xi_2,\xi_3\in V_0\), meaning
\[
  h(\xi_i,\xi_i)=0,
  \qquad
  h(\xi_i,\xi_j)\ne0\quad(i\ne j).
\]
Its boundary triple product is
\[
  \mathcal T_\partial(\xi_1,\xi_2,\xi_3)
  =
  -h(\xi_1,\xi_2)h(\xi_2,\xi_3)h(\xi_3,\xi_1).
\]
\end{definition}

\begin{proposition}[Ideal triangle determinant and chain test]
For a pairwise transverse ideal triangle, its Hermitian Gram determinant is
\[
  D_\partial
  =
  h(\xi_1,\xi_2)h(\xi_2,\xi_3)h(\xi_3,\xi_1)
  +
  \overline{h(\xi_1,\xi_2)h(\xi_2,\xi_3)h(\xi_3,\xi_1)}
  =
  -\mathcal T_\partial-\overline{\mathcal T_\partial}.
\]
The phase class of \(\mathcal T_\partial\) is projectively well-defined.
Moreover the three boundary points lie on one chain if and only if
\[
  D_\partial=0,
\]
equivalently \(\mathcal T_\partial\) is purely imaginary.
\end{proposition}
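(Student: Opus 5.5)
Three things have to be shown: the determinant formula, projective well-definedness of the phase class, and the chain test. The determinant formula comes from the Three-point Hermitian Gram identity with \(a=b=c=0\). All diagonal terms then vanish, so
\[
  D_\partial=\tau+\overline{\tau},
  \qquad
  \tau=h(\xi_1,\xi_2)h(\xi_2,\xi_3)h(\xi_3,\xi_1).
\]
Since \(\mathcal T_\partial=-\tau\), this equals \(-\mathcal T_\partial-\overline{\mathcal T_\partial}\). Projective well-definedness of the phase class is the Triple-product scaling proposition: rescaling multiplies \(\mathcal T_\partial\) by the positive factor \(|\lambda_1\lambda_2\lambda_3|^2\). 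Pairwise transversality gives \(\mathcal T_\partial\ne0\), so the phase is actually defined. The same scaling shows that \(D_\partial=0\) is a projective condition.

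For the chain test I will use linear algebra. By the incidence description of chains, the three points lie on a common chain \(\mathcal C_p\) exactly when some positive \(p\) satisfies \(h(\xi_i,p)=0\) for \(i=1,2,3\).

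For the direct implication, suppose such a \(p\) exists. Then \(\xi_1,\xi_2,\xi_3\) all lie in the two-plane \(p^\perp\), so they are linearly dependent. Hence the Gram matrix \(G=(h(\xi_i,\xi_j))\) is singular, being the matrix of \(h\) evaluated on a dependent family, and \(D_\partial=0\).

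For the converse, suppose \(D_\partial=0\). Since \(h\) is nondegenerate on \(V\), a singular Gram matrix forces the \(\xi_i\) to be linearly dependent. If \(\xi_1,\xi_2,\xi_3\) were independent, \(G\) would be the matrix of \(h\) in a basis and hence invertible. Distinct null lines are independent in pairs, so \(\xi_3\in W=\Span\{\xi_1,\xi_2\}\). By the Unique chain through two boundary points proposition, \(W^\perp\) is a positive line \([p]\), and \(\mathcal C_p\) contains \([\xi_1],[\xi_2]\). Because \(\xi_3\in W\), we get \(h(\xi_3,p)=0\), so \([\xi_3]\in\mathcal C_p\) as well.

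The equivalence with \(\mathcal T_\partial\) being purely imaginary is immediate, since \(D_\partial=-2\operatorname{Re}\mathcal T_\partial\).

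The main obstacle is the converse direction: it relies on the fact that a singular Gram matrix of three vectors in a nondegenerate three-dimensional space forces linear dependence. I will argue this by writing \(G=M^\ast JM\), where \(M\) has columns the coordinate vectors of the \(\xi_i\) in an orthonormal basis and \(J\) is the signature matrix (up to transpose or conjugation depending on convention). Then \(\det G=\det J\,|\det M|^2\), so \(D_\partial=0\) if and only if \(\det M=0\). This factorization handles both directions at once.
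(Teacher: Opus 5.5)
Your proof is correct and follows the paper's route: you specialize the three-point Gram identity to zero diagonal, use the triple-product scaling law, and rely on the equivalence between vanishing of the Gram determinant and linear dependence of the three null representatives. Your converse is somewhat more explicit than the paper's. You justify the dependence criterion through \(\det G=\det J\,|\det M|^2\), and you identify the common pole as \(\Span\{\xi_1,\xi_2\}^\perp\) via the unique-chain proposition. Note that the pairwise distinctness of the three null lines, which you need to conclude \(\xi_3\in\Span\{\xi_1,\xi_2\}\), follows from transversality \(h(\xi_i,\xi_j)\ne0\).
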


\begin{proof}
The determinant formula is the three-point Gram identity with all diagonal
terms \(h(\xi_i,\xi_i)\) equal to zero.  The scaling law for the triple
product shows that rescaling the \(\xi_i\) multiplies
\(\mathcal T_\partial\) by
\(|\lambda_1\lambda_2\lambda_3|^2\), so its phase class is unchanged.

Since the ambient Hermitian form is nondegenerate on a three-dimensional
space, the Gram determinant of three representatives vanishes exactly when
the representatives are linearly dependent.  A chain is the null boundary of
a complex two-plane.  Thus three boundary points on a chain have linearly
dependent representatives, hence \(D_\partial=0\).  Conversely, if
\(D_\partial=0\), the representatives span a two-dimensional complex plane;
pairwise transversality rules out collapse to one null line and gives the
usual nondegenerate \((1,1)\) chain plane.  Finally
\[
  D_\partial=-\mathcal T_\partial-\overline{\mathcal T_\partial}
\]
vanishes exactly when \(\mathcal T_\partial\) has zero real part.
\end{proof}

\begin{definition}[Pole or chain triangle]
A chain triangle is represented by three non-null positive poles
\(p_1,p_2,p_3\in V_+\).  Its side spreads are
\[
  S_{ij}=q_H([p_i],[p_j]),
\]
and its pole triple phase is the phase class of
\[
  T_+(p_1,p_2,p_3)
  =
  -h(p_1,p_2)h(p_2,p_3)h(p_3,p_1).
\]
\end{definition}

\begin{proposition}[Chain triangle identities]
For a nondegenerate chain triangle, all point-triangle identities above apply
to the positive-pole representatives \(p_1,p_2,p_3\).  In particular, if
\[
  b_i=h(p_i,p_i),
  \qquad
  \Delta^+_{ij}=b_i b_j-|h(p_i,p_j)|^2,
  \qquad
  D_+=\det(h(p_i,p_j)),
\]
and
\[
  \Omega_+=\frac{h(p_1,p_2)h(p_2,p_3)h(p_3,p_1)}{b_1b_2b_3},
  \qquad
  \Theta_+=\Omega_++\overline{\Omega_+},
\]
then
\[
  \frac{D_+}{b_1b_2b_3}
  =
  S_{12}+S_{23}+S_{31}-2+\Theta_+.
\]
The cyclic pole-spread laws are obtained by replacing
\((z_i,a_i,q_{ij},D_{123},\Theta_{123})\) by
\((p_i,b_i,S_{ij},D_+,\Theta_+)\) in the preceding theorem.
\end{proposition}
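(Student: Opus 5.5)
The plan is to observe that every point-triangle identity proved above uses only Hermitian algebra: sesquilinearity, Hermitian symmetry $h_{ji}=\overline{h_{ij}}$, reality and non-vanishing of the diagonal entries, and determinant expansion. None of them uses the sign of $h(z_i,z_i)$, and none uses the fact that the $z_i$ are negative. The Hermitian triangle constraint, the vertex spread cofactor formula, the Hermitian cross law and the cyclic Hermitian spread law are all stated on the \emph{non-null} triple domain. Positive poles $p_i\in V_+$ satisfy $b_i=h(p_i,p_i)>0$, so a chain triangle lies in that domain. I therefore intend to prove the proposition by direct substitution rather than by new computation.

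First I will apply the three-point Hermitian Gram identity to $p_1,p_2,p_3$. This gives $D_+=b_1b_2b_3-b_3|h(p_1,p_2)|^2-b_1|h(p_2,p_3)|^2-b_2|h(p_3,p_1)|^2+\tau_++\overline{\tau_+}$, where $\tau_+=h(p_1,p_2)h(p_2,p_3)h(p_3,p_1)$. Next I will divide by the nonzero real number $b_1b_2b_3$. Then I will use the identification $S_{ij}=q_H([p_i],[p_j])=1-|h(p_i,p_j)|^2/(b_ib_j)$, which is the polar-spread/quadrance identity noted after the definition of $S_H$. Exactly as in the proof of the Hermitian triangle constraint, this turns the pair terms into $S_{12}+S_{23}+S_{31}-2$ and the triple terms into $\Omega_++\overline{\Omega_+}=\Theta_+$. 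The modulus identity $\Omega_+\overline{\Omega_+}=(1-S_{12})(1-S_{23})(1-S_{31})$ follows verbatim in the same way.

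For the cyclic pole-spread laws, I will note that the cyclic Hermitian spread law needs two things. The triple must be linearly independent, which is what ``nondegenerate'' should mean here. The side spans must be nondegenerate, so that the side polars exist and the $\Delta^+_{ij}$ are nonzero, and the relevant polars must be non-null so that the spreads are defined. I will make this hypothesis explicit. For positive poles, $\Delta^+_{ij}\ne0$ means by the geodesic position discriminator that the two complex geodesics do not meet at a boundary point. Under this hypothesis the computation $\Sigma_i=a_iD_{123}$ is a polynomial identity in the Gram entries, so it transfers with $a_i\mapsto b_i$. This yields $\Delta^+_{ji}\Delta^+_{ik}s_i^+=b_iD_+$ together with the normalized forms.

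The main obstacle is not algebraic. It is pinning down the domain: I must state precisely which degeneracies (the asymptotic case $\Delta^+_{ij}=0$, or null side polars) are excluded by ``nondegenerate chain triangle'', so that every quotient used is legal under the domain decision theorem. I will first try to define nondegeneracy as linear independence plus $\Delta^+_{ij}\ne0$ for all $i\ne j$ plus non-null side polars, and then check that this matches the hypotheses of each cited identity.
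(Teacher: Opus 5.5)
Your proposal is correct and follows the paper's route. Positive poles have nonzero diagonal $b_i=h(p_i,p_i)$, so every point-triangle identity, being pure Hermitian algebra on the non-null triple domain, applies verbatim with $(z_i,a_i,q_{ij},D_{123},\Theta_{123})$ replaced by $(p_i,b_i,S_{ij},D_+,\Theta_+)$. Your explicit domain bookkeeping goes beyond the paper's one-line proof; note only that, for a linearly independent triple, each side polar has Hermitian norm proportional (by the cofactor/adjugate computation) to the corresponding $\Delta^+_{ij}$, so your separate non-null-polar hypothesis is already implied by $\Delta^+_{ij}\ne0$.
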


\begin{proof}
Positive poles are non-null representatives.  Therefore the normalized
triangle constraint, cofactor formulas, cyclic spread law, and Pythagoras
specializations apply verbatim to the triple \(p_1,p_2,p_3\).  The notation
above is exactly the notation of the point-triangle theorem with \(z_i\)
renamed as \(p_i\).
\end{proof}

\begin{proposition}[Chain tangency discriminator]
Let \(p,q\in V_+\) be independent positive poles, and let
\[
  \mathcal C_p=\{[\xi]\in\partial\CH^2:h(\xi,p)=0\},
  \qquad
  \mathcal C_q=\{[\xi]\in\partial\CH^2:h(\xi,q)=0\}.
\]
Then \(\mathcal C_p\) and \(\mathcal C_q\) meet on the boundary if and only if
\[
  \Delta_H(p,q)=0.
\]
In that case the intersection is the null projective line
\[
  \PH\bigl(\Span\{p,q\}^{\perp}\bigr).
\]
\end{proposition}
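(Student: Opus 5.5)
The plan is to reduce everything to the one-dimensional complement \(\ell=\Span\{p,q\}^{\perp}\), as in the proof of the Geodesic position discriminator. Since \(p,q\) are independent and \(h\) is nondegenerate on the three-dimensional space \(V\), the space \(\ell\) is a complex line. A boundary point \([\xi]\) lies on both chains exactly when \(h(\xi,p)=h(\xi,q)=0\) and \(h(\xi,\xi)=0\). The first two conditions say \(\xi\in\ell\setminus\{0\}\). So the intersection \(\mathcal C_p\cap\mathcal C_q\) is either the single projective point \(\PH(\ell)\), when \(\ell\) is null, or empty, when \(\ell\) is not null. This already gives the description of the intersection claimed in the second sentence.

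It remains to show that \(\ell\) is null if and only if \(\Delta_H(p,q)=0\). First I will treat the forward direction. If \(\Delta_H(p,q)=0\), the Gram matrix of \(h\) on \(W=\Span\{p,q\}\) is singular, so \(W\) has a nonzero radical vector \(r\in W\cap W^{\perp}=W\cap\ell\). Since \(\ell\) is a line, \(\ell=\C r\subset W\). Then \(h(r,r)=0\), so \(\ell\) is null and \([r]\) is a common boundary point. For the converse, suppose \(\ell=\C\xi\) is null. Then \(\xi\in\ell^{\perp}\), and \(\ell^{\perp}=(W^{\perp})^{\perp}=W\) by nondegeneracy. So \(\xi\in W\cap W^{\perp}\) is a nonzero radical vector of \(h|_W\), which forces \(\Delta_H(p,q)=\det(h|_W)=0\).

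Alternatively, I can quote the signature argument from the Geodesic position discriminator directly. With \(h(p,p)>0\), the plane \(W\) is positive definite, degenerate, or of signature \((1,1)\) according to the sign of \(\Delta_H(p,q)\). Correspondingly \(\ell\) is negative, null, or positive. In the nondegenerate cases \(V=W\oplus\ell\), and inertia forces the complementary sign. In the degenerate case \(\ell\subset W\) is the radical. Only the null case meets \(\partial\CH^2\), so this gives both the equivalence and the intersection in one step.

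The main point needing care is the degenerate case. There the decomposition \(V=W\oplus\ell\) fails, so the inertia count cannot be used. I will rely on the double-perpendicular identity \(\ell^{\perp}=W\), which holds for any nondegenerate form and gives \(\ell\subset W\). The argument should not depend on a signature decomposition.
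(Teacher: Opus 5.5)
Your proposal is correct and follows essentially the same route as the paper. Both reduce to the line \(\ell=\Span\{p,q\}^{\perp}\), note that a singular Gram matrix on \(W=\Span\{p,q\}\) has a null radical lying in \(\ell\) and hence equal to it by dimension, and use nondegeneracy of \(h\) for the other direction; the paper obtains the radical from its explicit null-join witness and argues the converse contrapositively (a nondegenerate \(W\) has non-null complement), whereas you argue directly via \(\ell^{\perp}=W\).
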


\begin{proof}
A boundary point lies in \(\mathcal C_p\cap\mathcal C_q\) exactly when it is
represented by a nonzero null vector in \(\Span\{p,q\}^{\perp}\).  Since
\(p,q\) are
independent, this orthogonal complement is one-dimensional.  The restriction
of \(h\) to \(\Span\{p,q\}\) has Gram determinant \(\Delta_H(p,q)\).  If this
determinant is nonzero, the span is nondegenerate, so its orthogonal
complement is a non-null line and contains no boundary point.  If
\(\Delta_H(p,q)=0\), the null-join witness for the pole pair gives the null
radical direction in \(\Span\{p,q\}\).  That radical is contained in
\(\Span\{p,q\}^{\perp}\), and both spaces are one-dimensional, so the
orthogonal complement is exactly the same null projective direction.  Hence
the two chains meet exactly in that boundary point.
\end{proof}

\begin{definition}[Typed mixed triangle readout]
A mixed triangle is a triple whose entries are chosen from non-null points,
boundary null points, and chains represented by positive poles.  Its legal
readouts are assigned pairwise by stratum:
\[
\begin{array}{c|c}
\text{pair type} & \text{legal readout}\\ \hline
\text{non-null/non-null} & q_H,\ \eta_H,\ \Delta_H\\
\text{positive pole/positive pole} & \text{polar spread }q_H\\
\text{null/null} & \text{boundary pair kernel and triple phase}\\
\text{null/non-null} & \text{incidence and pairing kernel }h(\xi,z)\\
\text{null/positive pole} & \text{chain incidence }h(\xi,p)=0
\end{array}
\]
The mixed triple phase, when all three pairings are nonzero, is again the
phase class of
\[
  -h(x_1,x_2)h(x_2,x_3)h(x_3,x_1).
\]
\end{definition}

\begin{proposition}[Mixed triangle projective descent]
Every zero/nonzero incidence statement in the mixed table is independent of
the chosen representatives.  Whenever the mixed triple product is defined,
its phase class is also independent of representatives.
\end{proposition}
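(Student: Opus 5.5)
The plan is to reduce every entry of the mixed table to the single scaling rule \(h(\lambda u,\mu v)=\lambda\overline{\mu}\,h(u,v)\) with \(\lambda,\mu\ne0\), used in the same way as in the Primitive scale weights proposition and the Triple-product scaling proposition. First I fix representatives \(x_1,x_2,x_3\) of the three mixed entries and replace them by \(\lambda_ix_i\) with \(\lambda_i\in\C^\times\). I then check separately that (i) the stratum predicates themselves, (ii) each zero/nonzero incidence equation, and (iii) the mixed triple phase are unchanged.

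For (i), the diagonal changes as \(h(\lambda x,\lambda x)=|\lambda|^2h(x,x)\) with \(|\lambda|^2>0\). So nullity, negativity and positivity are all preserved, and the stratum type of each vertex is well-defined; over a field with involution the factor is \(N(\lambda)\ne0\), which preserves the null predicate. This makes the assignment of a legal readout to each pair type in the typed table representative-independent.

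For (ii), every incidence statement in the table has the form \(h(x_i,x_j)=0\) or \(h(x_i,x_j)\ne0\). Typical cases are the chain incidence \(h(\xi,p)=0\), the polar-geodesic incidence \(h(z,p)=0\), and the transversality \(h(\xi,\eta)\ne0\). Since \(h(\lambda_ix_i,\lambda_jx_j)=\lambda_i\overline{\lambda_j}h(x_i,x_j)\) and \(\lambda_i\overline{\lambda_j}\ne0\), vanishing is preserved in both directions. I will cite the Projective polarity of chains proposition, which already handles the null/pole case, and note that the same argument covers every other pair type verbatim. For the non-null pairs, the quotient readouts \(q_H,\eta_H\) are already invariant by the Scale invariance proposition. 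Zero/nonzero statements about \(\Delta_H\), the null-join case, are also invariant, since \(\Delta_H\) scales by \(|\lambda|^2|\mu|^2\).

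For (iii), when all three pairings are nonzero I repeat the computation of the Triple-product scaling proposition, which never used the strata. The product \(-h(x_1,x_2)h(x_2,x_3)h(x_3,x_1)\) picks up \((\lambda_1\overline{\lambda_2})(\lambda_2\overline{\lambda_3})(\lambda_3\overline{\lambda_1})=|\lambda_1\lambda_2\lambda_3|^2>0\), so its argument, and hence its phase class modulo positive norm factors, is unchanged. The condition that the triple product is defined, meaning all three pairings are nonzero, is itself representative-independent by (ii).

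The only real care point, rather than a genuine obstacle, is to make clear that the argument is uniform across strata. The Triple-product scaling proposition was stated for generic representatives, and the earlier cycle normalizations \(C_{3}\) divided by diagonal terms that vanish on null vertices. I must therefore use the raw triple product rather than \(C_{3,123}\). Then no diagonal denominator appears, and the null entries cause no difficulty.
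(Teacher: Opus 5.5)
Your proposal is correct and follows essentially the same route as the paper. Both use the single scaling rule \(h(\lambda x,\mu y)=\lambda\overline{\mu}\,h(x,y)\) to make every vanishing condition representative-independent, cite the earlier scale-invariance results for \(q_H,\eta_H\) and polar spread, and note that the raw triple product picks up the positive factor \(|\lambda_1\lambda_2\lambda_3|^2\). Your extra check that the stratum predicates themselves descend, and your remark that only the zero/nonzero status of \(\Delta_H\) (not its value) descends, are small refinements that the paper leaves implicit.
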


\begin{proof}
For a single pairing,
\[
  h(\lambda x,\mu y)=\lambda\overline{\mu}\,h(x,y),
\]
so the property \(h(x,y)=0\) is unchanged by nonzero rescaling of either
representative.  The pair invariants \(q_H,\eta_H,\Delta_H\) on non-null
pairs have already been proved to descend projectively, and polar spread is
the same invariant applied to positive poles.  For the mixed triple product,
the three scalar factors multiply to
\[
  \lambda_1\overline{\lambda_2}\,
  \lambda_2\overline{\lambda_3}\,
  \lambda_3\overline{\lambda_1}
  =
  |\lambda_1\lambda_2\lambda_3|^2,
\]
a positive real norm factor.  Thus the phase class descends.
\end{proof}

\subsection*{Geometric role of the phase}

The normalized phase \(\Omega_{123}\) is not an optional decoration.  The
identity
\[
  \Omega_{123}\overline{\Omega}_{123}
  =
  (1-q_{12})(1-q_{23})(1-q_{31})
\]
says that the three pair quadrances determine the modulus of
\(\Omega_{123}\), but not its argument.  The argument is the Cartan-type
angular readout.  Reversing the cyclic order conjugates \(\Omega_{123}\), so
the imaginary part is the orientation-sensitive part of the triangle.

If \(\Omega_{123}\) is real, the oriented triple product is invariant under
reversal; in the boundary Cartan theory this is the real-circle, or totally
real, extreme.  If \(\Omega_{123}\) is purely imaginary and nonzero, then
\(\Theta_{123}=0\): the phase contributes no real term to the Gram determinant
even though the oriented triangle has maximal \(U(1)\) twist.  On boundary
triples this is the complex-chain extreme.  For interior non-null triples it
should be read algebraically as a phase condition on the Gram data, not by
itself as an incidence statement.

\section{Cross ratio}

The complex cross ratio below is the standard Koranyi--Reimann expression.
The present calculus keeps the full complex projective expression, rather than
only its absolute value, and records the quotient-level algebraic laws it
satisfies.

\begin{definition}[Hermitian cross ratio]
For projective representatives \(a,b,c,d\), define
\[
  X(a,b;c,d)
  =
  \frac{h(c,a)h(d,b)}{h(d,a)h(c,b)},
\]
whenever the denominator is nonzero.
\end{definition}

\begin{proposition}[Scale invariance]
The value \(X(a,b;c,d)\) is unchanged by independently rescaling
\[
  a,b,c,d
  \mapsto
  \lambda_a a,\lambda_b b,\lambda_c c,\lambda_d d
\]
by nonzero complex scalars.
\end{proposition}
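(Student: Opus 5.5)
The plan is a direct substitution using only the Hermitian scaling convention \(h(\lambda z,\mu w)=\lambda\overline{\mu}\,h(z,w)\), in the same spirit as the primitive scale-weight proposition.

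First I will rescale each of the four factors separately:
\[
  h(\lambda_c c,\lambda_a a)=\lambda_c\overline{\lambda_a}\,h(c,a),\qquad
  h(\lambda_d d,\lambda_b b)=\lambda_d\overline{\lambda_b}\,h(d,b),
\]
\[
  h(\lambda_d d,\lambda_a a)=\lambda_d\overline{\lambda_a}\,h(d,a),\qquad
  h(\lambda_c c,\lambda_b b)=\lambda_c\overline{\lambda_b}\,h(c,b).
\]
Multiplying the two numerator factors gives the prefactor \(\lambda_c\lambda_d\overline{\lambda_a}\,\overline{\lambda_b}\) times the original numerator. Multiplying the two denominator factors gives \(\lambda_d\lambda_c\overline{\lambda_a}\,\overline{\lambda_b}\) times the original denominator. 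These two prefactors are identical, which is exactly the ``paired cancellation'' mechanism recorded earlier.

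The prefactor is nonzero because all four scalars are nonzero. Hence the rescaled denominator vanishes if and only if the original one does, so the domain of definition of \(X\) is itself projectively well defined. On that domain the common factor cancels and \(X(\lambda_a a,\lambda_b b;\lambda_c c,\lambda_d d)=X(a,b;c,d)\).

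There is no genuine obstacle. The only point needing care is the bookkeeping: each of \(a,b\) appears conjugated exactly once upstairs and once downstairs, and each of \(c,d\) appears unconjugated exactly once upstairs and once downstairs. It is this balance that makes the full complex value of \(X\) invariant, not merely its modulus.
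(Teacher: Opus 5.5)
Your proposal is correct and matches the paper's proof: you substitute the sesquilinear scaling into the four pairings and observe that numerator and denominator acquire the same nonzero prefactor \(\lambda_c\lambda_d\overline{\lambda_a}\,\overline{\lambda_b}\), which cancels. Your extra remark that the nonvanishing of the denominator is itself preserved under rescaling, so the domain of \(X\) is projectively well defined, is a small refinement that the paper leaves implicit.
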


\begin{proof}
Each Hermitian pairing acquires one scalar from its first argument and the
conjugate scalar from its second argument:
\[
  h(\lambda_c c,\lambda_a a)=\lambda_c\overline{\lambda_a}h(c,a),
  \qquad
  h(\lambda_d d,\lambda_b b)=\lambda_d\overline{\lambda_b}h(d,b),
\]
while
\[
  h(\lambda_d d,\lambda_a a)=\lambda_d\overline{\lambda_a}h(d,a),
  \qquad
  h(\lambda_c c,\lambda_b b)=\lambda_c\overline{\lambda_b}h(c,b).
\]
Thus the rescaled numerator is
\[
  \lambda_c\lambda_d\overline{\lambda_a}\,\overline{\lambda_b}\,
  h(c,a)h(d,b),
\]
and the rescaled denominator is
\[
  \lambda_d\lambda_c\overline{\lambda_a}\,\overline{\lambda_b}\,
  h(d,a)h(c,b).
\]
The same nonzero scalar multiplies both numerator and denominator, so the
quotient is unchanged.
\end{proof}

\begin{proposition}[Endpoint symmetries]
On the nonzero numerator/denominator domain,
\[
  X(a,b;d,c)=X(a,b;c,d)^{-1},\qquad
  X(b,a;c,d)=X(a,b;c,d)^{-1}.
\]
Swapping both endpoint pairs preserves the value:
\[
  X(b,a;d,c)=X(a,b;c,d).
\]
Exchanging the two ordered pairs conjugates it:
\[
  X(c,d;a,b)=\overline{X(a,b;c,d)}.
\]
\end{proposition}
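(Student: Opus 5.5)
The plan is to substitute the swapped arguments directly into the defining quotient
\[
  X(a,b;c,d)=\frac{h(c,a)h(d,b)}{h(d,a)h(c,b)}
\]
and compare the resulting four pairings with the original ones.  The only tools needed are commutativity of complex multiplication and Hermitian symmetry \(h(u,v)=\overline{h(v,u)}\).  No projective rescaling argument is needed, since each identity already holds at the level of fixed representatives.  Projective descent then follows from the scale invariance proposition proved just above.  Throughout, I work on the domain where all four pairings \(h(c,a),h(d,b),h(d,a),h(c,b)\) are nonzero, so that every quotient and its inverse are defined.

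For the first inversion, replacing \((c,d)\) by \((d,c)\) gives
\[
  X(a,b;d,c)=\frac{h(d,a)h(c,b)}{h(c,a)h(d,b)},
\]
which is literally the reciprocal of \(X(a,b;c,d)\).  Similarly, replacing \((a,b)\) by \((b,a)\) gives
\[
  X(b,a;c,d)=\frac{h(c,b)h(d,a)}{h(d,b)h(c,a)},
\]
and after reordering factors this is again the reciprocal.  For the double swap, I compose the two single swaps: applying the second inversion to \(X(a,b;d,c)\) yields \(X(b,a;d,c)=X(a,b;d,c)^{-1}=X(a,b;c,d)\).  One can also see this directly, since the double swap returns the original numerator \(h(d,b)h(c,a)\) over the original denominator \(h(c,a)\)-type product \(h(c,b)h(d,a)\).

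For the conjugation law, I write out
\[
  X(c,d;a,b)=\frac{h(a,c)h(b,d)}{h(b,c)h(a,d)}.
\]
Applying Hermitian symmetry to each factor converts it into
\[
  \frac{\overline{h(c,a)}\,\overline{h(d,b)}}{\overline{h(c,b)}\,\overline{h(d,a)}}=\overline{X(a,b;c,d)}.
\]

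The only point requiring care, and the closest thing to an obstacle, is bookkeeping of the domain.  The swapped quotients have numerator and denominator exchanged, so they need the numerator \(h(c,a)h(d,b)\) of the original expression to be nonzero as well.  This is exactly the "nonzero numerator/denominator domain" stipulated in the statement.  For the conjugation law, the denominator of \(X(c,d;a,b)\) is the conjugate of the original denominator, so it is nonzero whenever the original one is; that identity therefore holds whenever \(X(a,b;c,d)\) is defined.
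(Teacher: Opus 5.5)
Your proposal is correct and follows essentially the same route as the paper's proof. Both substitute directly into the quotient, observe that each single swap exchanges the numerator \(h(c,a)h(d,b)\) with the denominator \(h(d,a)h(c,b)\), obtain the double swap as two inversions or by direct substitution, and get the conjugation law from Hermitian symmetry plus commutativity. Your extra remark that the conjugation law needs only the original denominator to be nonzero is a valid small refinement of the paper's blanket assumption that numerator and denominator are both nonzero.
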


\begin{proof}
Set
\[
  N=h(c,a)h(d,b),
  \qquad
  D=h(d,a)h(c,b).
\]
On the stated domain \(N\ne0\) and \(D\ne0\), and \(X(a,b;c,d)=N/D\).
Swapping \(c\) and \(d\) interchanges numerator and denominator:
\[
  X(a,b;d,c)
  =
  \frac{h(d,a)h(c,b)}{h(c,a)h(d,b)}
  =
  \frac{D}{N}
  =
  X(a,b;c,d)^{-1}.
\]
Similarly, swapping \(a\) and \(b\) gives
\[
  X(b,a;c,d)
  =
  \frac{h(c,b)h(d,a)}{h(d,b)h(c,a)}
  =
  \frac{D}{N}
  =
  X(a,b;c,d)^{-1}.
\]
Applying both endpoint swaps applies inversion twice.  Equivalently,
\[
  X(b,a;d,c)
  =
  \frac{h(d,b)h(c,a)}{h(c,b)h(d,a)}
  =
  \frac{N}{D}
  =
  X(a,b;c,d).
\]

For the ordered-pair exchange, Hermitian symmetry gives
\[
\begin{aligned}
\overline{X(a,b;c,d)}
&=
\frac{\overline{h(c,a)}\,\overline{h(d,b)}}
     {\overline{h(d,a)}\,\overline{h(c,b)}}\\
&=
\frac{h(a,c)h(b,d)}
     {h(a,d)h(b,c)}.
\end{aligned}
\]
By commutativity of complex multiplication this is
\[
  \frac{h(a,c)h(b,d)}{h(b,c)h(a,d)}
  =
  X(c,d;a,b),
\]
which proves the conjugation law.
\end{proof}

\section{Chain cross-ratio calculus}

Complex geodesics inherit a one-dimensional complex-hyperbolic boundary
calculus.  Algebraically, this is simply the Hermitian cross ratio restricted
to the null lines in one positive-pole hyperplane.

\begin{definition}[Chain cross ratio]
Let \(p\in V_+\), and let
\[
  [\xi_1],[\xi_2],[\xi_3],[\xi_4]\in\mathcal C_p
\]
be boundary points for which the denominator is nonzero.  Define
\[
  X_{\mathcal C_p}(\xi_1,\xi_2;\xi_3,\xi_4)
  =
  X(\xi_1,\xi_2;\xi_3,\xi_4).
\]
\end{definition}

\begin{proposition}[Chain cross-ratio descent and symmetries]
The value \(X_{\mathcal C_p}\) is independent of the representatives
\(\xi_i\) and of the positive-pole representative \(p\).  It satisfies the
same endpoint laws:
\[
  X_{\mathcal C_p}(\xi_1,\xi_2;\xi_4,\xi_3)
  =
  X_{\mathcal C_p}(\xi_1,\xi_2;\xi_3,\xi_4)^{-1},
\]
\[
  X_{\mathcal C_p}(\xi_2,\xi_1;\xi_3,\xi_4)
  =
  X_{\mathcal C_p}(\xi_1,\xi_2;\xi_3,\xi_4)^{-1},
\]
\[
  X_{\mathcal C_p}(\xi_2,\xi_1;\xi_4,\xi_3)
  =
  X_{\mathcal C_p}(\xi_1,\xi_2;\xi_3,\xi_4),
\]
and
\[
  X_{\mathcal C_p}(\xi_3,\xi_4;\xi_1,\xi_2)
  =
  \overline{X_{\mathcal C_p}(\xi_1,\xi_2;\xi_3,\xi_4)}.
\]
\end{proposition}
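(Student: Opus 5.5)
The plan is to reduce everything to the ambient Hermitian cross ratio. By definition, \(X_{\mathcal C_p}(\xi_1,\xi_2;\xi_3,\xi_4)=X(\xi_1,\xi_2;\xi_3,\xi_4)\). The pole \(p\) enters this definition only through the incidence condition \([\xi_i]\in\mathcal C_p\); it does not appear in the formula itself.

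The first step is to treat independence from the pole representative. By the proposition on projective polarity of chains, \(\mathcal C_{\lambda p}=\mathcal C_p\) for every \(\lambda\in\C^\times\). So the admissible set of quadruples is the same for \(p\) and \(\lambda p\), and the value assigned to a quadruple does not involve \(p\) at all. Hence rescaling \(p\) changes nothing.

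The second step is to treat independence from the representatives \(\xi_i\). The same polarity proposition shows that nullity and the incidence condition \(h(\xi_i,p)=0\) hold for every representative of \([\xi_i]\). The scale-invariance proposition for \(X\) then shows that the value is unchanged under \(\xi_i\mapsto\lambda_i\xi_i\). That proposition also shows that numerator and denominator pick up the same nonzero factor, so the nonvanishing of the denominator is itself representative-independent. Consequently the domain of definition descends to projective classes.

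The third step is the four endpoint laws, which are obtained by applying the endpoint-symmetries proposition for \(X\) verbatim to \(a=\xi_1,b=\xi_2,c=\xi_3,d=\xi_4\).

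The only point needing care, and the main potential obstacle, is the domain. The inversion laws require both numerator and denominator to be nonzero, whereas the chain cross ratio was defined only under a nonzero denominator. I would handle this by stating the endpoint laws on the nonzero numerator/denominator domain, exactly as in the ambient proposition. For boundary points there is a cleaner description: the pairing \(h(\xi_i,\xi_j)\) vanishes only if \([\xi_i]=[\xi_j]\), because distinct null lines are nonorthogonal (Witt index one). So this domain is just the set of quadruples with \([\xi_1],[\xi_2]\) each distinct from \([\xi_3],[\xi_4]\), which is symmetric under all the listed permutations.

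The conjugation law needs no extra hypothesis. Its right-hand side has numerator \(h(\xi_1,\xi_3)h(\xi_2,\xi_4)\) and denominator \(h(\xi_1,\xi_4)h(\xi_2,\xi_3)\), which are the complex conjugates of the original numerator and denominator. It is therefore defined exactly when the original is, and all four chain laws follow.
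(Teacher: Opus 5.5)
Your proposal is correct and follows the paper's proof essentially verbatim. Pole-independence comes from the polarity proposition \(\mathcal C_{\lambda p}=\mathcal C_p\), representative-independence from scale invariance of the ambient \(X\), and the four laws from the ambient endpoint-symmetry proposition. Your explicit domain discussion is a correct refinement that the paper leaves implicit: the nonzero numerator/denominator condition, which for boundary points amounts to \(\{[\xi_1],[\xi_2]\}\cap\{[\xi_3],[\xi_4]\}=\varnothing\) by Witt index one.
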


\begin{proof}
The chain condition \(h(\xi_i,p)=0\) is unchanged by rescaling \(p\), so the
underlying chain \(\mathcal C_p\) depends only on \([p]\).  Once the four
boundary points are on that chain, \(X_{\mathcal C_p}\) is the ambient
Hermitian cross ratio \(X\).  Projective descent in the \(\xi_i\) and all
four endpoint laws are exactly the scale-invariance and endpoint-symmetry
propositions for \(X\).
\end{proof}

\begin{proposition}[Intrinsic chain-plane form]
Let \(W=p^\perp\) with the restricted Hermitian form \(h|_W\).  For
\(\xi_i\in W\), the chain cross ratio computed in the two-dimensional
Hermitian space \(W\) agrees with the ambient value:
\[
  X_{(W,h|_W)}(\xi_1,\xi_2;\xi_3,\xi_4)
  =
  X_{\mathcal C_p}(\xi_1,\xi_2;\xi_3,\xi_4).
\]
\end{proposition}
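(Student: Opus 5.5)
The claim follows directly from the definitions, since the cross ratio uses only Hermitian pairings of its four arguments. The cross ratio \(X_{(W,h|_W)}\) computed in the two-dimensional Hermitian space \(W=p^\perp\) is, by the definition of the Hermitian cross ratio applied to \((W,h|_W)\),
\[
  X_{(W,h|_W)}(\xi_1,\xi_2;\xi_3,\xi_4)
  =
  \frac{h|_W(\xi_3,\xi_1)\,h|_W(\xi_4,\xi_2)}{h|_W(\xi_4,\xi_1)\,h|_W(\xi_3,\xi_2)}.
\]
Because every \(\xi_i\) lies in \(W\), each restricted pairing \(h|_W(\xi_i,\xi_j)\) is by definition the ambient value \(h(\xi_i,\xi_j)\). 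So the numerator and the denominator are literally the same complex numbers as in the ambient expression \(X(\xi_1,\xi_2;\xi_3,\xi_4)\). In particular, the two denominators vanish or fail to vanish together, so the two sides are defined on the same domain.

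The steps, in order, are as follows.

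\begin{enumerate}
\item Note that \(W=p^\perp\) depends only on \([p]\), by the projective polarity proposition for chains. Hence the left-hand side is attached to the chain itself.
\item Record that \(h|_W\) is a Hermitian form on \(W\) with the same scaling convention, \(h|_W(\lambda z,\mu w)=\lambda\overline{\mu}\,h|_W(z,w)\). The intrinsic cross ratio is therefore well defined and projectively invariant, by the same scale-invariance argument as in the ambient case.
\item Substitute the pairings term by term. This gives \(X_{(W,h|_W)}=X(\xi_1,\xi_2;\xi_3,\xi_4)\).
\item Invoke the definition \(X_{\mathcal C_p}:=X\) restricted to points of \(\mathcal C_p\). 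This closes the chain of equalities.
\end{enumerate}

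No step is genuinely hard. The only point needing care is the meaning of "intrinsic". The restricted form \(h|_W\) must be nondegenerate, of signature \((1,1)\), so that \((W,h|_W)\) is a bona fide one-dimensional complex-hyperbolic model whose null lines are exactly the chain points. I would check this first. Since \(h(p,p)>0\) and the ambient signature is \((2,1)\), the complement \(W=p^\perp\) has signature \((1,1)\). Its null lines are then \(\PH(p^\perp\cap V_0)=\mathcal C_p\).

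With that in place, the equality is a restriction identity and needs no computation beyond the substitution in step 3.
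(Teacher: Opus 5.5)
Your proposal is correct and matches the paper's proof: for \(\xi_i,\xi_j\in W\) one has \(h|_W(\xi_i,\xi_j)=h(\xi_i,\xi_j)\), and substituting these four pairings into the cross-ratio quotient gives the same value. Your extra checks are also correct (that \(p^\perp\) depends only on \([p]\), and that \(h|_W\) has signature \((1,1)\) with null lines exactly \(\mathcal C_p\)); they explain what ``intrinsic'' means but are not needed for the equality itself.
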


\begin{proof}
The restricted form \(h|_W\) has the same pairings \(h(\xi_i,\xi_j)\) as the
ambient form for vectors \(\xi_i,\xi_j\in W\).  Substituting these same four
pairings into the cross-ratio formula gives the same quotient.
\end{proof}

\section{Projective-unitary covariance}

The whole calculus is covariant under the projective unitary group.  This is
the symmetry principle that replaces coordinate-dependent normal forms: every
invariant defined only from the Hermitian form is automatically preserved by
maps preserving that form.

\begin{definition}[Hermitian-unitary map]
A Hermitian-unitary map is an invertible complex-linear map \(g:V\to V\)
satisfying
\[
  h(gz,gw)=h(z,w)
  \qquad
  (z,w\in V).
\]
Its projectivization sends \([z]\mapsto[gz]\).
\end{definition}

\begin{proposition}[Projective action and strata]
A Hermitian-unitary map \(g\) descends to a projective automorphism.  It
preserves the three Hermitian strata:
\[
  z\in V_-\Longleftrightarrow gz\in V_-,
  \qquad
  z\in V_0\Longleftrightarrow gz\in V_0,
  \qquad
  z\in V_+\Longleftrightarrow gz\in V_+.
\]
\end{proposition}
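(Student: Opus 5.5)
The plan is to split the statement into two parts: projective descent of \(g\), and preservation of the three strata. Both follow directly from complex linearity, invertibility, and the defining identity \(h(gz,gw)=h(z,w)\).

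For descent, complex linearity gives \(g(\lambda z)=\lambda\,gz\) for \(\lambda\in\C^\times\), so \([gz]\) depends only on \([z]\). Invertibility shows that \(gz\ne0\) whenever \(z\ne0\), so the induced map \([z]\mapsto[gz]\) is well defined on \(\PH(V)\). Applying the same argument to \(g^{-1}\) gives an inverse, so the induced map is a bijection, indeed a projective automorphism. Before using \(g^{-1}\), one checks that it is again Hermitian-unitary: setting \(z=g^{-1}u\) and \(w=g^{-1}v\) in \(h(gz,gw)=h(z,w)\) gives \(h(u,v)=h(g^{-1}u,g^{-1}v)\).

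For the strata, take \(z=w\) in the defining identity to obtain \(h(gz,gz)=h(z,z)\). The sign of the diagonal value, and its vanishing, is therefore unchanged. The condition \(z\ne0\) in the definition of \(V_0\) is preserved in both directions by injectivity of \(g\) and of \(g^{-1}\). This gives the three forward implications. The reverse implications follow either by applying the forward statement to the unitary map \(g^{-1}\), or simply because \(h(gz,gz)=h(z,z)\) is an equality rather than an inequality.

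Combining the two parts, \(g\) induces bijections \(\PH(V_-)\to\PH(V_-)\), \(\PH(V_0)\to\PH(V_0)\), and \(\PH(V_+)\to\PH(V_+)\). No step is a real obstacle; the only point needing care is recording that \(g^{-1}\) is itself Hermitian-unitary, so that the biconditionals and the bijectivity on each stratum are justified rather than just the forward inclusions.
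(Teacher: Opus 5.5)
Your proposal is correct and follows the paper's proof: projective descent comes from complex linearity \(g(\lambda z)=\lambda\,gz\), and stratum preservation comes from the diagonal identity \(h(gz,gz)=h(z,z)\). You also use injectivity to handle the \(z\ne0\) clause in \(V_0\), and you check that \(g^{-1}\) is Hermitian-unitary so the induced projective map is bijective; these are details the paper leaves implicit.
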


\begin{proof}
If \(z'=\lambda z\) with \(\lambda\ne0\), then
\[
  g z'=g(\lambda z)=\lambda gz,
\]
so projectively equivalent representatives remain projectively equivalent.
Thus \(g\) descends to projective classes.  The diagonal value is preserved:
\[
  h(gz,gz)=h(z,z).
\]
Therefore its sign, or its vanishing, is preserved.
\end{proof}

\begin{theorem}[Algebraic projective-unitary covariance]
Let \(g\) be Hermitian-unitary.  On their natural domains,
\[
  \eta_H([gz],[gw])=\eta_H([z],[w]),
  \qquad
  q_H([gz],[gw])=q_H([z],[w]),
\]
\[
  \Delta_H(gz,gw)=\Delta_H(z,w),
  \qquad
  S_H(gp,gq)=S_H(p,q),
\]
\[
  T_H(gz_1,gz_2,gz_3)=T_H(z_1,z_2,z_3),
\]
and
\[
  X(ga,gb;gc,gd)=X(a,b;c,d).
\]
Consequently the following derived objects are projective-unitary invariant:
phase classes, endpoint symmetries, triangle laws, chain triangle laws, and
denominator-cleared polynomial identities built from these quantities.
\end{theorem}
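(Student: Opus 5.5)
The plan is to reduce every listed identity to one elementary fact: a Hermitian-unitary map preserves every pairing, \(h(gu,gv)=h(u,v)\). Each invariant in the statement is a rational expression in pairings of its arguments. The diagonal terms \(h(z,z)\) and the off-diagonal terms \(h(z,w)\), \(h(w,z)\) are therefore unchanged entry by entry when every argument is replaced by its image under \(g\).

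The first step is to settle the natural domains. By the preceding proposition on the projective action and strata, \(g\) preserves \(V_-\), \(V_0\) and \(V_+\), so the hypotheses are transported as well. A non-null pair maps to a non-null pair, and positive poles map to positive poles. Any pairing that is nonzero stays nonzero, so a nonvanishing cross-ratio denominator stays nonvanishing. Hence both sides of each identity are defined simultaneously.

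The second step handles the pair and triple invariants. For the pair invariants I substitute into the definitions: the numerator \(|h(gz,gw)|^2=|h(z,w)|^2\) and the denominator \(h(gz,gz)h(gw,gw)=h(z,z)h(w,w)\) are unchanged. This gives \(\eta_H\), then \(q_H=1-\eta_H\), and \(\Delta_H=D-P\) at the level of representatives. Since \(S_H\) is the same quotient applied to poles, its invariance follows identically. For \(T_H\), each of the three factors \(h(gz_i,gz_j)=h(z_i,z_j)\) is unchanged, so the product is unchanged.

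For the cross ratio, all four pairings in \(X\) are preserved, so \(X\) is preserved. Projective descent, meaning independence of the choice of representatives \(gz\) for \([gz]\), follows from the earlier scale-invariance propositions for \(\eta_H\), \(q_H\) and \(X\), together with the triple-product scaling law for the phase class of \(T_H\).

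The ``consequently'' clause is handled by a uniform argument. Phase classes are functions of \(T_H\) modulo positive norm factors, so they are invariant. Endpoint symmetries relate values of \(X\) at permuted arguments, and permutation commutes with applying \(g\) to every argument. The triangle laws and chain triangle laws involve only \(a_i\), \(h_{ij}\), \(D_{123}\) and the side polars. For \(D_{123}\) this is immediate, since it is the determinant of the Gram matrix \((h(gz_i,gz_j))=(h(z_i,z_j))\). For the vertex spreads, note that \(gp_i\) is polar to the image side: \(h(gp_i,gz_j)=h(p_i,z_j)=0\). So \(g\) carries side polars to side polars, and \(s_i\) is invariant by the \(q_H\) case.

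The only point needing care, and I expect it to be the main (mild) obstacle, is this last one: checking that derived constructions such as polars, chains and joins are themselves equivariant under \(g\), and not merely their formulas. This relies on \(g(p^\perp)=(gp)^\perp\), which follows from invertibility of \(g\) together with preservation of \(h\). With that in place, every polynomial identity among these quantities is transported verbatim, and so are its denominator-cleared forms.
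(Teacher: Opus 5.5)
Your proposal is correct and follows the same route as the paper. Everything reduces to substituting \(h(gu,gv)=h(u,v)\) into each definition, and the consequences follow because the derived laws are built from these preserved quantities. Your extra checks---that \(g\) preserves the strata so both sides are defined simultaneously, and that \(g(p^\perp)=(gp)^\perp\) carries side polars to side polars for the vertex spreads---are details the paper's proof leaves implicit here and records separately in its subsequent incidence and chain covariance proposition.
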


\begin{proof}
Each displayed formula is obtained by substituting
\[
  h(gu,gv)=h(u,v)
\]
into the relevant definition.  For example,
\[
  |h(gz,gw)|^2=|h(z,w)|^2,
  \qquad
  h(gz,gz)h(gw,gw)=h(z,z)h(w,w),
\]
so \(\eta_H\), \(q_H\), and \(\Delta_H\) are unchanged.  The polar spread
formula is the same pair formula applied to positive poles.  The triple
product satisfies
\[
\begin{aligned}
T_H(gz_1,gz_2,gz_3)
&=-h(gz_1,gz_2)h(gz_2,gz_3)h(gz_3,gz_1)\\
&=-h(z_1,z_2)h(z_2,z_3)h(z_3,z_1),
\end{aligned}
\]
and the cross-ratio formula is unchanged because all four pairings in the
quotient are unchanged.  The final statement follows because the listed laws
are equations formed from these invariant quantities.
\end{proof}

\begin{proposition}[Incidence and chain covariance]
For nonzero \(x,p\in V\),
\[
  h(x,p)=0
  \quad\Longleftrightarrow\quad
  h(gx,gp)=0.
\]
Hence
\[
  g(L_p)=L_{gp},
  \qquad
  g(\mathcal C_p)=\mathcal C_{gp},
\]
and chain cross-ratios are invariant:
\[
  X_{\mathcal C_{gp}}(g\xi_1,g\xi_2;g\xi_3,g\xi_4)
  =
  X_{\mathcal C_p}(\xi_1,\xi_2;\xi_3,\xi_4).
\]
\end{proposition}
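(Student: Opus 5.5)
The plan is to read all three assertions off the defining identity \(h(gu,gv)=h(u,v)\) for a Hermitian-unitary \(g\), together with the invertibility of \(g\).

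\emph{Incidence equivalence.} For nonzero \(x,p\), the pairing \(h(gx,gp)\) equals \(h(x,p)\) exactly, not merely up to a scalar. Hence one vanishes if and only if the other does. Since \(g\) is invertible, \(gx\) and \(gp\) are again nonzero, so the statement stays on the same projective domain. The preceding proposition on projective action and strata also shows \(gp\in V_+\) whenever \(p\in V_+\), so \(L_{gp}\) and \(\mathcal C_{gp}\) are defined.

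\emph{Set equalities.} I will prove \(g(\mathcal C_p)=\mathcal C_{gp}\) by double inclusion; the argument for \(L_p\) is identical with \(V_-\) in place of \(V_0\).
\begin{itemize}
\item If \([\xi]\in\mathcal C_p\), then \(h(g\xi,g\xi)=h(\xi,\xi)=0\) and \(h(g\xi,gp)=h(\xi,p)=0\). Thus \([g\xi]\in\mathcal C_{gp}\).
\item Conversely, let \([\eta]\in\mathcal C_{gp}\) and set \(\xi=g^{-1}\eta\). Then \(g^{-1}\) is also Hermitian-unitary, because \(h(g^{-1}u,g^{-1}v)=h(gg^{-1}u,gg^{-1}v)=h(u,v)\). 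Applying the first inclusion to \(g^{-1}\) and the pole \(gp\) gives \([\xi]\in\mathcal C_p\). Hence \([\eta]=[g\xi]\in g(\mathcal C_p)\).
\end{itemize}
Projective well-definedness is already covered by the projective polarity proposition for chains, so the choice of representative \(p\) plays no role.

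\emph{Chain cross-ratio invariance.} By the previous step, the points \([g\xi_i]\) lie on \(\mathcal C_{gp}\), so the left-hand side is a legitimate chain cross ratio. By definition it equals the ambient cross ratio \(X(g\xi_1,g\xi_2;g\xi_3,g\xi_4)\). The algebraic projective-unitary covariance theorem gives
\[
X(g\xi_1,g\xi_2;g\xi_3,g\xi_4)=X(\xi_1,\xi_2;\xi_3,\xi_4)=X_{\mathcal C_p}(\xi_1,\xi_2;\xi_3,\xi_4).
\]
The denominator \(h(g\xi_4,g\xi_1)h(g\xi_3,g\xi_2)\) equals the original denominator \(h(\xi_4,\xi_1)h(\xi_3,\xi_2)\), so the nonvanishing hypothesis transfers unchanged.

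There is no real obstacle here. The only point needing care is the reverse inclusion in the set equalities, which requires invertibility of \(g\) and the observation that \(g^{-1}\) is again Hermitian-unitary. With that in place, everything else is direct substitution of \(h(gu,gv)=h(u,v)\).
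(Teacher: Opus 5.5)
Your proof is correct and follows the same route as the paper: the incidence equivalence is read off directly from \(h(gx,gp)=h(x,p)\), the set equalities combine this with stratum preservation, and chain cross-ratio invariance is the ambient cross-ratio covariance applied to null points on \(\mathcal C_p\). Your explicit reverse inclusion via the Hermitian-unitary inverse \(g^{-1}\), and your remark that the denominator's nonvanishing carries over, fill in steps the paper leaves implicit.
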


\begin{proof}
The incidence equivalence is exactly
\[
  h(gx,gp)=h(x,p).
\]
Together with stratum preservation, this maps negative lines in \(p^\perp\)
to negative lines in \((gp)^\perp\), and null lines in \(p^\perp\) to null
lines in \((gp)^\perp\).  Chain cross-ratio invariance is the ambient
cross-ratio covariance applied to four null representatives lying on
\(\mathcal C_p\).
\end{proof}

\begin{proposition}[Boundary contact covariance]
The projectivized action of a Hermitian-unitary map preserves the CR/contact
distribution on \(\partial\CH^2\).  On the null cone, the tautological contact
form
\[
  \alpha_\xi(v)=\operatorname{Im} h(v,\xi)
\]
is pulled back to itself:
\[
  \alpha_{g\xi}(gv)=\alpha_\xi(v).
\]
Thus the induced boundary map preserves the contact class and carries chains
to chains.
\end{proposition}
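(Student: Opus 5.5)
The plan is to reduce everything to the single identity \(h(gu,gv)=h(u,v)\) and to complex-linearity of \(g\), proceeding in three steps: the contact form, the contact distribution with its CR structure, and finally chains.

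\textbf{Contact form.} Since \(g\) is linear, its differential at any point of the null cone \(V_0\) is again \(g\). So a tangent vector \(v\in T_\xi V_0\), characterized by \(\operatorname{Re}h(v,\xi)=0\), is carried to \(gv\in T_{g\xi}V_0\). This is legitimate because
\[
  \operatorname{Re}h(gv,g\xi)=\operatorname{Re}h(v,\xi)=0,
\]
and \(g\) preserves \(V_0\) by the proposition on projective action and strata. The pullback identity then follows in one line:
\[
  \alpha_{g\xi}(gv)=\operatorname{Im}h(gv,g\xi)=\operatorname{Im}h(v,\xi)=\alpha_\xi(v).
\]

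\textbf{Descent to \(\partial\CH^2\).} This is the step that needs care. Under the rescaling \(\xi\mapsto\lambda\xi\), the one-form scales as \(\alpha_{\lambda\xi}(\lambda v)=|\lambda|^2\alpha_\xi(v)\). The vertical directions \(v\in\C\xi\) also satisfy \(\alpha_\xi(v)=\operatorname{Im}\bigl(c\,h(\xi,\xi)\bigr)=0\). Hence the kernel \(\ker\alpha\), taken modulo the fibre directions \(\C\xi\), descends to a well-defined hyperplane field on \(\PH(V_0)\). This field is the contact distribution; it can be identified with \(\xi^\perp/\C\xi\), which is complex, so the same description yields the CR structure.

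Since \(g\) maps \(\xi^\perp\) onto \((g\xi)^\perp\) and \(\C\xi\) onto \(\C g\xi\) complex-linearly, the induced boundary map sends the distribution at \([\xi]\) onto the distribution at \([g\xi]\). It also commutes with multiplication by \(i\), so it preserves the CR structure. Because \(\alpha\) pulls back to itself on the cone, it pulls back to a positive multiple of itself on \(\partial\CH^2\) under any local choice of section. The contact class is therefore preserved, together with its coorientation.

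\textbf{Chains.} Chains go to chains by the preceding incidence and chain covariance proposition, \(g(\mathcal C_p)=\mathcal C_{gp}\). Here \(gp\) is again positive by stratum preservation, so \(\mathcal C_{gp}\) is a genuine chain.

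The main obstacle is the projective descent in the second step. The routine identities are immediate; what needs explaining is why a form on the cone defines a class on the quotient. The scaling law together with the vanishing on the \(\C\xi\) directions handles this. If needed, I would confirm the identification with the standard contact structure by checking in the Heisenberg/Siegel chart that \(\ker\alpha\) projects to the standard horizontal distribution.
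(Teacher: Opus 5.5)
Your proposal is correct and follows the paper's proof: the one-line identity $\operatorname{Im}h(gv,g\xi)=\operatorname{Im}h(v,\xi)$, preservation of the null cone for descent, and the earlier covariance $g(\mathcal C_p)=\mathcal C_{gp}$ for chains. The only difference is that you justify the descent step (the $|\lambda|^2$ scaling weight, the vanishing of $\alpha$ on $\C\xi$, and the identification of the distribution with $\xi^\perp/\C\xi$, which also yields the CR structure and the preserved coorientation), whereas the paper only asserts that the identity descends.
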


\begin{proof}
Since \(g\) preserves \(h\),
\[
  \alpha_{g\xi}(gv)
  =
  \operatorname{Im} h(gv,g\xi)
  =
  \operatorname{Im} h(v,\xi)
  =
  \alpha_\xi(v).
\]
The null cone is preserved by the stratum proposition, so this identity
descends to the projectivized boundary contact distribution.  Chains are the
projectivized null parts of positive-pole hyperplanes, and these were shown
above to map as \(\mathcal C_p\mapsto\mathcal C_{gp}\).
\end{proof}

\begin{proposition}[Ball-kernel automorphy covariance]
In a ball chart, let a projective-unitary transformation have automorphy
factor \(J_g(z)\ne0\) satisfying
\[
  B(gz,gw)
  =
  \frac{B(z,w)}{J_g(z)\overline{J_g(w)}}.
\]
For any exponent \(\gamma\) for which the powers are defined multiplicatively,
\[
  K_\gamma(gz,gw)
  =
  \bigl(J_g(z)\overline{J_g(w)}\bigr)^\gamma K_\gamma(z,w).
\]
Moreover the left-normalized overlap satisfies
\[
  \Omega_\gamma(gz,gw)
  =
  \left(\frac{\overline{J_g(w)}}{\overline{J_g(z)}}\right)^\gamma
  \Omega_\gamma(z,w).
\]
\end{proposition}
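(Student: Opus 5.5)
The plan is a direct substitution argument. I assume the ball-chart conventions the statement presupposes: \(B(z,w)\) is the basic ball kernel, of the form \(1-\langle z,w\rangle\) up to the Hermitian normalization, obtained from \(-h(\hat z,\hat w)\) on standard lifts \(\hat z=(z,1)\). I take
\[
  K_\gamma(z,w)=B(z,w)^{-\gamma},
\]
and read the left-normalized overlap as
\[
  \Omega_\gamma(z,w)=\frac{K_\gamma(z,w)}{K_\gamma(z,z)}.
\]
The automorphy relation for \(B\) is taken as the given hypothesis. Conceptually it comes from \(h(g\hat z,g\hat w)=h(\hat z,\hat w)\), which was established in the projective-unitary covariance theorem. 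The only point is that \(g\hat z\) is not a standard lift: \(g\hat z=J_g(z)^{-1}\widehat{gz}\). Since \(h\) is linear in the first slot and conjugate-linear in the second, this produces exactly the factor \(J_g(z)\overline{J_g(w)}\).

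First, the kernel. Raise the hypothesis to the power \(-\gamma\) to get
\[
  K_\gamma(gz,gw)=B(gz,gw)^{-\gamma}=\bigl(B(z,w)\,(J_g(z)\overline{J_g(w)})^{-1}\bigr)^{-\gamma}.
\]
Then split the power multiplicatively as \((J_g(z)\overline{J_g(w)})^{\gamma}B(z,w)^{-\gamma}\). This is precisely where the standing assumption "for any exponent \(\gamma\) for which the powers are defined multiplicatively" is used. For integer \(\gamma\) the split is automatic. For general \(\gamma\), one fixes a branch (on the ball, \(\operatorname{Re}B>0\), so the principal branch of \(B\) works) and a compatible branch of \(J_g\) on the simply connected chart, and then takes the multiplicativity as part of the hypothesis.

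Second, the overlap. Apply the kernel law twice, once at \((z,w)\) and once on the diagonal \((z,z)\), and form the quotient:
\[
  \Omega_\gamma(gz,gw)=\frac{(J_g(z)\overline{J_g(w)})^{\gamma}K_\gamma(z,w)}{(J_g(z)\overline{J_g(z)})^{\gamma}K_\gamma(z,z)}.
\]
The holomorphic factor \(J_g(z)^\gamma\) cancels. What remains is \(\bigl(\overline{J_g(w)}/\overline{J_g(z)}\bigr)^{\gamma}\Omega_\gamma(z,w)\), which is the claim. Two facts are needed here. First, \(K_\gamma(z,z)\ne0\), because \(B(z,z)>0\) in the interior of the ball. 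Second, the quotient of powers is again the power of the quotient, which is once more the multiplicativity hypothesis.

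I expect the main obstacle to be bookkeeping rather than ideas. I must make sure that the normalization in \(\Omega_\gamma\) divides by the left diagonal \(K_\gamma(z,z)\) and not by the right one or a geometric mean, since that choice is what makes \(J_g(z)\) cancel and leaves the antiholomorphic ratio. I must also keep the branch conventions consistent. If the paper's later definition of \(\Omega_\gamma\) differs, for instance by a square-root normalization, the same two-step substitution applies and only the surviving automorphy factor changes.
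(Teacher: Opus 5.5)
Your proof is correct and follows the paper's proof. You raise the automorphy law to the power \(-\gamma\) to get the kernel law, then divide by its diagonal case \(K_\gamma(gz,gz)=|J_g(z)|^{2\gamma}K_\gamma(z,z)\) so that \(J_g(z)^\gamma\) cancels, and like the paper you take multiplicativity of powers as a hypothesis. One harmless slip, in a motivational aside the proof never uses: \(g\hat z=J_g(z)^{-1}\widehat{gz}\) would give \(B(gz,gw)=J_g(z)\overline{J_g(w)}\,B(z,w)\), which is the inverse of the stated law; the stated law corresponds to \(g\hat z=J_g(z)\,\widehat{gz}\).
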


\begin{proof}
The first identity follows directly from
\[
  K_\gamma(z,w)=B(z,w)^{-\gamma}.
\]
Indeed,
\[
\begin{aligned}
K_\gamma(gz,gw)
&=
\left(\frac{B(z,w)}{J_g(z)\overline{J_g(w)}}\right)^{-\gamma}\\
&=
\bigl(J_g(z)\overline{J_g(w)}\bigr)^\gamma K_\gamma(z,w).
\end{aligned}
\]
Setting \(w=z\) gives
\[
  K_\gamma(gz,gz)
  =
  |J_g(z)|^{2\gamma}K_\gamma(z,z).
\]
Dividing the first covariance formula by this diagonal formula gives the
left-normalized overlap law.
\end{proof}

\section{Bisectors and spinal spheres}

Bisectors are the first place where the calculus must deliberately avoid a
metric quotient.  The interior bisector is an equal-kernel locus.  Its boundary
trace is still meaningful after the normalizing factor \(h(x,x)\) has vanished,
provided the equation has first been cleared of denominators.

\begin{definition}[Hermitian bisector polynomial]
Let \(a,b\in V\) be non-null focus vectors.  Define
\[
  \mathcal B_{a,b}(x)
  =
  h(x,a)h(a,x)h(b,b)-h(x,b)h(b,x)h(a,a).
\]
The algebraic bisector is the projective zero locus
\[
  \mathfrak B(a,b)
  =
  \{[x]\in\PH(V\setminus\{0\}):\mathcal B_{a,b}(x)=0\}.
\]
Its complex-hyperbolic bisector is
\[
  \operatorname{Bis}(a,b)=\mathfrak B(a,b)\cap\CH^2,
\]
and its spinal sphere is the boundary trace
\[
  \mathfrak S(a,b)=\mathfrak B(a,b)\cap\partial\CH^2.
\]
\end{definition}

\begin{proposition}[Projective descent and focus symmetry]
For nonzero scalars \(\lambda,\mu,\nu\),
\[
  \mathcal B_{\lambda a,\mu b}(\nu x)
  =
  |\lambda\mu\nu|^2\,\mathcal B_{a,b}(x).
\]
Hence \(\mathfrak B(a,b)\), \(\operatorname{Bis}(a,b)\), and
\(\mathfrak S(a,b)\) depend only on the projective focus classes \([a]\) and
\([b]\).  Also
\[
  \mathcal B_{b,a}(x)=-\mathcal B_{a,b}(x),
\]
so the zero locus is unchanged by swapping the two foci.
\end{proposition}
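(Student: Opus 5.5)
The plan is to compute the scaling of each of the four terms of \(\mathcal B_{a,b}\) directly from the Hermitian scaling rule \(h(\lambda z,\mu w)=\lambda\overline{\mu}\,h(z,w)\), exactly as in the proof of the primitive scale weights.

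\textbf{First term.} Substituting \(\lambda a\), \(\mu b\), \(\nu x\) gives
\[
  h(\nu x,\lambda a)h(\lambda a,\nu x)
  =\nu\overline{\lambda}\,\lambda\overline{\nu}\,h(x,a)h(a,x)
  =|\lambda|^2|\nu|^2h(x,a)h(a,x).
\]
The diagonal factor contributes \(h(\mu b,\mu b)=|\mu|^2h(b,b)\). So the first product picks up exactly \(|\lambda\mu\nu|^2\).

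\textbf{Second term.} The pair product \(h(\nu x,\mu b)h(\mu b,\nu x)\) picks up \(|\mu|^2|\nu|^2\). The diagonal \(h(\lambda a,\lambda a)\) picks up \(|\lambda|^2\). The total factor is again \(|\lambda\mu\nu|^2\).

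Since both terms acquire the same common factor, their difference does too. This proves \(\mathcal B_{\lambda a,\mu b}(\nu x)=|\lambda\mu\nu|^2\mathcal B_{a,b}(x)\). The factor matches the cancellation mechanism recorded for \(P\) and \(D\) in the primitive scale weights proposition.

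\textbf{Projective descent.} The factor \(|\lambda\mu\nu|^2\) is nonzero for nonzero scalars. Hence the vanishing of \(\mathcal B_{a,b}(x)\) depends only on \([a]\), \([b]\) and \([x]\), so \(\mathfrak B(a,b)\) is a well-defined projective zero locus. \(\operatorname{Bis}(a,b)\) and \(\mathfrak S(a,b)\) are its intersections with the fixed, projectively defined strata \(\PH(V_-)\) and \(\PH(V_0)\). They therefore inherit the same independence of representatives.

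\textbf{Focus symmetry.} Interchanging \(a\) and \(b\) in the definition turns \(h(x,a)h(a,x)h(b,b)-h(x,b)h(b,x)h(a,a)\) into the same two terms in the opposite order, hence into \(-\mathcal B_{a,b}(x)\). The zero set is unchanged.

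No step is a genuine obstacle. The only care needed is tracking conjugates correctly, so that each focus scalar appears once plain and once conjugated in each term, and observing that the non-null hypothesis on \(a,b\) is never used for this descent statement.
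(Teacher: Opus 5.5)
Your proposal is correct and follows essentially the same route as the paper's proof: you track the Hermitian scaling factor term by term, use that the nonzero factor $|\lambda\mu\nu|^2$ preserves the zero locus, and observe that swapping the foci reverses the difference. Your added remarks — that $\operatorname{Bis}(a,b)$ and $\mathfrak S(a,b)$ inherit descent from the scale-stable strata, and that the non-null hypothesis on the foci is not needed here — are accurate.
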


\begin{proof}
Using Hermitian sesquilinearity,
\[
  h(\nu x,\lambda a)h(\lambda a,\nu x)
  =
  |\nu\lambda|^2 h(x,a)h(a,x),
\]
and
\[
  h(\mu b,\mu b)=|\mu|^2h(b,b).
\]
Thus the first term in \(\mathcal B_{\lambda a,\mu b}(\nu x)\) is multiplied
by \(|\lambda\mu\nu|^2\).  The same computation with \(a\) and \(b\)
interchanged gives the same factor for the second term.  Since the factor is
nonzero, the equation \(\mathcal B_{a,b}(x)=0\) descends to projective
classes.  Swapping \(a\) and \(b\) simply reverses the two terms in the
difference.
\end{proof}

\begin{proposition}[Kernel-equidistance form]
For non-null foci \(a,b\), the bisector equation is equivalent to
\[
  \frac{h(x,a)h(a,x)}{h(a,a)}
  =
  \frac{h(x,b)h(b,x)}{h(b,b)}.
\]
If \(x\) is also non-null, this is equivalently
\[
  \eta_H([x],[a])=\eta_H([x],[b]).
\]
\end{proposition}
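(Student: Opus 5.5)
The plan is to divide the defining polynomial by the nonzero real number \(h(a,a)h(b,b)\) and then, in the second step, divide once more by \(h(x,x)\). Since \(a,b\) are non-null foci, \(h(a,a)\) and \(h(b,b)\) are nonzero reals, and so is their product. Hence
\[
  \mathcal B_{a,b}(x)=0
  \quad\Longleftrightarrow\quad
  \frac{\mathcal B_{a,b}(x)}{h(a,a)h(b,b)}=0 .
\]
Expanding the quotient term by term gives
\[
  \frac{\mathcal B_{a,b}(x)}{h(a,a)h(b,b)}
  =
  \frac{h(x,a)h(a,x)}{h(a,a)}-\frac{h(x,b)h(b,x)}{h(b,b)}.
\]
This is exactly the first displayed equation. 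No condition on \(x\) is used, which is why the first form also holds on the null boundary. It is the denominator-cleared form that the section says must survive for spinal spheres.

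For the second statement, assume in addition that \(h(x,x)\neq0\). Dividing the first equation by the nonzero real \(h(x,x)\) preserves the equivalence. We then use \(h(x,a)h(a,x)=|h(x,a)|^2\), by Hermitian symmetry, and likewise for \(b\). This identifies each side with
\[
  \frac{|h(x,a)|^2}{h(x,x)h(a,a)}=\eta_H([x],[a]),
\]
and the same for \([b]\). Both pairs \(([x],[a])\) and \(([x],[b])\) lie in the non-null pair domain, so \(\eta_H\) is legal there by the domain-decision theorem. By the scale-invariance proposition for \(\eta_H\), together with the projective-descent proposition for \(\mathcal B_{a,b}\), the resulting equation depends only on projective classes.

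There is no real obstacle in this argument. The only point needing care is bookkeeping about which denominators are nonzero. The division by \(h(x,x)\) is legitimate only when \(x\) is non-null, and the null case is covered by the first, \(x\)-independent form.
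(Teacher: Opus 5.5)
Your proposal is correct and matches the paper's proof. The paper likewise passes between the equal-kernel equation and \(\mathcal B_{a,b}(x)=0\) using the nonzero factor \(h(a,a)h(b,b)\), multiplying where you divide. It then cancels the common nonzero factor \(h(x,x)\) to obtain the \(\eta_H\) form.
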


\begin{proof}
Because \(a\) and \(b\) are non-null, \(h(a,a)\) and \(h(b,b)\) are nonzero.
Multiplying the displayed equal-kernel equation by \(h(a,a)h(b,b)\) gives
\(\mathcal B_{a,b}(x)=0\).  If \(x\) is non-null, then
\[
  \eta_H([x],[a])
  =
  \frac{h(x,a)h(a,x)}{h(x,x)h(a,a)},
  \qquad
  \eta_H([x],[b])
  =
  \frac{h(x,b)h(b,x)}{h(x,x)h(b,b)}.
\]
The common nonzero factor \(h(x,x)\) cancels.
\end{proof}

\begin{theorem}[Projective-unitary covariance of bisectors]
Let \(g\) be Hermitian-unitary.  Then
\[
  \mathcal B_{ga,gb}(gx)=\mathcal B_{a,b}(x).
\]
Consequently
\[
  g(\mathfrak B(a,b))=\mathfrak B(ga,gb),\qquad
  g(\operatorname{Bis}(a,b))=\operatorname{Bis}(ga,gb),
\]
and
\[
  g(\mathfrak S(a,b))=\mathfrak S(ga,gb).
\]
\end{theorem}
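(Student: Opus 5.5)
The plan is to substitute directly, exactly as in the algebraic projective-unitary covariance theorem. Write out
\[
  \mathcal B_{ga,gb}(gx)
  =
  h(gx,ga)h(ga,gx)h(gb,gb)-h(gx,gb)h(gb,gx)h(ga,ga).
\]
Apply the defining identity \(h(gu,gv)=h(u,v)\) to each of the six pairings. This returns \(\mathcal B_{a,b}(x)\) term by term, and proves the polynomial identity. The foci \(ga,gb\) are still non-null, because \(h(ga,ga)=h(a,a)\) by the stratum-preservation proposition. So \(\mathcal B_{ga,gb}\) is a legitimate bisector polynomial.

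For the set statements, use the fact that \(g\) is invertible and descends to a bijection of projective classes, \([x]\mapsto[gx]\). If \([x]\in\mathfrak B(a,b)\), then \(\mathcal B_{ga,gb}(gx)=\mathcal B_{a,b}(x)=0\), so \([gx]\in\mathfrak B(ga,gb)\). This gives \(g(\mathfrak B(a,b))\subseteq\mathfrak B(ga,gb)\).

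For the reverse inclusion, take \([y]\in\mathfrak B(ga,gb)\) and write \(y=g(g^{-1}y)\). The identity gives \(\mathcal B_{a,b}(g^{-1}y)=0\), so \([y]\in g(\mathfrak B(a,b))\). Alternatively, note that \(g^{-1}\) is also Hermitian-unitary and apply the forward inclusion to it. The previous proposition on projective descent guarantees that these membership statements do not depend on the representatives chosen.

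Finally, intersect with the strata. By the proposition on projective action and strata, \(g\) maps \(\PH(V_-)\) bijectively onto itself and \(\PH(V_0)\) bijectively onto itself. Since \(g\) acts bijectively on projective space, it commutes with intersections. Hence
\[
  g(\mathfrak B(a,b)\cap\CH^2)=\mathfrak B(ga,gb)\cap\CH^2,
\]
and likewise for \(\partial\CH^2\), which gives the statements for \(\operatorname{Bis}\) and \(\mathfrak S\).

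There is no real obstacle. The only points needing care are the equality of sets rather than mere inclusion, which is handled by invertibility of \(g\), and checking that the new foci remain non-null so that the right-hand sides are defined.
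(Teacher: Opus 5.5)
Your proposal is correct and follows essentially the same route as the paper: substitute \(h(gu,gv)=h(u,v)\) into the six pairings of \(\mathcal B_{ga,gb}(gx)\), then read off the zero-locus statements using the fact that \(g\) preserves the negative and null strata. Your version is simply more explicit, since you spell out both inclusions via the invertibility of \(g\) and check that the new foci remain non-null.
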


\begin{proof}
Substitute \(h(gu,gv)=h(u,v)\) into the definition of
\(\mathcal B\):
\[
\begin{aligned}
\mathcal B_{ga,gb}(gx)
&=
h(gx,ga)h(ga,gx)h(gb,gb)\\
&\qquad
-h(gx,gb)h(gb,gx)h(ga,ga)\\
&=
h(x,a)h(a,x)h(b,b)-h(x,b)h(b,x)h(a,a).
\end{aligned}
\]
This is \(\mathcal B_{a,b}(x)\).  The projective zero loci follow from this
identity and from the fact that \(g\) preserves the negative and null strata.
\end{proof}

\begin{proposition}[Spinal sphere in the finite Heisenberg chart]
Use the Siegel Hermitian form
\[
  H(Z,W)=Z_0\overline{W_2}+Z_1\overline{W_1}+Z_2\overline{W_0}
\]
and the finite null lift
\[
  \widehat\xi(\zeta,t)
  =
  \left(\frac{-|\zeta|^2+i t}{2},\,\zeta,\,1\right).
\]
For a focus \(f=(f_0,f_1,f_2)\), set
\[
  A_f(\zeta,t)
  =
  H(\widehat\xi(\zeta,t),f)
  =
  \frac{-|\zeta|^2+i t}{2}\overline{f_2}
  +\zeta\overline{f_1}
  +\overline{f_0},
  \qquad
  n_f=H(f,f).
\]
For non-null foci \(a,b\), the finite part of the spinal sphere is
\[
  \Phi_{a,b}(\zeta,t)
  =
  n_b A_a(\zeta,t)\overline{A_a(\zeta,t)}
  -
  n_a A_b(\zeta,t)\overline{A_b(\zeta,t)}
  =
  0.
\]
The point at infinity lies on the spinal sphere exactly when
\[
  n_b|a_2|^2-n_a|b_2|^2=0.
\]
\end{proposition}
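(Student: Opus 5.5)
The plan is to restrict the bisector polynomial \(\mathcal B_{a,b}\) of the definition above to the Heisenberg chart of the Siegel model.

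\textbf{Step 1: the lift is null.} Check directly that \(H(\widehat\xi,\widehat\xi)=0\). With \(Z=\widehat\xi(\zeta,t)\), the form gives
\[
  H(Z,Z)=2\operatorname{Re}\bigl(Z_0\overline{Z_2}\bigr)+|Z_1|^2
  =2\operatorname{Re}\frac{-|\zeta|^2+it}{2}+|\zeta|^2=0.
\]
So \([\widehat\xi(\zeta,t)]\) is a boundary point.

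\textbf{Step 2: these are exactly the finite boundary points.} A null vector with \(Z_2\ne0\) can be rescaled so that \(Z_2=1\). Then nullity, \(2\operatorname{Re}Z_0+|Z_1|^2=0\), forces \(Z_0=(-|Z_1|^2+it)/2\) for a unique real \(t\), with \(\zeta=Z_1\). Hence \((\zeta,t)\mapsto[\widehat\xi(\zeta,t)]\) parametrizes the null lines with \(Z_2\ne0\).

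\textbf{Step 3: the equation \(\Phi_{a,b}=0\).} By the projective descent proposition for bisectors, \(\mathcal B_{a,b}\) may be evaluated on the representative \(\widehat\xi\). By definition \(A_f=H(\widehat\xi,f)\), and Hermitian symmetry gives \(H(f,\widehat\xi)=\overline{A_f}\). Substituting into the definition,
\[
  \mathcal B_{a,b}(\widehat\xi)=A_a\overline{A_a}\,n_b-A_b\overline{A_b}\,n_a=\Phi_{a,b}(\zeta,t).
\]
The formula for \(A_f\) follows by expanding \(H\) with \(Z=\widehat\xi\): \(Z_0\overline{f_2}+Z_1\overline{f_1}+Z_2\overline{f_0}\). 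Since the zero locus descends to projective classes, the finite part of \(\mathfrak S(a,b)\) is exactly \(\{\Phi_{a,b}=0\}\).

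\textbf{Step 4: the point at infinity.} The remaining null lines have \(Z_2=0\). Nullity then reads \(|Z_1|^2=0\), so the only such line is \(\infty=[e_0]\), where \(e_0=(1,0,0)\). Compute \(H(e_0,f)=\overline{f_2}\), so
\[
  \mathcal B_{a,b}(e_0)=|a_2|^2n_b-|b_2|^2n_a,
\]
which gives the stated criterion. There is no real obstacle; the only points needing care are the conjugation conventions in \(A_f\) (the second slot is conjugate-linear) and the completeness of the chart in Steps 2 and 4, so that no boundary point is missed.
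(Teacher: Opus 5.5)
Your proposal is correct and follows the same route as the paper's proof. The paper substitutes the finite null lift into the denominator-cleared bisector polynomial, uses Hermitian symmetry to get $H(f,\widehat\xi)=\overline{A_f}$, and evaluates at the lift $(1,0,0)$ to obtain the condition at infinity. Your Steps 1 and 2 and the nullity argument in Step 4 add a check the paper leaves as background from its Heisenberg chart section: that the lift is null and that the finite chart together with $\infty$ accounts for every boundary point.
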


\begin{proof}
The spinal sphere is the null-boundary zero locus of
\(\mathcal B_{a,b}\).  Substituting \(x=\widehat\xi(\zeta,t)\) into the
bisector polynomial for the Siegel form gives
\[
  H(\widehat\xi,a)H(a,\widehat\xi)H(b,b)
  -
  H(\widehat\xi,b)H(b,\widehat\xi)H(a,a)=0.
\]
By Hermitian symmetry,
\[
  H(\widehat\xi,f)=A_f(\zeta,t),
  \qquad
  H(f,\widehat\xi)=\overline{A_f(\zeta,t)},
  \qquad
  H(f,f)=n_f.
\]
This is exactly \(\Phi_{a,b}(\zeta,t)=0\).

For the point at infinity, use the null lift
\(\widehat\xi(\infty)=(1,0,0)\).  Then
\[
  H(\widehat\xi(\infty),f)=\overline{f_2}.
\]
Substitution in the same denominator-cleared equation gives
\[
  n_b\,\overline{a_2}a_2-n_a\,\overline{b_2}b_2=0,
\]
which is the stated condition.
\end{proof}

\section{Boundary contact calculus}

The boundary \(\partial\CH^2=\PH(V_0)\) carries a CR/contact geometry.  The
boundary primitive is not a real angle.  It is the package of null projective
incidence, boundary cross ratios, triple-product phase, and contact-volume
normalization intrinsic to the CR sphere.  This is a readout package on the
standard CR boundary, not a replacement for the CR geometry itself.

\begin{definition}[Boundary contact readout]
A boundary contact readout is the package:
\[
\begin{array}{ll}
\text{null incidence} & h(\xi,\eta)=0\text{ or }h(\xi,\eta)\ne0,\\
\text{boundary cross-ratio data} & X(\xi_1,\xi_2;\xi_3,\xi_4),\\
\text{triple phase class} & \argg[-h(\xi_1,\xi_2)h(\xi_2,\xi_3)h(\xi_3,\xi_1)],\\
\text{contact normalization} & \text{the standard CR/contact volume class.}
\end{array}
\]
\end{definition}

\begin{definition}[Boundary defining contrast]
In the ball chart \(B^2=\{z\in\C^2:\|z\|^2<1\}\), set
\[
  \delta(z)=1-\|z\|^2,
  \qquad
  \Theta(z)=-\frac32\log\delta(z).
\]
The function \(\delta\) is a boundary defining function, and \(\Theta\) is its
logarithmic contrast.
\end{definition}

\begin{proposition}[Boundary-tangent contrast criterion]
Let \(X\) be a smooth real vector field on a neighborhood in the ball chart.
On the interior of \(B^2\),
\[
  X\Theta=0
  \quad\Longleftrightarrow\quad
  X\delta=0.
\]
Consequently \(X\Theta=0\) is precisely the condition that \(X\) be tangent
to the level hypersurfaces of \(\delta\).  Along any locus on which
\(X\Theta=0\), the additional equations
\[
  X(X\Theta)=0,
  \qquad
  Y(X\Theta)=0
\]
say respectively that the \(X\)-flow has no second-order contrast component
and that the contrast condition is preserved to first order in the direction
\(Y\).
\end{proposition}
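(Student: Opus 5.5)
The plan is a chain-rule computation followed by a restatement of what the two additional equations mean. On the interior of \(B^2\) we have \(\delta(z)=1-\|z\|^2>0\), so \(\Theta=-\tfrac32\log\delta\) is smooth there. For any smooth real vector field \(X\), the chain rule gives
\[
  X\Theta=-\frac{3}{2}\,\frac{X\delta}{\delta}.
\]
The factor \(-\tfrac{3}{2\delta}\) is a smooth, nowhere-vanishing function on the interior. Hence \(X\Theta=0\) at a point if and only if \(X\delta=0\) at that point. This gives the pointwise equivalence, and therefore the equivalence on any open set or any locus.

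Next we read \(X\delta=0\) geometrically. The differential is \(\dd\delta=-\sum_j(\bar z_j\dd z_j+z_j\dd\bar z_j)\), which vanishes only at \(z=0\). Away from the origin, the level sets \(\{\delta=c\}\) with \(0<c<1\) are smooth real hypersurfaces, namely the spheres \(\|z\|^2=1-c\). A vector \(X_z\) is tangent to the level set through \(z\) exactly when \(\dd\delta(X_z)=0\), that is, when \(X\delta(z)=0\). So \(X\Theta=0\) is precisely tangency to the level hypersurfaces of \(\delta\).

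At the origin the level set degenerates to a point. There the condition \(X\delta=0\) holds automatically, and we interpret it as the degenerate tangency condition. This is the only place where we must be careful with wording: we state tangency for \(z\ne0\) and note that the equation is vacuous at \(0\).

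For the second-order statements, set \(F=X\Theta=-\tfrac32\,X\delta/\delta\) and restrict to the locus \(Z=\{F=0\}\). By definition, \(X(X\Theta)=XF\) is the derivative of \(F\) along the \(X\)-flow, and \(Y(X\Theta)=YF\) is the first-order variation of \(F\) in the direction \(Y\).

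To make the interpretation concrete, differentiate the quotient:
\[
  XF=-\tfrac32\Bigl(\frac{X^2\delta}{\delta}-\frac{(X\delta)^2}{\delta^2}\Bigr).
\]
On \(Z\) we have \(X\delta=0\), so this reduces to \(XF=-\tfrac32\,X^2\delta/\delta\). Thus \(X(X\Theta)=0\) on \(Z\) if and only if \(X^2\delta=0\), which says the \(X\)-flow has no second-order component transverse to the level sets.

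In the same way, \(YF=-\tfrac32\bigl(Y(X\delta)/\delta-(Y\delta)(X\delta)/\delta^2\bigr)\). On \(Z\) this reduces to \(-\tfrac32\,Y(X\delta)/\delta\). So \(Y(X\Theta)=0\) on \(Z\) means the tangency condition \(X\delta=0\) is preserved to first order in the direction \(Y\).

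The main obstacle is not computational. The second half of the statement is essentially definitional, so the real content lies in the reduction to \(\delta\): on \(Z\), the vanishing of \(X\delta\) kills every cross term in \(XF\) and \(YF\). That reduction is what makes the stated interpretations precise rather than merely heuristic.
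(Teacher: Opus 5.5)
Your proof is correct and follows the same route as the paper: you use the chain-rule identity $X\Theta=-\tfrac32\,X\delta/\delta$ with $\delta>0$, read tangency as $\ker\dd\delta$, and treat $X(X\Theta)$ and $Y(X\Theta)$ as derivatives of the scalar function $X\Theta$. Your explicit reduction on $\{X\Theta=0\}$ to $X^2\delta=0$ and $Y(X\delta)=0$ is sharper than the paper's argument, and so is your caveat that at $z=0$ the level set of $\delta$ degenerates to a point, so the tangency reading holds only for $z\ne0$; the paper glosses over the origin.
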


\begin{proof}
The first claim is the differential identity
\[
  X\Theta
  =
  -\frac32\,\frac{X\delta}{\delta}.
\]
Since \(\delta>0\) in the ball, this vanishes exactly when \(X\delta=0\).
The level hypersurfaces of \(\delta\) have tangent distribution
\(\ker\dd\delta\), so this is the ordinary tangency criterion.

If \(X\Theta=0\) on a locus, then differentiating \(X\Theta\) in the \(X\)
direction gives \(X(X\Theta)=0\) exactly when the vanishing persists through
second order along the \(X\)-flow.  Differentiating the same scalar function
in the direction \(Y\) gives the stated first-order preservation condition.
\end{proof}

\section{Heisenberg boundary chart}

The Heisenberg compactification and its standard transformations are
background.  The calculus uses this chart to write explicit boundary pair
kernels and covariance laws.

For the analytic \(\CH^2\) boundary chart, use the Siegel Hermitian form
\[
  H(Z,W)=Z_0\overline{W_2}+Z_1\overline{W_1}+Z_2\overline{W_0}.
\]
The one-point compactified Heisenberg boundary is
\[
  (\C\times\mathbb R)\cup\{\infty\}.
\]
The finite point \((\zeta,t)\) has homogeneous lift
\[
  \widehat\xi(\zeta,t)
  =
  \left(\frac{-|\zeta|^2+i t}{2},\,\zeta,\,1\right),
\]
and the point at infinity has lift
\[
  \widehat\xi(\infty)=(1,0,0).
\]
These lifts are null.  For finite points \(p=(\zeta,t)\) and \(q=(\eta,s)\),
\[
\begin{aligned}
  H(\widehat\xi(p),\widehat\xi(q))
  &=
  \zeta\overline{\eta}
  -\frac{|\zeta|^2+|\eta|^2}{2}
  +\frac{i(t-s)}{2} \\
  &=
  -\frac12\left(
    |\zeta-\eta|^2
    -i(t-s+2\,\operatorname{Im}(\zeta\overline\eta))
  \right).
\end{aligned}
\]
Thus the boundary pair kernel contains both horizontal separation and contact
phase.
It is also naturally covariant under the affine Heisenberg similarities.  If
\(L_g\) denotes left translation, \(R_{c,s}\) a unit horizontal rotation
(\(c^2+s^2=1\)), and \(\delta_r(\zeta,t)=(r\zeta,r^2t)\), then the
finite-chart identities are
\[
  K(L_gp,L_gq)=K(p,q),\qquad
  K(R_{c,s}p,R_{c,s}q)=K(p,q),
\]
and
\[
  K(\delta_r p,\delta_r q)=r^2K(p,q).
\]
Consequently the finite similarity \(p\mapsto g\cdot R_{c,s}(\delta_rp)\)
scales the boundary pair kernel by \(r^2\).  This gives the boundary-kernel
part of the calculus in intrinsic CR/contact and kernel terms.

The standard contact form on the finite chart is
\[
  \theta=\dd t+2\,\operatorname{Im}(\overline\zeta\,\dd\zeta).
\]
If \(\zeta=x+i y\), then
\[
  \theta=\dd t+2(x\,\dd y-y\,\dd x),
  \qquad
  \dd\theta=4\,\dd x\wedge\dd y,
\]
and, for the orientation \(\dd t\wedge\dd x\wedge\dd y\),
\[
  \theta\wedge\dd\theta
  =
  4\,\dd t\wedge\dd x\wedge\dd y.
\]
This is the finite-chart density for the standard contact-volume class.  A
compactified boundary calculation treats \(\infty\) by passing to a second
Heisenberg chart, as in the atlas transition formulas below.

\begin{proposition}[Anti-complex conjugation character]
Let \(C_m:\C^m\to\C^m\) be coordinatewise conjugation,
\[
  C_m(z_1,\ldots,z_m)=(\overline z_1,\ldots,\overline z_m).
\]
As a real linear map on \(\mathbb R^{2m}\),
\[
  \det_{\mathbb R}(C_m)=(-1)^m.
\]
In the finite Heisenberg chart, define
\[
  \kappa(\zeta,t)=(\overline\zeta,-t),
  \qquad
  \text{equivalently}\qquad
  \kappa(x,y,t)=(x,-y,-t).
\]
Then
\[
  \widehat\xi(\kappa(\zeta,t))
  =
  \overline{\widehat\xi(\zeta,t)}
\]
and
\[
  H(\widehat\xi(\kappa p),\widehat\xi(\kappa q))
  =
  \overline{H(\widehat\xi(p),\widehat\xi(q))}.
\]
For the standard contact form,
\[
  \kappa^\ast\theta=-\theta,
  \qquad
  \kappa^\ast(\dd\theta)=-\dd\theta,
  \qquad
  \kappa^\ast(\theta\wedge\dd\theta)=\theta\wedge\dd\theta.
\]
\end{proposition}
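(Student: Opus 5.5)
The proposition has three independent parts, and I will verify each by direct computation, in the order stated.

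\emph{Determinant of $C_m$.} As a real linear map, $C_m$ is a direct sum of $m$ copies of the map $x+iy\mapsto x-iy$ on $\mathbb R^2$. In the basis $(1,i)$ this map has matrix $\operatorname{diag}(1,-1)$, so each copy has determinant $-1$. Multiplicativity of the determinant over the block decomposition then gives $\det_{\mathbb R}(C_m)=(-1)^m$.

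\emph{The lift identity.} I substitute $\kappa(\zeta,t)=(\overline\zeta,-t)$ into the finite null lift. Using $|\overline\zeta|^2=|\zeta|^2$,
\[
  \widehat\xi(\overline\zeta,-t)
  =
  \left(\frac{-|\zeta|^2-it}{2},\,\overline\zeta,\,1\right),
\]
and this is the coordinatewise conjugate of $\widehat\xi(\zeta,t)$, so $\widehat\xi\circ\kappa=C_3\circ\widehat\xi$. For the kernel identity, the Siegel form has real coefficients, so
\[
  H(\overline Z,\overline W)
  =
  \overline{Z_0}W_2+\overline{Z_1}W_1+\overline{Z_2}W_0
  =
  \overline{H(Z,W)}.
\]
Combining this with the lift identity gives $H(\widehat\xi(\kappa p),\widehat\xi(\kappa q))=\overline{H(\widehat\xi(p),\widehat\xi(q))}$. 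As a cross-check, the explicit pair kernel
\[
  -\frac12\left(|\zeta-\eta|^2-i\bigl(t-s+2\operatorname{Im}(\zeta\overline\eta)\bigr)\right)
\]
should conjugate under $\zeta\mapsto\overline\zeta$, $\eta\mapsto\overline\eta$, $t\mapsto -t$, $s\mapsto -s$. Indeed, $|\zeta-\eta|^2$ is unchanged, while $\operatorname{Im}(\overline\zeta\eta)=-\operatorname{Im}(\zeta\overline\eta)$ and $t-s$ flips sign, so the whole bracket multiplying $i$ changes sign.

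\emph{The contact forms.} I work in real coordinates with $\theta=\dd t+2(x\,\dd y-y\,\dd x)$ and $\kappa^\ast x=x$, $\kappa^\ast y=-y$, $\kappa^\ast t=-t$. Pulling back term by term,
\[
  \kappa^\ast\theta=-\dd t+2\bigl(x(-\dd y)-(-y)\dd x\bigr)=-\theta.
\]
Since pullback commutes with $\dd$, this gives $\kappa^\ast\dd\theta=-\dd\theta$. Since pullback is a ring homomorphism on forms, $\kappa^\ast(\theta\wedge\dd\theta)=(-\theta)\wedge(-\dd\theta)=\theta\wedge\dd\theta$. As a consistency check, $\kappa^\ast(4\,\dd t\wedge\dd x\wedge\dd y)=4(-\dd t)\wedge\dd x\wedge(-\dd y)$, which is the same form. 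This also agrees with $\det_{\mathbb R}(\kappa)=(+1)(-1)(-1)=1$, which is $\det C_1=-1$ on the $\zeta$-factor times $-1$ on the $t$-factor.

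The only step that needs care is the sign in the Heisenberg pair-kernel cross-check, specifically the behaviour of $\operatorname{Im}(\zeta\overline\eta)$ under conjugation. The rest is bookkeeping.
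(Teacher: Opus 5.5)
Your proof is correct and follows the paper's argument essentially step for step. The steps are the blockwise $\operatorname{diag}(1,-1)$ determinant count, direct substitution into the null lift, the real coefficients of the Siegel form giving $H(\overline Z,\overline W)=\overline{H(Z,W)}$, and the coordinate pullback of $\theta$ followed by naturality of $\dd$ and of the wedge product; your two cross-checks (the explicit Heisenberg pair kernel and $\det_{\mathbb R}\kappa=1$) are also correct but not needed.
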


\begin{proof}
For one complex coordinate \(z=x+i y\), conjugation is the real map
\[
  (x,y)\longmapsto(x,-y),
\]
whose determinant is \(-1\).  Taking the product over \(m\) independent
coordinates gives
\[
  \det_{\mathbb R}(C_m)=(-1)^m.
\]

The lift identity follows directly:
\[
\begin{aligned}
\widehat\xi(\overline\zeta,-t)
&=
\left(\frac{-|\zeta|^2-i t}{2},\,\overline\zeta,\,1\right)\\
&=
\overline{
  \left(\frac{-|\zeta|^2+i t}{2},\,\zeta,\,1\right)
}.
\end{aligned}
\]
Since the Siegel Hermitian form has real coefficients,
\[
  H(\overline Z,\overline W)=\overline{H(Z,W)}.
\]
Substituting \(Z=\widehat\xi(p)\) and \(W=\widehat\xi(q)\) gives the displayed
kernel-conjugation formula.

Finally, in real coordinates,
\[
  \theta=\dd t+2(x\,\dd y-y\,\dd x).
\]
Under \(\kappa(x,y,t)=(x,-y,-t)\),
\[
  \kappa^\ast\dd t=-\dd t,\qquad
  \kappa^\ast\dd x=\dd x,\qquad
  \kappa^\ast\dd y=-\dd y.
\]
Therefore
\[
\begin{aligned}
\kappa^\ast\theta
&=
-\dd t+2\bigl(x(-\dd y)-(-y)\dd x\bigr)\\
&=
-\dd t-2(x\,\dd y-y\,\dd x)\\
&=
-\theta.
\end{aligned}
\]
Taking exterior derivatives gives
\[
  \kappa^\ast(\dd\theta)=\dd(\kappa^\ast\theta)=-\dd\theta,
\]
and hence
\[
  \kappa^\ast(\theta\wedge\dd\theta)
  =
  (-\theta)\wedge(-\dd\theta)
  =
  \theta\wedge\dd\theta.
\]
\end{proof}

\begin{figure}[ht]
\centering
\begin{tikzpicture}[
    x=1cm,y=1cm,
    line cap=round,
    line join=round,
    >=Latex,
    label/.style={font=\small, fill=white, inner sep=1.5pt, rounded corners=2pt},
    note/.style={font=\small}
  ]
  \draw[->, thick, gray!80!black] (-3.50,0) -- (3.50,0)
    node[right, text=black] {\(x\)};
  \draw[->, thick, gray!80!black] (0,-0.50) -- (0,3.50)
    node[above, text=black] {\(t\)};
  \draw[->, thick, gray!80!black] (-2.20,-1.20) -- (2.20,1.20)
    node[right, text=black] {\(y\)};

  \fill[blue!10, opacity=0.8] (-2.10,0.55) ellipse (1.08 and 0.38);
  \draw[blue!65, thick] (-2.10,0.55) ellipse (1.08 and 0.38);
  \node[label, text=blue!70!black] at (-2.65,1.12) {\(\theta=0\)};
  \draw[dashed, gray!70] (-2.10,0.55) -- (-2.10,0);

  \fill[blue!10, opacity=0.8, rotate around={18:(0.80,1.30)}]
    (0.80,1.30) ellipse (0.94 and 0.30);
  \draw[blue!65, thick, rotate around={18:(0.80,1.30)}]
    (0.80,1.30) ellipse (0.94 and 0.30);
  \node[label, text=blue!70!black] at (2.40,1.85) {contact plane};
  \draw[blue!65, ->, shorten >=2pt] (1.85,1.65) -- (1.25,1.45);
  \draw[dashed, gray!70] (0.80,1.30) -- (0.80,0);

  \shade[inner color=orange!15, outer color=orange!50, opacity=0.85]
    (0.20,1.30) ellipse (1.18 and 0.88);
  \draw[orange!80!black, thick] (0.20,1.30) ellipse (1.18 and 0.88);
  \node[label, text=orange!90!black] at (0.20,2.65)
    {spinal sphere \(\mathfrak S\)};

  \draw[very thick,red!70!black]
    (-2.35,0.16) .. controls (-1.50,2.42) and (1.38,2.50) .. (2.35,0.38);
  \node[label, text=red!70!black] at (2.82,0.80) {chain};
  \draw[red!70!black, ->, shorten >=2pt] (2.46,0.72) -- (2.08,0.70);

  \coordinate (Qone) at (-0.62,1.90);
  \coordinate (Qtwo) at (1.20,0.72);
  \fill[black] (Qone) circle (2pt);
  \fill[black] (Qtwo) circle (2pt);
  \node[label, above right=1pt] at (Qone) {\(q_1\)};
  \node[label, below right=1pt] at (Qtwo) {\(q_2\)};
  \node[label] at (-2.35,2.48) {\(\mathcal C_p\cap\mathfrak S=\{q_1,q_2\}\)};
  \draw[gray!60,->] (-1.28,2.34) -- (Qone);
  \draw[gray!60,->] (-1.28,2.34) .. controls (-0.18,2.18) and (0.78,1.46) .. (Qtwo);
\end{tikzpicture}
\caption{Schematic Heisenberg boundary chart.  The boundary is represented by
coordinates \((\zeta,t)=(x+iy,t)\).  Contact planes are the horizontal
distribution \(\theta=0\); chains appear as CR circles; and spinal spheres
are boundary traces of bisectors.  Their intersections are governed by the
same Hermitian incidence and kernel equations as in the projective model.}
\label{fig:heisenberg-spinal}
\end{figure}

\begin{proposition}[Finite Heisenberg contact density]
In finite Heisenberg coordinates \((t,x,y)\), with
\[
  \theta=\dd t+2(x\,\dd y-y\,\dd x),
\]
one has
\[
  \dd\theta=4\,\dd x\wedge\dd y,\qquad
  \theta\wedge\dd\theta
  =
  4\,\dd t\wedge\dd x\wedge\dd y.
\]
\end{proposition}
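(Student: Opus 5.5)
This is a direct computation with exterior derivatives in the real coordinates $(t,x,y)$, with $\zeta=x+iy$. I would first confirm that $\theta=\dd t+2\,\operatorname{Im}(\overline\zeta\,\dd\zeta)$ agrees with the displayed real form. Writing $\overline\zeta\,\dd\zeta=(x-iy)(\dd x+i\,\dd y)$, the imaginary part is $x\,\dd y-y\,\dd x$. This matches the form used in the statement and earlier in the Heisenberg chart discussion.

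Next comes $\dd\theta$. Since $\dd(\dd t)=0$, we get
\[
  \dd\theta=2\bigl(\dd x\wedge\dd y-\dd y\wedge\dd x\bigr).
\]
Antisymmetry gives $\dd y\wedge\dd x=-\dd x\wedge\dd y$, so $\dd\theta=4\,\dd x\wedge\dd y$.

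Then I would wedge:
\[
  \theta\wedge\dd\theta=4\,\dd t\wedge\dd x\wedge\dd y+8\bigl(x\,\dd y-y\,\dd x\bigr)\wedge\dd x\wedge\dd y .
\]
The second term vanishes, because every summand contains a repeated one-form ($\dd y\wedge\dd x\wedge\dd y$ or $\dd x\wedge\dd x\wedge\dd y$). This leaves $4\,\dd t\wedge\dd x\wedge\dd y$, which is nonvanishing, so $\theta$ is indeed a contact form.

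The only point needing care is the orientation and sign bookkeeping. The ordering $\dd t\wedge\dd x\wedge\dd y$ is the orientation fixed in the preceding text, and the coefficient $4$ comes from the two equal contributions in $\dd\theta$. I expect no genuine obstacle. As a consistency check, I would note that this agrees with the invariance $\kappa^\ast(\theta\wedge\dd\theta)=\theta\wedge\dd\theta$ from the anti-complex conjugation proposition: $\kappa$ flips the signs of both $\dd t$ and $\dd y$, so the volume form is preserved.
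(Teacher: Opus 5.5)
Your proposal is correct and follows essentially the same route as the paper: use $\dd(\dd t)=0$ and antisymmetry to get $\dd\theta=4\,\dd x\wedge\dd y$, then wedge with $\theta$ and discard the terms with a repeated one-form. Your extra checks (the agreement of $2\,\operatorname{Im}(\overline\zeta\,\dd\zeta)$ with the real form, and the consistency with $\kappa^\ast$) are accurate but not needed, since the statement is a pure identity of forms and involves no orientation choice.
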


\begin{proof}
Since \(\dd(\dd t)=0\),
\[
  \dd\theta
  =
  2(\dd x\wedge\dd y-\dd y\wedge\dd x)
  =
  4\,\dd x\wedge\dd y.
\]
The terms involving \(\dd x\wedge\dd x\) and \(\dd y\wedge\dd y\) vanish, so
\[
  \theta\wedge\dd\theta
  =
  \dd t\wedge 4\,\dd x\wedge\dd y.
\]
\end{proof}

\begin{proposition}[Finite Heisenberg contact invariance]
Let \(g=(a+i b,c)\) and \(p=(x+i y,t)\).  With finite Heisenberg left
translation
\[
  L_g(p)=g\cdot p
  =
  \bigl(a+x+i(b+y),\,c+t+2(bx-a y)\bigr),
\]
one has
\[
  L_g^\ast\theta=\theta.
\]
Consequently the finite contact-volume density
\(\theta\wedge\dd\theta\) is invariant under finite Heisenberg left
translation.
\end{proposition}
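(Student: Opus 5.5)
The plan is a direct pullback computation in the real coordinates \((x,y,t)\), using \(\theta=\dd t+2(x\,\dd y-y\,\dd x)\). Write the components of \(L_g\) as
\[
  x'=a+x,\qquad y'=b+y,\qquad t'=c+t+2(bx-ay),
\]
with \(a,b,c\) constant. Pulling back the coordinate differentials gives
\[
  L_g^\ast\dd x=\dd x,\qquad L_g^\ast\dd y=\dd y,\qquad
  L_g^\ast\dd t=\dd t+2b\,\dd x-2a\,\dd y .
\]

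Next substitute these into the pulled-back form:
\[
  L_g^\ast\theta=\dd t+2b\,\dd x-2a\,\dd y+2\bigl((a+x)\,\dd y-(b+y)\,\dd x\bigr).
\]
Expanding the bracket produces the cross terms \(2a\,\dd y-2b\,\dd x\). These cancel exactly against the \(2b\,\dd x-2a\,\dd y\) coming from the \(t\)-component. What remains is \(\dd t+2(x\,\dd y-y\,\dd x)=\theta\).

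The only real checkpoint is sign consistency. The group law's twist term \(2(bx-ay)=2\operatorname{Im}(\overline{g_\zeta}\,\zeta)\) must match the orientation convention in \(\theta\). I will verify this cancellation carefully, since a sign mismatch there is the one place the argument could fail.

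The volume statement then follows formally, because pullback commutes with \(\dd\) and with wedge products:
\[
  L_g^\ast(\theta\wedge\dd\theta)=L_g^\ast\theta\wedge\dd(L_g^\ast\theta)=\theta\wedge\dd\theta .
\]
Equivalently, by the Finite Heisenberg contact density proposition, this says \(4\,\dd t\wedge\dd x\wedge\dd y\) is preserved.
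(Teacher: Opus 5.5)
Your proposal is correct and follows essentially the paper's proof: pull back the coordinate differentials, let the $2b\,\dd x-2a\,\dd y$ coming from $\dd t'$ cancel the cross terms in $2(x'\,\dd y'-y'\,\dd x')$, and get the volume statement because pullback commutes with $\dd$ and $\wedge$. One harmless slip is in your side remark: $2(bx-ay)=2\operatorname{Im}(g_\zeta\overline{\zeta})=-2\operatorname{Im}(\overline{g_\zeta}\,\zeta)$, not $+2\operatorname{Im}(\overline{g_\zeta}\,\zeta)$; the remark is never used, and your explicit real-coordinate cancellation is right.
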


\begin{proof}
Write \(x'=a+x\), \(y'=b+y\), and
\[
  t'=c+t+2(bx-a y).
\]
Then
\[
  \dd t'=\dd t+2(b\,\dd x-a\,\dd y),
  \qquad
  \dd x'=\dd x,
  \qquad
  \dd y'=\dd y.
\]
Therefore
\[
\begin{aligned}
L_g^\ast\theta
&=
\dd t'+2(x'\,\dd y'-y'\,\dd x')\\
&=
\dd t+2(b\,\dd x-a\,\dd y)
  +2((a+x)\dd y-(b+y)\dd x)\\
&=
\dd t+2(x\,\dd y-y\,\dd x)
=\theta.
\end{aligned}
\]
Taking \(\theta\wedge\dd\theta\) gives the density statement.
\end{proof}

\begin{proposition}[Horizontal rotation contact invariance]
Let \(c,s\in\mathbb R\) satisfy \(c^2+s^2=1\), and let
\[
  R_{c,s}(x,y,t)=(c x-s y,\,s x+c y,\,t).
\]
Then
\[
  R_{c,s}^\ast\theta=\theta,
\]
and hence \(\theta\wedge\dd\theta\) is invariant under unit horizontal
rotations.
\end{proposition}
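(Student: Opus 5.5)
The plan is a direct pullback computation, in the same style as the proof of the finite Heisenberg contact invariance for left translations. Write the new coordinates as
\[
  x'=cx-sy,\qquad y'=sx+cy,\qquad t'=t.
\]
Since \(c,s\) are constants, the differentials are
\[
  \dd x'=c\,\dd x-s\,\dd y,\qquad
  \dd y'=s\,\dd x+c\,\dd y,\qquad
  \dd t'=\dd t.
\]
Then \(R_{c,s}^\ast\theta=\dd t'+2(x'\,\dd y'-y'\,\dd x')\), so everything reduces to the horizontal term \(x'\,\dd y'-y'\,\dd x'\).

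Expanding this term, the cross terms in \(x\,\dd x\) and \(y\,\dd y\) appear with coefficients \(cs-sc=0\) and cancel. The coefficients of \(x\,\dd y\) and \(-y\,\dd x\) each collect to \(c^2+s^2\). With the hypothesis \(c^2+s^2=1\) this gives
\[
  x'\,\dd y'-y'\,\dd x'=x\,\dd y-y\,\dd x,
\]
and hence \(R_{c,s}^\ast\theta=\theta\).

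Equivalently, writing \(\zeta'=(c+is)\zeta\) with \(|c+is|=1\), one sees that \(\operatorname{Im}(\overline{\zeta'}\,\dd\zeta')=|c+is|^2\operatorname{Im}(\overline\zeta\,\dd\zeta)\). I would mention this as a check.

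For the density, pullback commutes with \(\dd\) and with wedge products, so
\[
  R_{c,s}^\ast(\theta\wedge\dd\theta)
  =R_{c,s}^\ast\theta\wedge\dd\bigl(R_{c,s}^\ast\theta\bigr)
  =\theta\wedge\dd\theta.
\]
As a cross-check against the finite Heisenberg contact density proposition, the rotation also has unit Jacobian \(c^2+s^2=1\) on \(\dd x\wedge\dd y\). There is no real obstacle here; the only point requiring care is tracking the cancellation of the \(cs\) cross terms.
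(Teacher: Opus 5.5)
Your proposal is correct and follows essentially the same route as the paper: the paper also writes \(x'=cx-sy\), \(y'=sx+cy\), \(t'=t\), pulls back \(\dd t'+2(x'\,\dd y'-y'\,\dd x')\), and collects the factor \(c^2+s^2=1\). The density statement then follows, as you say, because pullback commutes with \(\dd\) and \(\wedge\).
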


\begin{proof}
Write \(x'=c x-s y\), \(y'=s x+c y\), and \(t'=t\).  Then
\[
  \dd x'=c\,\dd x-s\,\dd y,\qquad
  \dd y'=s\,\dd x+c\,\dd y.
\]
Thus
\[
\begin{aligned}
R_{c,s}^\ast\theta
&=
\dd t+2(x'\dd y'-y'\dd x')\\
&=
\dd t+2(c^2+s^2)(x\,\dd y-y\,\dd x)\\
&=\theta.
\end{aligned}
\]
\end{proof}

\begin{proposition}[Heisenberg dilation contact scaling]
For \(r\in\mathbb R\), let
\[
  \delta_r(x,y,t)=(r x,r y,r^2 t).
\]
Then
\[
  \delta_r^\ast\theta=r^2\theta,
  \qquad
  \delta_r^\ast(\dd\theta)=r^2\dd\theta.
\]
Consequently
\[
  \delta_r^\ast(\theta\wedge\dd\theta)
  =
  r^4\,\theta\wedge\dd\theta.
\]
\end{proposition}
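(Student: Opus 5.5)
The plan is the same pullback computation used in the two preceding propositions for left translation and horizontal rotation. Write the new coordinates as
\[
  x'=rx,\qquad y'=ry,\qquad t'=r^2t.
\]
Their differentials are
\[
  \dd x'=r\,\dd x,\qquad \dd y'=r\,\dd y,\qquad \dd t'=r^2\,\dd t.
\]
Substituting these into \(\theta=\dd t'+2(x'\,\dd y'-y'\,\dd x')\), each horizontal term carries one factor of \(r\) from the coordinate and one from the differential. This gives
\[
  \delta_r^\ast\theta
  =r^2\,\dd t+2r^2(x\,\dd y-y\,\dd x)
  =r^2\theta.
\]
This homogeneity, with \(t\) carrying weight two, is the only real content of the proof.

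For the second identity, use that pullback commutes with the exterior derivative and that \(r^2\) is a constant:
\[
  \delta_r^\ast(\dd\theta)=\dd(\delta_r^\ast\theta)=\dd(r^2\theta)=r^2\,\dd\theta.
\]
As a cross-check, apply the finite Heisenberg contact density proposition, \(\dd\theta=4\,\dd x\wedge\dd y\). Pulling this back gives \(4r^2\,\dd x\wedge\dd y\), which agrees.

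The volume statement then follows because pullback is multiplicative on wedge products:
\[
  \delta_r^\ast(\theta\wedge\dd\theta)
  =(r^2\theta)\wedge(r^2\,\dd\theta)
  =r^4\,\theta\wedge\dd\theta.
\]
Equivalently, \(\delta_r^\ast(4\,\dd t\wedge\dd x\wedge\dd y)=4\,r^2\cdot r\cdot r\,\dd t\wedge\dd x\wedge\dd y\), which is the same factor \(r^4\).

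There is no real obstacle. The one point to note is that \(r=0\) is allowed by the statement's hypothesis \(r\in\mathbb R\): then \(\delta_0\) is not a diffeomorphism, but the identities are formal pullback identities and still hold, with both sides equal to zero.
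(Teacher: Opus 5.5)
Your proposal is correct and follows the paper's own proof: substitute \(x'=rx\), \(y'=ry\), \(t'=r^2t\) and their differentials into \(\theta\) to get \(r^2\theta\), then use that \(\dd\) commutes with pullback for the \(r^2\) scaling of \(\dd\theta\), and take the wedge product to get \(r^4\). Your remark that the identities still hold formally at \(r=0\) (both sides vanish) and your cross-check against \(\dd\theta=4\,\dd x\wedge\dd y\) are valid additions that the paper leaves implicit.
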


\begin{proof}
Writing \(x'=r x\), \(y'=r y\), and \(t'=r^2t\), we have
\[
  \dd x'=r\,\dd x,\qquad
  \dd y'=r\,\dd y,\qquad
  \dd t'=r^2\,\dd t.
\]
Therefore
\[
\begin{aligned}
\delta_r^\ast\theta
&=
r^2\dd t+2(r x)(r\,\dd y)-2(r y)(r\,\dd x)\\
&=
r^2\bigl(\dd t+2(x\,\dd y-y\,\dd x)\bigr)
=r^2\theta.
\end{aligned}
\]
Taking exterior derivatives gives the \(r^2\) scaling of \(\dd\theta\), and
wedging the two scaled forms gives the \(r^4\) density scaling.
\end{proof}

These finite-chart laws are naturally expressed as constant-weight
contact-conformal transformations.  If a finite Heisenberg map \(\phi\)
satisfies
\[
  \phi^\ast\theta=w\theta,\qquad
  \phi^\ast(\dd\theta)=w\,\dd\theta,
\]
then its top-degree rule is
\[
  (\phi^\ast\theta)\wedge(\phi^\ast\dd\theta)
  =
  w^2\,\theta\wedge\dd\theta.
\]
Translations and unit horizontal rotations have \(w=1\), while the dilation
\(\delta_r\) has \(w=r^2\).

The affine transformations form the standard Heisenberg similarity calculus.
Left translations satisfy the Heisenberg identity, inverse, and associativity
laws; horizontal rotations compose by the usual angle-addition formulas and
preserve \(c^2+s^2=1\); and dilations compose by multiplying their scales.
Rotations and dilations conjugate translations by rotating or dilating the
translation parameter, and rotations commute with dilations in the finite
chart.  Equivalently, a translation, a unit rotation, and a dilation combine
into a finite Heisenberg similarity
\[
  p\longmapsto g\cdot R_{c,s}(\delta_r p),
\]
whose contact weight is \(r^2\) and whose contact-volume coefficient scales by
\(r^4\).  The similarity maps are closed under composition: the second
translation parameter is first dilated and rotated by the linear part of the
first similarity, the two translation parameters multiply in the Heisenberg
group, the rotation parameters compose by angle addition, and the dilation
scales multiply.  On finite boundary invariants, the same similarity rescales
the pair kernel by \(r^2\).  Hence, when \(r\ne0\), finite cross-ratios are
preserved; finite triple products are multiplied by \((r^2)^3\), so their
phase class is unchanged.  The same calculation covers the first compactified
reductions involving \(\infty\): because these similarities fix \(\infty\),
the first-infinity cross-ratio quotient is preserved and the first-infinity
triple product is multiplied by \(r^2\).

The same affine maps extend to the one-point compactification by fixing
\(\infty\).  Thus finite points remain in the finite chart and the finite
pullback coefficient is governed by the same contact weight.  For compactified
similarities the retained finite-chart contact weight is \(r^2\), so the top
coefficient still scales by \(r^4\).  The compactified affine maps are also
closed under composition:
their boundary maps compose, finite points remain finite, \(\infty\) remains
fixed, and their contact weights multiply.  For compactified similarities, the
boundary composition law is the same as the finite-chart similarity law: the
new translation parameter is
\[
  g_1\cdot R_{c_1,s_1}(\delta_{r_1}g_2),
\]
the new rotation parameters are
\[
  (c_1c_2-s_1s_2,\ s_1c_2+c_1s_2),
\]
and the new dilation scale is \(r_1r_2\).  The compactified-composite weight
is therefore \((r_1r_2)^2\), agreeing with the weight of the composed
similarity.  Finite and first-infinity cross-ratios use the same preservation
laws as above, while finite and first-infinity triple products carry the
corresponding \(r^2\)-weight powers.  This is the affine part of the
compactified boundary atlas.

For inversion and other second-chart overlaps, the transition is not a global
affine map with constant weight.  The overlap is partial in a finite chart,
because one domain pole is omitted, and its contact weight may depend on the
point.  The appropriate law is therefore a partial contact-conformal chart
transition:
\[
  \phi^\ast\theta=\lambda(p)\theta
\]
on the finite overlap.  Since
\(\dd(\lambda\theta)=\dd\lambda\wedge\theta+\lambda\dd\theta\), the
top-degree contact-volume term still satisfies
\[
  \phi^\ast(\theta\wedge\dd\theta)
  =
  \lambda(p)^2\,\theta\wedge\dd\theta.
\]
The compactified transition sends the omitted domain pole to the omitted
image pole.  The standard inversion is the case in which the omitted finite
pole maps to \(\infty\).  On the punctured finite chart, for
\(\zeta=x+iy\), \(r=x^2+y^2\), and \(D=r^2+t^2\), it is
\[
  x'=\frac{2(-rx+ty)}{D},\qquad
  y'=\frac{2(-tx-ry)}{D},\qquad
  t'=\frac{-4t}{D},
\]
with pointwise contact weight
\[
  \lambda(p)=\frac{4}{D}.
\]
The compactified map sends the finite origin to \(\infty\) and sends
\(\infty\) to the finite origin.  These coordinates are exactly the Siegel
projective coordinate swap
\((Z_0,Z_1,Z_2)\mapsto(Z_2,Z_1,Z_0)\) on punctured finite null lifts.  The
finite pole and involution laws are
\[
  D(p)=0\Longleftrightarrow p=(0,0),\qquad
  D(I(p))=\frac{16}{D(p)},\qquad
  I(I(p))=p
\]
on the punctured finite chart.  Consequently the compactified boundary
inversion satisfies \(I_{\partial}^2=\mathrm{id}\).  Since the Siegel form is
linear in its first slot and conjugate-linear in its second slot, the same
scalar identity gives
\[
  K(I(p),I(q))
  =
  \alpha(p)\overline{\alpha(q)}\,K(p,q)
\]
for finite boundary pair kernels.  The four scalar factors cancel in the
standard cross-ratio, so \(X(I(a),I(b);I(c),I(d))=X(a,b;c,d)\) whenever the
four finite points avoid the inversion pole.  For triples, the same
calculation gives
\[
  T(I(a),I(b),I(c))
  =
  \alpha(a)\overline{\alpha(b)}
  \alpha(b)\overline{\alpha(c)}
  \alpha(c)\overline{\alpha(a)}\,T(a,b,c).
\]
The multiplier is
\[
  |\alpha(a)|^2|\alpha(b)|^2|\alpha(c)|^2,
\]
so the Cartan phase class is preserved modulo norm scalars.  The compactified
boundary-map calculus also includes the first-infinity reductions.  If
\(I_{\partial}(\infty)=0\), then
\[
  X(I_{\partial}\infty,I(b);I(c),I(d))
  =
  X(\infty,b;c,d),
\]
and
\[
  T(I_{\partial}\infty,I(p),I(q))
  =
  |\alpha(p)|^2|\alpha(q)|^2\,T(\infty,p,q),
\]
again on the punctured finite entries.  Thus the inversion atlas preserves
cross-ratio formulas and preserves Cartan phase classes after the expected
norm-scalar rescaling.

\begin{proposition}[Standard inversion as Siegel coordinate swap]
Let \(p=(x+i y,t)\ne(0,0)\), set \(r=x^2+y^2\) and
\(D=r^2+t^2\), and define \(I(p)=(x'+iy',t')\) by
\[
  x'=\frac{2(-rx+ty)}{D},\qquad
  y'=\frac{2(-tx-ry)}{D},\qquad
  t'=\frac{-4t}{D}.
\]
Let
\[
  \sigma(Z_0,Z_1,Z_2)=(Z_2,Z_1,Z_0).
\]
Then \(\sigma\) preserves the Siegel Hermitian form \(H\), exchanges
\(\widehat\xi(\infty)\) and \(\widehat\xi(0,0)\), and on the punctured finite
chart
\[
  \widehat\xi(I(p))
  =
  \alpha(p)\,\sigma(\widehat\xi(p)),
  \qquad
  \alpha(p)=\frac{-2r-2it}{D}.
\]
Consequently the explicit Heisenberg inversion is the boundary projective map
induced by the linear isometry \(\sigma\).  Moreover \(D=0\) only at the
finite origin, \(D(I(p))=16/D\), and \(I(I(p))=p\); hence the induced
compactified boundary map is an involution.  Finally, sesquilinearity gives
\[
  K(I(p),I(q))
  =
  \alpha(p)\overline{\alpha(q)}K(p,q),
\]
and substituting this into the four pairings defining \(X(a,b;c,d)\) cancels
all four scalar factors.  Substituting it into the three pairings defining the
Cartan triple product gives the displayed triple-product multiplier; after
commuting factors, that multiplier is the product of the three norm scalars
\(\alpha(a)\overline{\alpha(a)}\),
\(\alpha(b)\overline{\alpha(b)}\), and
\(\alpha(c)\overline{\alpha(c)}\).  Pairing an inverted finite point with the
finite origin gives \(\alpha(p)\), and pairing the origin with it gives
\(\overline{\alpha(p)}\); these two identities give the stated first-infinity
cross-ratio and triple-product formulas after the same cancellation.
\end{proposition}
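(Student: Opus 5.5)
The plan is to reduce everything to one explicit check, the lift identity \(\widehat\xi(I(p))=\alpha(p)\,\sigma(\widehat\xi(p))\), and then let sesquilinearity and the earlier covariance results do the rest. Two preliminary facts are immediate. First, \(\sigma\) preserves \(H\): \(H(\sigma Z,\sigma W)=Z_2\overline{W_0}+Z_1\overline{W_1}+Z_0\overline{W_2}=H(Z,W)\). Second, \(\sigma(1,0,0)=(0,0,1)=\widehat\xi(0,0)\), so \(\sigma\) exchanges \(\widehat\xi(\infty)\) and \(\widehat\xi(0,0)\). Hence \(\sigma\) is Hermitian-unitary, and by the earlier proposition on projective action and strata it induces a map of the null boundary.

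For the lift identity, write \(w=(-r+it)/2\), so that \(\sigma(\widehat\xi(p))=(1,\zeta,w)\). Away from the origin, \(D=r^2+t^2=4|w|^2\neq0\), so \(w\neq0\). The unique rescaling with last coordinate \(1\) is \(w^{-1}(1,\zeta,w)\). I would check that \(w^{-1}=\alpha(p)\) and then verify the remaining coordinates:
\begin{itemize}
\item the middle coordinate is \(\alpha\zeta=(2/D)\bigl((-rx+ty)+i(-tx-ry)\bigr)=x'+iy'\);
\item the first coordinate is \(\alpha=(-2r-2it)/D\), and this should equal \((-|\zeta'|^2+it')/2\).
\end{itemize}
For the first coordinate I use \(|\zeta'|^2=|\alpha|^2r=4r/D\) and \(t'=-4t/D\). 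The vector \(\alpha\sigma(\widehat\xi(p))\) is null, since \(\sigma\) preserves \(H\), and it has last coordinate \(1\). A null vector \((Z_0,\zeta',1)\) is a finite Heisenberg lift exactly when \(\operatorname{Re}Z_0=-|\zeta'|^2/2\), so it is \(\widehat\xi(\zeta',2\operatorname{Im}Z_0)\). This confirms both that the image point is \(I(p)\) and the formula for \(t'\). The whole step is bookkeeping with \(|w|^2=D/4\), and I expect it to be the only real obstacle, mainly in matching signs of \(x',y',t'\).

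The consequences follow from the lift identity. From \(r'=4r/D\) and \(t'=-4t/D\) one gets \(D(I(p))=16(r^2+t^2)/D^2=16/D\). This is nonzero, so \(I(p)\) is again punctured and \(I\) can be iterated. Also \(D=0\) forces \(r=t=0\), which is the origin. For the involution, apply the lift identity twice and use \(\sigma^2=\mathrm{id}\): \(\widehat\xi(I(I(p)))\) is a nonzero multiple of \(\widehat\xi(p)\). Since finite lifts are normalized by last coordinate \(1\), the multiple is \(1\), so \(I(I(p))=p\). Together with the exchange of \(0\) and \(\infty\), this makes \(I_\partial\) an involution.

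For the kernel law, sesquilinearity and \(\sigma\)-invariance give
\[
  K(I(p),I(q))
  =H\bigl(\alpha(p)\sigma\widehat\xi(p),\,\alpha(q)\sigma\widehat\xi(q)\bigr)
  =\alpha(p)\overline{\alpha(q)}\,K(p,q).
\]
Substituting this into \(X\) cancels the factors as in the scale-invariance proposition for \(X\). Substituting into \(T\) gives the multiplier \(\prod_i\alpha(a_i)\overline{\alpha(a_i)}\), as in the triple-product scaling proposition. For the first-infinity formulas, take \(\widehat\xi(0,0)=\sigma\widehat\xi(\infty)\) as the lift of \(I_\partial\infty\). Then
\[
  H\bigl(\widehat\xi(I(p)),\widehat\xi(0,0)\bigr)
  =\alpha(p)\,H\bigl(\widehat\xi(p),\widehat\xi(\infty)\bigr),
\]
and the reversed pairing carries \(\overline{\alpha(p)}\). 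The same cancellations then yield the stated cross-ratio equality and the \(|\alpha(p)|^2|\alpha(q)|^2\) multiplier for the triple product.
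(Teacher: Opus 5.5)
Your proposal is correct and follows the paper's route. You verify the lift identity coordinatewise: normalizing \(\sigma(\widehat\xi(p))\) by \(w^{-1}=\alpha(p)\) is the same check as the paper's \(\alpha(p)(-r+it)/2=1\). You then read off \(D(I(p))=16/D\) from \(|\zeta'|^2=4r/D\) and \(t'=-4t/D\). Finally you obtain the kernel, cross-ratio, triple-product and first-infinity laws from sesquilinearity and \(H(\sigma Z,\sigma W)=H(Z,W)\).

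The one small difference is the involution step. You derive \(I(I(p))=p\) from \(\sigma^2=\mathrm{id}\) and uniqueness of the finite lift with last coordinate \(1\), rather than by a second substitution into the explicit formulas. This is cleaner and also gives \(\alpha(I(p))\,\alpha(p)=1\).
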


\begin{proof}
The identity \(H(\sigma Z,\sigma W)=H(Z,W)\) follows immediately from
\[
  H(Z,W)=Z_0\overline{W_2}+Z_1\overline{W_1}+Z_2\overline{W_0}.
\]
The exchange of \(\infty\) and the finite origin is the coordinate statement
\[
  \sigma(1,0,0)=(0,0,1),\qquad
  \sigma(0,0,1)=(1,0,0).
\]
For \(p=(x+i y,t)\), write \(\zeta=x+iy\).  Then
\[
  \widehat\xi(p)=\left(\frac{-r+it}{2},\zeta,1\right).
\]
The displayed formulas give
\[
  x'+iy'=\alpha(p)\zeta,\qquad
  |x'+iy'|^2=\frac{4r}{D},\qquad
  t'=-\frac{4t}{D}.
\]
Therefore the first two coordinates of \(\widehat\xi(I(p))\) are
\(\alpha(p)\) and \(\alpha(p)\zeta\).  The last coordinate agrees because
\[
  \alpha(p)\frac{-r+it}{2}
  =
  \frac{(-2r-2it)(-r+it)}{2D}
  =
  1.
\]
The denominator statement follows from
\[
  D=(x^2+y^2)^2+t^2,
\]
which vanishes exactly when \(x=y=t=0\).  Substituting the displayed formulas
for \(x',y',t'\) gives
\[
  x'^2+y'^2=\frac{4r}{D},
  \qquad
  D(I(p))=\left(\frac{4r}{D}\right)^2+
          \left(\frac{-4t}{D}\right)^2
        =\frac{16}{D}.
\]
A second substitution in the same formula for \(I\) then gives
\(I(I(p))=p\).  Together with the exchange of \(0\) and \(\infty\), this is
the compactified involution law.
\end{proof}

\section{Full CR/contact atlas}

The finite Heisenberg chart is not a privileged extra structure.  It is one
chart in the projective CR atlas of the null boundary.  The full atlas is
obtained by choosing, for each omitted boundary point, a projective-unitary
map carrying that point to the standard point at infinity.

\begin{proposition}[Unitary transitivity on null lines]
For any two boundary points \([\xi],[\xi']\in\partial\CH^2\), there is a
Hermitian-unitary map \(g\) such that
\[
  g[\xi]=[\xi'].
\]
\end{proposition}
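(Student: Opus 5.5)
The plan is to construct \(g\) by completing each null vector to a standard hyperbolic frame and mapping one frame to the other. Given a null representative \(\xi\), I first pick a second null vector \(\eta\) with \(h(\xi,\eta)=1\). Since \(h\) is nondegenerate, there is \(v\) with \(h(v,\xi)\neq0\). The plane \(\Span\{\xi,v\}\) has Gram determinant \(-|h(v,\xi)|^2<0\) (the \((1,1)\)-entry is zero), so it has signature \((1,1)\) and contains a second null line, explicitly constructible as in the null-join witness. Taking a null \(\eta\) in that plane independent of \(\xi\), the unique-chain argument (Witt index one) gives \(h(\eta,\xi)\neq0\). I then rescale \(\eta\) so that \(h(\eta,\xi)=1\).

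Next I let \(u\) span \(\Span\{\xi,\eta\}^\perp\). As in the proof of the unique-chain proposition, this complement is a positive line, so I normalize \(h(u,u)=1\). In the basis \((\xi,u,\eta)\) the Gram matrix is then exactly that of the Siegel form \(H\), with \(\xi\) in the slot of \(\widehat\xi(\infty)=(1,0,0)\). I build the same kind of frame \((\xi',u',\eta')\) starting from \(\xi'\).

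The map \(g\) is the complex-linear map defined by \(\xi\mapsto\xi'\), \(u\mapsto u'\), \(\eta\mapsto\eta'\). Both frames are bases, since a Gram matrix with nonzero determinant forces linear independence. Because \(g\) carries one basis to another with identical Gram matrices, sesquilinear expansion gives \(h(gz,gw)=h(z,w)\) for all \(z,w\). So \(g\) is invertible and Hermitian-unitary, and \(g[\xi]=[\xi']\).

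The only step needing care is the first one, producing a null \(\eta\) with \(h(\xi,\eta)\neq0\). I would make it explicit. If \(h(v,v)\ne0\), I set \(\eta=v-\frac{h(v,v)}{2\,\overline{h(v,\xi)}}\,\xi\) and check directly that \(h(\eta,\eta)=0\) while \(h(\eta,\xi)=h(v,\xi)\neq0\). If \(h(v,v)=0\), I simply take \(\eta=v\). If a determinant-one map is wanted, I would also rescale \(u'\) by a unit scalar; this keeps all pairings unchanged and adjusts \(\det g\), though the statement only asks for a Hermitian-unitary map.
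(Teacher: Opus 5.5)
Your proof is correct and follows essentially the same route as the paper: both complete each null vector to a frame \((\xi,e,\eta)\) with the antidiagonal Siegel Gram matrix and take \(g\) to be the linear map between the two frames. The only difference is cosmetic: you build the second null vector as \(\eta=v-\frac{h(v,v)}{2\overline{h(v,\xi)}}\xi\) and rescale afterwards, whereas the paper first normalizes \(h(\xi,u)=1\) and sets \(\eta=u-\tfrac{h(u,u)}{2}\xi\); your separate case \(h(v,v)=0\) is already covered by the same formula.
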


\begin{proof}
Choose nonzero null representatives \(\xi,\xi'\).  Since the Hermitian form
is nondegenerate, choose \(u,u'\in V\) with
\[
  h(\xi,u)=1,\qquad h(\xi',u')=1.
\]
Set
\[
  \eta=u-\frac{h(u,u)}2\,\xi,
  \qquad
  \eta'=u'-\frac{h(u',u')}2\,\xi'.
\]
The diagonal values \(h(u,u)\) and \(h(u',u')\) are real.  Since \(\xi\) and
\(\xi'\) are null, the new vectors satisfy
\[
  h(\xi,\eta)=1,\qquad h(\eta,\eta)=0,
  \qquad
  h(\xi',\eta')=1,\qquad h(\eta',\eta')=0.
\]
The two planes \(\Span\{\xi,\eta\}\) and \(\Span\{\xi',\eta'\}\) have the
same Gram matrix
\[
  \begin{pmatrix}0&1\\1&0\end{pmatrix}.
\]
Their orthogonal complements are positive one-dimensional Hermitian spaces.
Choose unit positive vectors \(e\) and \(e'\) in these complements.  Then the
ordered bases
\[
  (\xi,e,\eta),\qquad(\xi',e',\eta')
\]
have the same Gram matrix
\[
  \begin{pmatrix}
    0&0&1\\
    0&1&0\\
    1&0&0
  \end{pmatrix}.
\]
The linear map sending the first ordered basis to the second therefore
preserves the Hermitian form and sends \([\xi]\) to \([\xi']\).
\end{proof}

\begin{definition}[Boundary charts with omitted pole]
In the Siegel model, let
\[
  \infty=[(1,0,0)].
\]
For each \(\omega\in\partial\CH^2\), choose a Hermitian-unitary map
\(g_\omega\) with
\[
  g_\omega\omega=\infty.
\]
Let
\[
  U_\omega=\partial\CH^2\setminus\{\omega\}.
\]
The chart
\[
  \psi_\omega:U_\omega\longrightarrow \C\times\mathbb R
\]
is defined by
\[
  g_\omega[\xi]=[\widehat\xi(\zeta,t)]
  \quad\Longleftrightarrow\quad
  \psi_\omega([\xi])=(\zeta,t).
\]
The family \(\{(U_\omega,\psi_\omega)\}_{\omega\in\partial\CH^2}\) is the
projective Heisenberg atlas of the boundary.
\end{definition}

\begin{proposition}[Atlas coverage and coordinate uniqueness]
The sets \(U_\omega\) cover \(\partial\CH^2\).  In each chart, the finite
coordinate \((\zeta,t)\) is unique.
\end{proposition}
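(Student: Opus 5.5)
Two things need to be shown. First, every boundary point lies in some chart domain \(U_\omega\). Second, within a fixed chart, the Heisenberg coordinate of a point is determined by the point.

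\textbf{Coverage.} Since \(\partial\CH^2=\PH(V_0)\) contains more than one point, for any \([\xi]\) we can pick \(\omega\ne[\xi]\); then \([\xi]\in U_\omega\). For instance, if \([\xi]\ne\infty\) take \(\omega=\infty\); if \([\xi]=\infty\) take \(\omega=[\widehat\xi(0,0)]\). Both are null by the lift computation. Existence of the maps \(g_\omega\) is supplied by the unitary transitivity proposition.

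\textbf{Existence of a coordinate.} Fix \(\omega\) and \([\xi]\in U_\omega\). Since \(g_\omega\) is Hermitian-unitary, it preserves the null stratum, so \(g_\omega\xi=Z=(Z_0,Z_1,Z_2)\) is a nonzero null vector. Because \(g_\omega\) descends to a projective bijection and \(g_\omega\omega=\infty\), we have \([Z]\ne\infty=[(1,0,0)]\).

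I claim \(Z_2\ne0\). If \(Z_2=0\), then \(H(Z,Z)=2\operatorname{Re}(Z_0\overline{Z_2})+|Z_1|^2=|Z_1|^2=0\), so \(Z_1=0\). Then \(Z\) is a multiple of \((1,0,0)\), which contradicts \([Z]\ne\infty\).

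So we may rescale to \(Z_2=1\) and set \(\zeta=Z_1\). Nullity then reads \(2\operatorname{Re}Z_0+|\zeta|^2=0\). Defining \(t=2\operatorname{Im}Z_0\) gives \(Z_0=(-|\zeta|^2+it)/2\), that is, \(Z=\widehat\xi(\zeta,t)\).

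\textbf{Uniqueness.} Suppose \([\widehat\xi(\zeta,t)]=[\widehat\xi(\zeta',t')]\). Both lifts have last coordinate \(1\), so the proportionality scalar is \(1\). Hence \(\zeta=\zeta'\) and \(t=t'\). Since the point \(g_\omega[\xi]\) is well defined, \(\psi_\omega\) is well defined and injective.

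\textbf{Main obstacle.} The only substantive point is ruling out \(Z_2=0\) for finite points, which the nullity computation handles. Uniqueness is understood for a fixed choice of \(g_\omega\): a different choice of \(g_\omega\) changes the chart by an element of the stabilizer of \(\infty\). That is a transition question, not part of this statement.
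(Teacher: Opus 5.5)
Your proof is correct and follows the paper's own argument: coverage comes from choosing any \(\omega\ne[\xi]\), and uniqueness holds because the normalized third coordinate \(1\) forces the projective scalar to be \(1\). Your extra step showing \(Z_2\ne0\), which guarantees that every point of \(U_\omega\) actually has a finite coordinate, is a check the paper leaves implicit, and it is worth keeping.
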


\begin{proof}
Given a boundary point \(\xi\), choose any boundary point \(\omega\ne\xi\).
Then \(\xi\in U_\omega\), so the \(U_\omega\)'s cover the boundary.

For uniqueness in the standard chart, suppose
\[
  [\widehat\xi(\zeta,t)]=[\widehat\xi(\eta,s)].
\]
The third homogeneous coordinate of each finite lift is \(1\), so the
projective scalar must be \(1\).  Hence the second coordinates give
\(\zeta=\eta\), and the first coordinates then give \(t=s\).  Applying this
after \(g_\omega\) gives uniqueness in every chart.
\end{proof}

\begin{theorem}[Contact-conformal chart overlaps]
Let \(\omega,\omega'\in\partial\CH^2\), and let
\[
  \phi=\psi_{\omega'}\circ\psi_\omega^{-1}
\]
be a transition map on a nonempty finite overlap.  Let
\[
  G=g_{\omega'}g_\omega^{-1}.
\]
Then there is a nonzero scalar function \(\rho(p)\) on the overlap such that
\[
  \widehat\xi(\phi(p))=\rho(p)\,G\widehat\xi(p).
\]
For the standard finite contact form
\[
  \theta=\dd t+2\,\operatorname{Im}(\overline\zeta\,\dd\zeta),
\]
the overlap satisfies
\[
  \phi^\ast\theta=|\rho|^2\theta,
  \qquad
  \phi^\ast(\theta\wedge\dd\theta)=|\rho|^4\,\theta\wedge\dd\theta.
\]
Thus all projective-unitary boundary chart overlaps preserve the contact
distribution and transform the contact-volume class conformally.
\end{theorem}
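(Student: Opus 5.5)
The plan is to derive both identities from one fact: the finite contact form is the pullback of the tautological form \(\operatorname{Im}H(\dd v,v)\) restricted to the null cone, as in the boundary contact covariance proposition. Then a transition map acts on lifts by a Hermitian-unitary \(G\) followed by a scalar rescaling. The scalar rescaling contributes a factor \(|\rho|^2\), and the unitary part contributes nothing.

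\emph{Step 1 (the scalar \(\rho\)).} Let \(p\) lie in the finite overlap. By definition of the charts, \(g_\omega\psi_\omega^{-1}(p)=[\widehat\xi(p)]\), so \(\psi_\omega^{-1}(p)=g_\omega^{-1}[\widehat\xi(p)]\). Applying \(g_{\omega'}\) gives
\[
  [\widehat\xi(\phi(p))]=[G\widehat\xi(p)].
\]
By the atlas-uniqueness proposition, the finite lift is the representative whose third coordinate equals \(1\). Since \(\phi(p)\) is finite, \([G\widehat\xi(p)]\ne\infty\), so \((G\widehat\xi(p))_2\ne0\). We therefore set
\[
  \rho(p)=\frac{1}{(G\widehat\xi(p))_2}.
\]
This is nonzero, and it is smooth because it is the reciprocal of a nonvanishing polynomial in \((\zeta,t)\). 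It follows that \(\phi\) is smooth, with \(\phi=(\text{coordinates of }\rho G\widehat\xi)\).

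\emph{Step 2 (tautological form).} I will check directly that
\[
  2\operatorname{Im}H(\dd\widehat\xi,\widehat\xi)=\theta.
\]
Expanding,
\[
  H(\dd\widehat\xi,\widehat\xi)=\tfrac12\bigl(-\zeta\,\dd\overline\zeta-\overline\zeta\,\dd\zeta+i\,\dd t\bigr)+\overline\zeta\,\dd\zeta .
\]
The term \(-\tfrac12(\zeta\,\dd\overline\zeta+\overline\zeta\,\dd\zeta)\) is real, so taking imaginary parts leaves \(\tfrac12\dd t+\operatorname{Im}(\overline\zeta\,\dd\zeta)=\tfrac12\theta\).

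\emph{Step 3 (pullback of \(\theta\)).} Set \(v=G\widehat\xi\). Then
\[
  \phi^\ast\theta=2\operatorname{Im}H\bigl(\dd(\rho v),\rho v\bigr).
\]
Expanding by sesquilinearity gives
\[
  H\bigl(\dd(\rho v),\rho v\bigr)=\overline\rho\,\dd\rho\,H(v,v)+|\rho|^2H(\dd v,v).
\]
The first term vanishes because \(v\) is null: \(H\) is preserved by \(G\) and \(\widehat\xi\) is null. For the second term, \(H(\dd v,v)=H(G\,\dd\widehat\xi,G\widehat\xi)=H(\dd\widehat\xi,\widehat\xi)\) because \(G\) is Hermitian-unitary. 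Hence \(\phi^\ast\theta=|\rho|^2\theta\).

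\emph{Step 4 (contact volume).} Differentiating, \(\phi^\ast\dd\theta=\dd|\rho|^2\wedge\theta+|\rho|^2\dd\theta\). When this is wedged with \(|\rho|^2\theta\), the \(\dd|\rho|^2\wedge\theta\) term is killed by \(\theta\wedge\theta=0\). This gives \(\phi^\ast(\theta\wedge\dd\theta)=|\rho|^4\,\theta\wedge\dd\theta\), exactly as in the partial contact-conformal discussion. Preservation of the contact distribution follows because \(|\rho|^2>0\).

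The main obstacle I expect is Step 1's bookkeeping, namely justifying that \(\rho\) is well defined and smooth on the whole overlap. The argument above handles it by reading \(\rho\) off the normalization of the third coordinate and using that finite points are precisely those away from \(\infty=[(1,0,0)]\).
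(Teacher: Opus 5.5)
Your proposal is correct and follows the paper's proof essentially step for step. The shared steps are the projective-equivalence scalar \(\rho\), the identity \(\theta=2\operatorname{Im}H(\dd\widehat\xi,\widehat\xi)\), the vanishing of the \(\dd\rho\) term because \(G\widehat\xi\) is null, unitarity of \(G\), and the \(\theta\wedge\theta=0\) cancellation in top degree; your explicit check of the tautological identity and your formula \(\rho(p)=1/(G\widehat\xi(p))_2\), which gives the smoothness of \(\rho\) needed to differentiate, are small additions that the paper leaves implicit.
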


\begin{proof}
The two finite lifts \(\widehat\xi(\phi(p))\) and \(G\widehat\xi(p)\)
represent the same projective null line, so they differ by a nonzero scalar
\(\rho(p)\).

Let \(s(p)=\widehat\xi(p)\).  In the finite chart,
\[
  \theta=2\,\operatorname{Im} H(\dd s,s).
\]
Differentiate the lift relation:
\[
  \dd(\rho Gs)=(\dd\rho)Gs+\rho\,G\dd s.
\]
Since \(Gs\) is null, the term involving \(\dd\rho\) pairs trivially with
\(\rho Gs\).  Because \(G\) is Hermitian-unitary,
\[
\begin{aligned}
  H(\dd(\rho Gs),\rho Gs)
  &=
  |\rho|^2 H(G\dd s,Gs)\\
  &=
  |\rho|^2 H(\dd s,s).
\end{aligned}
\]
Taking twice the imaginary part gives
\[
  \phi^\ast\theta=|\rho|^2\theta.
\]
Finally,
\[
  \dd(|\rho|^2\theta)
  =
  \dd(|\rho|^2)\wedge\theta+|\rho|^2\dd\theta,
\]
and the term
\(\theta\wedge\dd(|\rho|^2)\wedge\theta\) vanishes.  Therefore
\[
  \phi^\ast(\theta\wedge\dd\theta)
  =
  |\rho|^4\,\theta\wedge\dd\theta.
\]
\end{proof}

\begin{corollary}[Atlas covariance of boundary invariants]
With notation as in the preceding theorem, the boundary pair kernel satisfies
\[
  K(\phi(p),\phi(q))
  =
  \rho(p)\overline{\rho(q)}\,K(p,q).
\]
Consequently boundary cross-ratios are preserved by chart overlaps, and the
Cartan triple product is multiplied by the positive real norm factor
\[
  |\rho(p_1)|^2|\rho(p_2)|^2|\rho(p_3)|^2.
\]
In particular, the Cartan phase class is atlas independent.
\end{corollary}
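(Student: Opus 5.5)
The plan is to work directly from the lift relation of the preceding theorem,
\[
  \widehat\xi(\phi(p))=\rho(p)\,G\widehat\xi(p),
\]
together with the defining property \(H(Gu,Gv)=H(u,v)\) of the Hermitian-unitary map \(G=g_{\omega'}g_\omega^{-1}\). The boundary pair kernel is the Siegel pairing of finite lifts, \(K(p,q)=H(\widehat\xi(p),\widehat\xi(q))\). Substituting the lift relation in both slots and using linearity in the first slot and conjugate-linearity in the second, I expect
\[
  K(\phi(p),\phi(q))
  =H(\rho(p)G\widehat\xi(p),\rho(q)G\widehat\xi(q))
  =\rho(p)\overline{\rho(q)}\,H(\widehat\xi(p),\widehat\xi(q)),
\]
which is the displayed kernel law. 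This step is purely formal and mirrors the inversion computation in the Siegel coordinate-swap proposition, where \(\alpha(p)\) played the role of \(\rho(p)\).

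For the cross-ratio, I will substitute the kernel law into the four pairings of \(X(a,b;c,d)=K(c,a)K(d,b)/(K(d,a)K(c,b))\). The numerator acquires \(\rho(c)\overline{\rho(a)}\rho(d)\overline{\rho(b)}\), and the denominator acquires \(\rho(d)\overline{\rho(a)}\rho(c)\overline{\rho(b)}\). These factors coincide, so they cancel exactly as in the scale-invariance proposition for \(X\). The only point to check is that the domain is respected. All four points must lie in the overlap, and \(\rho\ne0\) there, so the denominator is nonzero after transport if and only if it was nonzero before.

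For the triple product, the three pairings \(K(p_1,p_2)K(p_2,p_3)K(p_3,p_1)\) pick up
\[
  \rho(p_1)\overline{\rho(p_2)}\,\rho(p_2)\overline{\rho(p_3)}\,\rho(p_3)\overline{\rho(p_1)}
  =|\rho(p_1)|^2|\rho(p_2)|^2|\rho(p_3)|^2,
\]
after commuting factors. Since \(\rho\) is nonvanishing, this is a positive real number. Hence \(\arg T\) is unchanged, and the Cartan phase class, which by the triple-product scaling proposition is defined modulo such norm factors, is atlas independent.

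I expect no serious obstacle. The one subtle point is that \(\rho\) must be a genuine nonzero scalar function on the overlap, which the preceding theorem already provides because both lifts represent the same null line. Beyond that, one should note that charts centered at different omitted poles \(\omega\) are compared only on finite overlaps, so all evaluated points avoid the omitted poles.
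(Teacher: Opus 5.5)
Your proposal is correct and matches the paper's proof step for step. You substitute \(\widehat\xi(\phi(p))=\rho(p)\,G\widehat\xi(p)\) into both slots of \(H\), use \(H(Gu,Gv)=H(u,v)\) and sesquilinearity to get the kernel law, and then observe that the endpoint scalars cancel in the cross-ratio and collapse to \(|\rho(p_1)|^2|\rho(p_2)|^2|\rho(p_3)|^2\) in the triple product; your remark that the cross-ratio denominator stays nonzero because \(\rho\ne0\) is a small point the paper leaves implicit.
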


\begin{proof}
The kernel identity is
\[
\begin{aligned}
K(\phi(p),\phi(q))
&=
H(\widehat\xi(\phi(p)),\widehat\xi(\phi(q)))\\
&=
H(\rho(p)G\widehat\xi(p),\rho(q)G\widehat\xi(q))\\
&=
\rho(p)\overline{\rho(q)}\,K(p,q).
\end{aligned}
\]
In a cross-ratio, each endpoint scalar occurs once in the numerator and once
in the denominator, so all four scalar factors cancel.  In a triple product,
the three factors multiply as
\[
  \rho_1\overline{\rho_2}\,
  \rho_2\overline{\rho_3}\,
  \rho_3\overline{\rho_1}
  =
  |\rho_1|^2|\rho_2|^2|\rho_3|^2,
\]
which is positive real.  Hence the phase class is unchanged.
\end{proof}

\begin{proposition}[Affine and inversion transitions]
The compactified affine Heisenberg similarities are chart transitions induced
by projective-unitary maps fixing \(\infty\).  The standard inversion is the
transition between the standard chart omitting
\(\infty\) and the chart omitting the finite origin, induced by the Siegel
coordinate swap
\[
  (Z_0,Z_1,Z_2)\longmapsto(Z_2,Z_1,Z_0).
\]
Thus the affine maps and the inversion computed above are concrete members of
the full projective CR/contact atlas.
\end{proposition}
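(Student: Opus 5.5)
The statement has two halves. The first is that each affine similarity \(p\mapsto g\cdot R_{c,s}(\delta_r p)\) is induced by a Hermitian-unitary (up to scalar) linear map of the Siegel model fixing \([(1,0,0)]\). The second is that the inversion is the transition map \(\psi_{0}\circ\psi_\infty^{-1}\) for the choice \(g_{0}=\sigma\). My plan is to exhibit explicit matrices for the first half and to cite the preceding proposition for the second.

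For translations, left translation by \(g=(\gamma,c)\) with \(\gamma=a+ib\) should be induced by the unipotent matrix
\[
  N_g=\begin{pmatrix}1&-\overline\gamma&\frac{-|\gamma|^2+ic}{2}\\ 0&1&\gamma\\ 0&0&1\end{pmatrix}.
\]
I will check two things directly from \(H(Z,W)=Z_0\overline{W_2}+Z_1\overline{W_1}+Z_2\overline{W_0}\). First, \(N_g\) preserves \(H\): the \((1,2)\) entry \(-\overline\gamma\) is forced by \(H\) on the pair \(e_1,e_2\), and the corner entry is forced by nullity of \(N_g e_2\). Second, \(N_g\widehat\xi(\zeta,t)=\widehat\xi(L_g(\zeta,t))\). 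The sign of \(\operatorname{Im}\) in the \(t\)-coordinate must match \(t'=c+t+2(bx-ay)\), so I may need \(\gamma\) replaced by \(\overline\gamma\) in the off-diagonal entries; fixing this convention is the only real bookkeeping.

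Rotations and dilations are diagonal. A rotation \(R_{c,s}\) is \(\operatorname{diag}(1,e^{i\phi},1)\) with \(e^{i\phi}=c+is\); it is visibly unitary for \(H\) and acts by \(\zeta\mapsto e^{i\phi}\zeta\). A dilation \(\delta_r\) (\(r\neq0\)) is \(\operatorname{diag}(r,1,r^{-1})\) acting on lifts, followed by projective rescaling by \(r\) to restore the last coordinate \(1\). This gives \((r^2(-|\zeta|^2+it)/2,\,r\zeta,\,1)=\widehat\xi(r\zeta,r^2t)\), and \(H\) is preserved since \(r\cdot\overline{r^{-1}}=1\) for real \(r\). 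Each of these matrices fixes \((1,0,0)\) projectively, so composites do too. The projective action of a product is the composite boundary map, which by the composition laws already recorded reproduces \(p\mapsto g\cdot R_{c,s}(\delta_r p)\). Each such map is then the transition \(\psi_\infty\circ\psi_\infty^{-1}\) computed with the alternative choice \(g'_\infty=M g_\infty\), where \(M\) is the product matrix. Equivalently, it is the transition \(\phi\) of the contact-conformal overlap theorem with \(G=M\), and the \(\rho\) obtained this way recovers the weight \(r^2=|\rho|^2\).

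For the inversion, I take \(g_\infty=\mathrm{id}\) and \(g_{0}=\sigma\), which is legitimate because \(\sigma\) is Hermitian-unitary and sends \([\widehat\xi(0,0)]\) to \(\infty\). By the definition of charts, \(\psi_0([\xi])=(\zeta',t')\) exactly when \(\sigma[\xi]=[\widehat\xi(\zeta',t')]\). The previous proposition gives \(\widehat\xi(I(p))=\alpha(p)\sigma\widehat\xi(p)\), hence \(\psi_0\circ\psi_\infty^{-1}=I\) on the punctured chart. Its extension exchanges \(0\) and \(\infty\), as shown there.

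The main obstacle is the sign convention in \(N_g\) matching the stated group law. I will first test the matrix on \(\zeta=0\) and on \(t=0\), and then adjust the conjugations accordingly.
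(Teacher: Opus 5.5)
Your proposal is correct. On the inversion half it is the paper's argument: take $g_0=\sigma$, note that $\sigma$ sends $[(0,0,1)]$ to $\infty$, and read off $\psi_0\circ\psi_\infty^{-1}=I$ on the punctured chart from $\widehat\xi(I(p))=\alpha(p)\,\sigma\widehat\xi(p)$.

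On the affine half you take a more explicit route than the paper. The paper's proof only asserts that the similarities preserve the standard chart, fix $\infty$, and have constant contact and kernel weights, so that they are special cases of the overlap theorem. It never exhibits the Hermitian-unitary maps that induce them. Your matrices supply exactly that missing step, and they work as written, so the sign adjustment you anticipate is unnecessary. With $\gamma=a+ib$ one gets
\[
  N_g\widehat\xi(\zeta,t)=\widehat\xi\bigl(\zeta+\gamma,\;c+t+2\operatorname{Im}(\gamma\overline\zeta)\bigr),
  \qquad 2\operatorname{Im}(\gamma\overline\zeta)=2(bx-ay),
\]
which matches the paper's group law. The columns of $N_g$ have the same $H$-Gram matrix as the standard basis, so $N_g$ is $H$-unitary. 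The matrix $\operatorname{diag}(r,1,r^{-1})$ is genuinely $H$-unitary for real $r\neq0$, with no scalar correction needed, and it satisfies $\widehat\xi(\delta_r p)=r\,\operatorname{diag}(r,1,r^{-1})\,\widehat\xi(p)$.

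What your route buys is that the scalar $\rho$ of the overlap theorem is actually computed. For the composite $M=N_g\operatorname{diag}(1,c+is,1)\operatorname{diag}(r,1,r^{-1})$ one gets $\rho=r$, so the identification of the contact weight $r^2=|\rho|^2$ becomes a calculation rather than an assertion. You also correctly observe that a transition between two charts omitting the same pole $\infty$ only makes sense after a second choice $g_\infty'=Mg_\infty$. The paper's atlas definition fixes one $g_\omega$ per $\omega$, and its proof passes over this point silently. The paper's version is shorter, but it leans on the background fact that Heisenberg similarities come from the stabilizer of $\infty$ in the projective unitary group.
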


\begin{proof}
The affine similarities computed above preserve the standard finite chart and
compactify by fixing \(\infty\).  Their contact weights and kernel weights are
the constant-weight cases of the general overlap theorem.

For the second statement, the coordinate swap sends the finite origin
\([(0,0,1)]\) to \(\infty=[(1,0,0)]\).  Hence it supplies the chart
\(\psi_0\) omitting the finite origin.  The preceding inversion proposition
proved that, on the common finite overlap, its coordinate expression is
exactly the displayed Heisenberg inversion.  Therefore inversion is a genuine
atlas transition, not an additional operation.
\end{proof}

\begin{definition}[Atlas CR circle]
An atlas CR circle is a boundary subset whose expression in one, equivalently
every, projective Heisenberg chart is the finite equation
\[
  H(\widehat\xi(\zeta,t),p)=0
\]
for some positive vector \(p\), together with the omitted chart point when
that point also satisfies the same projective incidence relation.
\end{definition}

\begin{theorem}[Chains are the CR circles of the atlas]
For \(p\in V_+\), the Hermitian chain
\[
  \mathcal C_p=\PH(p^\perp\cap V_0)
\]
is an atlas CR circle.  Conversely every atlas CR circle is a Hermitian chain.
In the chart \(\psi_\omega\), its finite equation is
\[
  H(\widehat\xi(\zeta,t),g_\omega p)=0.
\]
Writing \(g_\omega p=(p_0,p_1,p_2)\), this is
\[
  \frac{-|\zeta|^2+i t}{2}\overline{p_2}
  +\zeta\overline{p_1}
  +\overline{p_0}
  =
  0.
\]
The omitted point \(\omega\) lies on the chain exactly when \(h(\omega,p)=0\).
In the standard chart, this specializes to
\[
  \infty\in\mathcal C_p
  \quad\Longleftrightarrow\quad
  p_2=0.
\]
\end{theorem}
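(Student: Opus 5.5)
The plan is to reduce everything to the transport identity \(h(g_\omega x,g_\omega p)=h(x,p)\) and then read off coordinates in the standard Siegel chart. Throughout, \(h\) is identified with the Siegel form \(H\) in the chosen model, and \(g_\omega\) is the Hermitian-unitary map from the definition of the atlas.

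\textbf{Forward direction.} Fix \(p\in V_+\) and a chart \(\psi_\omega\). Take a boundary point \([\xi]\in U_\omega\) with coordinates \(\psi_\omega([\xi])=(\zeta,t)\). By the definition of the chart, \(g_\omega\xi=\lambda\,\widehat\xi(\zeta,t)\) for some nonzero \(\lambda\).

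By the incidence and chain covariance proposition,
\[
h(\xi,p)=h(g_\omega\xi,g_\omega p)=\lambda\,H(\widehat\xi(\zeta,t),g_\omega p).
\]
Hence \([\xi]\in\mathcal C_p\) if and only if \(H(\widehat\xi(\zeta,t),g_\omega p)=0\). Nullity is automatic here, since \(\xi\) is already a boundary point.

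Expanding with the Siegel form and \(g_\omega p=(p_0,p_1,p_2)\) gives
\[
\frac{-|\zeta|^2+it}{2}\overline{p_2}+\zeta\overline{p_1}+\overline{p_0}.
\]
This is exactly the computation already done for \(A_f\) in the spinal-sphere proposition. The vector \(g_\omega p\) is again positive by the strata proposition. So \(\mathcal C_p\cap U_\omega\) is the finite zero set of an equation of the required form. The omitted point is handled by the same incidence: \(\omega\in\mathcal C_p\) if and only if \(h(\omega,p)=0\).

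In the standard chart \(g_\omega\) is the identity and \(\widehat\xi(\infty)=(1,0,0)\). Then \(H((1,0,0),p)=\overline{p_2}\), which gives \(\infty\in\mathcal C_p\iff p_2=0\).

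\textbf{Converse.} Suppose an atlas CR circle is given in some chart \(\psi_\omega\) by the equation \(H(\widehat\xi,q)=0\) with \(q\) positive, together with \(\omega\) whenever \(\omega\) is incident. Set \(p=g_\omega^{-1}q\), which is positive by the strata proposition. The same transport identity, read backwards, shows that the finite locus is \(\mathcal C_p\cap U_\omega\). The rule for adjoining \(\omega\) is precisely the incidence \(h(\omega,p)=0\), so the subset equals \(\mathcal C_p\).

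\textbf{Main obstacle: ``one, equivalently every'' chart.} The definition requires the description to be independent of the chart, so this clause must be checked. If \(\omega'\) is another chart pole, the forward argument applied to \(\mathcal C_p\) produces the equation \(H(\widehat\xi,g_{\omega'}p)=0\) in \(\psi_{\omega'}\), including the correct treatment of \(\omega'\).

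Alternatively, one can use the contact-conformal overlap theorem. It gives \(\widehat\xi(\phi(x))=\rho\,G\widehat\xi(x)\) with \(G=g_{\omega'}g_\omega^{-1}\). The equation \(H(\widehat\xi,q)=0\) is then carried to \(H(\widehat\xi,Gq)=0\) up to the nonzero factor \(\rho\).

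The only subtle point is bookkeeping at the two omitted poles, where a point may be finite in one chart and omitted in the other. I will handle this uniformly by stating membership projectively as \(h(\cdot,p)=0\), rather than through chart coordinates.
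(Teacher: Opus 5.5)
Your proposal is correct and follows essentially the same route as the paper: transport the incidence \(h(\xi,p)=0\) through the Hermitian-unitary \(g_\omega\), absorb the nonzero lift scalar, expand the Siegel form, treat the omitted pole by the same incidence relation, and pull the chart pole \(q\) back to \(g_\omega^{-1}q\) for the converse. Your explicit check of the ``one, equivalently every, chart'' clause, including the bookkeeping at the omitted poles, is a small addition that the paper's proof leaves implicit.
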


\begin{proof}
For \([\xi]\in U_\omega\), the chart definition gives
\[
  g_\omega[\xi]=[\widehat\xi(\zeta,t)].
\]
Since \(g_\omega\) is Hermitian-unitary,
\[
  h(\xi,p)=0
  \quad\Longleftrightarrow\quad
  H(g_\omega\xi,g_\omega p)=0.
\]
Replacing \(g_\omega\xi\) by the finite lift
\(\widehat\xi(\zeta,t)\) changes the left argument by a nonzero scalar and
does not change the zero locus.  Thus the finite chart equation is
\[
  H(\widehat\xi(\zeta,t),g_\omega p)=0,
\]
which expands to the displayed formula.  The omitted point belongs to the
chain precisely when it satisfies the same incidence relation
\(h(\omega,p)=0\).  In the standard chart the omitted point is
\((1,0,0)\), and
\[
  H((1,0,0),p)=\overline{p_2},
\]
so the condition is \(p_2=0\).

Conversely, any atlas CR circle is, by definition, given in some chart by
the equation
\[
  H(\widehat\xi(\zeta,t),q)=0
\]
with \(q\in V_+\).  Pulling back by the chosen chart map gives the Hermitian chain
\(\mathcal C_{g_\omega^{-1}q}\).  Hence the atlas CR circles and Hermitian
chains are the same objects.
\end{proof}

\section{Normalized ball kernel calculus}

In the ball model of \(\CH^2\), write \(z,w\in B^2\subset\C^2\) and use the
positive definite Euclidean Hermitian pairing \(\langle z,w\rangle\).  The
ball model is standard; the added object here is the left-normalized
phase/contrast factorization of its power kernels.  For a
nonzero exponent \(\gamma\), the power-law kernel is
\[
  K_\gamma(z,w)
  =
  (1-\langle z,w\rangle)^{-\gamma}.
\]
Its left-normalized overlap is
\[
  \Omega_\gamma(z,w)
  =
  \frac{K_\gamma(z,w)}{K_\gamma(z,z)}
  =
  \frac{(1-\langle z,w\rangle)^{-\gamma}}
       {(1-\langle z,z\rangle)^{-\gamma}}.
\]

Define
\[
  B(z,w)=1-\langle z,w\rangle,\qquad
  D(z)=B(z,z)=1-\langle z,z\rangle.
\]
On the open ball, \(D(z)>0\).  Cauchy--Schwarz gives \(B(z,w)\ne0\), so the
normalized overlap has the algebraic factorization
\[
  \Omega_\gamma(z,w)
  =
  \left(\frac{B(z,w)}{D(z)}\right)^{-\gamma}.
\]
The unit factor
\[
  U(z,w)=\frac{B(z,w)}{|B(z,w)|}
\]
records phase, while the positive factor
\[
  R_L(z,w)=\frac{|B(z,w)|}{D(z)}
\]
records the left-normalized radial/contrast part.  Thus
\[
  \Omega_\gamma(z,w)=U(z,w)^{-\gamma}R_L(z,w)^{-\gamma}.
\]
This is the kernel-side analogue of the pair/triple/contact split.

\begin{proposition}[Hilbert-ball kernel split]
Let \(B^2=\{z\in\C^2:\|z\|<1\}\), and set
\[
  B(z,w)=1-\langle z,w\rangle,\qquad
  D(z)=B(z,z).
\]
For \(z,w\in B^2\), one has
\[
  B(z,w)\ne0,\qquad D(z)\ne0.
\]
Consequently the functions
\[
  M(z,w)=|B(z,w)|,\qquad
  L(z)=D(z),\qquad
  R_L(z,w)=\frac{M(z,w)}{L(z)},\qquad
  U(z,w)=\frac{B(z,w)}{M(z,w)}
\]
are defined, and
\[
  \frac{B(z,w)^{-\gamma}}{D(z)^{-\gamma}}
  =
  U(z,w)^{-\gamma} R_L(z,w)^{-\gamma}.
\]
\end{proposition}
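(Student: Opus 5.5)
The argument has three steps: nonvanishing of \(B\) and \(D\), well-definedness of the auxiliary functions, and the power factorization.

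For nonvanishing, take \(z,w\in B^2\). Cauchy--Schwarz for the Euclidean Hermitian pairing gives
\[
  |\langle z,w\rangle|\le\|z\|\,\|w\|<1 .
\]
Hence
\[
  \operatorname{Re}B(z,w)=1-\operatorname{Re}\langle z,w\rangle\ge 1-|\langle z,w\rangle|>0,
\]
so \(B(z,w)\ne0\). In fact \(B(z,w)\) lies in the open right half-plane. The diagonal case \(w=z\) gives
\[
  D(z)=1-\|z\|^2>0 .
\]
It follows at once that \(M=|B|>0\), \(L=D>0\), \(R_L=M/L>0\) and \(U=B/M\in U(1)\) are all well defined.

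For the factorization, the identity
\[
  \frac{B(z,w)}{D(z)}=U(z,w)\,R_L(z,w)
\]
is immediate, since \(U R_L = (B/M)(M/D)\). The only delicate point is raising this to the power \(-\gamma\) for general complex or nonintegral \(\gamma\). Here I fix the principal branch of \(\log\), which is legitimate because \(B\), \(D\), \(U\) and \(R_L\) all avoid the negative real axis. This follows from \(\operatorname{Re}B>0\) and \(D,R_L>0\), together with \(\operatorname{Re}U=\operatorname{Re}B/M>0\). With this convention,
\[
  \operatorname{Arg}B=\operatorname{Arg}U\in(-\pi/2,\pi/2),\qquad \operatorname{Arg}D=\operatorname{Arg}R_L=0 .
\]
Therefore \(\log B-\log D=\log U+\log R_L\) holds exactly, with no \(2\pi i\) correction, because both sides have real part \(\log M-\log D\) and imaginary part \(\operatorname{Arg}B\). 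Multiplying by \(-\gamma\) and exponentiating gives
\[
  \frac{B(z,w)^{-\gamma}}{D(z)^{-\gamma}}=U(z,w)^{-\gamma}R_L(z,w)^{-\gamma}.
\]

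The main obstacle is this branch bookkeeping. I would handle it entirely through the half-plane observation \(\operatorname{Re}B>0\), which keeps every argument strictly inside \((-\pi/2,\pi/2)\) and so lets the multiplicative power law hold on the nose for every exponent \(\gamma\).
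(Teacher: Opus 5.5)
Your proof is correct and follows the paper's route. Cauchy--Schwarz gives \(|\langle z,w\rangle|\le\|z\|\,\|w\|<1\), hence \(B(z,w)\ne0\) and \(D(z)>0\), and the factorization is the identity \(B/D=(B/M)(M/D)\) raised to the power \(-\gamma\). Your principal-branch bookkeeping makes explicit a step the paper leaves tacit. Strictly, positivity of \(M\) and \(D\) alone already gives \(\log B-\log D=\log U+\log R_L\) for any nonzero \(B\), so the half-plane bound \(\operatorname{Re}B>0\) is not needed for the identity itself. It is, however, what keeps \(B^{-\gamma}\) off the branch cut and hence smooth on all of \(B^2\times B^2\).
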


\begin{proof}
Since \(z,w\) lie in the open unit ball,
\(\|z\|<1\) and \(\|w\|<1\).  Cauchy--Schwarz gives
\[
  |\langle z,w\rangle|\le \|z\|\,\|w\|<1.
\]
Hence \(\langle z,w\rangle\ne1\), so \(B(z,w)\ne0\).  The diagonal case gives
\[
  D(z)=1-\|z\|^2>0.
\]
The displayed factorization is then the identity
\[
  \frac{B(z,w)}{D(z)}
  =
  \frac{B(z,w)}{|B(z,w)|}\,
  \frac{|B(z,w)|}{D(z)}
\]
raised to the power \(-\gamma\).
\end{proof}

\begin{proposition}[Normalized kernel overlap from the origin]
Let \(\gamma>0\), and let
\[
  K_\gamma(z,w)=(1-\langle z,w\rangle)^{-\gamma}
\]
on the unit ball \(B^2\subset\C^2\).  Define the symmetric normalized kernel
overlap with the origin by
\[
  \Phi_\gamma(z,0)
  =
  \frac{K_\gamma(z,0)}
       {\sqrt{K_\gamma(z,z)K_\gamma(0,0)}}.
\]
Then
\[
  \Phi_\gamma(z,0)=(1-\|z\|^2)^{\gamma/2},
\]
and the logarithmic overlap divergence is
\[
  E_\gamma(z)
  =
  -\log|\Phi_\gamma(z,0)|
  =
  -\frac{\gamma}{2}\log(1-\|z\|^2).
\]
For the ordinary Bergman kernel on \(B^2\), one has \(\gamma=3\), so
\[
  \Phi_3(z,0)=(1-\|z\|^2)^{3/2},
  \qquad
  E_3(z)=-\frac32\log(1-\|z\|^2).
\]
\end{proposition}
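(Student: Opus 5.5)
The proof is a direct evaluation of the three kernel values in the definition of \(\Phi_\gamma(z,0)\), followed by taking a logarithm.

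First, since \(\langle z,0\rangle=0\) and \(\langle 0,0\rangle=0\), the off-diagonal and origin values are
\[
  K_\gamma(z,0)=(1-0)^{-\gamma}=1,
  \qquad
  K_\gamma(0,0)=1.
\]
The diagonal value is
\[
  K_\gamma(z,z)=(1-\|z\|^2)^{-\gamma},
\]
which is a positive real number because \(D(z)=1-\|z\|^2>0\) on the open ball, as in the Hilbert-ball kernel split. All powers are therefore real powers of positive reals, so no branch choice is needed.

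Substituting these values into the definition gives
\[
  \Phi_\gamma(z,0)=\bigl((1-\|z\|^2)^{-\gamma}\bigr)^{-1/2}=(1-\|z\|^2)^{\gamma/2}.
\]
This uses the rule \((x^{a})^{b}=x^{ab}\), which is valid for \(x>0\). The result is positive, so \(|\Phi_\gamma(z,0)|=\Phi_\gamma(z,0)\). Taking \(-\log\) then yields
\[
  E_\gamma(z)=-\frac{\gamma}{2}\log(1-\|z\|^2).
\]

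For the Bergman case, I would recall the standard fact that the Bergman kernel of \(B^n\) is \(c_n(1-\langle z,w\rangle)^{-(n+1)}\). With \(n=2\) this gives \(\gamma=3\). The normalizing constant \(c_n\) cancels in \(\Phi\), since \(\Phi\) is homogeneous of degree zero in the kernel. Substituting \(\gamma=3\) gives \(\Phi_3(z,0)=(1-\|z\|^2)^{3/2}\) and \(E_3(z)=-\frac32\log(1-\|z\|^2)\). The latter coincides with the logarithmic contrast \(\Theta\) defined in the boundary contact calculus.

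There is no real obstacle in this argument. The only points needing care are that \(K_\gamma(z,z)>0\), which justifies the real square root, and that the Bergman exponent in complex dimension two is \(\gamma=n+1=3\).
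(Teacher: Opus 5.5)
Your proposal is correct and follows the paper's proof essentially verbatim: you evaluate \(K_\gamma(z,0)=K_\gamma(0,0)=1\) and \(K_\gamma(z,z)=(1-\|z\|^2)^{-\gamma}\), substitute, take \(-\log\), and use the fact that the constant in the Bergman kernel \(c(1-\langle z,w\rangle)^{-3}\) cancels in \(\Phi\). Your remarks that \(K_\gamma(z,z)>0\), so no branch choice is needed, and that \(E_3\) coincides with the contrast \(\Theta\) are harmless additions that the paper leaves implicit.
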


\begin{proof}
Since \(K_\gamma(z,0)=1\), \(K_\gamma(0,0)=1\), and
\[
  K_\gamma(z,z)=(1-\|z\|^2)^{-\gamma},
\]
the normalized overlap is
\[
  \Phi_\gamma(z,0)
  =
  \frac{1}{\sqrt{(1-\|z\|^2)^{-\gamma}}}
  =
  (1-\|z\|^2)^{\gamma/2}.
\]
Taking the negative logarithm of its absolute value gives the displayed
formula for \(E_\gamma\).  The Bergman kernel in complex dimension \(2\) is
\[
  K_B(z,w)=c_K(1-\langle z,w\rangle)^{-3},
\]
and the constant \(c_K\) cancels in the normalized overlap; hence the
Bergman case is \(\gamma=3\).
\end{proof}

\begin{proposition}[Radial divergence and asymptotic slope]
Let
\[
  \phi(z)=-\log(1-\|z\|^2),
  \qquad
  g^\ast=2\,\partial\overline{\partial}\phi
\]
with the real radial normalization
\[
  ds^2=\frac{4\,d\rho^2}{(1-\rho^2)^2},
  \qquad \rho=\|z\|.
\]
If \(r=d_{g^\ast}(0,z)\), then
\[
  r(\rho)=2\,\operatorname{arctanh}\rho
  =
  \log\frac{1+\rho}{1-\rho}.
\]
Consequently
\[
  1-\rho^2=\operatorname{sech}^2\!\left(\frac r2\right),
\]
and the logarithmic divergence along a radial geodesic is
\[
  E_\gamma(r)
  =
  \gamma\log\cosh\left(\frac r2\right).
\]
In particular,
\[
  E_\gamma(r)
  =
  \frac{\gamma}{2}r-\gamma\log2+o(1)
  =
  \frac{\gamma}{2}r+\mathcal O(1)
  \qquad (r\to\infty).
\]
For the Bergman kernel on \(B^2\), this gives
\[
  E_3(r)=3\log\cosh\left(\frac r2\right)
  =
  \frac32 r+\mathcal O(1).
\]
\end{proposition}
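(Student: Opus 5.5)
The plan is to reduce everything to a one-variable computation along a radial ray and then read off the asymptotics. By unitary invariance of \(\phi\) under \(U(2)\), it suffices to take \(z=\rho e_1\) with \(0\le\rho<1\) and work on the radial segment \(t\mapsto te_1\).

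First I will justify the arc-length formula. With the stated normalization \(ds^2=4\,d\rho^2/(1-\rho^2)^2\), the radial segment from \(0\) to \(\rho e_1\) has length
\[
  \int_0^\rho\frac{2\,du}{1-u^2}=2\operatorname{arctanh}\rho=\log\frac{1+\rho}{1-\rho}.
\]
To identify this with \(d_{g^\ast}(0,z)\) I will invoke the standard fact that radial segments through the origin are minimizing geodesics of the ball metric; the paper takes the ball/Bergman infrastructure as background. The reason is that the fixed-point set of the isometry \(z\mapsto(z_1,-z_2)\), intersected with real points, is totally geodesic. Alternatively, the closed formula can be cross-checked against the Bergman distance proposition: \(\cosh^2(d/2)=\eta_H=1/(1-\rho^2)\) for the lifts \((0,1)\) and \((\rho e_1,1)\) with \(h=\langle\cdot,\cdot\rangle-|\cdot_0|^2\). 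This gives the same relation \(1-\rho^2=\operatorname{sech}^2(r/2)\) once the normalizations agree. Agreeing on the normalization is the only delicate point, and it is the main obstacle: the factor of \(2\) in \(g^\ast=2\partial\overline\partial\phi\) versus the stated radial \(ds^2\) has to be matched. I would handle this by simply adopting the stated radial normalization as the definition of \(r\), as the proposition does.

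Second, I invert the relation. From \(\rho=\tanh(r/2)\) we get \(1-\rho^2=\operatorname{sech}^2(r/2)\), and substituting into the overlap-from-the-origin proposition gives
\[
  E_\gamma=-\tfrac{\gamma}{2}\log(1-\rho^2)=\gamma\log\cosh(r/2).
\]

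Finally, the asymptotics follow from
\[
  \cosh(r/2)=\tfrac12 e^{r/2}\bigl(1+e^{-r}\bigr),
\]
so that
\[
  E_\gamma=\tfrac{\gamma}{2}r-\gamma\log2+\gamma\log(1+e^{-r}),
\]
and the last term is \(o(1)\). Setting \(\gamma=3\) then yields the Bergman case.
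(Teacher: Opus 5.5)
Your proposal is correct and follows essentially the same route as the paper: integrate the radial line element \(2\,d\rho/(1-\rho^2)\) to get \(r=2\operatorname{arctanh}\rho\), invert to \(1-\rho^2=\operatorname{sech}^2(r/2)\), substitute into \(E_\gamma=-\frac{\gamma}{2}\log(1-\rho^2)\), and expand \(\log\cosh(r/2)\). The differences are minor. The paper first evaluates \(\partial\overline{\partial}\phi\) on the radial direction, getting \(1/(1-\rho^2)^2\), before invoking the stated normalization. You skip that computation and instead justify that the radial segment actually realizes \(d_{g^\ast}(0,z)\), via totally geodesic fixed sets or the cross-check \(\cosh^2(d/2)=\eta_H=1/(1-\rho^2)\), a point the paper takes for granted.
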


\begin{proof}
Write \(z=\rho\xi\), where \(\|\xi\|=1\) and \(0\le\rho<1\).  For
\[
  \phi(z)=-\log(1-\|z\|^2),
\]
one computes
\[
  \partial\overline{\partial}\phi
  =
  \sum_{i,j}
  \left(
    \frac{\delta_{ij}}{1-\|z\|^2}
    +
    \frac{\overline z_i z_j}{(1-\|z\|^2)^2}
  \right)
  dz_i\,d\overline z_j.
\]
Along the radial path \(z=\rho\xi\), \(dz_i=\xi_i\,d\rho\).  Hence
\[
\begin{aligned}
\partial\overline{\partial}\phi(\dot z,\dot z)
&=
\frac{1}{1-\rho^2}
+
\frac{\rho^2}{(1-\rho^2)^2}\\
&=
\frac{1}{(1-\rho^2)^2}.
\end{aligned}
\]
With the stated real radial normalization of \(g^\ast\), this is
\[
  ds^2=\frac{4\,d\rho^2}{(1-\rho^2)^2},
  \qquad
  ds=\frac{2\,d\rho}{1-\rho^2}.
\]
Therefore
\[
  r(\rho)
  =
  \int_0^\rho\frac{2\,du}{1-u^2}
  =
  2\,\operatorname{arctanh}\rho
  =
  \log\frac{1+\rho}{1-\rho}.
\]
Thus \(\rho=\tanh(r/2)\), and so
\[
  1-\rho^2
  =
  1-\tanh^2(r/2)
  =
  \operatorname{sech}^2(r/2).
\]
Substituting into
\[
  E_\gamma(\rho)
  =
  -\frac{\gamma}{2}\log(1-\rho^2)
\]
gives
\[
  E_\gamma(r)
  =
  -\frac{\gamma}{2}
  \log\operatorname{sech}^2(r/2)
  =
  \gamma\log\cosh(r/2).
\]
Finally,
\[
  \log\cosh(r/2)=\frac r2-\log2+o(1)
  \qquad (r\to\infty),
\]
which gives the asymptotic formula.
\end{proof}

\begin{theorem}[Kernel automorphy and normalized covariance]
Let \(g\) be a ball-model projective-unitary transformation with automorphy
factor \(J_g(z)\ne0\), so that
\[
  B(gz,gw)
  =
  \frac{B(z,w)}{J_g(z)\overline{J_g(w)}}.
\]
Set
\[
  A_L(z,w)=\frac{B(z,w)}{D(z)}.
\]
Then
\[
  D(gz)=\frac{D(z)}{|J_g(z)|^2},
  \qquad
  A_L(gz,gw)
  =
  \frac{\overline{J_g(z)}}{\overline{J_g(w)}}A_L(z,w).
\]
Consequently
\[
  K_\gamma(gz,gw)
  =
  \bigl(J_g(z)\overline{J_g(w)}\bigr)^\gamma K_\gamma(z,w),
\]
and
\[
  \Omega_\gamma(gz,gw)
  =
  \left(\frac{\overline{J_g(w)}}{\overline{J_g(z)}}\right)^\gamma
  \Omega_\gamma(z,w).
\]
The phase/contrast factors transform by
\[
  R_L(gz,gw)
  =
  \frac{|J_g(z)|}{|J_g(w)|}R_L(z,w),
\]
and
\[
  U(gz,gw)
  =
  \frac{|J_g(z)|\,|J_g(w)|}
       {J_g(z)\overline{J_g(w)}}\,U(z,w).
\]
\end{theorem}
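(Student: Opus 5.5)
The plan is to derive everything from the single automorphy hypothesis
\[
  B(gz,gw)=\frac{B(z,w)}{J_g(z)\overline{J_g(w)}}
\]
by algebraic substitution, in the order the conclusions are listed. First I set \(w=z\). Since \(D(z)=B(z,z)\) and \(J_g(z)\overline{J_g(z)}=|J_g(z)|^2\), this gives \(D(gz)=D(z)/|J_g(z)|^2\). Dividing the automorphy law by this diagonal law then gives
\[
  A_L(gz,gw)=\frac{B(z,w)}{J_g(z)\overline{J_g(w)}}\cdot\frac{|J_g(z)|^2}{D(z)}
  =\frac{\overline{J_g(z)}}{\overline{J_g(w)}}\,A_L(z,w),
\]
using \(|J_g(z)|^2/J_g(z)=\overline{J_g(z)}\).

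The kernel laws follow by raising to the power \(-\gamma\). For \(K_\gamma=B^{-\gamma}\) this is exactly the computation already done in the Ball-kernel automorphy covariance proposition, and I will cite it. For \(\Omega_\gamma=A_L^{-\gamma}\) (the factorization from the Hilbert-ball kernel split), raising the \(A_L\) law to \(-\gamma\) inverts the ratio and yields the factor \(\bigl(\overline{J_g(w)}/\overline{J_g(z)}\bigr)^\gamma\). This agrees with the earlier proposition's derivation via \(K_\gamma(gz,gz)=|J_g(z)|^{2\gamma}K_\gamma(z,z)\).

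For the phase/contrast split I take absolute values in the automorphy law. This gives
\[
  M(gz,gw)=\frac{M(z,w)}{|J_g(z)|\,|J_g(w)|}.
\]
Dividing by the \(D(gz)\) law gives \(R_L(gz,gw)=\frac{|J_g(z)|}{|J_g(w)|}R_L(z,w)\). Dividing the automorphy law by this \(M\) law gives \(U(gz,gw)=\frac{|J_g(z)||J_g(w)|}{J_g(z)\overline{J_g(w)}}U(z,w)\). All denominators are nonzero by the Hilbert-ball kernel split and by \(J_g\ne0\).

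The main obstacle is the meaning of noninteger powers. The identities \((XY)^{-\gamma}=X^{-\gamma}Y^{-\gamma}\) need a consistent branch. I will handle this as the earlier proposition did: the power statements are asserted for \(\gamma\) where powers are taken multiplicatively. Concretely, on the simply connected ball one fixes continuous branches with \(B(0,0)=1\) and \(J_g\) lifted via \(\log J_g\). The resulting identities then hold up to a locally constant root of unity, which is \(1\) by evaluating at a base point. For integer \(\gamma\), or in the pure modulus/phase statements, no branch issue arises.
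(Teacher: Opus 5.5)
Your proposal is correct and follows the paper's proof step for step: you set $w=z$ to get the $D$ law, divide to get the $A_L$ law, raise to the power $-\gamma$ for $K_\gamma$ and $\Omega_\gamma$, and take moduli for $R_L$ and $U$. Your branch discussion usefully makes explicit the paper's ``powers defined multiplicatively'' proviso; one small precision is that the discrepancy factor is $e^{2\pi i\gamma k}$ with $k$ a locally constant integer, which need not be a root of unity for irrational $\gamma$, but $k=0$ because both sides are real on the diagonal, so your conclusion stands.
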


\begin{proof}
Setting \(w=z\) in the automorphy law gives
\[
  D(gz)=B(gz,gz)
  =
  \frac{B(z,z)}{J_g(z)\overline{J_g(z)}}
  =
  \frac{D(z)}{|J_g(z)|^2}.
\]
Dividing the pair-base covariance by this diagonal covariance gives
\[
\begin{aligned}
A_L(gz,gw)
&=
\frac{B(gz,gw)}{D(gz)}\\
&=
\frac{B(z,w)}{J_g(z)\overline{J_g(w)}}\,
  \frac{|J_g(z)|^2}{D(z)}\\
&=
\frac{\overline{J_g(z)}}{\overline{J_g(w)}}A_L(z,w).
\end{aligned}
\]
Since \(K_\gamma=B^{-\gamma}\), the kernel automorphy formula is the same
identity raised to the power \(-\gamma\).  Dividing by the diagonal kernel
formula gives the displayed law for \(\Omega_\gamma\).

For the contrast factor,
\[
  |B(gz,gw)|
  =
  \frac{|B(z,w)|}{|J_g(z)|\,|J_g(w)|},
\]
and the formula for \(D(gz)\) gives
\[
  R_L(gz,gw)
  =
  \frac{|B(gz,gw)|}{D(gz)}
  =
  \frac{|J_g(z)|}{|J_g(w)|}R_L(z,w).
\]
Finally,
\[
  U(gz,gw)
  =
  \frac{B(gz,gw)}{|B(gz,gw)|}
  =
  \frac{|J_g(z)|\,|J_g(w)|}
       {J_g(z)\overline{J_g(w)}}\,U(z,w).
\]
\end{proof}

\begin{proposition}[Boundary limit of the ball pair base]
Use the ball lift
\[
  \widetilde z=(z_1,z_2,1),
\]
for the Hermitian form
\[
  h_B(Z,W)=Z_1\overline{W_1}+Z_2\overline{W_2}-Z_3\overline{W_3}.
\]
Then
\[
  B(z,w)=-h_B(\widetilde z,\widetilde w).
\]
Let \(\zeta,\eta\in\partial B^2\), set
\[
  \widetilde\zeta=(\zeta_1,\zeta_2,1),
  \qquad
  \widetilde\eta=(\eta_1,\eta_2,1),
\]
and let \(z_r=r\zeta\), \(w_s=s\eta\) with \(0<r,s<1\).  Then
\[
  \lim_{r,s\to1}B(z_r,w_s)
  =
  1-\langle\zeta,\eta\rangle
  =
  -h_B(\widetilde\zeta,\widetilde\eta).
\]
If \(C\) is a projective-unitary Cayley map from the ball form to the Siegel
form, and
\[
  C\widetilde\zeta=\lambda_\zeta\widehat\xi(p),
  \qquad
  C\widetilde\eta=\lambda_\eta\widehat\xi(q),
\]
with \(\lambda_\zeta,\lambda_\eta\ne0\), then
\[
  \lim_{r,s\to1}B(z_r,w_s)
  =
  -\lambda_\zeta\overline{\lambda_\eta}\,
  H(\widehat\xi(p),\widehat\xi(q)).
\]
Equivalently, the projectively normalized boundary limit is the Heisenberg
pair kernel:
\[
  -\lambda_\zeta^{-1}\overline{\lambda_\eta}^{-1}
  \lim_{r,s\to1}B(z_r,w_s)
  =
  H(\widehat\xi(p),\widehat\xi(q)).
\]
\end{proposition}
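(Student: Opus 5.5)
The proof splits into three steps: an algebraic identity for the lift, a continuity argument for the limit, and a transfer to the Siegel model by sesquilinearity.

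\textbf{Step 1: the lift identity.} For the ball lift \(\widetilde z=(z_1,z_2,1)\) and the form \(h_B\),
\[
  h_B(\widetilde z,\widetilde w)=z_1\overline{w_1}+z_2\overline{w_2}-1=\langle z,w\rangle-1 .
\]
Hence \(B(z,w)=-h_B(\widetilde z,\widetilde w)\) holds identically on \(\C^2\times\C^2\), and not only on the open ball. I will stress this point, because it lets the identity be evaluated at boundary arguments without reinterpretation.

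\textbf{Step 2: the limit.} The map \((z,w)\mapsto 1-\langle z,w\rangle\) is a polynomial in \(z\) and \(\overline w\), so it is continuous on \(\C^2\times\C^2\). Since \(z_r=r\zeta\to\zeta\) and \(w_s=s\eta\to\eta\) as \(r,s\to1\), we get
\[
  B(z_r,w_s)=1-rs\langle\zeta,\eta\rangle\longrightarrow 1-\langle\zeta,\eta\rangle .
\]
Applying the Step 1 identity at \((\zeta,\eta)\) rewrites this limit as \(-h_B(\widetilde\zeta,\widetilde\eta)\). The limit is joint in \((r,s)\) and uses only continuity. No Cauchy--Schwarz argument is needed, and the limit may be zero, which happens exactly when \(\zeta=\eta\).

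\textbf{Step 3: transfer to the Siegel model.} Because \(C\) is a Cayley map from the ball form to the Siegel form, it intertwines them: \(H(CZ,CW)=h_B(Z,W)\). This is the same covariance principle used in the Algebraic projective-unitary covariance theorem, now between two forms.

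If \(C\) is only given projectively, I would fix a linear representative satisfying this isometry condition exactly. A scalar multiple \(cC\) would change the pairings by \(|c|^2\). To keep the stated identity exact, I will take "projective-unitary Cayley map" to mean a linear isometry of the forms. This interpretation of the hypothesis is the main point to be careful about.

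With that representative, sesquilinearity gives
\[
  H(C\widetilde\zeta,C\widetilde\eta)
  =H\bigl(\lambda_\zeta\widehat\xi(p),\lambda_\eta\widehat\xi(q)\bigr)
  =\lambda_\zeta\overline{\lambda_\eta}\,H(\widehat\xi(p),\widehat\xi(q)).
\]
Combining this with Step 2 yields
\[
  \lim_{r,s\to1}B(z_r,w_s)=-\lambda_\zeta\overline{\lambda_\eta}\,H(\widehat\xi(p),\widehat\xi(q)).
\]
The normalized version follows by dividing by \(-\lambda_\zeta\overline{\lambda_\eta}\), which is nonzero by hypothesis.

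As a consistency check, \(\widetilde\zeta\) is null for \(h_B\) because \(\|\zeta\|=1\). Its image under \(C\) is therefore null for \(H\), so it is a multiple of a finite Heisenberg lift whenever the image point is not \(\infty\), which justifies the hypothesis. The computation then parallels the atlas-covariance corollary's kernel identity \(K(\phi(p),\phi(q))=\rho(p)\overline{\rho(q)}K(p,q)\).
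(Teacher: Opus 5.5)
Your proposal is correct and follows the paper's proof step for step. It uses the lift identity $h_B(\widetilde z,\widetilde w)=\langle z,w\rangle-1$, the explicit limit of $1-rs\langle\zeta,\eta\rangle$, and the transfer $h_B(\widetilde\zeta,\widetilde\eta)=H(C\widetilde\zeta,C\widetilde\eta)=\lambda_\zeta\overline{\lambda_\eta}\,H(\widehat\xi(p),\widehat\xi(q))$ by sesquilinearity. Your insistence that $C$ be an exact linear isometry, not merely one up to scale, states explicitly what the paper assumes when it says $C$ preserves the Hermitian form.
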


\begin{proof}
The ball-lift identity is immediate:
\[
  h_B(\widetilde z,\widetilde w)
  =
  \langle z,w\rangle-1
  =
  -B(z,w).
\]
Taking \(z_r=r\zeta\) and \(w_s=s\eta\) gives
\[
  B(z_r,w_s)=1-rs\,\langle\zeta,\eta\rangle,
\]
and the stated limit follows as \(r,s\to1\).

Since \(C\) preserves the Hermitian form,
\[
  h_B(\widetilde\zeta,\widetilde\eta)
  =
  H(C\widetilde\zeta,C\widetilde\eta).
\]
Substituting the projective boundary representatives gives
\[
  H(C\widetilde\zeta,C\widetilde\eta)
  =
  \lambda_\zeta\overline{\lambda_\eta}\,
  H(\widehat\xi(p),\widehat\xi(q)).
\]
Combining this with the preceding sign relation proves both displayed
boundary formulas.
\end{proof}

\begin{corollary}[Boundary readout compatibility]
On boundary configurations where the relevant pair kernels are nonzero, the
renormalized ball-boundary pair bases give the same cross-ratios as the
Heisenberg pair kernel.  Triple products differ only by the product of the
projective norm factors and the fixed sign convention, so their Cartan phase
class is the same after the projective normalization in the preceding
proposition.
\end{corollary}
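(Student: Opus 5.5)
The plan is to reduce everything to the final formula of the preceding proposition (boundary limit of the ball pair base), which expresses the renormalized ball-boundary pair base through the Heisenberg kernel. For boundary points \(\zeta_i\in\partial B^2\) with Cayley images \(C\widetilde\zeta_i=\lambda_i\widehat\xi(p_i)\), set
\[
  B_\partial(\zeta_i,\zeta_j)=\lim_{r,s\to1}B(r\zeta_i,s\zeta_j)=-h_B(\widetilde\zeta_i,\widetilde\zeta_j).
\]
By that proposition,
\[
  B_\partial(\zeta_i,\zeta_j)=-\lambda_i\overline{\lambda_j}\,K(p_i,p_j),
  \qquad
  K(p,q)=H(\widehat\xi(p),\widehat\xi(q)).
\]
This is exactly the shape of the atlas-covariance corollary, \(K(\phi(p),\phi(q))=\rho(p)\overline{\rho(q)}K(p,q)\), with an extra overall sign \(-1\). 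The proof will therefore repeat the cancellation arguments used there and in the cross-ratio scale-invariance proposition.

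\textbf{Cross-ratios.} Take four boundary points on which all pairings appearing in \(X\) are nonzero; by the displayed identity this is equivalent to the corresponding Heisenberg kernels being nonzero, since \(\lambda_i\ne0\). Form
\[
  X_B=\frac{B_\partial(\zeta_c,\zeta_a)B_\partial(\zeta_d,\zeta_b)}{B_\partial(\zeta_d,\zeta_a)B_\partial(\zeta_c,\zeta_b)}
\]
and substitute. The numerator acquires
\[
  (-1)^2\lambda_c\lambda_d\overline{\lambda_a}\,\overline{\lambda_b},
\]
and the denominator acquires the same factor. So \(X_B=X(p_a,p_b;p_c,p_d)\), computed with \(K\), with no sign or norm residue. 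Equivalently, \(X_B\) is the ambient \(h_B\) cross ratio, which is invariant because \(C\) is Hermitian-unitary and \(X\) is scale invariant.

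\textbf{Triple products.} Define the ball triple product with the signed convention
\[
  T_B=-B_\partial(\zeta_1,\zeta_2)B_\partial(\zeta_2,\zeta_3)B_\partial(\zeta_3,\zeta_1).
\]
Substitution gives
\[
  T_B=(-1)^3\,|\lambda_1\lambda_2\lambda_3|^2\,T(p_1,p_2,p_3)=-|\lambda_1\lambda_2\lambda_3|^2\,T(p_1,p_2,p_3),
\]
using the telescoping \(\lambda_1\overline{\lambda_2}\lambda_2\overline{\lambda_3}\lambda_3\overline{\lambda_1}=|\lambda_1\lambda_2\lambda_3|^2\) from the triple-product scaling proposition. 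So the two triple products differ by the positive norm factor together with the fixed sign coming from \(B=-h_B\). After the projective normalization of the preceding proposition, that is, dividing each pairing by \(-\lambda_i\overline{\lambda_j}\), the triple products agree exactly. Hence the normalized phases \(T/|T|\) coincide, and so do the Cartan phase classes modulo norm scalars.

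The only point needing care, which I expect to be the main obstacle, is bookkeeping of the sign, not any analysis. I will state explicitly that the raw \(B\)-based triple product carries the odd factor \((-1)^3\), which flips its phase by \(\pi\). The corollary's claim of a ``fixed sign convention'' is exactly this: the sign is absorbed either by the renormalization \(-\lambda_\zeta^{-1}\overline{\lambda_\eta}^{-1}\) or by comparing \(T_H\) for \(h_B\) directly. I will also note that the limit interchange is harmless, since \(B(r\zeta,s\eta)\) is a polynomial in \(r,s\) and so the limits pass through the products and quotients on the nonvanishing domain.
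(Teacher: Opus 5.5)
Your proposal is correct and follows the paper's own argument. You substitute the preceding proposition's relation $\lim_{r,s\to1}B(z_r,w_s)=-\lambda_\zeta\overline{\lambda_\eta}\,H(\widehat\xi(p),\widehat\xi(q))$ into each pairing; the endpoint scalars and signs then cancel in the cross-ratio, and in the triple product they telescope to $|\lambda_1\lambda_2\lambda_3|^2$ times a fixed sign that the normalization removes. Your explicit $(-1)^3$ bookkeeping, giving $T_B=-|\lambda_1\lambda_2\lambda_3|^2\,T(p_1,p_2,p_3)$, just makes precise what the paper calls the fixed sign convention.
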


\begin{proof}
For each boundary pair, the preceding proposition writes the ball-boundary
base as a nonzero scalar attached to the first endpoint times the conjugate
scalar attached to the second endpoint times the Heisenberg pair kernel,
with the same fixed sign convention for every pair.  In a cross-ratio each
endpoint scalar and each fixed sign occurs once in the numerator and once in
the denominator, so they cancel.  In a triple product the endpoint scalars
multiply to a product of norm factors.  The normalized boundary convention
removes the fixed sign, leaving the same Cartan phase class.
\end{proof}

\section{Differential kernel algebra}

The kernel calculus also gives local second-order objects: Hessian blocks,
radial pole orders, and finite-dimensional reductions obtained by eliminating
one variable.  The following elementary identities are stated separately
because they are useful whenever the ball kernel is expanded near a chosen
base point or near the boundary.

\begin{lemma}[Rank-one Schur decoupling]
Let \(N\) be a finite-dimensional Hermitian vector space, let \(R\cong\C\),
and let \(B:N\to R\) be a nonzero complex linear map.  For a self-adjoint
operator \(A:N\to N\), a scalar \(m\in\mathbb R\), and a parameter
\(\lambda\ne m\), define the Schur operator
\[
  S(\lambda)
  =
  A-\frac{1}{m-\lambda}B^\ast B.
\]
Then
\[
  B^\ast B|_{\ker B}=0,
  \qquad
  \operatorname{rank}(B^\ast B)=1,
\]
and the Schur correction acts only on the one-dimensional subspace
\[
  (\ker B)^\perp=\operatorname{im}B^\ast.
\]
Equivalently, the directions in \(\ker B\) are unchanged by the rank-one
correction:
\[
  S(\lambda)v=Av
  \qquad (v\in\ker B).
\]
\end{lemma}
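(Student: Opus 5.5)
The plan is to work directly with the adjoint \(B^\ast:R\to N\) determined by the Hermitian structures, \(\langle Bv,r\rangle_R=\langle v,B^\ast r\rangle_N\), where \(R\cong\C\) carries its standard Hermitian pairing. First we record the elementary facts. Since \(B\ne0\) and \(\dim R=1\), \(B\) is surjective, so \(\ker B\) has codimension one in \(N\). The adjoint \(B^\ast\) is injective, and its image is the line \(\operatorname{im}B^\ast=\C\,b\), with \(b=B^\ast(1)\ne0\). The identity \((\ker B)^\perp=\operatorname{im}B^\ast\) is the standard finite-dimensional adjoint relation. To check it directly, note that \(v\in\ker B\) if and only if \(\langle v,B^\ast r\rangle=\langle Bv,r\rangle=0\) for all \(r\). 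Hence \(\ker B=(\operatorname{im}B^\ast)^\perp\), and taking orthogonal complements in the finite-dimensional space \(N\) gives the claim.

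Next comes the rank and vanishing statement. For \(v\in\ker B\) we have \(B^\ast Bv=B^\ast 0=0\), so \(B^\ast B|_{\ker B}=0\). For the rank, write \(B^\ast Bv=\langle v,b\rangle\,b\), using \(Bv=\langle Bv,1\rangle_R=\langle v,b\rangle\). So \(B^\ast B\) is the rank-one operator \(v\mapsto\langle v,b\rangle b\), and its image is exactly \(\C b=(\ker B)^\perp\), because \(b\ne0\) and \(\langle b,b\rangle>0\). The same formula shows that \(B^\ast B\) is self-adjoint and positive. It therefore preserves the orthogonal splitting \(N=\ker B\oplus\C b\), acting as \(0\) on the first summand and as multiplication by \(\|b\|^2\) on the second.

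Finally, since \(m\in\mathbb R\) and \(\lambda\ne m\), the scalar \(1/(m-\lambda)\) is defined. For \(v\in\ker B\), the correction term gives \(S(\lambda)v=Av-\frac{1}{m-\lambda}B^\ast Bv=Av\). Thus \(S(\lambda)-A=-\frac{1}{m-\lambda}B^\ast B\) is supported on, and takes values in, the line \((\ker B)^\perp\). This is the precise sense in which the correction "acts only on" that subspace.

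The only step needing care is making the phrase "acts only on \((\ker B)^\perp\)" precise. \(A\) itself need not preserve the splitting, so the claim concerns the difference \(S(\lambda)-A\) and not \(S(\lambda)\). I will state it as: the difference vanishes on \(\ker B\) and has image in \(\operatorname{im}B^\ast\). I will also note that \(\lambda\) may be complex, in which case \(S(\lambda)\) need not be self-adjoint, but none of the claims use self-adjointness of \(S(\lambda)\).
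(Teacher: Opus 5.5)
Your proposal is correct and follows essentially the same elementary adjoint argument as the paper. Both proofs get the vanishing of \(B^\ast B\) on \(\ker B\), identify \((\ker B)^\perp=\operatorname{im}B^\ast\) from \(\langle Bv,r\rangle_R=\langle v,B^\ast r\rangle_N\), and read off \(S(\lambda)v=Av\). The differences are cosmetic: you write the rank-one operator explicitly as \(B^\ast B=\langle\cdot,b\rangle\,b\) with \(b=B^\ast(1)\) and use a double orthogonal complement, whereas the paper bounds the image dimension, checks \(B^\ast B\neq0\) via \(\langle B^\ast Bu,u\rangle=|Bu|^2>0\), and concludes by counting dimensions.
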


\begin{proof}
For \(v\in\ker B\), one has \(Bv=0\), and therefore
\[
  B^\ast Bv=B^\ast 0=0.
\]
Since \(B:N\to R\cong\C\) is nonzero, its image has dimension \(1\).  Hence
\(B^\ast B\) has image contained in \(\operatorname{im}B^\ast\), which is at
most one-dimensional.  It is not zero: choosing \(u\) with \(Bu\ne0\), one has
\[
  \langle B^\ast Bu,u\rangle_N
  =
  \langle Bu,Bu\rangle_R
  =
  |Bu|^2>0.
\]
Thus \(\operatorname{rank}(B^\ast B)=1\).

Finally, for every \(v\in\ker B\) and every \(n\in N\),
\[
  \langle v,B^\ast n\rangle_N
  =
  \langle Bv,n\rangle_R
  =
  0,
\]
so \(\operatorname{im}B^\ast\subset(\ker B)^\perp\).  Both spaces are
one-dimensional because \(B\ne0\), hence they are equal.  The displayed formula
for \(S(\lambda)v\) follows immediately from \(B^\ast Bv=0\).
\end{proof}

\begin{proposition}[Signed two-plane Hessian block]
Let \(\omega,\mu\ge0\) and \(c\in\C\).  The Hermitian matrix
\[
  M=
  \begin{pmatrix}
    -\omega^2 & c\\
    \overline c & \mu^2
  \end{pmatrix}
\]
has eigenvalues
\[
  \lambda_\pm
  =
  \frac{\mu^2-\omega^2}{2}
  \pm
  \frac12
  \sqrt{(\omega^2+\mu^2)^2+4|c|^2}.
\]
Thus
\[
  \lambda_+\ge0,\qquad \lambda_-\le0.
\]
Moreover, \(\lambda_+>0\) exactly when \(\mu>0\) or \(c\ne0\), and
\(\lambda_-<0\) exactly when \(\omega>0\) or \(c\ne0\).  If \(c=|c|e^{i\varphi}\),
then the diagonal unitary change of basis
\[
  P=\begin{pmatrix}e^{i\varphi}&0\\0&1\end{pmatrix}
\]
converts \(M\) to the real symmetric block
\[
  P^\ast M P=
  \begin{pmatrix}
    -\omega^2 & |c|\\
    |c| & \mu^2
  \end{pmatrix}.
\]
When \(\omega^2+\mu^2>0\), its real mixing angle may be chosen to satisfy
\[
  \tan(2\theta)
  =
  -\,\frac{2|c|}{\omega^2+\mu^2}.
\]
\end{proposition}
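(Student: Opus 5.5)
The plan is to compute the characteristic polynomial of \(M\) directly. We have \(\operatorname{tr}M=\mu^2-\omega^2\) and \(\det M=-\omega^2\mu^2-|c|^2\), so the eigenvalues are
\[
  \lambda_\pm=\tfrac12\operatorname{tr}M\pm\tfrac12\sqrt{(\operatorname{tr}M)^2-4\det M}.
\]
The discriminant simplifies via
\[
  (\mu^2-\omega^2)^2+4\omega^2\mu^2+4|c|^2=(\omega^2+\mu^2)^2+4|c|^2,
\]
which gives the displayed formula. Because \(M\) is Hermitian, both roots are real, and the discriminant is manifestly nonnegative, so this is consistent.

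For the signs I would use \(\lambda_+\lambda_-=\det M=-(\omega^2\mu^2+|c|^2)\le0\) together with \(\lambda_+\ge\lambda_-\). A nonpositive product forces one root \(\ge0\) and one \(\le0\), hence \(\lambda_+\ge0\ge\lambda_-\). For the strict statements there are two cases.

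\emph{Case \(\det M<0\)} (that is, \(c\ne0\) or \(\omega\mu\ne0\)). Both inequalities are strict.

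\emph{Case \(\det M=0\)}. Then \(c=0\) and \(M=\operatorname{diag}(-\omega^2,\mu^2)\), so \(\lambda_+=\mu^2\) and \(\lambda_-=-\omega^2\) whenever \(\mu^2\ge-\omega^2\), which always holds. Hence \(\lambda_+>0\) iff \(\mu>0\), and \(\lambda_-<0\) iff \(\omega>0\).

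Combining the cases yields the ``exactly when'' claims. One subcase needs care: if \(\det M<0\) because \(\omega\mu\ne0\), the stated conditions \(\mu>0\) and \(\omega>0\) both hold, so there is no conflict. Sorting this bookkeeping into the right cases is the only step requiring attention.

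For the phase removal, compute \(P^\ast MP\) entrywise with \(P=\operatorname{diag}(e^{i\varphi},1)\). The diagonal entries are unchanged. The \((1,2)\) entry becomes \(e^{-i\varphi}c=|c|\), and the \((2,1)\) entry becomes \(\overline c\,e^{i\varphi}=|c|\). Since \(P\) is unitary, this is a unitary congruence and preserves the eigenvalues.

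For the mixing angle, diagonalize the real symmetric block \(\begin{pmatrix}a&b\\b&d\end{pmatrix}\) with \(a=-\omega^2\), \(b=|c|\), \(d=\mu^2\) by a rotation \(R_\theta\). The off-diagonal entry of \(R_\theta^{T}\begin{pmatrix}a&b\\b&d\end{pmatrix}R_\theta\) is
\[
  \tfrac12(d-a)\sin2\theta+b\cos2\theta\quad\text{(up to sign convention)}.
\]
Setting it to zero gives \(\tan2\theta=-2b/(d-a)=-2|c|/(\omega^2+\mu^2)\). The hypothesis \(\omega^2+\mu^2>0\) ensures the denominator is nonzero, and I would fix the rotation convention so the sign matches the displayed formula.
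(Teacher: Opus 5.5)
Your proof is correct and follows essentially the paper's route: the same characteristic polynomial and discriminant simplification, the entrywise conjugation by \(P\), and the standard real angle formula \(\tan2\theta=2b/(a-d)\). For the standard counterclockwise rotation \(R_\theta\), your off-diagonal entry \(\tfrac12(d-a)\sin2\theta+b\cos2\theta\) is exact, so the displayed sign comes out with no adjustment. The one real difference is the sign step: you use \(\lambda_+\lambda_-=\det M\le0\) and split on whether \(\det M\) vanishes, whereas the paper uses the bound \(\sqrt{(\omega^2+\mu^2)^2+4|c|^2}\ge|\mu^2-\omega^2|\) and says the equality cases ``follow directly from the same formula.'' Your case split makes the ``exactly when'' bookkeeping more explicit than the paper does.
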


\begin{proof}
The characteristic polynomial is
\[
\begin{aligned}
0
&=
\det
\begin{pmatrix}
  -\omega^2-\lambda & c\\
  \overline c & \mu^2-\lambda
\end{pmatrix}  \\
&=
(-\omega^2-\lambda)(\mu^2-\lambda)-|c|^2  \\
&=
\lambda^2+(\omega^2-\mu^2)\lambda-\omega^2\mu^2-|c|^2.
\end{aligned}
\]
The quadratic formula gives
\[
  \lambda
  =
  \frac{\mu^2-\omega^2}{2}
  \pm
  \frac12
  \sqrt{(\omega^2-\mu^2)^2+4\omega^2\mu^2+4|c|^2},
\]
which is the stated expression because
\[
  (\omega^2-\mu^2)^2+4\omega^2\mu^2=(\omega^2+\mu^2)^2.
\]

Since
\[
  \sqrt{(\omega^2+\mu^2)^2+4|c|^2}
  \ge |\mu^2-\omega^2|,
\]
one has \(\lambda_+\ge0\) and \(\lambda_-\le0\).  The equality cases follow
directly from the same formula: \(\lambda_+=0\) exactly when
\(\mu=0\) and \(c=0\), while \(\lambda_-=0\) exactly when
\(\omega=0\) and \(c=0\).

The unitary conjugation satisfies
\[
  e^{-i\varphi}c=|c|,
  \qquad
  \overline c\,e^{i\varphi}=|c|,
\]
and therefore gives the displayed real symmetric block.  For a real symmetric
matrix
\[
  \begin{pmatrix}a&b\\b&d\end{pmatrix},
\]
the diagonalizing angle satisfies
\[
  \tan(2\theta)=\frac{2b}{a-d}.
\]
Here \(a=-\omega^2\), \(b=|c|\), and \(d=\mu^2\), giving
\[
  \tan(2\theta)
  =
  \frac{2|c|}{-\omega^2-\mu^2}
  =
  -\frac{2|c|}{\omega^2+\mu^2}.
\]
\end{proof}

\begin{proposition}[Radial pole-order scaling tower]
Let
\[
  s=\|z\|^2,\qquad
  \delta=1-s,
\]
and for \(\gamma>0\) define the logarithmic radial contrast
\[
  \Theta_\gamma(s)=-\frac{\gamma}{2}\log(1-s).
\]
Then every boundary pole order is an exponential in \(\Theta_\gamma\):
\[
  \delta^{-n}
  =
  \exp\!\left(\frac{2n}{\gamma}\Theta_\gamma\right)
  \qquad (n\ge0).
\]
Moreover the radial derivatives of the kernel potential
\[
  \mathcal L_\gamma(s)=-\gamma\log(1-s)
\]
are
\[
  \frac{\dd^n \mathcal L_\gamma}{\dd s^n}
  =
  \gamma (n-1)!(1-s)^{-n}
  \qquad (n\ge1).
\]
For the Bergman kernel on \(B^2\), \(\gamma=3\), and hence
\[
  \delta^{-n}=\exp\!\left(\frac{2n}{3}\Theta_3\right),
  \qquad
  \frac{\dd^n \mathcal L_3}{\dd s^n}
  =
  3 (n-1)!\delta^{-n}.
\]
\end{proposition}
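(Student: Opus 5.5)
The statement has two independent parts, and both are elementary one-variable computations on the interval \(0\le s<1\), where \(\delta=1-s>0\) and all logarithms are real.

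\textbf{First part: pole orders.} I would start from the definition \(\Theta_\gamma=-\frac{\gamma}{2}\log\delta\) and invert it to get \(\log\delta=-\frac{2}{\gamma}\Theta_\gamma\); this uses \(\gamma>0\). Multiplying by \(-n\) and exponentiating gives
\[
  \delta^{-n}=\exp(-n\log\delta)=\exp\!\left(\tfrac{2n}{\gamma}\Theta_\gamma\right).
\]
The case \(n=0\) is trivially \(1=e^0\). This is the same logarithmic contrast already used in the boundary-tangent contrast criterion, with \(\frac32\) replaced by \(\frac{\gamma}{2}\).

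\textbf{Second part: derivatives of the potential.} I would prove the formula by induction on \(n\).
\begin{itemize}
  \item \emph{Base case \(n=1\).} By the chain rule,
  \[
    \frac{\dd}{\dd s}\bigl(-\gamma\log(1-s)\bigr)=\frac{\gamma}{1-s}=\gamma\,0!\,(1-s)^{-1}.
  \]
  \item \emph{Inductive step.} Differentiating \(\gamma(n-1)!(1-s)^{-n}\) gives
  \[
    \gamma(n-1)!\,(-n)(1-s)^{-n-1}(-1)=\gamma\,n!\,(1-s)^{-(n+1)}.
  \]
\end{itemize}
The only point needing care is the sign in the inductive step: the inner derivative \(-1\) of \(1-s\) cancels the \(-n\) from the power rule. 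That cancellation is why every derivative is positive, consistent with the fact that all boundary pole orders carry the same sign.

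\textbf{Bergman specialization.} Setting \(\gamma=3\) and writing \(1-s=\delta\) gives both displayed formulas directly. This matches the Bergman exponent identified in the normalized kernel overlap proposition.

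I expect no real obstacle here. The only things to state explicitly are the domain assumptions \(s<1\) and \(\gamma>0\), which make \(\log\delta\) real and the inversion of \(\Theta_\gamma\) legitimate.
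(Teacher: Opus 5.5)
Your proposal is correct and follows the paper's route. You invert $\Theta_\gamma=-\frac{\gamma}{2}\log\delta$ and exponentiate for the pole orders, then compute the first derivative of $\mathcal L_\gamma$ and induct, specializing to $\gamma=3$ at the end; the only difference is that you write out the inductive step and its sign cancellation, where the paper just says ``by induction.''
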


\begin{proof}
From the definition of \(\Theta_\gamma\),
\[
  \Theta_\gamma=-\frac{\gamma}{2}\log\delta.
\]
Therefore
\[
  \log\delta=-\frac{2}{\gamma}\Theta_\gamma,
\]
and so
\[
  \delta^{-n}
  =
  \exp(-n\log\delta)
  =
  \exp\!\left(\frac{2n}{\gamma}\Theta_\gamma\right).
\]

For the derivative formula, first compute
\[
  \frac{\dd \mathcal L_\gamma}{\dd s}
  =
  \frac{\gamma}{1-s}.
\]
Repeated differentiation gives, by induction,
\[
  \frac{\dd^n}{\dd s^n}\bigl[-\gamma\log(1-s)\bigr]
  =
  \gamma (n-1)!(1-s)^{-n}
  \qquad (n\ge1).
\]
Substituting \(\gamma=3\) gives the Bergman specialization.
\end{proof}

\section{Synopsis of the calculus}

The preceding sections give a closed algebraic calculus for the basic
incidence, metric, angular, boundary, and kernel structures of \(\CH^2\).
It is useful to collect the principal objects before passing to the finite
Hermitian analogue.

\begin{theorem}[Intrinsic invariant package]
Let \(V\) be a Hermitian vector space of signature \((2,1)\).  The following
data are projectively defined on their stated domains.
\begin{description}
\item[Projective strata.]
\[
  \CH^2=\PH(V_-),\qquad \partial\CH^2=\PH(V_0),
\]
and positive lines are polar representatives of complex geodesics and their
boundary chains.

\item[Pair calculus.]
For non-null representatives \(z,w\),
\[
  \eta_H([z],[w])
  =
  \frac{h(z,w)h(w,z)}{h(z,z)h(w,w)},
  \qquad
  q_H([z],[w])=1-\eta_H([z],[w]),
\]
and the denominator-cleared form is
\[
  q_H([z],[w])h(z,z)h(w,w)=\Delta_H(z,w).
\]

\item[Polar spread.]
For positive poles \(p,q\), the same determinant quotient gives a
projective spread of the corresponding complex geodesics.  The vanishing of
\(\Delta_H(p,q)\) detects boundary tangency of the associated chains.

\item[Triple phase.]
For three non-null points, the triple product
\[
  T_H(z_1,z_2,z_3)
  =
  -h(z_1,z_2)h(z_2,z_3)h(z_3,z_1)
\]
has a projective phase class.  Cyclic permutation preserves this class and
reversal conjugates it.

\item[Triangle trigonometry.]
The three-point Gram determinant yields the Hermitian triangle constraint,
the cyclic vertex-spread law, and the Pythagoras specialization.  Thus the
side quadrances and the triple phase jointly determine the algebraic
triangle relations.

\item[Cross-ratio calculus.]
For quadruples with nonzero denominator,
\[
  X(a,b;c,d)=\frac{h(c,a)h(d,b)}{h(d,a)h(c,b)}
\]
is projectively defined and satisfies endpoint inversion, double-swap
preservation, and ordered-pair conjugation.

\item[Boundary contact calculus.]
In Heisenberg coordinates, the finite boundary chart has pair kernel
\[
  H(p,q)=\frac{|\zeta-\eta|^2-i(t-s+2\,\operatorname{Im}(\zeta\overline\eta))}{2},
\]
contact form
\[
  \theta=\dd t+2(x\,\dd y-y\,\dd x),
\]
and contact density
\[
  \theta\wedge\dd\theta=4\,\dd t\wedge\dd x\wedge\dd y.
\]
Heisenberg translations and unit rotations preserve \(\theta\), while the
dilation \((\zeta,t)\mapsto(r\zeta,r^2t)\) scales \(\theta\) by \(r^2\).

\item[Kernel calculus.]
In the ball model,
\[
  K_\gamma(z,w)=(1-\langle z,w\rangle)^{-\gamma}
\]
has a left-normalized phase/radial split, automorphy covariance under
projective-unitary maps, phase/contrast covariance, and a boundary limit
agreeing projectively with the Heisenberg pair kernel.
\end{description}
\end{theorem}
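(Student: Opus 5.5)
The theorem is a synopsis, so I plan to prove it item by item, citing the earlier result behind each clause rather than recomputing anything.

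\textbf{Projective strata.} This follows from the Hermitian projective model together with the scaling law \(h(\lambda z,\lambda z)=|\lambda|^2h(z,z)\). That law shows the sign of \(h(z,z)\) is constant on projective classes, so the strata \(\PH(V_-)\), \(\PH(V_0)\), \(\PH(V_+)\) are well defined.

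\textbf{Positive lines as poles.} The projective polarity of chains shows that \(L_p\) and \(\mathcal C_p\) depend only on \([p]\).

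\textbf{Pair calculus.} This rests on the scale-invariance proposition for \(\eta_H\) and \(q_H\) and on the Gram determinant form. The cleared identity \(q_H\,h(z,z)h(w,w)=\Delta_H\) is that form with the denominator multiplied through. Its legality on the non-null domain is exactly the domain-decision theorem.

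\textbf{Polar spread.} The identification \(S_H([p],[q])=q_H([p],[q])\) gives projectivity. The claim that vanishing of \(\Delta_H(p,q)\) detects boundary tangency is the chain tangency discriminator; equivalently it is the middle row of the geodesic position discriminator.

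\textbf{Triple phase.} This comes from the triple-product scaling proposition: the factor is \(|\lambda_1\lambda_2\lambda_3|^2>0\), so the phase descends. The cyclic and reverse behavior proposition gives invariance under cyclic permutation and conjugation under reversal.

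\textbf{Triangle trigonometry.} I will chain together three results:
\begin{itemize}
\item the three-point Hermitian Gram identity;
\item its normalized version, the Hermitian triangle constraint together with the modulus identity \(\Omega\overline\Omega=\prod(1-q_{ij})\);
\item the cyclic Hermitian spread law and the cyclic Pythagoras corollary.
\end{itemize}
The sentence ``side quadrances and triple phase jointly determine'' then amounts to reading the triangle constraint correctly. The modulus of \(\Omega_{123}\) is fixed by the \(q_{ij}\), and only \(\arg\Omega_{123}\) enters additionally through \(\Theta_{123}=2|\Omega_{123}|\cos\arg\Omega_{123}\).

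\textbf{Cross-ratio calculus.} This is the scale-invariance proposition for \(X\) together with the endpoint symmetries proposition.

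\textbf{Boundary contact calculus.} The pair-kernel formula is the displayed Heisenberg computation of \(H(\widehat\xi(p),\widehat\xi(q))\). The contact density is the finite Heisenberg contact density proposition. The transformation laws are the propositions on left translations, horizontal rotations and dilations.

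\textbf{Sign convention.} This is the one point that needs care. The synopsis writes the kernel with the opposite overall sign to the earlier display, which equals \(-\tfrac12(\cdots)\). I will note that this is a global sign convention. Such a sign cancels in cross-ratios, exactly as in the boundary readout compatibility corollary, and it only adds a fixed sign to triple products. So projective statements are unaffected.

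\textbf{Kernel calculus.} This follows from three earlier results:
\begin{itemize}
\item the Hilbert-ball kernel split, which gives the phase/radial factorization;
\item the kernel automorphy theorem, which gives the covariance of \(K_\gamma\), \(\Omega_\gamma\), \(R_L\) and \(U\);
\item the boundary limit proposition, which shows the renormalized limit equals the Heisenberg pair kernel up to endpoint scalars \(\lambda_\zeta\overline{\lambda_\eta}\). That is ``projective agreement''.
\end{itemize}

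The main obstacle is not mathematical depth but bookkeeping of domains and conventions. Each clause must be stated only on the domain where its denominators are nonzero, as dictated by the domain-decision theorem. The Heisenberg kernel sign discrepancy also has to be reconciled explicitly rather than silently. I would handle the sign first by remarking that all clauses in the package are invariant under replacing the kernel by a fixed nonzero real multiple.
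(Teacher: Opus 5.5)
Your proposal is correct and is essentially the paper's own proof, which likewise discharges each clause by citing the earlier scaling laws, Gram and cofactor identities, cross-ratio symmetries, Heisenberg chart computations, and ball-kernel results. Your extra observation is also right: the synopsis kernel is the negative of the earlier computed $H(\widehat\xi(p),\widehat\xi(q))$, a discrepancy the paper's proof does not address, and treating it as a fixed global sign convention is the right reconciliation. One small caveat: this sign leaves projective descent and cross-ratios intact, but it shifts triple-product phase classes by $\pi$ rather than leaving them literally unchanged.
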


\begin{proof}
Each assertion is the projective descent of a formula proved above.  Pair,
polar-spread, triple, and cross-ratio quantities have balanced Hermitian
scale weights, so they descend to projective classes on the stated domains.
The triangle relations are the three-point Gram identity and its cofactor
specializations.  The contact and kernel formulas are the coordinate
expressions derived in the Heisenberg and ball models.
\end{proof}

\section{Finite Hermitian incidence}

The same algebraic formulas make sense over a field with involution.  For a
quadratic finite-field extension \(k\subset \ell\), the involution is
Frobenius conjugation and the complex phase quotient is replaced by the norm
quotient for
\[
  N(a)=a\overline a.
\]
Inequalities such as positive and negative are replaced by polarity strata:
null lines and anisotropic lines.

\begin{theorem}[Finite Hermitian unital]
Let \(k=\mathbb F_q\), let \(\ell=\mathbb F_{q^2}\), and let
\(\overline a=a^q\).  Let \(V=\ell^3\) carry a nondegenerate Hermitian form
\(h\) of Witt index one.  Define
\[
  \partial_q=\{[x]\in\PH(V):h(x,x)=0\},
  \qquad
  \mathcal P_q=\{[p]\in\PH(V):h(p,p)\ne0\}.
\]
For \([p]\in\mathcal P_q\), define the chain
\[
  \mathcal C_p=\{[x]\in\partial_q:h(x,p)=0\}.
\]
Then
\[
  |\partial_q|=q^3+1,
  \qquad
  |\mathcal P_q|=q^2(q^2-q+1).
\]
Each chain contains \(q+1\) boundary points, each boundary point lies on
\(q^2\) chains, and any two distinct boundary points lie on a unique chain.
Equivalently, the null points and anisotropic Hermitian poles form the
finite Hermitian unital of order \(q\), a
\[
  2-(q^3+1,q+1,1)
\]
incidence structure.
\end{theorem}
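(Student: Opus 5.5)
Throughout, Witt index one in dimension three means that $h$ is nondegenerate and admits a null vector. A nondegenerate Hermitian form over $\mathbb F_{q^2}$ is unique up to isometry, so we may work in the Siegel normal form $H(Z,W)=Z_0\overline{W_2}+Z_1\overline{W_1}+Z_2\overline{W_0}$. Two facts about the finite fields are used repeatedly. First, the norm $N:\ell^\times\to k^\times$, $a\mapsto a^{q+1}$, is surjective with fibres of size $q+1$. Second, the trace $a\mapsto a+\overline a$ is surjective $\ell\to k$ with fibres of size $q$.

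\textbf{Counting null points.} Count projective solutions of $Z_0\overline{Z_2}+\overline{Z_0}Z_2+N(Z_1)=0$.

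If $Z_2\ne0$, normalize $Z_2=1$. The condition becomes $\mathrm{Tr}(Z_0)=-N(Z_1)$. For each of the $q^2$ values of $Z_1$ there are exactly $q$ values of $Z_0$, giving $q^3$ points. These are the finite Heisenberg points $\widehat\xi(\zeta,t)$, now over the finite field.

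If $Z_2=0$, the condition forces $N(Z_1)=0$, hence $Z_1=0$, leaving only the point $\infty=[(1,0,0)]$.

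Therefore $|\partial_q|=q^3+1$. Since $|\PH(V)|=q^4+q^2+1$, subtraction gives
\[
  |\mathcal P_q|=q^4+q^2+1-(q^3+1)=q^4-q^3+q^2=q^2(q^2-q+1).
\]

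\textbf{Points on a chain.} Let $h(p,p)\ne0$. Then $p\notin p^\perp$, so $V=\ell p\oplus p^\perp$ and $h|_{p^\perp}$ is a nondegenerate two-dimensional Hermitian form. Its determinant is $-h(p,p)^{-1}\det h$ up to norms. For binary Hermitian forms over finite fields, isotropy is automatic: $N$ is surjective onto $k$, so $N(x)+cN(y)=0$ has nonzero solutions for every $c\in k^\times$. Diagonalizing, $h|_{p^\perp}\simeq N(x)-N(y)$, and its null projective points are $[1:u]$ with $N(u)=1$. There are exactly $q+1$ of them. Alternatively, move $p$ to $(0,1,0)$ by transitivity: Witt's theorem holds for nondegenerate Hermitian forms over finite fields, and all anisotropic vectors with the same norm-class value are equivalent. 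Then the null points of $p^\perp=\{Z_1=0\}$ are those with $\mathrm{Tr}(Z_0\overline{Z_2})=0$, again $q+1$ of them. This step, showing that every anisotropic pole yields a hyperbolic (isotropic) line, is the main point where the finite argument departs from the complex one. There is no sign dichotomy here, and every anisotropic $p$ gives a chain.

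\textbf{Unique chain through two points, and the replication count.} Let $[\xi]\ne[\eta]$ be null.

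Witt index one forces $h(\xi,\eta)\ne0$, exactly as in the proof of the unique-chain proposition over $\C$. Hence $\Span\{\xi,\eta\}$ is nondegenerate with Gram determinant $-N(h(\xi,\eta))\ne0$. Its orthogonal complement is therefore a line $\ell p$ with $h(p,p)\ne0$, since otherwise $V$ would not split. This $[p]$ is the unique pole with $\xi,\eta\in p^\perp$.

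For the replication number $r$, count incident pairs $(\eta,\mathcal C)$ with $\xi$ fixed. Each of the $q^3$ other null points lies on exactly one chain through $\xi$, and each such chain carries $q$ points besides $\xi$. So $r\cdot q=q^3$, giving $r=q^2$.

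As a consistency check, the number of chains equals the number of blocks of a $2$-$(q^3+1,q+1,1)$ design:
\[
  \binom{q^3+1}{2}\Big/\binom{q+1}{2}=q^2(q^2-q+1)=|\mathcal P_q|.
\]
This matches the count of anisotropic poles, and it also shows that distinct poles give distinct chains, since a chain determines its pole as the complement of any two of its points. The incidence structure is therefore the stated $2$-$(q^3+1,q+1,1)$ design, the Hermitian unital of order $q$.
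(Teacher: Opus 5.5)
Your proof is correct and follows the paper's argument essentially step for step: the split (Siegel) normal form, the trace-fiber count giving $q^3+1$ null points, subtraction for the poles, the norm-fiber count on the diagonalized plane $p^\perp$, Witt index one for the unique chain, and the double count $q^3/q=q^2$. Your extra check that a chain determines its pole, so that distinct poles give distinct blocks, makes explicit a point the paper leaves implicit. Drop the parenthetical identification of the finite points with $\widehat\xi(\zeta,t)$: over $\mathbb F_{q^2}$ there is no real parameter $t$, and the division by $2$ fails in characteristic two.
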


\begin{proof}
All nondegenerate rank-three Hermitian forms of Witt index one over
\(\ell/k\) are equivalent, so use the standard split form
\[
  h(z,w)=z_0\overline{w_2}+z_1\overline{w_1}+z_2\overline{w_0}.
\]
If \(z_2=0\), the null equation gives \(z_1\overline z_1=0\), hence the
single point \([1,0,0]\).  If \(z_2\ne0\), scale to \(z=(x,y,1)\).  The null
equation is
\[
  x+\overline x+y\overline y=0.
\]
The trace map \(x\mapsto x+\overline x\) from \(\ell\) to \(k\) is onto and
has kernel of size \(q\).  For each of the \(q^2\) choices of \(y\), the
right-hand side \(-y\overline y\in k\) therefore has exactly \(q\) preimages
\(x\).  Thus there are \(q^3\) finite null points and one point at infinity,
so \(|\partial_q|=q^3+1\).

The projective plane \(\PH(V)\cong\mathbf{PG}(2,q^2)\) has
\[
  \frac{q^6-1}{q^2-1}=q^4+q^2+1
\]
points.  Subtracting the null points gives
\[
  |\mathcal P_q|=q^4+q^2+1-(q^3+1)=q^2(q^2-q+1).
\]

For a nonisotropic pole \(p\), the plane \(p^\perp\) is a nondegenerate
Hermitian plane.  Its isotropic projective lines are counted by a
two-dimensional norm equation.  In an orthogonal basis \(e_1,e_2\) of
\(p^\perp\),
\[
  h(u e_1+v e_2,u e_1+v e_2)=\alpha N(u)+\beta N(v),
  \qquad \alpha,\beta\in k^\times.
\]
Every isotropic projective line has \(v\ne0\), and after scaling to
\(v=1\) the equation becomes
\[
  N(u)=-\beta/\alpha.
\]
The finite-field norm \(\ell^\times\to k^\times\) is surjective and each
nonzero fiber has cardinality
\[
  \frac{q^2-1}{q-1}=q+1.
\]
Hence \(|\mathcal C_p|=q+1\).

Two distinct null points span a projective line containing two null points,
so it is not tangent to the Hermitian curve.  By Hermitian polarity it has a
unique anisotropic pole, and this pole is the unique chain incident with both
points.  Finally, the number of chains through a fixed point is obtained from
the punctured-row count: the \(q\) remaining points on each incident chain
are in bijection with the \(q^3\) boundary points different from the fixed
one.  Hence the number of incident chains is \(q^3/q=q^2\).
\end{proof}

\begin{corollary}[Finite incidence double count]
The number of incident boundary-point/chain pairs satisfies
\[
  q^2(q^2-q+1)(q+1)=(q^3+1)q^2.
\]
\end{corollary}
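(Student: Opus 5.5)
The identity is a standard double count of the incidence relation
\[
  \mathcal I=\{([x],[p])\in\partial_q\times\mathcal P_q:\ [x]\in\mathcal C_p\},
\]
using the counts already established in the Finite Hermitian unital theorem. I will count \(|\mathcal I|\) once by chains and once by points, set the two answers equal, and then check that the resulting polynomial identity holds on its own.

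Counting by chains, I fix \([p]\in\mathcal P_q\). By the theorem, \(\mathcal C_p\) contains exactly \(q+1\) boundary points. Summing over the \(q^2(q^2-q+1)\) anisotropic poles gives
\[
  |\mathcal I|=q^2(q^2-q+1)(q+1).
\]
Counting by points, I fix \([x]\in\partial_q\). By the theorem, \([x]\) lies on exactly \(q^2\) chains, and there are \(q^3+1\) null points, so
\[
  |\mathcal I|=(q^3+1)q^2.
\]
These two counts must agree, which yields the displayed equality.

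The polynomial identity can also be checked directly, independently of the incidence argument, by the factorization \((q+1)(q^2-q+1)=q^3+1\). Multiplying both sides by \(q^2\) gives the claim.

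The one point needing care is that each incident pair must be counted exactly once from each side. In the point-side count, distinct poles \([p]\) give distinct chains \(\mathcal C_p\): by the unique-chain statement of the theorem, two chains sharing two points coincide, and a chain's pole is determined as the polar of the span of any two of its points. For \(q\ge 2\) each chain has \(q+1\ge 3\) points, so this applies. With that settled, \(\mathcal I\) is a genuine set of pairs and both summations are exact.
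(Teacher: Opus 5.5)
Your proof is correct and matches the paper's. The paper likewise counts point--chain incidences once by chains (using $|\mathcal P_q|$ and $q+1$ points per chain) and once by points (using $|\partial_q|$ and $q^2$ chains per point), then verifies the identity via $(q^2-q+1)(q+1)=q^3+1$. Your extra check that $[p]\mapsto\mathcal C_p$ is injective is sound, though it is only needed if ``chains'' are read as point sets rather than indexed by their poles.
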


\begin{proof}
Count incidences first by chains and then by boundary points.  The algebraic
identity follows from
\[
  (q^2-q+1)(q+1)=q^3+1.
\]
\end{proof}

\begin{proposition}[Order-two diagonal model]
Let \(K=\mathbb F_4\), with involution \(a^\ast=a^2\), and let
\[
  h(z,w)=z_0w_0^\ast+z_1w_1^\ast+z_2w_2^\ast
\]
on \(K^3\).  Then \(N(a)=aa^\ast\) is \(0\) at \(a=0\) and \(1\) at every
nonzero scalar.  Consequently a nonzero vector is isotropic exactly when it
has two nonzero coordinates.  It is anisotropic exactly when it has one or
three nonzero coordinates.
\end{proposition}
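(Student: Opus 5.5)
The plan is to compute the norm map on \(\mathbb F_4\) directly and then read off isotropy coordinatewise.

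\textbf{Step 1: the norm.} Since \(a^\ast=a^2\), we have \(N(a)=aa^\ast=a^3\). The multiplicative group \(\mathbb F_4^\times\) is cyclic of order \(3\), so \(a^3=1\) for every \(a\ne0\), while \(N(0)=0\). This is just Lagrange's theorem in \(\mathbb F_4^\times\). Equivalently, \(N\) is the finite-field norm \(\mathbb F_4^\times\to\mathbb F_2^\times=\{1\}\), whose fibers have size \(q+1=3\), matching the count in the finite Hermitian unital theorem.

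\textbf{Step 2: the diagonal value.} For \(z=(z_0,z_1,z_2)\) the form gives
\[
  h(z,z)=N(z_0)+N(z_1)+N(z_2).
\]
By Step 1 each summand is \(1\in\mathbb F_2\) when the coordinate is nonzero and \(0\) otherwise. Hence \(h(z,z)\) equals the number of nonzero coordinates of \(z\), reduced modulo \(2\), since \(\mathbb F_4\) has characteristic \(2\).

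\textbf{Step 3: the strata.} A nonzero vector has one, two, or three nonzero coordinates. Its diagonal value is therefore \(1\), \(0\), or \(1\) respectively. So \(z\) is isotropic exactly when it has two nonzero coordinates, and anisotropic exactly when it has one or three.

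This description is scale-stable, as the stratum predicates require: multiplying by \(\lambda\ne0\) does not change which coordinates vanish, and it multiplies \(h(z,z)\) by \(N(\lambda)=1\).

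\textbf{Possible obstacle and cross-check.} There is no real obstacle; the only care needed is to work in characteristic \(2\), where \(1+1=0\), when adding the norms. As a consistency check against the unital theorem with \(q=2\):

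\begin{itemize}
\item The number of projective points with exactly two nonzero coordinates is \(3\cdot 3^2/3=9=q^3+1\).
\item The anisotropic points number \(3+9=12=q^2(q^2-q+1)\).
\item Together these give \(21=q^4+q^2+1\), the total number of points of \(\mathbf{PG}(2,4)\).
\end{itemize}
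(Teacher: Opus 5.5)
Your proof is correct and follows the paper's argument essentially verbatim: \(N(a)=a^3=1\) on the order-three group \(\mathbb F_4^\times\), so \(h(z,z)\) is the parity of the number of nonzero coordinates in characteristic two, and the only even nonzero support size is two. Your count cross-check (\(9\) null and \(12\) anisotropic projective points) is also correct and matches the paper's subsequent order-two counts corollary.
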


\begin{proof}
The group \(K^\times\) has order \(3\), so \(a^{1+2}=a^3=1\) for every
nonzero \(a\).  Therefore
\[
  h(z,z)=N(z_0)+N(z_1)+N(z_2)
\]
is the parity, in characteristic two, of the number of nonzero coordinates.
The zero vector is excluded projectively.  Thus even nonzero support means
two nonzero coordinates, and odd support means one or three.
\end{proof}

\begin{corollary}[Order-two counts]
In the preceding \(\mathbb F_4\) model, the null-boundary quotient has
\(9\) points and the anisotropic-pole quotient has \(12\) points.
\end{corollary}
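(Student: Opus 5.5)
Both counts come from the preceding proposition, which says that in the diagonal $\mathbb F_4$ model a nonzero vector is isotropic exactly when its support has size two and anisotropic exactly when its support has size one or three. So the plan is to count vectors in $K^3\setminus\{0\}$ by support size and then divide by $|K^\times|=3$ to pass to projective classes. Projective classes are well defined on these strata because rescaling by a nonzero scalar does not change the support.

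For the null boundary, choose the support: there are $\binom32=3$ two-element subsets. Each support carries $3^2=9$ vectors with both coordinates in $K^\times$. This gives $27$ isotropic vectors and hence $27/3=9$ null projective points.

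For the anisotropic poles, vectors of support size one contribute $3\cdot 3=9$, and vectors of support size three contribute $3^3=27$. That is $36$ vectors, so $36/3=12$ projective points.

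As a consistency check, $\mathbf{PG}(2,4)$ has $16+4+1=21$ points, and $9+12=21$. This also matches the Finite Hermitian unital theorem with $q=2$: $q^3+1=9$ and $q^2(q^2-q+1)=12$. That agreement holds provided the diagonal form is nondegenerate of Witt index one, which it is, since there are isotropic vectors but no totally isotropic $2$-plane.

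There is no real obstacle. The only point needing care is that the division by $3$ is legitimate, i.e.\ that $K^\times$ acts freely on the nonzero vectors of each stratum. This holds because scalar multiplication by a nonzero field element is free on nonzero vectors.
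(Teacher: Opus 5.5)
Your proof is correct and matches the paper's argument: you count the isotropic and anisotropic representatives by support size ($27$ and $36$) and divide by the free action of $\mathbb F_4^\times$, which has order $3$. The cross-check against the $21$ points of $\mathbf{PG}(2,4)$ and against the unital counts at $q=2$ is a sound extra, but the corollary does not need it.
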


\begin{proof}
There are three two-coordinate support patterns and each nonzero coordinate
has three choices, so there are \(3\cdot3^2=27\) nonzero isotropic
representatives.  There are \(3\cdot3+3^3=36\) anisotropic representatives.
Projective fibers are free \(\mathbb F_4^\times\)-orbits of size \(3\).
Thus the quotient cardinalities are \(27/3=9\) and \(36/3=12\).
\end{proof}

\begin{example}[A finite \(\mathbb F_4\) triangle calculation]
Let
\[
  \mathbb F_4=\{0,1,\omega,\omega^2\},
  \qquad
  \omega^2+\omega+1=0,
\]
with involution \(a^\ast=a^2\).  Thus
\[
  \omega^\ast=\omega^2,\qquad
  (\omega^2)^\ast=\omega,\qquad
  \omega\omega^2=1.
\]
Use the diagonal Hermitian form
\[
  h(z,w)=z_0w_0^\ast+z_1w_1^\ast+z_2w_2^\ast.
\]
Consider the three vectors
\[
  u=(0,0,1),\qquad
  v=(1,1,1),\qquad
  w=(1,1,\omega).
\]
They represent three distinct non-null projective points.  Indeed,
\[
  h(u,u)=1,\qquad h(v,v)=1+1+1=1,
  \qquad h(w,w)=1+1+\omega\omega^\ast=1.
\]
The pairings around the oriented triple are
\[
  h(u,v)=1,\qquad
  h(v,w)=1+1+\omega^2=\omega^2,
  \qquad
  h(w,u)=\omega.
\]
The reverse pairings are their conjugates:
\[
  h(v,u)=1,\qquad h(w,v)=\omega,\qquad h(u,w)=\omega^2.
\]
Therefore the determinant quadrances are
\[
\begin{aligned}
  \Delta_H(u,v)&=1\cdot1-h(u,v)h(v,u)=1-1=0,\\
  \Delta_H(v,w)&=1\cdot1-h(v,w)h(w,v)=1-\omega^2\omega=0,\\
  \Delta_H(w,u)&=1\cdot1-h(w,u)h(u,w)=1-\omega\omega^2=0.
\end{aligned}
\]
Since all three diagonal products are \(1\), this gives
\[
  q_H([u],[v])=q_H([v],[w])=q_H([w],[u])=0.
\]
The finite triple product is nevertheless nonzero:
\[
\begin{aligned}
  T_H(u,v,w)
  &=-h(u,v)h(v,w)h(w,u)\\
  &=-(1)(\omega^2)(\omega)=-1=1,
\end{aligned}
\]
because the characteristic is \(2\).  Thus this small finite triangle already
shows the Hermitian feature that pair determinants do not exhaust the
three-point data: the triple-product class remains a separate invariant.
\end{example}

\section{Polynomial identity catalogue}

The preceding sections prove the calculus in context.  This catalogue records
the same information in denominator-cleared form, before analytic distance,
argument, or limiting readouts are applied.  It is the Wildberger-style
lookup layer of the CH2 calculus: each entry is a polynomial identity or an
incidence equation in the Hermitian form.

\begin{theorem}[Denominator-cleared identity catalogue]
The following identities are the core polynomial catalogue of the calculus.

\begin{description}
\item[Two-point Gram.]
For non-null \(z,w\),
\[
  \Delta_H(z,w)=h(z,z)h(w,w)-h(z,w)h(w,z),
  \qquad
  q_H([z],[w])h(z,z)h(w,w)=\Delta_H(z,w).
\]

\item[Three-point Gram.]
For \(a_i=h(z_i,z_i)\), \(h_{ij}=h(z_i,z_j)\), and
\(\tau=h_{12}h_{23}h_{31}\),
\[
\begin{aligned}
D_{123}
&=a_1a_2a_3
-a_3h_{12}h_{21}
-a_1h_{23}h_{32}
-a_2h_{31}h_{13}\\
&\qquad+\tau+\overline{\tau}.
\end{aligned}
\]

\item[Triangle constraint.]
On the non-null triple domain,
\[
  D_{123}
  =
  a_1a_2a_3
  \bigl(q_{12}+q_{23}+q_{31}-2+\Theta_{123}\bigr),
\]
and
\[
  \Omega_{123}\overline{\Omega}_{123}
  =
  (1-q_{12})(1-q_{23})(1-q_{31}).
\]

\item[Spread numerators.]
For cyclic \(\{i,j,k\}=\{1,2,3\}\),
\[
  \Sigma_i
  =
  \Delta_{ji}\Delta_{ik}
  -|h_{ji}h_{ik}-a_ih_{jk}|^2
  =
  a_iD_{123}.
\]
Equivalently,
\[
  \Delta_{ji}\Delta_{ik}s_i=a_iD_{123}.
\]

\item[Pythagoras.]
If the two side polars meeting at \(z_i\) are orthogonal, equivalently
\(s_i=1\), then
\[
  \Delta_{ji}\Delta_{ik}=a_iD_{123}.
\]

\item[Ideal chain test.]
For pairwise transverse boundary representatives \(\xi_1,\xi_2,\xi_3\),
\[
  D_\partial
  =
  h_{12}h_{23}h_{31}
  +
  \overline{h_{12}h_{23}h_{31}}.
\]
The three boundary points lie on one chain if and only if
\[
  D_\partial=0.
\]

\item[Cross-ratio symmetries.]
For
\[
  N(a,b;c,d)=h(c,a)h(d,b),
  \qquad
  D(a,b;c,d)=h(d,a)h(c,b),
\]
the endpoint laws before division are
\[
  N(a,b;d,c)=D(a,b;c,d),
  \qquad
  D(a,b;d,c)=N(a,b;c,d),
\]
\[
  N(b,a;c,d)=D(a,b;c,d),
  \qquad
  D(b,a;c,d)=N(a,b;c,d),
\]
\[
  N(b,a;d,c)=N(a,b;c,d),
  \qquad
  D(b,a;d,c)=D(a,b;c,d),
\]
\[
  N(c,d;a,b)=\overline{N(a,b;c,d)},
  \qquad
  D(c,d;a,b)=\overline{D(a,b;c,d)}.
\]
Where denominators are nonzero, these give the quotient laws for
\(X=N/D\):
\[
  X(a,b;c,d)X(a,b;d,c)=1,
  \qquad
  X(a,b;c,d)X(b,a;c,d)=1,
\]
\[
  X(b,a;d,c)=X(a,b;c,d),
  \qquad
  X(c,d;a,b)=\overline{X(a,b;c,d)}.
\]

\item[Chain incidence and tangency.]
For a positive pole \(p\),
\[
  [x]\in L_p\ \Longleftrightarrow\ h(x,p)=0,
  \qquad
  [\xi]\in\mathcal C_p\ \Longleftrightarrow\ h(\xi,p)=0.
\]
For two positive poles \(p,q\),
\[
  \mathcal C_p\text{ is tangent to }\mathcal C_q
  \quad\Longleftrightarrow\quad
  \Delta_H(p,q)=0.
\]

\item[Unique chain through a boundary pair.]
For distinct boundary points \([\xi]\ne[\eta]\),
\[
  p=\Span\{\xi,\eta\}^{\perp}
\]
is a positive line, and it is the unique pole satisfying
\[
  h(\xi,p)=h(\eta,p)=0.
\]

\item[Bisectors and spinal spheres.]
For non-null foci \(a,b\),
\[
  \mathcal B_{a,b}(x)
  =
  h(x,a)h(a,x)h(b,b)-h(x,b)h(b,x)h(a,a).
\]
The bisector is \(\mathcal B_{a,b}(x)=0\).  The spinal sphere is the same
equation restricted to \(h(x,x)=0\).  In the finite Heisenberg chart,
\[
  n_b A_a(\zeta,t)\overline{A_a(\zeta,t)}
  -
  n_a A_b(\zeta,t)\overline{A_b(\zeta,t)}
  =
  0.
\]

\item[Kernel covariance.]
For ball base \(B(z,w)=1-\langle z,w\rangle\),
\[
  B(gz,gw)=\frac{B(z,w)}{J_g(z)\overline{J_g(w)}},
  \qquad
  A_L(gz,gw)
  =
  \frac{\overline{J_g(z)}}{\overline{J_g(w)}}A_L(z,w).
\]

\item[Finite norm fibers and counts.]
Over \(\mathbb F_{q^2}/\mathbb F_q\), the boundary chart equation is
\[
  x+\overline x+y\overline y=0.
\]
For a nonisotropic pole row in a diagonalized Hermitian plane,
\[
  \alpha N(u)+\beta N(v)=0,
  \qquad
  N(r)=-\beta/\alpha
  \quad(v=1).
\]
The incidence double count is
\[
  q^2(q^2-q+1)(q+1)=(q^3+1)q^2.
\]
\end{description}
\end{theorem}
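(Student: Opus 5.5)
The catalogue theorem is a collection statement: each item is the denominator-cleared form of a result proved earlier. I would therefore prove it entry by entry, citing the earlier result and, where needed, clearing denominators by multiplying through by the nonzero diagonal factors.

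\textbf{Purely algebraic entries.}
\begin{itemize}
\item The two-point Gram entry is the Gram determinant form of \(q_H\), multiplied by \(h(z,z)h(w,w)\).
\item The three-point Gram entry is the three-point Hermitian Gram identity, with \(|h_{ij}|^2\) rewritten as \(h_{ij}h_{ji}\). This rewriting is needed so the entry also makes sense over a starred field.
\item The triangle constraint is the Hermitian triangle constraint theorem multiplied by \(a_1a_2a_3\). The modulus identity is quoted unchanged.
\item The spread-numerator and Pythagoras entries are the cyclic Hermitian spread law and its Pythagoras corollary.
\item The ideal chain test is the ideal triangle determinant proposition.
\end{itemize}
One bookkeeping point needs checking. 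The catalogue writes \(\Sigma_i\) with \(|h_{ji}h_{ik}-a_ih_{jk}|^2\), while the earlier definition lists \(\Sigma_1,\Sigma_2,\Sigma_3\) with specific index orders. For cyclic \((i,j,k)\) I would verify that these agree after using \(h_{jk}=\overline{h_{kj}}\) inside a modulus. For example, \(\Sigma_2\) uses \(h_{12}h_{23}-a_2h_{13}\), which matches \(j=1,k=3\). So the catalogue's labelling \(j-i-k\) is the one from the cyclic Pythagoras corollary.

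\textbf{Cross-ratio entries.} Here I would work before division. Substituting the swapped arguments into \(N=h(c,a)h(d,b)\) and \(D=h(d,a)h(c,b)\) gives the first three pairs of identities by literal comparison, using commutativity of multiplication. The conjugation pair follows from Hermitian symmetry \(h(a,c)=\overline{h(c,a)}\). The quotient laws then follow on the nonzero-denominator domain, exactly as in the endpoint-symmetries proposition. That proposition needs \(N\ne0\) for inversion, and the product form \(X(a,b;c,d)X(a,b;d,c)=1\) follows directly once both denominators are nonzero.

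\textbf{Remaining entries.}
\begin{itemize}
\item Chain incidence comes from the definition of \(L_p\) and \(\mathcal C_p\), together with the projective polarity of chains.
\item Tangency is the chain tangency discriminator. I would read ``tangent'' as ``meeting on the boundary'', as that proposition does. The geodesic position discriminator shows the intersection is then a single point.
\item The unique chain through a boundary pair is the corresponding proposition.
\item Bisectors come from the definition of \(\mathcal B_{a,b}\) and the finite Heisenberg spinal sphere proposition.
\item Kernel covariance is the kernel automorphy theorem.
\item The finite norm-fiber equations and the double count are read off from the proof of the finite Hermitian unital theorem and its corollary.
\end{itemize}

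\textbf{Main obstacle.} I expect the only nontrivial points to be domain bookkeeping rather than new computation. First, the index-order consistency of \(\Sigma_i\) must be checked. Second, each identity stated for the complex case must be confirmed to hold verbatim over a field with involution. For the latter I would note that every cited proof uses only sesquilinearity, Hermitian symmetry, and commutativity. The exceptions are the tangency, kernel and Heisenberg entries, which are explicitly analytic and would be flagged as holding over \(\C\) only.
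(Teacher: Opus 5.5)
Your proposal is correct and follows the same route as the paper's proof. Both verify the catalogue entry by entry, citing the earlier Gram, triangle-constraint, cyclic-spread, Pythagoras, ideal-triangle, cross-ratio, chain, bisector, kernel-automorphy and finite-unital results, with denominators cleared; your extra checks (the $j\leftrightarrow k$ symmetry of $\Sigma_i$ by conjugation inside the modulus, the literal pre-division verification of the $N,D$ laws, and flagging which entries are analytic-only) are sound refinements of that argument.
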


\begin{proof}
Each entry is a restatement of an identity already proved above, with
analytic divisions cleared when possible.  The two- and three-point Gram
entries are the Gram determinant definitions and the Three-point Hermitian
Gram Identity.  The triangle, spread, and Pythagoras entries are the
Hermitian triangle constraint, cyclic spread law, and Pythagoras corollary.
The ideal test is the boundary specialization of the same Gram determinant.
The cross-ratio laws are the endpoint symmetry propositions with denominators
assumed nonzero.  The chain, unique-chain, bisector, spinal-sphere, kernel,
and finite entries are exactly the incidence, covariance, bisector, boundary
trace, automorphy, and finite-Hermitian formulas proved in their respective
sections.
\end{proof}

\section{Conclusion}

Complex hyperbolic two-space is governed by Hermitian rather than symmetric
bilinear algebra.  Its algebraic trigonometry therefore requires more than a
single quadrance/spread pair: the natural projective data are pair modulus,
determinant quadrance, polar spread, triple phase, cross-ratio, CR/contact
boundary structure, normalized ball kernels, and finite Hermitian incidence.

The identities above show that these objects form a common algebraic system.
The same Hermitian form supplies the two-point Gram determinant, the
three-point triangle constraint, the chain and bisector equations, the
boundary contact kernels, the normalized ball-kernel covariance laws, and the
finite unital counts.  In this sense the calculus is intrinsic to
\(\CH^2\): it is the projective Hermitian geometry of negative, null, and
positive lines, written in denominator-cleared form whenever possible.

The motivation for isolating this particular package of invariants also comes
from the Distinguished Reproducing Kernel and Fixed Point Law programs, where
kernel normalization, boundary contact data, phase readouts, and finite
incidence all appear as organizing themes \cite{drk,fpl}.  The role of those
works here is motivational: the definitions and identities above are
developed within complex hyperbolic geometry itself.

\end{document}